\definecolor{uququq}{rgb}{0.25,0.25,0.25}
\definecolor{qqwuqq}{rgb}{0,0.39,0}
\definecolor{zzttqq}{rgb}{0.6,0.2,0}
\definecolor{tttttt}{rgb}{0.2,0.2,0.2}
\definecolor{qqqqff}{rgb}{0,0,1}
\def\C{\ensuremath{\mathbb{C}}}
\def\P{\ensuremath{\mathbb{P}}}
\def\Q{\ensuremath{\mathbb{Q}}}
\def\R{\ensuremath{\mathbb{R}}}
\def\Z{\ensuremath{\mathbb{Z}}}
\def\cA{\ensuremath{\mathcal A}}
\def\AA{\ensuremath{\mathcal A}}
\def\cB{\ensuremath{\mathcal B}}
\def\BB{\ensuremath{\mathcal B}}
\def\cD{\ensuremath{\mathcal D}}
\def\DD{\ensuremath{\mathcal D}}
\def\cE{\ensuremath{\mathcal E}}
\def\EE{\ensuremath{\mathcal E}}
\def\cF{\ensuremath{\mathcal F}}
\def\FF{\ensuremath{\mathcal F}}
\def\cG{\ensuremath{\mathcal G}}
\def\GG{\ensuremath{\mathcal G}}
\def\LL{\ensuremath{\mathcal L}}
\def\OO{\ensuremath{\mathcal O}}
\def\cO{\ensuremath{\mathcal O}}
\def\PP{\ensuremath{\mathcal P}}
\def\cT{\ensuremath{\mathcal T}}
\def\TT{\ensuremath{\mathcal T}}
\def\VV{\ensuremath{\mathcal V}}
\def\cW{\ensuremath{\mathcal W}}
\def\vv{\ensuremath{\mathbf v}}
\def\llambda{\ensuremath{\boldsymbol{\lambda}}} 
\def\ch{\mathop{\mathrm{ch}}\nolimits}
\def\Char{\mathop{\mathrm{char}}\nolimits}
\def\Coh{\mathop{\mathrm{Coh}}\nolimits}
\def\Db{\mathop{\mathrm{D}^{\mathrm{b}}}\nolimits}
\def\deg{\mathop{\mathrm{deg}}\nolimits}
\def\dim{\mathop{\mathrm{dim}}\nolimits}
\def\ext{\mathop{\mathrm{ext}}\nolimits}
\def\Ext{\mathop{\mathrm{Ext}}\nolimits}
\def\lExt{\mathop{\mathcal Ext}\nolimits}
\def\Forg{\mathop{{\mathrm{Forg}}}\nolimits}
\def\GL{\mathop{\mathrm{GL}}\nolimits}
\def\Hom{\mathop{\mathrm{Hom}}\nolimits}
\def\lHom{\mathop{\mathcal Hom}\nolimits}
\def\RlHom{\mathop{\mathbf{R}\mathcal Hom}\nolimits}
\def\RHom{\mathop{\mathbf{R}\mathrm{Hom}}\nolimits}
\def\id{\mathop{\mathrm{id}}\nolimits}
\def\Ker{\mathop{\mathrm{Ker}}\nolimits}
\def\min{\mathop{\mathrm{min}}\nolimits}
\def\num{\mathop{\mathrm{num}}\nolimits}
\def\NS{\mathop{\mathrm{NS}}\nolimits}
\def\rk{\mathop{\mathrm{rk}}}
\def\Sym{\mathop{\mathrm{Sym}}}
\def\Stab{\mathop{\mathrm{Stab}}\nolimits}
\def\cH{\mathrm{H}}
\def\Forg{\mathrm{Forg}}
\def\Knum{K_{\num}}
\def\Ku{\mathcal{K}u}
\def\wPP{\ensuremath{\widetilde{\mathbb{P}}}}
\def\wX{\ensuremath{\widetilde{X}}}
\def\wY{\ensuremath{\widetilde{Y}}}
\def\wpsi{\ensuremath{\widetilde{\psi}}}
\newcommand\stv[2]{\left\{#1\,\colon\,#2\right\}}
\def\abs#1{\left\lvert#1\right\rvert}
\DeclareRobustCommand\longtwoheadrightarrow
\newcommand{\cat}[1]{\begin{bf}#1\end{bf}}
\DeclareMathOperator{\Rmut}{\cat{R}}
\DeclareMathOperator{\Lmut}{\cat{L}}
\newcommand{\set}[1]{\left\{#1\right\}}
\newcommand{\gen}[1]{\left\langle#1\right\rangle}
\newcommand{\res}[2]{\left.#1\right|_{#2}} 
\def\blank{\underline{\hphantom{A}}}
\def\into{\ensuremath{\hookrightarrow}}
\def\onto{\ensuremath{\twoheadrightarrow}}
\def\iso{\ensuremath{\simeq}}
\newcommand{\mapto}[1]{\xrightarrow{#1}}
\newtheorem*{rep@theorem}{\rep@title}
\newcommand{\newreptheorem}[2]{%
\newenvironment{rep#1}[1]{%
 \def\rep@title{#2~\ref{##1}}%
 \begin{rep@theorem}}%
 {\end{rep@theorem}}}
\newtheorem{Thm}{Theorem}[section]
\newtheorem{Prop}[Thm]{Proposition}
\newtheorem{PropDef}[Thm]{Proposition and Definition}
\newtheorem{Lem}[Thm]{Lemma}
\newtheorem{Cor}[Thm]{Corollary}
\newtheorem*{Ques*}{Question}
\theoremstyle{definition}
\newtheorem{Def}[Thm]{Definition}
\newtheorem{Rem}[Thm]{Remark}
\newtheorem{Ex}[Thm]{Example}
\newcommand\footnoteref[1]{\protected@xdef\@thefnmark{\ref{#1}}\@footnotemark}
\begin{document}

\title{Stability conditions on Kuznetsov components}

\author[A.~Bayer, M.~Lahoz, E.~Macr\`i, and P.~Stellari]{Arend Bayer, Mart\'i Lahoz, Emanuele Macr\`i, and Paolo Stellari}

\address{A.B.: School of Mathematics and Maxwell Institute,
University of Edinburgh,
James Clerk Maxwell Building,
Peter Guthrie Tait Road, Edinburgh, EH9 3FD,
United Kingdom}
\email{arend.bayer@ed.ac.uk}
\urladdr{\url{http://www.maths.ed.ac.uk/~abayer/}}

\address{M.L.: Universit\'{e} Paris Diderot -- Paris~7, B\^{a}timent Sophie Germain, Case 7012, 75205 Paris Cedex 13, France}
\email{marti.lahoz@ub.edu}
\urladdr{\url{http://www.ub.edu/geomap/lahoz/}}
\curraddr{Departament de Matem\`atiques i Inform\`atica,
Universitat de Barcelona, Gran Via de les Corts Catalanes, 585, 08007 Barcelona, Spain}

\address{E.M.: Department of Mathematics, Northeastern University, 360 Huntington Avenue, Boston, MA 02115, USA}
\email{emanuele.macri@universite-paris-saclay.fr}
\urladdr{\url{https://www.imo.universite-paris-saclay.fr/~macri/}}
\curraddr{Universit\'e Paris-Saclay, CNRS, Laboratoire de Math\'ematiques d'Orsay, Rue Michel Magat, B\^at. 307, 91405 Orsay, France}

\address{P.S.: Dipartimento di Matematica ``F.~Enriques'', Universit{\`a} degli Studi di Milano, Via Cesare Saldini 50, 20133 Milano, Italy}
\email{paolo.stellari@unimi.it}
\urladdr{\url{https://sites.unimi.it/stellari}}

\address{X.Z.: Department of Mathematics, Northeastern University, 360 Huntington Avenue, Boston, MA 02115, USA}
\email{xlzhao@ucsb.edu}
\urladdr{\url{https://sites.google.com/site/xiaoleizhaoswebsite/}}
\curraddr{Department of Mathematics, University of California, Santa Barbara, South Hall 6705, Santa Barbara, CA 93106, USA.}

\thanks{A.~B.~was supported by ERC-2013-StG-337039-WallXBirGeom, by  ERC-2018-CoG-819864-WallCrossAG, and by the NSF grant DMS-1440140 while the author was at the MSRI in Berkeley in the Spring 2019.
M.~L.~was supported by a Ram\'on y Cajal fellowship and 
partially by the research project PID2019-104047GB-I00 and the grant number 230986 of the Research Council of Norway.
E.~M.~was partially supported by the NSF grants DMS-1523496 and DMS-1700751, by a Poincar\'e Chair, by ERC-2020-SyG-854361-HyperK, and by GNSAGA-INdAM.
P.~S.~was partially supported by the grants ERC-2017-CoG-771507-StabCondEn, PRIN 2017 ``Moduli and Lie Theory'', and FARE 2018 HighCaSt (grant number R18YA3ESPJ)}

\keywords{Bridgeland stability, cubic fourfolds, Fano threefolds, Torelli theorem, semiorthogonal decomposition}

\subjclass[2010]{14C34, 14D20, 14F05, 18E30}

\begin{abstract}
We introduce a general method to induce Bridgeland stability conditions on semiorthogonal components of triangulated categories. 
In particular, we prove the existence of Bridgeland stability conditions on the Kuznetsov component of the derived category of Fano threefolds and of cubic fourfolds.
As an application, in the appendix, written jointly with Xiaolei Zhao, we give a variant of the proof of the Torelli theorem for cubic fourfolds by Huybrechts and Rennemo.

\end{abstract}

\maketitle
\setcounter{tocdepth}{1}
\tableofcontents


\section{Introduction}

\subsection*{Main results}
Let $X$ be a smooth Fano variety and let $\Db(X)$ denote its bounded derived category of coherent sheaves.
Let $E_1,\dots,E_m\in\Db(X)$ be an exceptional collection in $\Db(X)$. We call its right orthogonal complement
\begin{align*}
\Ku(X) &= \langle E_1, \dots, E_m \rangle^\perp\\
&=\left\{ C\in\Db(X)\,:\, \Hom(E_i,C[p])=0,\, \forall i=1,\dots,m,\, \forall p\in\Z \right\}
\end{align*}
a \emph{Kuznetsov component} of $X$.
In a series of papers, Kuznetsov has shown that much of the geometry of Fano varieties, and their moduli spaces, can be captured efficiently by $\Ku(X)$, for appropriate exceptional collections.

On the other hand, stability conditions on triangulated categories as introduced by Bridgeland in \cite{Bridgeland:Stab} and wall-crossing have turned out to be an extremely powerful tool for the study of moduli spaces of stable sheaves. We connect these two developments with the following two results:

\begin{Thm}\label{thm:main1}
Let $X$ be a Fano threefold of Picard rank $1$ over an algebraically closed field of characteristic either zero or sufficiently large.
Then the Kuznetsov semiorthogonal component $\Ku(X)$ has a Bridgeland stability condition.\footnote{In some case, we deduce Theorem~\ref{thm:main1} as an immediate consequence of an explicit description of the Kuznetsov component, which however is stated in the literature only for characteristic zero (and so it also holds if the characteristic of the base field is sufficiently large), see the tables in Section~\ref{sec:3foldsoverview}.}
\end{Thm}

The most interesting cases of Theorem~\ref{thm:main1} are Fano threefolds of index two, and those of index one and even genus; in these case, the theorem holds over any algebraically closed field, independently on the characteristic. We refer to Section~\ref{sec:3foldsoverview} for an overview of the classification of Fano threefolds of Picard rank one, and the exceptional collections appearing implicitly in Theorem~\ref{thm:main1}.

\begin{Thm}\label{thm:main2}
Let $X$ be a cubic fourfold over an algebraically closed field $k$ with $\Char k \neq 2$.
Then $\Ku(X)$ has a Bridgeland stability condition.
\end{Thm}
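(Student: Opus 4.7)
The proof would apply the general induction theorem established earlier in the paper: starting from a weak stability condition on $\Db(X)$ for which each exceptional object in the collection defining $\Ku(X)$ is stable and the three phases all lie in a single half-open interval of length one, one obtains a genuine Bridgeland stability condition on $\Ku(X)$ as soon as the support property on the component is verified.

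As input, I would use a weak stability condition $\sigma_{\alpha,\beta}$ on $\Db(X)$ of tilt type, with heart obtained from $\Coh(X)$ by one or two iterated tilts and central charge depending only on $\ch^\beta_{\le 2}$. Since $\dim X = 4$ and the fourfold generalization of the Bogomolov--Gieseker inequality of Bayer--Macr\`i--Toda is not known in general, this can only be arranged as a \emph{weak} stability condition on $\Db(X)$; nevertheless, this input is all the induction machine needs. I would then choose the parameters $(\alpha, \beta)$, with $\alpha > 0$ small and $\beta$ in an explicit interval, so that the three twists $\mathcal{O}_X, \mathcal{O}_X(1), \mathcal{O}_X(2)$ (suitably shifted) all lie in the heart of $\sigma_{\alpha,\beta}$, are $\sigma_{\alpha,\beta}$-stable, and have phases concentrated in a half-open interval of length one. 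Both stability and the phase clustering are standard for twists of the structure sheaf in tilt stability.

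The induction theorem then produces a candidate heart $\mathcal{A} := \Coh^{0,\beta}_\alpha(X) \cap \Ku(X)$ (up to a shift) and a weak stability function given by the restriction of $Z_{\alpha,\beta}$ to $\mathcal{A}$, reducing the problem to establishing the support property on $\Ku(X)$.

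The main obstacle is precisely this support property, and it is the reason why a weak (rather than a genuine) stability condition on $\Db(X)$ is enough. The semiorthogonality conditions $\RHom(\mathcal{O}_X(i), C) = 0$ for $i = 0, 1, 2$ force the numerical class of any $C \in \Ku(X)$ to lie in an explicit sublattice of $K_{\num}(X)$ of the same rank as the Mukai lattice of the $2$-Calabi--Yau category $\Ku(X)$. The plan is to show, by a direct linear-algebra computation on this finite-rank sublattice, that the Bogomolov--Gieseker quadratic form governing $\sigma_{\alpha,\beta}$ restricts to a form that is negative semidefinite on $\ker Z_{\alpha,\beta}$ and bounds a Mukai-type pairing on semistable classes. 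This converts the inaccessible fourfold BG positivity into a manageable finite-dimensional inequality that can be verified by hand, and is the central technical step of the proof.
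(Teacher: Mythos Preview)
Your proposal has a genuine gap at the very first step: you cannot put the exceptional objects and their Serre duals in the correct position using only one or two tilts of $\Coh(X)$.  On a fourfold the Serre functor is $S = \blank \otimes \omega_X[4]$, so the hypothesis $S(E_i) \in \cA[1]$ of Proposition~\ref{prop:inducestability} reads $\cO_X(i-3)[3] \in \cA$ for $i=0,1,2$.  Together with $\cO_X(i) \in \cA$, this forces the heart $\cA$ to contain both a sheaf and a sheaf shifted by $[3]$.  A heart obtained from $\Coh(X)$ by $k$ successive tilts lies in $\langle \Coh(X), \dots, \Coh(X)[k] \rangle$, so $k \ge 3$ is required.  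But to tilt a \emph{third} time one needs a weak stability function on $\Coh^\mu_{\alpha,\beta}(X)$ that separates the line bundles appropriately; this means a central charge involving $\ch_3$, and for it to be a weak stability function one needs exactly the conjectural Bogomolov--Gieseker type inequality for tilt-stable objects of \cite{PT15:bridgeland_moduli_properties}, which is not known for any fourfold.  The paper states this obstruction explicitly in the introduction and at the start of Section~\ref{sec:4folds}.

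This is why the paper does \emph{not} work on $\Db(X)$ at all.  Instead it fixes a line $L_0 \subset X$, uses the induced conic fibration $\wX \to \P^3$ and Kuznetsov's theory of quadric fibrations to embed $\Ku(X)$ as a semiorthogonal component of $\Db(\P^3,\cB_0)$ (Proposition~\ref{prop:decomp}), with complement generated by $\cB_1,\cB_2,\cB_3$.  Since $\P^3$ is a threefold, two tilts now suffice---but one first has to prove a sharp Bogomolov inequality for slope-stable $\cB_0$-modules (Theorem~\ref{thm:Bogomolov}), sharp in the sense that $\Delta_{\cB_0}(\cB_j)=0$, so that Proposition~\ref{prop:Delta0stable} guarantees tilt-stability of each $\cB_j$.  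Your identification of the support property as ``the main obstacle'' is also off: once the heart is in place, Proposition~\ref{prop:inducestability} delivers the support property essentially for free; the genuine work is the Clifford-algebra embedding and the Bogomolov inequality on $\Db(\P^3,\cB_0)$.
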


Here $\Ku(X)$ is defined by the semiorthogonal decomposition 
\[ \Db(X) = \langle \Ku(X), \cO_X, \cO_X(H), \cO_X(2H) \rangle, \]
where $H$ is a hyperplane section.
Here $\Ku(X)$ is a K3 category (i.e., the double shift $[2]$ is a Serre functor); conjecturally \cite[Conjecture 1.1]{Kuz:fourfold} it is the derived category of a K3 surface if and only if $X$ is rational. Our results also give the first stability conditions on $\Db(X)$ when $\Ku(X)$ is not equivalent to the derived category of a twisted K3 surface.

\subsection*{Background and motivation}
Wall-crossing for stability conditions on surfaces has had numerous powerful applications, e.g.,~to the geometry of moduli spaces of stable sheaves \cite{BM:walls, Izzet-Jack:ample, Chunyi-Xiaolei:birational}, or to questions of Brill-Noether type \cite{Arend:BN, Feyzbakhsh:Mukai, Izzet-Jack:BN-Hirzebruch}.
It is unrealistic to expect similarly systematic results for higher-dimensional varieties: as even Hilbert schemes of curves on $\P^3$ satisfy Murphy's law \cite{Murphy}, one should instead expect that wall-crossing lacks any generally effective control.
However, Kuznetsov components of Fano varieties are homologically much better behaved than their entire derived category (for example, they can be of Calabi-Yau or Enriques type of smaller dimension).
Thus, one can expect that moduli spaces and wall-crossing for objects can be controlled much more effectively, and are thus a natural starting point for extracting geometric results from categorical properties.

The study of Kuznetsov components of derived categories of Fano varieties started with \cite{BondalOrlov:Main}, and has seen a lot of recent interest, see e.g., \cite{Kuz:V14,Kuz:V12, IlievKatzarkovPrz} for threefolds, and \cite{Kuz:fourfold,AddingtonThomas:CubicFourfolds,addington:two_conjectures} for the cubic fourfold, as well as \cite{Kuz:Fano3folds, Kuz:ICM, Kuz:lectures} for surveys. The interest in them comes from various directions.
They are part of Kuznetsov's powerful framework of Homological Projective Duality \cite{Kuznetsov:HPD}. 
They often seem to encode the most interesting and geometric information about $\Db(X)$ and moduli spaces of sheaves on $X$; 
e.g., several recent constructions of hyperk\"ahler varieties associated to moduli spaces of sheaves on the cubic fourfold are induced by the projection to the K3 category $\Ku(X)$ \cite{KuznetsovMarkushevic} 
(where moduli spaces naturally come with a holomorphic symplectic structure, due to the fact that $\Ku(X)$ is a K3 category). 
In the case of Fano threefolds, there are a number of unexpected equivalences (some conjectural) between Kuznetsov components of pairs of Fano threefolds of index one and two, see \cite{Kuz:Fano3folds} for the theory, and \cite{KuznetsovProkhorovShramov} for an application to Hilbert schemes. In the case of cubic fourfolds, as mentioned above, they conjecturally determine rationality of $X$.
Finally, they are naturally related to Torelli type questions: on the one hand, they still encode much of the cohomological information of $X$;
on the other hand, one can hope to recover $X$ from $\Ku(X)$ (in some cases when equipped with some additional data); see \cite{BMMS:Cubics} for such a result for cubic threefolds, and \cite{HR16:Torelli_cubic_fourfolds} for many hypersurfaces, including cubic fourfolds.

Perhaps the most natural way to extract geometry from $\Ku(X)$ is to study moduli spaces of stable objects---hence the interest in the existence of stability conditions on $\Ku(X)$.
This question was first raised for cubic threefolds in \cite{Kuz:V14},  for cubic fourfolds by Addington and Thomas \cite{AddingtonThomas:CubicFourfolds} and Huybrechts \cite{Huy:cubics}, and in the generality of our results  by Kuznetsov in his lecture series \cite{Kuznetsov:Trieste-lectures}. 

\subsection*{Prior work}
When $X$ is a Fano threefold of Picard rank one, stability conditions on $\Db(X)$ have been constructed in \cite{Chunyi:Fano3folds}. 
However, in general these do not descend to stability conditions on the semiorthogonal component $\Ku(X)$, and due to their importance for moduli spaces, a direct construction of stability conditions on $\Ku(X)$ is of independent interest.

For Fano threefolds of index two, our Theorem~\ref{thm:main1} is referring to the decomposition $\Db(X) = \langle \Ku(X), \cO_X, \cO_X(H) \rangle$. Their deformation type is determined by $d = H^3 \in \{1, 2, 3, 4, 5\}$. The result is straightforward from prior descriptions of $\Ku(X)$ for $d \ge 4$ in \cite{orlov:Y5,BondalOrlov:Main}, due to \cite{BMMS:Cubics} for cubic threefolds ($d = 3$) and new for $d \in \{1, 2\}$. 
The most interesting cases of index one are those of even genus $g_X = \frac 12 H^3 + 1$, for which Mukai \cite{Mukai:Fano3folds} constructed an exceptional rank two vector bundle $\cE_2$ of slope $-\frac 12$; in these cases our Theorem refers to the semiorthogonal decomposition 
$\Db(X) = \langle \Ku(X), \cE_2, \cO_X \rangle$. The result is straightforward from previous descriptions of $\Ku(X)$ for $g_X \in \{10, 12\}$ in \cite{Kuznetsov:Hyperplane,Kuz:Fano3folds}, due to \cite{BMMS:Cubics} for $g_X = 8$, and new for $g_X = 6$.

For cubic fourfolds containing a plane, stability conditions on $\Ku(X)$ were constructed in \cite{MacriStellari:Cubics}, and \emph{Gepner point} stability conditions 
(invariant, up to rescaling, under the functor $(1)$  in Theorem~\ref{thm:CategoricalTorelli})
in \cite{Toda:Gepner-Orlov-Kuznetsov}. In this case, $\Ku(X)$ is equivalent to the derived category of a K3 surface with a Brauer twist.

\subsection*{Applications} 
Kuznetsov conjectured an equivalence between $\Ku(Y_d)$ and $\Ku(X_{4d+2})$ for appropriate pairs $Y_d$ and $X_{4d+2}$, where $Y_d$ is a Fano threefold of Picard rank one, index two and degree $d\geq 2$, and $X_{4d+2}$ is
of index one and genus $2d+2$ (degree $4d+2$). Our results may be helpful in reproving known cases, and proving new cases of these equivalences, by identifying moduli spaces of stable objects in both categories. We illustrate this for $d=4$, see
Example~\ref{ex:d4}.

For cubic fourfolds over the complex numbers, we show in the appendix, written jointly with Xiaolei Zhao, that the existence of stability conditions on $\Ku(X)$ is already enough to reprove a
categorical Torelli theorem for very general cubic fourfolds (which is a special case of \cite[Corollary~2.10]{HR16:Torelli_cubic_fourfolds}):

\begin{repThm}{thm:CategoricalTorelli}
Let $X$ and $Y$ be smooth cubic fourfolds over $\C$.
Assume that $H_{\mathrm{alg}}^*(\Ku(X),\Z)$ has no $(-2)$-classes.
Then $X\cong Y$ if and only if there is an equivalence
$\Phi \colon \Ku(X) \to \Ku(Y)$ whose induced map $H^*_{\mathrm{alg}}(\Ku(X), \Z) \to H^*_{\mathrm{alg}}(\Ku(Y), \Z)$ commutes with the action of $(1)$.
\end{repThm}
Here $(1)$ denotes the autoequivalence of $\Ku(X)$ induced by $\blank \otimes \cO_X(1)$ on $\Db(X)$; the numerical Grothendieck group of $\Ku(X)$ is denoted by $H_{\mathrm{alg}}^*(\Ku(X),\Z)$.

The idea is quite simple: we show that the projection of ideal sheaves of lines on $X$ to $\Ku(X)$ are stable for \emph{all} stability conditions on $\Ku(X)$; therefore, the Fano variety of lines can be recovered from $\Ku(X)$. 
An additional argument based on the compatibility with $(1)$ shows that the polarization coming from the Pl\"ucker embedding is preserved. By a classical argument, this is enough to recover $X$.

We also show that, with the same arguments as in \cite{HR16:Torelli_cubic_fourfolds}, Theorem~\ref{thm:CategoricalTorelli} is enough to reprove the classical Torelli theorem for cubic fourfolds.

\subsection*{Approach}
We establish  methods for inducing t-structures and stability conditions on $\Ku(X)$ from $\Db(X)$. The former, see Corollary~\ref{cor:induce}, generalizes a  construction that first appeared in \cite{vdB:blowingdown}. 
The latter, Proposition~\ref{prop:inducestability}, gives in addition  Harder-Narasimhan filtrations and the support property on  $\Ku(X)$ (and thus a stability condition), given an appropriate \emph{weak stability condition} on $\Db(X)$.

The crucial assumption for both methods is that the relevant \emph{heart} $\cA$ in $\Db(X)$ contains the exceptional objects $E_1, \dots, E_m$, while its shift $\cA[1]$ contains their Serre duals $S(E_1), \dots, S(E_m)$. This turns out to be a surprisingly subtle property.
Already in the case of Fano threefolds, we have to go in three steps. We start with ordinary slope-stability, tilt and deform to obtain a weak stability condition,  called \emph{tilt-stability} in \cite{BMT:3folds-BG}, on a heart $\Coh^\beta(X)$ of two-term complexes.
Then we have to move essentially as far from the large-volume limit in the space of tilt-stability conditions as the Bogomolov-Gieseker (BG) inequality allows us to;
finally, we tilt a second time to obtain a weak stability condition on $\Db(X)$ that induces one on $\Ku(X)$.

A similar approach for cubic fourfolds would be beyond current methods, as the required third tilt would need a strong BG type inequality for the third Chern character (as proposed in \cite{PT15:bridgeland_moduli_properties}) that is not currently known for any fourfold. Instead, we use the rational fibration in conics
$X \dashrightarrow \P^3$ and Kuznetsov's theory of derived categories of quadric fibrations \cite{Kuz:Quadric} to 
reinterpret $\Ku(X)$ as a semiorthogonal component in the derived category $\Db(\P^3, \cB_0)$ of modules over the associated sheaf of Clifford algebras on $\P^3$, see Section~\ref{sec:geometric}.
The key is now that on the one hand, Riemann-Roch along with a precise description of the $K$-group yields a strong BG inequality in $\Db(\P^2, \cB_0|_{\P^2})$ for any hyperplane $\P^2 \subset \P^3$ (stronger than the classical BG inequality); on the other hand, we adapt Langer's machinery of effective restriction theorems to extend this inequality to higher dimension, see Section~\ref{sec:Bogomolov}.
In other words, $\Ku(X) \subset \Db(\P^3, \cB_0)$ still behaves somewhat like the Kuznetsov category of a Fano threefold admitting a sufficiently strong BG inequality.

\subsection*{Updates} 
We conclude with a discussion of the significance of our recent work
\cite{BLMNSP19}, joint also with Howard Nuer and Alexander Perry, for the results in this article, as well as other recent work that builds on this article.
In [ibid.], we define and construct a notion of a stability condition for a family of varieties, and appropriate semiorthogonal components in their derived categories; 
in particular, based on the fiberwise construction in the present article we construct a stability condition for a family of Kuznetsov categories of Fano threefolds, and cubic fourfolds, in the same generality as Theorem~\ref{thm:main1} and Theorem~\ref{thm:main2}. 

This in particular includes the construction of moduli spaces of semistable objects in the setting of the present paper, which can then be studied by deformation to special threefolds or cubic fourfolds via a relative moduli space of semistable objects. In the case of cubic fourfolds, this leads for example to the non-emptiness of moduli spaces whenever the expected dimension is non-negative (thus extending the deformation arguments of \cite{Goettsche-Huybrechts,OGrady:weighttwo,Yoshioka:Abelian} from families of K3 surfaces to families of Kuznetsov components of cubic fourfolds, as well as the basic existence result for moduli spaces of stable complexes in \cite{Toda:K3}); to the extension of results by Addington-Thomas \cite{AddingtonThomas:CubicFourfolds} and Huybrechts \cite{Huy:cubics}, thereby giving a Hodge-theoretic characterisation of the locus where $\Ku(X)$ is derived equivalent to a (twisted) K3 surface (and, conjecturally, where $X$ is rational); and to the construction of locally complete unirational families of hyperk\"ahler varieties of arbitrarily high dimension and degree. It also provides the full strength of the results of \cite{BM:walls} on the birational geometry of moduli spaces in $\Ku(X)$. 

Some interesting examples of such moduli spaces have already been studied in detail since the first version of this article appeared.
For example, in \cite{LPZ:Cubics}, the Fano variety of lines on a cubic fourfold is described as a moduli space of stable objects in general (without the assumption on $(-2)$-classes appearing in the proof of Theorem~\ref{thm:CategoricalTorelli}), as well as the $8$-dimensional hyperk\"ahler variety associated to cubic fourfolds via the Hilbert scheme of twisted cubics \cite{LLSvS} (thus extending \cite{LLMS:LLSvS}, which only holds for a very general cubic fourfold; this construction also behaves well in family over the moduli space of cubic fourfolds, thus including the results in \cite{Ouchi:cubic4fold} when the cubic fourfold contains a plane). 
In \cite{Alex-Laura-Xiaolei:GM}, the authors produce stability conditions on Kuznetsov components of Gushel-Mukai. Their general approach is similar to ours, and in particular based on the inducing methods of Sections \ref{sec:inducing} and \ref{sec:inducestability}, but their geometric setup is more involved than ours; they obtain many results analogous to the case of cubic fourfolds.

In another direction, moduli spaces of stable objects on Fano threefolds of index two have been studied in \cite{APR:indextwo}, with a categorical Torelli theorem in the case of index two and degree two as a corollary.

\subsection*{Base field}
For most of the paper, we work over an algebraically closed field $k$ of arbitrary characteristic. In the case of Fano threefolds, we will assume the characteristic to be either zero or sufficiently large when we need explicit semiorthogonal decompositions of the derived category (such as those in \cite{Kuznetsov:Hyperplane,Kuz:Fano3folds}), for which we do not know a reference in full generality. In the case of cubic fourfolds, we need $\Char k \neq 2$ to apply the finite characteristic version given in \cite{ABB:intersectionsofquadrics} of Kuznetsov's description of the derived category of quadric fibrations \cite{Kuz:Quadric}. Finally, we assume $k = \C$ when we use the Mukai lattice and Hodge-theory of the Kuznetsov component of cubic fourfolds introduced in \cite{AddingtonThomas:CubicFourfolds}.

\subsection*{Acknowledgements} We are very grateful to Xiaolei Zhao for agreeing to make the results of the appendix part of this paper.
We are also indebted to Marcello Bernardara for very valuable help in a long discussion on derived categories of quadric fibrations and modules over the Clifford algebra, and to Howard Nuer and Alex Perry for many suggestions and conversations.
Moreover, the paper benefited from many useful discussions with Asher Auel, Agnieszka Bodzenta, Fran\c{c}ois Charles, Igor Dolgachev, Fr\'{e}d\'{e}ric Han, Daniel Huybrechts, Alexander Kuznetsov, Chunyi Li, Richard Thomas, Yukinobu Toda, and Song Yang.
We are grateful to Franco Rota and Marin Petkovic for pointing out an error in Lemma~\ref{lem:4term} in the first two arXiv versions of this paper, along with the appropriate correction.
We would also like to thank the referee for the careful reading of the manuscript and detailed and thoughtful comments.

The authors would like to acknowledge the following institutions, where parts of this paper have been written: Institut des Hautes \'Etudes Scientifiques, Institut Henri Poincar\'e, Mathematical Sciences Research Institute, Northeastern University, Universit\`a degli studi di Milano, Universit\'e Paris Diderot, Universit\'e Paul Sabatier, and University of Edinburgh.

\section{Review on tilt and Bridgeland stability}\label{sec:tilt}

We begin with a quick review about weak and Bridgeland stability conditions. 
Let $\cD$ be a triangulated category and let $K(\cD)$ denote the Grothendieck group of $\cD$. Fix a finite rank lattice $\Lambda$ and a surjective group homomorphism $v\colon K(\cD) \onto \Lambda$.

\subsection*{Weak stability conditions}

A weak stability condition has two ingredients: a heart of a bounded t-structure and a weak stability function.

\begin{Def}[{\cite[Lemma~3.2]{Bridgeland:Stab}}] \label{def:heart}
A \emph{heart of a bounded t-structure} is a full subcategory $\AA \subset \DD$ such that 
\begin{enumerate}[{\rm (a)}]
\item for $E, F \in \AA$ and $n < 0$ we have $\Hom(E, F[n]) = 0$, and
\item for every $E \in \DD$ there exists a sequence of morphisms
\[ 0 = E_0 \xrightarrow{\phi_1} E_1 \to \dots \xrightarrow{\phi_m} E_m = E \]
such that the cone of $\phi_i$ is of the form $A_i[k_i]$ for some sequence
$k_1 > k_2 > \dots > k_m$ of integers and objects $A_i \in \AA$.
\end{enumerate}
\end{Def}

We write $H^{-k_i}_\AA(E) = A_i$ for the cohomology objects of $E$ with respect to the bounded t-structure.

\begin{Def} \label{def:stabilityfunction}
Let $\cA$ be an abelian category. We say that a group homomorphism $Z \colon K(\cA) \to \C$ is a
\emph{weak stability function} on $\cA$ if, for $E \in \cA$, we have $\Im Z(E) \ge 0$, with $\Im Z(E)
= 0 \Rightarrow \Re Z(E) \le 0$. 
If, moreover, for all $0\neq E\in\cA$, $\Im Z(E) = 0 \Rightarrow \Re Z(E) < 0$, we say that $Z$ is a \emph{stability function} on $\cA$.
\end{Def}

\begin{Def} \label{def:weakstability}
A {\em weak stability condition} on $\cD$ is a pair $\sigma= (\AA, Z)$ consisting of the heart of a bounded t-structure $\AA\subset\cD$ and a group homomorphism $Z\colon \Lambda\to\C$ such that (a)--(c) below are satisfied:
\begin{enumerate}[{\rm (a)}]
\item[(a)] \label{item:upper}
The composition $K(\cA) = K(\cD) \xrightarrow{v} \Lambda \xrightarrow{Z} \C$ is a weak stability function on
$\cA$. By abuse of notation, we will write $Z(E)$ instead of $Z \circ v([E])$ for any $E \in \cD$.
\end{enumerate}
The function $Z$ allows one to define a \emph{slope} for any $E \in \cA$ by setting 
\[ \mu_{\sigma}(E) := \begin{cases} - \frac{\Re Z(E)}{\Im Z(E)} & \text{if $\Im Z(E) > 0$} \\
+\infty & \text{otherwise}
\end{cases}
\]
and a notion of stability: An object $0 \not= E\in\AA$ is $\sigma$-\emph{semistable} if for every proper subobject $F$, we have $\mu_{\sigma}(F) \leq \mu_{\sigma}(E)$.
We will often use the notation $\mu_Z$ as well.
\begin{enumerate}[{\rm (a)}]
\item[(b)] (HN filtrations) We require any object $E$ of $\AA$ to have a Harder-Narasimhan filtration in $\sigma$-semistable ones.

\item[(c)] (Support property) There exists a quadratic form $Q$ on $\Lambda\otimes\R$ such that $Q|_{\ker Z}$ is negative definite, and $Q(E)\geq0$, for all $\sigma$-semistable objects $E\in\AA$.
\end{enumerate}
\end{Def}

As usual, given a non-zero object $E\in\AA$, we will denote by $\mu_\sigma^+(E)$ (resp. $\mu_\sigma^-(E)$) the biggest (resp. smallest) slope of a Harder-Narasimhan factor.

\begin{Rem} \label{rem:defstability}
If in fact $Z$ is a stability function on $\cA$, then the pair $\sigma$ defines a \emph{Bridgeland stability condition}, see \cite[Proposition~5.3]{Bridgeland:Stab}.
\end{Rem}

\begin{Rem} \label{rem:HNexists}
If $Z$ has discrete image in $\C$, and if $\AA$ is noetherian, then the existence of Harder-Narasimhan filtrations 
is automatic by \cite[Lemma~2.4]{Bridgeland:Stab} (see \cite[Proposition~B.2]{localP2}).
\end{Rem}

\begin{Rem} \label{rem:supportproptrivial}
If $\Lambda$ has rank two, and if $Z \colon \Lambda \to \C$ is injective, then the support property is trivially satisfied for any non-negative quadratic form $Q$ on $\Lambda \otimes \R \cong \R^2$.
\end{Rem}

\begin{Rem}
For the purpose of Theorems~\ref{thm:main1} and~\ref{thm:main2}, we will choose $\Lambda$ to be the numerical K-group $\Knum(\DD)$ of $\DD$: it is defined as the quotient of $K(\DD)$ by the kernel of the Euler characteristic pairing $\chi(E, F) = \sum_i (-1)^i \dim \Ext^i(E, F)$.
\end{Rem}

\begin{Ex} \label{ex:slopestability}
To fix notation, we first recall slope-stability as a weak stability condition.
Let $X$ be an $n$-dimensional smooth projective variety and let $H$ be an hyperplane section. For $j=0,\dots,n$, consider the lattices $\Lambda_H^j\cong\Z^{j+1}$ generated by vectors of the form
\[
\left(H^n \ch_0(E), H^{n-1}\ch_1(E),\dots,H^{n-j}\ch_j(E)\right) \in \Q^{j+1}
\]
together with the natural map $v_H^j\colon K(X) \to \Lambda_H^j$.

Then the pair $(\AA = \Coh(X), Z_H)$ with 
\[ Z_H(E) = \mathfrak{i}\, H^n \ch_0(E) - H^{n-1} \ch_1(E) \]
defines slope-stability as a weak stability condition with respect to $\Lambda_H^1$; here, by Remark~\ref{rem:supportproptrivial}, we can choose $Q=0$.
We write $\mu_H$ for the associated slope function.

Slope-semistable sheaves satisfy a further inequality, which will allow us in Proposition~\ref{prop:tiltstability} to improve our positivity condition, by changing the bounded t-structure.
More precisely, by the Bogomolov-Gieseker inequality, for any slope-semistable sheaf $E$, we have $\Delta_H(E)\geq0$, where
\begin{equation} \label{eq:defDeltaH}
 \Delta_H(E) = \left(H^{n-1}\ch_1(E)\right)^2 - 2 H^n \ch_0(E) H^{n-2}\ch_2(E). 
\end{equation}
\end{Ex}

\begin{Rem} \label{rem:BGpositive}
In general, the Bogomolov-Gieseker inequality is known only in characteristic zero;
however, it does hold for Fano threefolds of Picard rank one and arbitrary characteristic.
Indeed, by the induction arguments in \cite{Langer:positive} (which we will use similarly in Section~
\ref{sec:Bogomolov} for sheaves over certain Clifford algebras) it is enough to prove \eqref{eq:defDeltaH} on a smooth hyperplane section $S \in \abs{K_X}$. Then $S$ is a K3 surface, in which case \eqref{eq:defDeltaH} follows from $\chi(E, E) \le 2$ and Hirzebruch-Riemann-Roch. 
\end{Rem}

\subsection*{Tilting} Assume that we are given a weak stability condition $\sigma = (\AA, Z)$, and let $\mu \in \R$. We can form the following subcategories of $\AA$ (where $\langle \dots \rangle$ denotes the extension closure):
\begin{align*}
\TT_\sigma^\mu & = \stv{E}{\text{All HN factors $F$ of $E$ have slope
$\mu_{\sigma}(F) > \mu $}} \\
& = \langle E \colon \text{$E$ is $\sigma$-semistable with $\mu_{\sigma}(E) > \mu$} \rangle, \\
\FF_\sigma^\mu & = \stv{E}{\text{All HN factors $F$ of $E$ have slope
$\mu_{\sigma}(F) \le \mu $}} \\
& = \langle E \colon \text{$E$ is $\sigma$-semistable with $\mu_{\sigma}(E) \leq \mu$} \rangle.
\end{align*}
It follows from the existence of Harder-Narasimhan filtrations that $(\TT_\sigma^\mu, \FF_\sigma^\mu)$ forms a torsion pair in $\AA$ in the sense of \cite{Happel-al:tilting}. In particular, we can obtain a new heart of a bounded t-structure by tilting:

\begin{PropDef}[\cite{Happel-al:tilting}] \label{prop:slopetilt}
Given a weak stability condition $\sigma = (Z, \AA)$ and a choice of slope $\mu \in \R$, there exists a heart of a bounded t-structure defined by
\[ \AA_\sigma^\mu = \langle \TT_\sigma^\mu, \FF_\sigma^\mu[1] \rangle. \]
\end{PropDef}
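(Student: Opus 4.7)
The plan is to verify that $(\TT_\sigma^\mu, \FF_\sigma^\mu)$ forms a torsion pair on $\AA$ in the sense of \cite{Happel-al:tilting}, and then invoke their general tilting construction, which produces from any torsion pair $(\TT, \FF)$ in a heart $\AA$ a new heart $\langle \TT, \FF[1]\rangle$ of a bounded t-structure on $\DD$. Given this, almost all of the work is in checking the two defining properties of a torsion pair: (i) $\Hom(T, F) = 0$ for every $T \in \TT_\sigma^\mu$ and $F \in \FF_\sigma^\mu$, and (ii) every $A \in \AA$ sits in a short exact sequence $0 \to T \to A \to F \to 0$ with $T \in \TT_\sigma^\mu$ and $F \in \FF_\sigma^\mu$.

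For the decomposition (ii), I would use the Harder-Narasimhan filtration built into axiom (b) of a weak stability condition: write $0 = A_0 \subset A_1 \subset \dots \subset A_k = A$ with $A_i/A_{i-1}$ $\sigma$-semistable and slopes $\mu_\sigma(A_i/A_{i-1})$ strictly decreasing. Let $i$ be the largest index with $\mu_\sigma(A_i/A_{i-1}) > \mu$ (taking $i = 0$ if no such index exists). Setting $T := A_i$ and $F := A/A_i$, the definitions of $\TT_\sigma^\mu$ and $\FF_\sigma^\mu$ as extension-closed subcategories give $T \in \TT_\sigma^\mu$ and $F \in \FF_\sigma^\mu$, and this decomposition is in fact unique.

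For the orthogonality (i), by extension closure it is enough to show $\Hom(T, F) = 0$ when $T, F$ are themselves $\sigma$-semistable with $\mu_\sigma(T) > \mu \geq \mu_\sigma(F)$. A nonzero map $f\colon T \to F$ would factor through the image $I = \Imm(f)$, which is simultaneously a quotient of $T$ and a subobject of $F$. Semistability of $T$ gives $\mu_\sigma(I) \geq \mu_\sigma(T)$, while semistability of $F$ gives $\mu_\sigma(I) \leq \mu_\sigma(F)$, yielding the contradiction $\mu_\sigma(T) \leq \mu_\sigma(F)$.

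The main delicate point — and the only place where the weak case (as opposed to genuine Bridgeland stability) requires care — is handling objects with $Z(E)=0$, i.e.\ slope $+\infty$. Such objects automatically lie in $\TT_\sigma^\mu$. To preserve orthogonality in the situation $\mu_\sigma(T) = +\infty$ and $\mu_\sigma(F)$ finite, one observes that the image $I$ must satisfy $Z(I)=0$ (as a quotient of $T$) and simultaneously be a subobject of the finite-slope semistable $F$; but a semistable object of finite slope cannot have a nonzero subobject with $\Im Z = 0$, since such a subobject would have slope $+\infty$ and hence violate semistability. Once this case is dispatched, the torsion pair axioms hold and the Happel--Reiten--Smal\o{} theorem yields the heart $\AA_\sigma^\mu = \langle \TT_\sigma^\mu, \FF_\sigma^\mu[1]\rangle$.
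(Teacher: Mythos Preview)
Your proposal is correct and matches the paper's approach exactly: the paper does not give a detailed proof but simply remarks that the existence of Harder--Narasimhan filtrations makes $(\TT_\sigma^\mu,\FF_\sigma^\mu)$ a torsion pair, and then cites \cite{Happel-al:tilting} for the tilting construction. You have spelled out precisely these two steps. One small imprecision: in the $\mu_\sigma(T)=+\infty$ case you write that the image $I$ satisfies $Z(I)=0$, but what actually follows from $\Im Z(T)=0$ and additivity is only $\Im Z(I)=0$; this is harmless, since $\Im Z(I)=0$ already gives $\mu_\sigma(I)=+\infty$, which is all you use.
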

We will call $\AA^\mu_\sigma$ the heart obtained by tilting $\AA$ with respect to the stability condition $\sigma$ at the slope $\mu$.

Now return to the setting of slope stability as in Example~\ref{ex:slopestability}, and choose a parameter $\beta \in \R$. Then we can apply Proposition~\ref{prop:slopetilt} and obtain: 

\begin{Def}\label{def:Cohbeta}
We write $\Coh_H^\beta(X)\subseteq \Db(X)$ for the heart of a bounded t-structure obtained by tilting $\Coh(X)$ with respect to slope-stability 
at the slope $\mu = \beta$. 
\end{Def}
In particular, $\Coh_H^\beta(X)$ contains slope-semistable sheaves $F$ 
of slope $\mu(F) > \beta$, and shifts $F[1]$ of slope-semistable sheaves $F$ of slope $\mu(F) \le \beta$.
In our setting, the polarization will often be unique, in which case we drop the subscript $H$ from the notation. 

For a coherent sheaf $E$ on $X$, we define the vector
\[
\ch^{\beta}(E) = e^{-\beta H} \ch(E) = 
\left(\ch_0^\beta(E), \ch_1^\beta(E), \dots, \ch_n^\beta(E) \right) \in H^*(X,\mathbb{R}).
\]

\begin{Prop}[\cite{BMT:3folds-BG,BMS}] \label{prop:tiltstability}
Given $\alpha > 0, \beta \in \R$, the pair $\sigma_{\alpha, \beta} = (\Coh^\beta(X), Z_{\alpha, \beta})$
with $\Coh^\beta(X)$ as constructed above, and
\[
Z_{\alpha, \beta}(E) := \mathfrak{i}\, H^{n-1}\ch_1^\beta(E) + \frac 12 \alpha^2 H^n \ch_0^\beta(E) - H^{n-2}\ch_2^\beta(E)
\]
defines a weak stability condition on $\Db(X)$ with respect to $\Lambda_H^2$. The quadratic form $Q$ can be given by the discriminant $\Delta_H$ as defined in \eqref{eq:defDeltaH}.

These stability conditions vary continuously as $(\alpha, \beta) \in \R_{>0} \times \R$ varies.
\end{Prop}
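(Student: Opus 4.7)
The plan is to verify each of the three axioms in Definition~\ref{def:weakstability} for $\sigma_{\alpha,\beta}$, and then deduce continuity from the explicit description of the central charge. I would proceed in four steps.

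\textbf{Step 1: $Z_{\alpha,\beta}$ is a weak stability function on $\Coh^\beta(X)$.}
By construction, every $E \in \Coh^\beta(X)$ is an extension of an object $T \in \TT_\sigma^\beta$ (with slope-stability as in Example~\ref{ex:slopestability}) by $F[1]$ with $F \in \FF_\sigma^\beta$. For a slope-semistable sheaf $T$ of slope $\mu_H(T) > \beta$ one has $H^{n-1}\ch_1^\beta(T) \geq 0$ (strictly so if $\ch_0(T) > 0$), with positivity also for pure torsion sheaves; for $F[1]$ with $\mu_H(F) \leq \beta$ one has $-H^{n-1}\ch_1^\beta(F) \geq 0$. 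Additivity therefore gives $\Im Z_{\alpha,\beta}(E) \geq 0$.

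\textbf{Step 2: Behaviour on $\ker \Im Z$.}
If $\Im Z_{\alpha,\beta}(E)=0$, then by Step~1 all torsion pair pieces of $E$ must have $H^{n-1}\ch_1^\beta = 0$, so $E$ is an iterated extension of (i) shifts $F[1]$ of torsion-free slope-semistable sheaves $F$ of slope exactly $\beta$, and (ii) torsion sheaves $T$ supported in codimension $\geq 2$. An elementary computation shows that $\Delta_H$ is invariant under the twist $\ch \mapsto \ch^\beta$; for (i), the classical Bogomolov--Gieseker inequality $\Delta_H(F) \geq 0$ then forces $H^{n-2}\ch_2^\beta(F) \leq 0$, and hence $\Re Z_{\alpha,\beta}(F[1]) \leq -\tfrac{1}{2}\alpha^2 H^n\ch_0(F) < 0$. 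For (ii), a direct check of $H^{n-2}\ch_2^\beta(T) \geq 0$ via the support codimension gives $\Re Z_{\alpha,\beta}(T) \leq 0$. Together these yield $\Re Z_{\alpha,\beta}(E) \leq 0$ on $\ker \Im Z$.

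\textbf{Step 3: Support property.}
The kernel of $Z_{\alpha,\beta}$ on $\Lambda_H^2\otimes\R$ is one-dimensional (generically), cut out by $H^{n-1}\ch_1^\beta = 0$ and $H^{n-2}\ch_2^\beta = \tfrac{1}{2}\alpha^2 H^n\ch_0$. Substituting into $\Delta_H$ yields $\Delta_H|_{\ker Z} = -\alpha^2 (H^n\ch_0)^2$, negative definite for $\alpha > 0$. The substantive content is $\Delta_H(E) \geq 0$ for every $\sigma_{\alpha,\beta}$-semistable object $E$. I would prove this by a deformation argument along $\alpha$: for $\alpha \gg 0$, any $\sigma_{\alpha,\beta}$-semistable $E$ must already be (a shift of) an HN piece of $E$ with respect to the torsion pair defining $\Coh^\beta(X)$, so $\Delta_H(E) \geq 0$ follows from the classical Bogomolov--Gieseker inequality applied to slope-semistable sheaves. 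For general $(\alpha,\beta)$, a wall-crossing argument in the $\alpha$-parameter propagates this bound: at any wall the destabilizing subobject and quotient would satisfy the same $\Delta_H\geq 0$ by induction, and an elementary convexity argument with $\Delta_H$ rules out $\Delta_H(E) < 0$.

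\textbf{Step 4: HN filtrations and continuity.}
HN filtrations follow from Remark~\ref{rem:HNexists} once one reduces to rational $\beta$, where the image of $Z_{\alpha,\beta}$ restricted to $\Lambda_H^2$ is discrete in $\C$, together with noetherianity of $\Coh^\beta(X)$; the case of irrational $\beta$ follows by a density argument comparing $\sigma_{\alpha,\beta}$ to nearby rational ones. Continuity of the family $\sigma_{\alpha,\beta}$ in the sense of Bridgeland is immediate from the polynomial dependence of $Z_{\alpha,\beta}$ on $(\alpha,\beta)$ together with the fact that the quadratic form $Q=\Delta_H$ is independent of $(\alpha,\beta)$, so the same $Q$ controls the support property uniformly on compact subsets of $\R_{>0} \times \R$.

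The main obstacle is Step~3: the tilt-Bogomolov--Gieseker inequality is not a formal consequence of its classical counterpart, and requires a genuine wall-crossing analysis in $\alpha$. The remaining axioms reduce to bookkeeping once the torsion pair defining $\Coh^\beta(X)$ is unraveled.
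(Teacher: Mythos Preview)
The paper does not give its own proof of this proposition; it is quoted from \cite{BMT:3folds-BG,BMS}. Your outline follows the strategy of those references and is correct in its broad shape, so there is no in-paper proof to compare against.

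Two points in your sketch need more care before they constitute a proof. In Step~3, the wall-crossing induction as written is circular: local finiteness of walls is normally a \emph{consequence} of the support property you are trying to establish. The resolution in \cite{BMS} is to first show, for a fixed class $v$, that potential destabilizing subobjects $A \hookrightarrow E$ have only finitely many classes in $\Lambda_H^2$; this uses only the \emph{classical} Bogomolov--Gieseker inequality applied to the cohomology sheaves $H^{-1}(A)$ and $H^0(A)$, together with the obvious bound $0 \le \Im Z_{\alpha,\beta}(A) \le \Im Z_{\alpha,\beta}(E)$. Once walls are locally finite for that reason, your induction on $\alpha$ goes through, and the ``convexity argument'' you allude to is \cite[Lemma~A.6]{BMS}.

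In Step~4, noetherianity of $\Coh^\beta(X)$ is not automatic for a tilted heart and needs its own argument (see e.g.\ the appendix of \cite{BMT:3folds-BG}); your discreteness claim also requires $\alpha^2 \in \Q$, not just $\beta \in \Q$. The ``density argument'' for irrational $\beta$ is too vague as stated; in practice one either restricts to rational parameters (which suffices for all applications in this paper) or proves HN filtrations directly from the same finiteness-of-classes argument used above.
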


In particular, this means that the family of stability conditions $\sigma_{\alpha, \beta}$ satisfies wall-crossing: for every fixed class $v \in \Lambda_H^2$, there is a locally finite wall-and-chamber structure on $\R_{>0} \times \R$ controlling stability of objects of class $v$.

\subsection*{Geometry of walls}
It is very helpful to visualize the structure of this family of stability conditions, and the associated walls, via the cone associated to the quadratic form $\Delta_H$. Consider $\R^3 = \Lambda_H^2 \otimes \R$ with coordinates
$\left(H^n \ch_0, H^{n-1}\ch_1, H^{n-2}\ch_2\right)$, and with the quadratic form of signature (2, 1) induced by $\Delta_H$. The map
\[ (\alpha, \beta) \mapsto \Ker Z_{\alpha, \beta} \subset \R^3
\]
assigns to each point in the upper half plane $\R_{>0} \times \R$ a line contained in the negative cone of $\Delta_H$; this induces a homeomorphism between the upper half plane and the projectivization of the negative cone of $\Delta_H$.
The kernels $\Ker Z_{\alpha,\beta}$ with a fixed $\beta=\mu$ lie all in the same plane passing through $(0,0,0)$ and $(0,0,1)$. 
The quadric $\Delta_H(\blank) = 0$ contains, of course, the vectors $v_H(L)$ for any line bundle $L$ proportional to $H$, as well as $(0, 0, 1)$. 

Now fix a Chern character $v$. Then the walls of tilt-stability correspond to hyperplanes $\cW$ in $\R^3$ containing $v_H(v)$: a stability conditions $\sigma_{\alpha, \beta}$ is contained in the wall if and only if $\Ker Z_{\alpha, \beta}$ is contained in $\cW$.
Moreover, Proposition~\ref{prop:largevolume} below will translate into the statement that for $\Ker Z_{\alpha, \beta}$ near $(0, 0, 1)$, slope-stable vector bundles of a fixed class are $\sigma_{\alpha, \beta}$-stable. In Figure~\ref{fig:tiltwalls}, we draw a cross-section of the negative cone. 

\begin{figure}[!tbp]
\definecolor{ttttff}{rgb}{0.2,0.2,1}
\definecolor{zzttqq}{rgb}{0.6,0.2,0}
\definecolor{qqqqff}{rgb}{0,0,1}
\begin{tikzpicture}[line cap=round,line join=round,>=triangle 45,x=0.65cm,y=0.65cm]
\clip(-2.1,-5.3) rectangle (7.12,2.5);
\draw [fill=black,fill opacity=0.05] (2.32,-1.42) circle (2.07cm);
\begin{scriptsize}
\fill [color=black] (0,-5) circle (1.5pt) coordinate (v);
\fill [color=qqqqff] (4.26,-1.17) circle (1.5pt);
\fill [color=zzttqq] (2.32,1.77) circle (1.5pt);
\end{scriptsize}
\draw (v)-- (-0.8,2.66);
\draw (v) -- (4.64, 8.54);
\draw (v)-- (6.46,2.36);
\draw (v) -- (8.52, 2.66);
\begin{footnotesize}
\draw [color=ttttff](4.26,-1.17) node[anchor=east] {$\mathrm{Ker} Z_{\alpha,\beta}$};
\draw [color=zzttqq](2.45,1.7) node[anchor=south east] {$(0,0,1)$};
\draw (-0.2,.82) node[anchor=north west] {$\Delta_H<0$};
\draw (-1.6,-4.5) node[anchor=north west] {$v_H(v)$};
\draw [color=qqqqff] (2.32,1.77)-- (5.25,-2.68);
\draw [color=qqqqff](3.12,0.95) node[anchor=north west] {$\beta=\mu$};
\end{footnotesize}
\begin{tiny}
\draw [color=qqqqff](2.45,1.7) node[anchor=north] {$\alpha\gg 0$};
\draw [color=qqqqff](5.16,-2.31) node[anchor=north east] {$\alpha\gtrsim 0$};
\end{tiny}

\end{tikzpicture}
\caption{Walls in the cross-section of the negative cone}
\label{fig:tiltwalls}
\end{figure}

\subsection*{Basic properties of tilt-stability}
We recall here further properties of tilt-stability that we will use later.
The first is a well-known variant of \cite[Lemma~14.2]{Bridgeland:K3}.

\begin{Prop} \label{prop:largevolume}
Let $\beta\in\R$ and let $E$ be a slope-stable vector bundle.
If $\mu_H(E)>\beta$, then $E\in\Coh^\beta(X)$ is $\sigma_{\alpha, \beta}$-stable for all $\alpha$ sufficiently large.
If $\mu_H(E)\leq\beta$, then $E[1]\in\Coh^\beta(X)$ is $\sigma_{\alpha, \beta}$-stable for all $\alpha$ sufficiently large.
\end{Prop}

The next property is a consequence of Bogomolov-Gieseker inequality for tilt-stability.

\begin{Prop}[{\cite[Proposition~7.4.1]{BMT:3folds-BG} or \cite[Corollary~3.11]{BMS}}] \label{prop:Delta0stable}
Let $E$ be a slope-stable vector bundle with $\Delta_H(E) = 0$. Then $E$, respectively $E[1]$, is $\sigma_{\alpha, \beta}$-stable for all $(\alpha, \beta) \in \R_{>0} \times \R_{<\mu_H(E)}$, respectively $(\alpha, \beta) \in \R_{>0} \times \R_{\geq\mu_H(E)}$.

Conversely, let $E$ be a $\sigma_{\alpha, \beta}$-stable object 
with $\Delta_H(E) = 0$ and $\beta > \mu_H(E)$. Then $E = F[1]$ is the shift of a slope-semistable vector bundle $F$.
\end{Prop}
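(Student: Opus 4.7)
The strategy is to prove the two directions separately. For the forward direction, fix $\beta \in \R$ and (by symmetry) assume $\beta < \mu_H(E)$, so that $E \in \Coh^\beta(X)$. Proposition~\ref{prop:largevolume} gives $\sigma_{\alpha,\beta}$-stability of $E$ for $\alpha \gg 0$, and the aim is to show that no wall appears as $\alpha$ decreases. Assume for contradiction there is a destabilizing short exact sequence $0 \to A \to E \to B \to 0$ in $\Coh^\beta(X)$ at some $\sigma_{\alpha,\beta}$, with tilt-semistable factors sharing the tilt-slope of $E$. The support property in Proposition~\ref{prop:tiltstability} gives $\Delta_H(A), \Delta_H(B) \ge 0$. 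The three classes $v_H(A), v_H(B), v_H(E)$ lie in the $2$-plane $P \subset \Lambda_H^2 \otimes \R$ spanned by $v_H(E)$ and $\Ker Z_{\alpha,\beta}$. Since $\Delta_H$ has signature $(2,1)$ on $\Lambda_H^2 \otimes \R$ and is negative definite on $\Ker Z_{\alpha,\beta}$, the restriction $\Delta_H|_P$ has signature $(1,1)$ and $v_H(E)$ lies on its null cone. In coordinates on $P$ where $\Delta_H|_P = xy$ and $v_H(E)$ sits on the positive $x$-axis, the conditions $\Delta_H(A), \Delta_H(B) \ge 0$ together with the positivity constraints from the slope ratios force $v_H(A), v_H(B) \in \R \cdot v_H(E)$. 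In particular $\mu_H(A) = \mu_H(E)$ as slopes of sheaves. The long exact cohomology sequence of $0 \to A \to E \to B \to 0$, together with $H^{-1}(E) = 0$, shows $H^{-1}(A) = 0$, so $A$ is an honest subsheaf of $E$ with the same slope, contradicting slope-stability.

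For the converse, let $E$ be $\sigma_{\alpha,\beta}$-stable with $\Delta_H(E) = 0$ and $\beta > \mu_H(E)$, and write $F = H^{-1}(E)$, $G = H^0(E)$. From $E \in \Coh^\beta(X)$ one has $H^{n-1}\ch_1^\beta(E) \ge 0$; combining this with $\mu_H(E) < \beta$ yields $\ch_0(E) < 0$, so in particular $F \ne 0$. To conclude $E = F[1]$ we must show $G = 0$. I would argue: the short exact sequence $0 \to F[1] \to E \to G \to 0$ in $\Coh^\beta(X)$ exhibits $G$ as a quotient, so tilt-stability would force $\mu_{\alpha,\beta}(E) < \mu_{\alpha,\beta}(G)$ if $G \ne 0$. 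Using the forward direction to propagate tilt-semistability of $E$ to all $\alpha' > 0$ along the ray (at any wall, destabilizers would have $\Delta_H = 0$ and proportional $v_H$, and can be handled by induction on rank), letting $\alpha' \to \infty$ drives $\mu_{\alpha',\beta}(E) \to +\infty$ while $\mu_{\alpha',\beta}(G)$ stays bounded or tends to $-\infty$, contradicting the strict quotient slope inequality. Hence $G = 0$ and $E = F[1]$.

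With $E = F[1]$ in hand, slope-semistability of $F$ follows because any destabilizing subsheaf $F' \subset F$ with $\mu_H(F') > \mu_H(F)$ produces (via $F'[1]$ or a quotient construction, depending on whether $\mu_H(F') \le \beta$) an object of $\Coh^\beta(X)$ that violates tilt-stability of $E$. Local freeness is then a standard double-dual argument: any torsion in $F$ would give a subobject $T[1] \hookrightarrow E$ with $\Im Z(T[1]) = 0$ and infinite tilt-slope, impossible inside a tilt-stable object of finite slope; once $F$ is torsion-free, the inclusion $F \hookrightarrow F^{\vee\vee}$ has cokernel in codimension $\ge 2$, so $\Delta_H(F^{\vee\vee}) \le \Delta_H(F) = 0$, while the Bogomolov-Gieseker inequality applied to the slope-semistable sheaf $F^{\vee\vee}$ (same Chern character in degrees $\le 1$) gives $\Delta_H(F^{\vee\vee}) \ge 0$, forcing equality and vanishing of the cokernel.

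The hard part is the Hodge-index-type inequality underlying both directions: when a $\Delta_H$-null class is written as a sum of two $\Delta_H$-nonnegative classes, the positivity coming from the tilt slope forces proportionality. The signature analysis on $P$ is standard in the wall-crossing literature (compare \cite{BMT:3folds-BG,BMS}), but must be carefully adapted here because $v_H(E)$ lies on the null cone rather than in its interior. A secondary delicate point is the converse step $G = 0$, where propagation of tilt-stability from the single hypothesized point $(\alpha,\beta)$ to the large-$\alpha'$ regime has to be justified by the wall analysis of the forward direction rather than assumed outright.
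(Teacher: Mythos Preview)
The paper does not give its own proof of this proposition; it is cited from \cite[Proposition~7.4.1]{BMT:3folds-BG} and \cite[Corollary~3.11]{BMS}. Your forward direction follows the standard approach in those references and is essentially correct. There is one slip at the end: once you have shown $v_H(A)\in\R_{>0}\cdot v_H(E)$, you conclude that $A$ is an honest subsheaf of $E$. The long exact sequence only gives $H^{-1}(A)=0$; the sheaf map $A\to E$ still has kernel $H^{-1}(B)$, which need not vanish. The clean conclusion from proportionality is rather that $\mu_{\alpha',\beta}(A)=\mu_{\alpha',\beta}(E)$ for \emph{every} $\alpha'>0$; since $A\hookrightarrow E$ is a fixed proper subobject in $\Coh^\beta(X)$, $E$ is then strictly semistable for all $\alpha'$, contradicting Proposition~\ref{prop:largevolume} directly.

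The converse has a genuine gap. In the step ruling out $G=H^0(E)\neq 0$, you assert that as $\alpha'\to\infty$ the slope $\mu_{\alpha',\beta}(G)$ ``stays bounded or tends to $-\infty$''. This fails precisely when $G$ is torsion supported in codimension~$\ge 2$: then $\Im Z_{\alpha',\beta}(G)=0$ and $\mu_{\alpha',\beta}(G)=+\infty$ for all $\alpha'$, so the quotient inequality $\mu_{\alpha',\beta}(E)<\mu_{\alpha',\beta}(G)$ gives no contradiction. The codimension~$\ge 3$ case is handled (then $v_H(G)=0$, so the proper sub $F[1]\hookrightarrow E$ has the same tilt-slope as $E$, contradicting strict stability), but pure codimension~$2$ with $c:=H^{n-2}\ch_2(G)>0$ survives your argument; one computes $\Delta_H(F)=2\,H^n\ch_0(E)\cdot c<0$ there, and closing this requires an extra step (for instance, first proving $F$ is slope-semistable and then invoking Bogomolov, or using the reflexive-hull construction in $\Coh^\beta(X)$ as in Lemma~\ref{lem:DoubleDualCohBeta}). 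Your local-freeness step has the same flavour of gap: from $\Delta_H(F^{\vee\vee})=\Delta_H(F)=0$ you only deduce that $F^{\vee\vee}/F$ is supported in codimension~$\ge 3$, not that it vanishes; and even $F=F^{\vee\vee}$ is weaker than local freeness in dimension~$\ge 3$. The cited references address these points, and you should consult them for the missing arguments.
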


We also need the following variant of Proposition~\ref{prop:tiltstability}, that appears 
implicitly, but not explicitly, in \cite{BMT:3folds-BG} for $\mu = 0$.
It is also a consequence of the general results in \cite{PT15:bridgeland_moduli_properties}, which are, however, depending on a conjectural Bogomolov-Gieseker type inequality involving $\ch_3$; we will give a proof without such an assumption.

Choose a weak stability condition $\sigma_{\alpha, \beta}$ as in Proposition
\ref{prop:tiltstability}, and $\mu \in \R$. 
Following Proposition/Definition~\ref{prop:slopetilt}, we obtain a tilted heart, which we will
denote by
\[ 
\Coh_{\alpha, \beta}^\mu(X):= \cA_{\sigma_{\alpha, \beta}}^\mu.
\]
Let $u \in \C$ be the unit vector in the upper half plane with $\mu = -\frac{\Re u}{\Im u}$. 
Then it is straightforward to see that 
\[ Z_{\alpha, \beta}^\mu := \frac 1u Z_{\alpha, \beta} \]
is a weak stability function on 
$\Coh_{\alpha, \beta}^\mu(X)$.

\begin{Prop} \label{prop:tiltedtiltstability}
The pair $(\Coh_{\alpha, \beta}^\mu(X), Z_{\alpha, \beta}^\mu)$ is a weak stability
condition on $\Db(X)$.
\end{Prop}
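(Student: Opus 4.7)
The plan is to verify the three conditions of Definition~\ref{def:weakstability} for the pair $\sigma^\mu_{\alpha,\beta}:=(\Coh^\mu_{\alpha,\beta}(X), Z^\mu_{\alpha,\beta})$, using the lattice $\Lambda_H^2$ and the same quadratic form $Q = \Delta_H$ already used in Proposition~\ref{prop:tiltstability}. The weak stability function property is the one the statement calls straightforward: writing $u = e^{\mathfrak{i}\varphi}$ with $\varphi\in(0,\pi)$, so that $\mu = -\cot\varphi$, multiplication by $1/u$ rotates the upper half-plane by $-\varphi$. By construction $\Coh^\mu_{\alpha,\beta}(X) = \langle \TT^\mu_{\sigma_{\alpha,\beta}}, \FF^\mu_{\sigma_{\alpha,\beta}}[1]\rangle$, where $\TT^\mu$-objects have $Z_{\alpha,\beta}$-values in the sector of slope $>\mu$ and $\FF^\mu[1]$-objects in the opposite sector; rotation by $1/u$ places both sectors into the closed upper half-plane, with the boundary case $\Im Z^\mu = 0$ forcing $\Re Z^\mu \leq 0$ by the original weak stability function property of $Z_{\alpha,\beta}$.

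For Harder--Narasimhan filtrations, I would argue directly from the torsion-pair description of the tilt. Every $E \in \Coh^\mu_{\alpha,\beta}(X)$ sits in a short exact sequence
\[
0 \to H^{-1}_{\Coh^\beta}(E)[1] \to E \to H^0_{\Coh^\beta}(E) \to 0
\]
inside the tilted heart, with $H^{-1}_{\Coh^\beta}(E) \in \FF^\mu_{\sigma_{\alpha,\beta}}$ and $H^0_{\Coh^\beta}(E) \in \TT^\mu_{\sigma_{\alpha,\beta}}$. Applying the HN filtrations with respect to $\sigma_{\alpha,\beta}$ to each cohomology object (which exist by Proposition~\ref{prop:tiltstability}) and shifting the $H^{-1}$-pieces by $[1]$ yields a filtration of $E$ whose factors are $\sigma_{\alpha,\beta}$-semistable objects of $\TT^\mu$ or shifts of $\sigma_{\alpha,\beta}$-semistable objects of $\FF^\mu$. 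Since the phase rotation $1/u$ preserves semistability, these are automatically $\sigma^\mu_{\alpha,\beta}$-semistable; reordering by decreasing $\sigma^\mu$-slope and grouping those of equal slope gives the HN filtration of $E$.

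The main obstacle is the support property. Here the easy half is that $Z^\mu_{\alpha,\beta}$ and $Z_{\alpha,\beta}$ have the same kernel, so $\Delta_H|_{\ker Z^\mu_{\alpha,\beta}}<0$ is inherited from Proposition~\ref{prop:tiltstability}. The delicate half is $\Delta_H(E) \geq 0$ for every $\sigma^\mu_{\alpha,\beta}$-semistable $E$, since such an $E$ need not itself be $\sigma_{\alpha,\beta}$-semistable after any shift. By the previous step, the HN analysis produces constituents that are $\sigma_{\alpha,\beta}$-semistable (up to a $[1]$-shift) and that all share the same $\sigma^\mu_{\alpha,\beta}$-slope. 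Consequently their $Z_{\alpha,\beta}$-values lie on a single line $\ell \subset \C$ through the origin, so their Chern characters all sit in the $2$-plane $\Pi := Z_{\alpha,\beta}^{-1}(\ell) \subset \Lambda_H^2 \otimes \R$, which contains $\ker Z_{\alpha,\beta}$.

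I would then conclude by a Hodge-index-style convexity argument on $\Pi$. Because $\Delta_H$ is negative on $\ker Z_{\alpha,\beta}$ and (by the original support property) non-negative on each semistable constituent, $\Delta_H|_\Pi$ has signature $(1,1)$, and the subset of $\Pi$ defined by $\Im Z^\mu_{\alpha,\beta} \geq 0$ together with $\Delta_H \geq 0$ is a single closed convex cone. Each constituent of $E$ lies in this cone, so their sum $\mathrm{ch}(E)$ does as well, giving $\Delta_H(E) \geq 0$. The crucial point, and the one requiring the most care, is to check that the $[1]$-shifts coming from the $\FF^\mu$-summands do not land on the opposite side of the null lines of $\Delta_H|_\Pi$ from the $\TT^\mu$-summands; this is precisely ensured by the common $\sigma^\mu_{\alpha,\beta}$-slope condition forcing all constituents into a single ray under $Z^\mu_{\alpha,\beta}$.
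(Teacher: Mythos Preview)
Your argument is essentially the $\widetilde{\GL}_2^+(\R)$-action, which the paper explicitly flags (in the paragraph following the statement) as \emph{not} automatic for weak stability conditions because of objects with $Z=0$. The gap is in the Harder--Narasimhan step.

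The assertion ``phase rotation $1/u$ preserves semistability'' is true here but not tautological: a $\sigma_{\alpha,\beta}$-semistable $E\in\TT^\mu$ acquires new subobjects in $\Coh^\mu_{\alpha,\beta}(X)$ (those $A$ for which $\ker(A\to E)$ in $\Coh^\beta(X)$ is nonzero), and one must check these do not destabilize. More seriously, ``reordering by decreasing $\sigma^\mu$-slope'' is illegitimate. If $T=H^0_{\Coh^\beta}(E)$ has a nonzero first $\sigma_{\alpha,\beta}$-HN factor $T_1$ with $Z_{\alpha,\beta}(T_1)=0$ (equivalently $T_1$ is supported in codimension $\ge 3$), then $T_1$ has $\sigma^\mu$-slope $+\infty$, yet in your filtration it appears as the quotient $E_1/F[1]$, \emph{after} the $F_i[1]$-factors of typically finite $\sigma^\mu$-slope. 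Filtrations cannot be reordered: promoting $T_1$ to the front would require lifting $T_1\subset T$ to a subobject of $E$, i.e.\ the vanishing of $\Ext^1_{\Coh^\mu_{\alpha,\beta}}(T_1,F[1])=\Ext^2(T_1,F)$, which fails already for $T_1$ a skyscraper on a threefold. So your candidate is not the HN filtration, and nothing you wrote shows one exists; indeed, without further input one cannot rule out infinite ascending chains of subobjects with $Z=0$ in the tilted heart.

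This is exactly what the paper's detour through Lemma~\ref{lem:TiltGoodIsAlmostGood}, Proposition~\ref{prop:CohBetaIsGood} and Lemma~\ref{lem:DoubleDualCohBeta} supplies: a double-dual operation $E\mapsto E^{\sharp\sharp}$ on $\Coh^\beta(X)$, analogous to the reflexive hull, guaranteeing that every object of the tilted heart has a \emph{maximal} subobject $\widetilde E\in\Coh^\beta(X)_0$. With this in hand the paper identifies the maximal $\sigma^\mu$-destabilizing quotient as $Q/\widetilde Q$ rather than $Q$, and iterates. Your support-property argument via the signature-$(1,1)$ cone on the $2$-plane $\Pi$ is sound in spirit (it is essentially \cite[Lemma~A.6]{BMS}), but as written it rests on the same unjustified decomposition of a $\sigma^\mu$-semistable object into $\sigma_{\alpha,\beta}$-semistable pieces of equal $\sigma^\mu$-slope; the paper's cleaner route is to observe that $\sigma^\mu$-\emph{stable} objects are, up to shift, exactly the $\sigma_{\alpha,\beta}$-stable ones, so $\Delta_H\ge 0$ is inherited directly.
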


In the context of stability conditions, this statement would follow automatically from Proposition~\ref{prop:tiltstability} via the $\widetilde{\GL_2^+}(\R)$-action on the space of stability
conditions.
However, due to the special treatment of objects $E \in \cA$ with $Z(E) = 0$, there is no a priori reason for the existence of such
action on the set of weak stability conditions; in fact, Proposition~\ref{prop:tiltedtiltstability} typically cannot be applied iteratively.

Proposition~\ref{prop:tiltedtiltstability} will be a consequence of the following general result.

\begin{Lem}\label{lem:TiltGoodIsAlmostGood}
Let $\sigma=(\AA,Z)$ be a weak stability condition.
Let $\AA_0\subset \AA$ be the abelian subcategory whose objects have $Z=0$.
Assume the following:
\begin{enumerate}[{\rm (a)}]
\item\label{item:TiltGoodIsAlmostGood1} $\AA_0$ is noetherian.
\item\label{item:TiltGoodIsAlmostGood2} For $A\in\AA$, there exists a maximal subobject $\widetilde{A}\hookrightarrow A$, $\widetilde{A}\in\AA_0$, such that $\Hom(\AA_0,A/\widetilde{A})=0$.
\item\label{item:TiltGoodIsAlmostGood3} For $A\in\AA$ with $\mu_\sigma^+(A)<+\infty$, there exists $A\hookrightarrow \widehat{A}\in \AA$ such that $\Hom(\AA_0^{},\widehat{A}[1])=0$ and $\widehat{A}/A\in\AA_0^{}$.
\end{enumerate}
Then, given $\mu\in\R$, we have
\[
\AA_0^{}=\left\{E\in\AA_\sigma^\mu\,:\, Z(E)=0 \right\}
\]
and, for all $B\in\AA_\sigma^\mu$, there exists a maximal subobject $\widetilde{B}\hookrightarrow B$, $\widetilde{B}\in\AA_0^{}$, such that $\Hom(\AA_0^{},B/\widetilde{B})=0$. 
\end{Lem}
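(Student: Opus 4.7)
My plan is to combine a slope-theoretic analysis of $\AA$-cohomology with the three hypotheses~(a),~(b),~(c).

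The first step is to identify $\AA_0 = \{E \in \AA_\sigma^\mu : Z(E) = 0\}$. The inclusion $\subseteq$ is immediate: any $A \in \AA_0$ has $\Im Z(A) = 0$, so each of its HN factors has $\Im Z = 0$ (nonnegative terms summing to zero), hence slope $+\infty$, placing $A$ in $\TT_\sigma^\mu \subseteq \AA_\sigma^\mu$. For the reverse inclusion, given $E \in \AA_\sigma^\mu$ with $Z(E) = 0$, I would use the canonical sequence $0 \to H^{-1}_\AA(E)[1] \to E \to H^0_\AA(E) \to 0$ in $\AA_\sigma^\mu$, yielding $Z(H^{-1}_\AA(E)) = Z(H^0_\AA(E))$. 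A dichotomy on $\Im Z(H^{-1}_\AA(E))$ then forces $H^{-1}_\AA(E) = 0$: if positive, the common total slope would lie simultaneously in $(-\infty,\mu]$ (from $\FF_\sigma^\mu$) and in $(\mu,+\infty)$ (from $\TT_\sigma^\mu$, since positive $\Im Z$ is contributed by a finite-slope HN factor); if zero, the HN factors of $H^{-1}_\AA(E)$ all have slope $+\infty$, contradicting $\FF_\sigma^\mu$. Applied to a SES $0 \to X \to A \to Y \to 0$ in $\AA_\sigma^\mu$ with $A \in \AA_0$, the same dichotomy on $\alpha := H^{-1}_\AA(Y)$, using $Z(X) = Z(\alpha)$ obtained from the $\AA$-cohomology LES together with $X/\alpha \hookrightarrow A$, yields Serre-closure of $\AA_0$ in $\AA_\sigma^\mu$; in particular the useful identity $\FF_\sigma^\mu \cap \AA_0 = 0$.

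For the maximal $\widetilde{B}$, I would combine (b), (c), and (a) to bound the $\AA_0$-subobjects of $B$ inside a pair of fixed $\AA_0$-objects. Any $\AA_0$-subobject $S \hookrightarrow B$ yields an image $I \subseteq H^0_\AA(B)$ and a kernel $T$ of the projection $S \to H^0_\AA(B)$ (both in $\AA_0$, by Serre-closure): hypothesis~(b) bounds $I$ inside $\widetilde{H^0_\AA(B)} \in \AA_0$, while (c) embeds $T$ into $C := \widehat{H^{-1}_\AA(B)}/H^{-1}_\AA(B) \in \AA_0$. Indeed, the embedding $T \hookrightarrow H^{-1}_\AA(B)[1]$ in $\AA_\sigma^\mu$ corresponds to an extension $0 \to H^{-1}_\AA(B) \to M \to T \to 0$ in $\AA$ with $M \in \FF_\sigma^\mu$; by the LES of $0 \to H^{-1}_\AA(B) \to \widehat{H^{-1}_\AA(B)} \to C \to 0$ and the vanishing $\Ext^1(\AA_0, \widehat{H^{-1}_\AA(B)}) = 0$, this extension is the pullback of $\widehat{H^{-1}_\AA(B)}$ along some $\phi \colon T \to C$, and any kernel of $\phi$ would split off as a subobject of $M$ in $\FF_\sigma^\mu \cap \AA_0 = 0$, forcing $\phi$ injective. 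For a chain of $\AA_0$-subobjects $S_i$ of $B$, the same $\Ext^1$-vanishing lets one choose compatible embeddings $T_i \hookrightarrow C$, so that the chains of images $I_i \subseteq \widetilde{H^0_\AA(B)}$ and of kernels $T_i \subseteq C$ both stabilize by noetherianity~(a); a five-lemma comparison of the SESs $T_i \hookrightarrow S_i \twoheadrightarrow I_i$ then forces the original chain $(S_i)$ to stabilize. The existence of $\widetilde{B}$ follows, and the $\Hom$-vanishing $\Hom(\AA_0, B/\widetilde{B}) = 0$ is a formal consequence of maximality: any non-zero $A \to B/\widetilde{B}$ with $A \in \AA_0$ would, via its image in $\AA_0$, pull back to strictly enlarge $\widetilde{B}$.

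The main obstacle is the intricate third step, where (b) controls the images in $H^0_\AA(B)$, (c) controls the hidden kernels sitting in $H^{-1}_\AA(B)[1]$ via the pullback-and-splitting argument, and the Serre-closure established in the first part ties the two sides together through $\FF_\sigma^\mu \cap \AA_0 = 0$. Once both image and kernel sides are bounded by fixed $\AA_0$-objects, noetherianity of $\AA_0$ finishes the argument cleanly.
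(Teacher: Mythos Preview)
Your proof is correct and follows the same strategy as the paper's: identify $\AA_0$ inside the tilt via the $\AA$-cohomology of $E$, then for an $\AA_0$-subobject $K\hookrightarrow B$ bound its image in $H^0_\AA(B)$ by~(b) and its kernel in $H^{-1}_\AA(B)[1]$ by~(c), and conclude by noetherianity of $\AA_0$. You in fact supply more detail than the paper at the (c) step---the paper simply asserts $\widehat{K}\subset \widehat{H^{-1}_\AA(B)}/H^{-1}_\AA(B)$, whereas you spell out the pullback argument, the injectivity of $\phi$ via $\FF_\sigma^\mu\cap\AA_0=0$, and the compatibility of the embeddings along a chain.
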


\begin{proof}
First of all, note that $\AA_0^{}$ is closed under subobjects, quotients, and extensions in $\AA$.

To prove the first statement, note that $\cA_0^{}\subset \cT_\sigma^\mu\subset\cA_\sigma^\mu$ and so $\cA_0^{}\subset \left\{E\in\AA_\sigma^\mu\,:\, Z(E)=0 \right\}$.
For the reverse inclusion note that $\frac{1}{u}Z$ is a weak stability function on $\cA_\sigma^\mu$, where $u \in \C$ is the unit vector in the upper half plane with $\mu = -\frac{\Re u}{\Im u}$.
Thus, for $E\in\cA_\sigma^\mu$ with $\frac{1}{u}Z(E)=0$, we must have $\frac{1}{u}Z(H^{-1}_\cA(E))=\frac{1}{u}Z(H^{0}_\cA(E))=0$.
By definition of $\cF_\sigma^\mu$, this implies that $H_{\cA}^{-1}(E)=0$.
Therefore, $E\in \cT_\sigma^\mu\subset \cA$, and so $E\in\AA_0$, proving the first claim.

We observe that $\cA_0^{}$ is therefore closed under subobjects, quotients, and extensions in $\AA^\mu_\sigma$ as well.

To prove the second statement, let $B\in \cA_\sigma^\mu$, let $K\in \AA_0^{}$, and assume $\Hom(K,B)\neq 0$.
By the previous observation, we can assume that $K$ is a subobject of $B$. 
Let $\widetilde{K}\subset H_\AA^0(B)$ be the image of the composition $K\into B\onto H^0_\cA(B)$ and 
$\widehat{K}\subset H^{-1}_\cA(B)[1]$ its kernel. Then $\widetilde{K},\widehat{K}\in\cA_0$. 
By property \eqref{item:TiltGoodIsAlmostGood2}, we have $\widetilde{K}\subset \widetilde{H^0_\cA(B)}$ in $\cA_0^{}$.
Similarly by property \eqref{item:TiltGoodIsAlmostGood3}, we have 
$\widehat{K}\subset \widehat{H^{-1}_\cA(B)}/H^{-1}_\cA(B)$ in $\cA_0^{}$. Given an increasing sequence of subobjects $K_n \subset B$ with $K_n \in \cA_0$, the corresponding sequences $\widetilde{K_n}$ and $\widehat{K_n}$ also form increasing sequences of subobjects; 
by noetherianity of $\AA_0$, both terminate, and thus we obtain the existence of a maximal subobject $\widetilde{B}$ as we wanted.
\end{proof}

\begin{Ex}\label{ex:CohXisgood}
Let $\sigma=(\Coh(X),Z_H)$ be the weak stability condition of Example~\ref{ex:slopestability}.
Then $\sigma$ satisfies the conditions of Lemma~\ref{lem:TiltGoodIsAlmostGood}. 
Here $\Coh(X)_0$ are the torsion sheaves supported in codimension at least $2$. For $A\in\Coh(X)$, $\widetilde{A}$ is the torsion part in codimension at least $2$ of the torsion filtration, while $\widehat{A}$ is the double-dual of a torsion-free sheaf. 
\end{Ex}

The key fact is that the same holds for tilt-stability:

\begin{Prop}\label{prop:CohBetaIsGood}
The weak stability condition $\sigma_{\alpha,\beta}=(\Coh^\beta(X),Z_{\alpha,\beta})$ satisfies the hypothesis of Lemma~\ref{lem:TiltGoodIsAlmostGood}.
\end{Prop}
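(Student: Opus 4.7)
The plan is to identify $\AA_0 := \{E \in \Coh^\beta(X) \,:\, Z_{\alpha,\beta}(E) = 0\}$ explicitly as the full subcategory of coherent sheaves supported in codimension $\geq 3$, and then to verify the three hypotheses of Lemma~\ref{lem:TiltGoodIsAlmostGood} in turn. The main obstacle will be the ``codimension-$\geq 3$ reflexive hull'' construction required by \eqref{item:TiltGoodIsAlmostGood3}.

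For the identification of $\AA_0$, given $E \in \Coh^\beta(X)$ with standard cohomology sheaves $F := H^{-1}(E) \in \FF$ and $T := H^0(E) \in \TT$, the imaginary part $\Im Z_{\alpha,\beta}(E) = H^{n-1}\ch_1^\beta(E)$ decomposes as a sum of two non-negative contributions: $H^{n-1}\ch_1^\beta(T) \geq 0$ since $T \in \TT$, and $-H^{n-1}\ch_1^\beta(F) = \sum_i H^n \ch_0(F_i)(\beta - \mu_H(F_i)) \geq 0$ summed over HN factors $F_i$ of $F \in \FF$. Its vanishing therefore forces $T$ to be torsion of codimension $\geq 2$ and every HN factor of $F$ to have slope exactly $\beta$. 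The classical Bogomolov--Gieseker inequality (Example~\ref{ex:slopestability}) applied to each such factor then gives $H^{n-2}\ch_2^\beta(F) \leq 0$, so $\Re Z_{\alpha,\beta}(E) = 0$ rearranges to
\[
\tfrac{\alpha^2}{2}\, H^n \ch_0(F) + H^{n-2}\ch_2(T) \;=\; H^{n-2}\ch_2^\beta(F) \;\leq\; 0.
\]
Since $\alpha > 0$, $\ch_0(F) \geq 0$, and $H^{n-2}\ch_2(T) \geq 0$ by effectivity of the codimension-$2$ support of $T$, both terms on the left vanish, yielding $F = 0$ and $T$ supported in codimension $\geq 3$. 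The converse inclusion is immediate, and hypothesis \eqref{item:TiltGoodIsAlmostGood1} follows since $\AA_0$ is then a Serre subcategory of the noetherian category $\Coh(X)$.

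For \eqref{item:TiltGoodIsAlmostGood2}, the sum in $B \in \Coh^\beta(X)$ of any two $\AA_0$-subobjects is itself an $\AA_0$-subobject (being a quotient of their direct sum inside $\AA_0$), so the poset of $\AA_0$-subs of $B$ is filtered; noetherianity of $\Coh^\beta(X)$, standard for $\beta$ rational and reducible to this case by a limit argument, then yields a unique maximum $\widetilde{B}$. Concretely, $\widetilde{B}$ is the lift to $B$ of the maximal subsheaf of $T$ supported in codimension $\geq 3$ on which the boundary $\partial\colon T \to F[2]$ from the triangle $F[1] \to B \to T \to F[2]$ vanishes. For \eqref{item:TiltGoodIsAlmostGood3}, the condition $\mu_\sigma^+(B) < +\infty$ excludes subobjects of $B$ with $\Im Z_{\alpha,\beta} = 0$, namely both codimension-$\geq 2$ torsion subsheaves lifting to $B$ and $F'[1]$-subobjects with $F'$ torsion-free and slope-$\beta$ semistable. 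The construction of $\widehat{B}$ then proceeds as a universal extension of $B$ by an object of $\AA_0$ that kills $\Ext^1(\AA_0, -)$: one replaces $H^{-1}(B)$ by its reflexive hull and further modifies the outcome to achieve depth $\geq 3$ at codimension-$3$ points, exploiting that reflexive sheaves on a smooth variety are automatically $S_2$ and fail to be $S_3$ only along a codimension-$\geq 3$ locus. The required vanishing $\Hom(\AA_0, \widehat{B}[1]) = 0$ then reduces to a local $\Ext$ computation along this locus, and carrying out this last step cleanly is where the main technical work lies.
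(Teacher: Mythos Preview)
Your identification of $\AA_0$ and the verification of \eqref{item:TiltGoodIsAlmostGood1} and \eqref{item:TiltGoodIsAlmostGood2} are fine; in fact your concrete description of $\widetilde{B}$ via subsheaves of $H^0(B)$ on which the boundary vanishes makes the noetherianity appeal unnecessary (any chain of $\AA_0$-subobjects of $B$ injects into a chain of subsheaves of $H^0(B)$, which terminates).

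However, your plan for \eqref{item:TiltGoodIsAlmostGood3} has a genuine gap. If $0\to B\to\widehat{B}\to Q\to 0$ is a short exact sequence in $\Coh^\beta(X)$ with $Q\in\AA_0$ a codimension-$\ge 3$ torsion sheaf, then the long exact sequence of standard cohomology forces $H^{-1}(\widehat{B})=H^{-1}(B)$: the term $H^{-1}(Q)$ vanishes, so $H^{-1}$ cannot change at all. Your proposal to ``replace $H^{-1}(B)$ by its reflexive hull'' therefore cannot describe any extension of $B$ by an object of $\AA_0$. (In fact, passing from $F$ to $F^{\vee\vee}$ yields a \emph{surjection} $F[1]\twoheadrightarrow F^{\vee\vee}[1]$ in $\Coh^\beta(X)$, with kernel $F^{\vee\vee}/F\in\cT$, so it moves in the wrong direction.) More fundamentally, the required vanishing $\Hom(K,\widehat{B}[1])=0$ cannot be read off from the cohomology sheaves $H^{-1}(\widehat{B}),H^0(\widehat{B})$: via the triangle $H^{-1}(\widehat{B})[1]\to\widehat{B}\to H^0(\widehat{B})$ it would require $\Ext^2(K,F)=0$ for the \emph{unchanged} sheaf $F=H^{-1}(B)$, which there is no reason to expect.

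The paper circumvents this by constructing $\widehat{B}$ as a genuine ``double dual'' in the tilted heart via the derived duality functor $\mathbb{D}(-)=\RlHom(-,\cO_X)[1]$. Setting $E^\sharp:=H^0_{\Coh^{-\beta}}(\mathbb{D}(E))$ and $E^{\sharp\sharp}:=(E^\sharp)^\sharp$, one shows (Lemma~\ref{lem:DoubleDualCohBeta}) that $E\hookrightarrow E^{\sharp\sharp}$ has cokernel in $\AA_0$, and that $E^{\sharp\sharp}$ is quasi-isomorphic to a two-term complex $C^{-1}\to C^0$ with $C^{-1}$ locally free and $C^0$ reflexive. The Ext-vanishing then follows from the \emph{stupid} triangle $C^0\to E^{\sharp\sharp}\to C^{-1}[1]$ (not the cohomology triangle): for $K$ supported in codimension $\ge 3$ one has $\Ext^2(K,C^{-1})=0$ because $C^{-1}$ is locally free, and $\Ext^1(K,C^0)=0$ because $C^0$, being reflexive, embeds in a locally free sheaf with torsion-free cokernel. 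This use of a good representative complex rather than the cohomology sheaves is the missing ingredient in your sketch.
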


In \cite[Definition~2.13]{PT15:bridgeland_moduli_properties}, condition \eqref{item:TiltGoodIsAlmostGood3} of Lemma~\ref{lem:TiltGoodIsAlmostGood} is part of the definition of \emph{good very weak stability condition}.
Proposition~\ref{prop:CohBetaIsGood} can be deduced from the general result \cite[Proposition~3.10]{PT15:bridgeland_moduli_properties}; the issue is that \cite[Conjecture 3.8]{PT15:bridgeland_moduli_properties} is assumed as hypothesis.
Our proof is unconditional and follows closely \cite[Section~5]{BMT:3folds-BG}.
More precisely, on the tilted category, there is a double-dual operation as well as for coherent sheaves.
This was defined in \cite[Proposition~5.1.3]{BMT:3folds-BG} for threefolds, and an analogous statement works in any dimension.

Let $\mathbb{D}(\blank):=\RlHom(-,\cO_X)[1]$ denote the duality functor.

\begin{Lem}\label{lem:DoubleDualCohBeta}
Let $E\in \Coh^\beta(X)$ be such that $\mu_{\sigma_{\alpha,\beta}}^+(E)<+\infty$.
\begin{enumerate}[{\rm (a)}]
\item\label{item:DoubleDualCohBeta1} We have $H^j_{\Coh^{-\beta}(X)}(\mathbb{D}(E))=0$, for all $j<0$, and $H^j_{\Coh^{-\beta}(X)}(\mathbb{D}(E))$ is a torsion sheaf supported in codimension at least $j+2$, for all $j\geq 1$. We define $E^\sharp$ as $H^0_{\Coh^{-\beta}(X)}(\mathbb{D}(E))$.
\item\label{item:DoubleDualCohBeta2} There exists an exact sequence in $\Coh^\beta(X)$
\[
0\to E \to E^{\sharp\sharp} \to E^{\sharp\sharp}/E \to 0
\]
where $E^{\sharp\sharp}/E$ is a torsion sheaf supported in codimension at least 3, and $E^{\sharp\sharp}$ is quasi-isomorphic to a two term complex $C^{-1} \to C^0$ with $C^{-1}$ locally-free and $C^0$ reflexive.
\end{enumerate}
\end{Lem}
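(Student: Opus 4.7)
The plan is to follow the three-fold proof of \cite[Proposition~5.1.3]{BMT:3folds-BG}, generalizing the codimension bookkeeping to arbitrary dimension. The key inputs are standard facts about $\RlHom(\blank, \cO_X)$ on a smooth $n$-dimensional $X$: for any coherent sheaf $G$, $\lExt^j(G,\cO_X) = 0$ for $j<0$; $\lExt^0(G,\cO_X) = (G/G_{\mathrm{tors}})^\vee$ is torsion-free; for torsion-free $G$, the sheaf $\lExt^j(G,\cO_X)$ is supported in codimension $\geq j+1$ when $j\geq 1$; and for torsion $G$ supported in codimension $\geq c$, the sheaves $\lExt^j(G,\cO_X)$ vanish for $j<c$ and are supported in codimension $\geq j$ for $j\geq c$.

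For part (a), decompose $E$ via the sheaf-cohomology triangle $\mathcal{H}^{-1}(E)[1]\to E\to\mathcal{H}^0(E)$, noting that $\mathcal{H}^{-1}(E)\in\cF^\beta$ is torsion-free and $\mathcal{H}^0(E)\in\cT^\beta$. The hypothesis $\mu^+_{\sigma_{\alpha,\beta}}(E)<+\infty$ rules out subobjects of $E$ lying in $\Coh^\beta(X)_0$, which forces the torsion part of $\mathcal{H}^0(E)$ to be concentrated in codimension $\geq 2$. Applying $\mathbb{D}=\RlHom(\blank,\cO_X)[1]$ and chasing the long exact sequence of sheaf cohomology, one obtains: $\mathcal{H}^{-1}(\mathbb{D}(E))$ is the dual of the torsion-free part of $\mathcal{H}^0(E)$, torsion-free of $H$-slope $\leq -\beta$, hence in $\cF^{-\beta}$; $\mathcal{H}^0(\mathbb{D}(E))$ is an extension of $\mathcal{H}^{-1}(E)^\vee$ by torsion of codimension $\geq 2$, both of which lie in $\cT^{-\beta}$; and $\mathcal{H}^j(\mathbb{D}(E))$ for $j\geq 1$ is a torsion sheaf supported in codimension $\geq j+2$, by combining the codimension bounds on $\lExt^{j+1}(\mathcal{H}^0(E),\cO_X)$ and $\lExt^j(\mathcal{H}^{-1}(E),\cO_X)$.

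The passage from sheaf cohomology of $\mathbb{D}(E)$ to its cohomology in the heart $\Coh^{-\beta}(X)$ proceeds via the torsion pair $(\cT^{-\beta},\cF^{-\beta}[1])$: each sheaf $\mathcal{H}^j(\mathbb{D}(E))$ contributes to $H^j_{\Coh^{-\beta}(X)}$ from its $\cT^{-\beta}$-part and to $H^{j+1}_{\Coh^{-\beta}(X)}$ from its $\cF^{-\beta}$-part. From the analysis above, the only $\cF^{-\beta}$-contribution appears at sheaf degree $-1$ and thus lifts to $H^0_{\Coh^{-\beta}(X)}$, while all other pieces live in $\cT^{-\beta}$ and stay in their native degree. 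This yields $H^j_{\Coh^{-\beta}(X)}(\mathbb{D}(E)) = 0$ for $j<0$ and the required codimension estimate for $j\geq 1$, thereby defining $E^\sharp = H^0_{\Coh^{-\beta}(X)}(\mathbb{D}(E))$.

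For part (b), apply (a) to $E^\sharp$, which has finite positive slope by construction. The derived biduality $E\to\RlHom(\RlHom(E,\cO_X),\cO_X)$, composed with the two $H^0$-truncations, induces a morphism $E\to E^{\sharp\sharp}$ in $\Coh^\beta(X)$; biduality preserves Chern characters, so the kernel and cokernel have $Z_{\alpha,\beta}=0$ and therefore lie in $\Coh^\beta(X)_0$. A finite locally-free resolution of $E^\sharp$, dualized and truncated to sheaf-degrees $[-1,0]$, provides the two-term presentation $[C^{-1}\to C^0]$: $C^{-1}$ is locally free (the last term of the resolution), while $C^0$ can be taken reflexive by absorbing the higher dual cohomology through a double-dual on the torsion-free part. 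The main obstacle is the sharp codimension bound of $\geq j+2$ in part (a): the naive $\lExt$-support estimate only gives $\geq j$, and upgrading it requires the finite-slope hypothesis together with a careful tracking of the long exact sequence coming from the two-step decomposition of $E$.
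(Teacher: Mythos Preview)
Your outline for part~(a) is correct up through the point where you conclude that $\mathcal{H}^j(\mathbb{D}(E))$ is supported in codimension $\geq j+1$ for $j\geq 1$.  You then correctly identify that upgrading this to $\geq j+2$ is the crux.  However, your proposed method (``careful tracking of the long exact sequence coming from the two-step decomposition of $E$'') will not yield the improvement: the long exact sequence of $\mathbb{D}$ applied to $\mathcal{H}^{-1}(E)[1]\to E\to\mathcal{H}^0(E)$ genuinely only produces the bound $j+1$, since $\lExt^j(\mathcal{H}^{-1}(E),\cO_X)$ for torsion-free $\mathcal{H}^{-1}(E)$ is supported in codimension $\geq j+1$ and no better in general.

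The paper's argument for the upgrade is a biduality trick, not a refinement of that sequence.  One sets $Q=\tau_{\geq 1}(\mathbb{D}(E))$ and considers $\mathbb{D}(Q)$.  If the codimension-$(j+2)$ bound fails for some $H^j(Q)$, then $H^0(\mathbb{D}(Q))$ is a nonzero torsion sheaf supported in codimension $\geq 2$.  The triangle $\mathbb{D}(Q)\to\mathbb{D}(\mathbb{D}(E))=E\to\mathbb{D}(E^\sharp)$ then provides a nonzero morphism $H^0(\mathbb{D}(Q))\to E$ (it cannot factor through $H^{-1}(\mathbb{D}(E^\sharp))[1]$, since the latter lies in $\cF^\beta[1]$).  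But a codimension-$\geq 2$ torsion subobject of $E$ has $\Im Z_{\alpha,\beta}=0$, i.e.\ tilt-slope $+\infty$, contradicting $\mu^+_{\sigma_{\alpha,\beta}}(E)<+\infty$.  This is the missing idea in your proposal.

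A minor point: your claim that the hypothesis forces the torsion part of $\mathcal{H}^0(E)$ to live in codimension $\geq 2$ is false (a codimension-$1$ torsion subsheaf has finite tilt-slope) and in any case is not needed.  For part~(b), your sketch is close to the paper's, though the paper proves injectivity of $E\to E^{\sharp\sharp}$ directly from the triangle $Q'[-1]\to C\to\mathbb{D}(Q)[1]$ for the cone $C$, rather than via a Chern-character argument; and the reflexivity of $C^0$ comes simply from identifying it as the kernel of a map between locally free sheaves in the dualized resolution, which is cleaner than your ``absorb through a double-dual''.
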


Part \eqref{item:DoubleDualCohBeta1} of Lemma~\ref{lem:DoubleDualCohBeta} can be rephrased by saying that there exists an exact triangle
\begin{equation*}\label{eqn:dual}
E^\sharp \to \mathbb{D}(E)\to Q,
\end{equation*}
with $E^\sharp\in \Coh^{-\beta}(X)$, $H_{\Coh(X)}^j(Q)=0$ for all $j\leq 0$, and $H_{\Coh(X)}^j(Q)$ a torsion sheaf supported in codimension at least $j+2$, for all $j\geq 1$.

\def\bD{\mathbb{D}}
\begin{proof}
In this proof, we will write $(\cD^{\le 0}, \cD^{\ge 0})$ for the standard t-structure on $\Db(X)$, and $H^i$ and $\tau_{\leq n}, \tau_{\ge n}$ for the associated cohomology and truncation functors.

We first recall that for a coherent sheaf $F$, the complex $\bD(F)$ satisfies
\[ H^{j}(\bD(F)) = \begin{cases}
0 & \text{for $j < -1$} \\
\lHom(F, \cO_X) & \text{for $j = -1$} \\ 
\text{$\lExt^{j+1}(F, \cO_X)$, a sheaf supported in codimension $\ge j+1$} &  \text{for $j \ge 0$.}
\end{cases}
\]
Moreover, if $F$ is supported in codimension $k$, then $H^{k-1}(\bD(E))$ is the smallest degree with a non-vanishing cohomology sheaf.

\eqref{item:DoubleDualCohBeta1}
We dualize the triangle
$H^{-1}(E)[1] \to E \to H^0(E)$ and consider the long exact cohomology sequence with respect to $\Coh X$. We first get an isomorphism 
\[ \lHom(H^0(E), \cO_X) = H^{-1}\left(\bD(H^0(E))\right) \cong H^{-1}(\bD(E)). \]
As $H^0(E) \in \cT^\beta$, i.e., $\mu^- (H^0(E)) < \beta$, we get $\mu^+(H^{-1}(\bD(E))) > - \beta$, i.e., $H^{-1}(\bD(E))) \in \cF^{-\beta}$. 

We next get a long exact sequence
\[
0 \to \lExt^1(H^0(E),\cO_X)\\
  \to H^0(\mathbb{D}(E))\to \lHom(H^{-1}(E),\cO_X)\xrightarrow{\delta} \lExt^2(H^0(E),\cO_X).
\]
Since $H^{-1}(E) \in \cF^\beta$, we have $\mu^+(H^{-1}(E)) \le \beta$. In case of equality, let $F \subset H^{-1}(E)$ be the first step of its HN filtration; then the composition $F[1] \to H^{-1}(E)[1] \to E$ is a subobject of $E$ in 
$\Coh^\beta X$ with $\mu_{\sigma_{\alpha, \beta}}(F[1]) = +\infty$; this is a contradiction to the assumption $\mu_{\sigma_{\alpha, \beta}}^+ (E) < +\infty$. Therefore, we have strict inequality $\mu^+(H^{-1}(E)) < \beta$, and so $\lHom(H^{-1}(E),\cO_X) \in \cT^{-\beta}$. Since $\lExt^2(H^0(E),\cO_X)$ is supported in codimension at least two, the kernel of $\delta$ is also contained in $\cT^{-\beta}$. Since $\lExt^1(H^0(E),\cO_X)$ is a torsion sheaf, this shows $H^0(\mathbb{D}(E)) \in \cT^{-\beta}$. 

We now define $E^\sharp := \tau_{\le 0} \bD(E)$ and $Q:= \tau_{\ge 1} \bD(E)$. The previous arguments show $H^{-1}(E^\sharp) = H^{-1}(\bD(E)) \in \cF^{-\beta}$ and $H^0(E^\sharp) = H^0(\bD(E))  \in \cT^{-\beta}$, and thus $E^\sharp \in \Coh^{-\beta} X$. It remains to show for $j > 0$ that $H^j(\bD(E)) = H^j(Q)$ is a torsion sheaf supported in codimension at least $j+2$. 

The continuation of the long exact cohomology sequence above shows that $H^j(Q)$ is supported in codimension $\ge j+1$. Therefore, 
$\bD(H^j(Q)[-j]) \in \cD^{\ge 0}$, and $H^j(Q)$ is supported in codimension at least $j+2$ if and only if $\bD(H^j(Q)[-j]) \in \cD^{\ge 1}$. 
Assume for contradiction that there is a largest possible $j_0$  such that $H^0(\bD(H^{j_0}(Q)[-j_0])) \neq 0$. By induction on the number of non-zero cohomology objects, we see that $\bD(\tau_{\le k}Q), \bD(\tau_{\ge k}Q), \bD(Q) \in \cD^{\ge 0}$ for all $k$. Similarly, $\bD(\tau_{\ge j_0 +1} Q ) \in \cD^{\ge 1}$, and 
$H^1(\bD(\tau_{\ge j_0 +1} Q ))$ is supported in codimension at least $j_0 + 3$. 
Dualizing the exact triangle
\[ H^{j_0}(Q)[-j_0] \to \tau_{\ge j_0} Q \to \tau_{\ge j_0 +1} Q  \]
and taking its long exact cohomology sequence give
\[ 0 \to H^0(\bD(\tau_{\ge j_0} Q)) \to H^0(\bD(H^{j_0}(Q)[-j_0]))
\to H^1(\bD(\tau_{\ge j_0 +1} Q )).
\]
Since the middle object is supported in  codimension exactly $j_0+1$, and the right object in codimension $\ge j_0+3$, this shows $H^0(\bD(\tau_{\ge j_0} Q)) \neq 0$. Dualizing the exact triangle 
$\tau_{\le j_0 -1} Q \to  Q \to \tau_{\ge j_0} Q$ gives an injective map
\[ 0 \neq H^0(\bD(\tau_{\ge j_0} Q))  \into H^0(\bD(Q)), \]
thus $H^0(\bD(Q))$ is a non-zero torsion sheaf supported in codimension at least two. Now consider the exact triangle
\[
\bD(Q) \to \bD(\bD(E)) = E \to \bD(E^\sharp).
\]
The same arguments as before show $H^{-1}(\bD(E^\sharp)) \in \cF^{\beta}$, and so  $\Hom(H^0(\bD(Q)) , \bD(E^{\sharp})[-1]) = \Hom(H^0(\bD(Q)), H^{-1}(\bD(E^\sharp))) = 0$.
Therefore, the composition $H^0(\bD(Q)) \to \bD(Q) \to E$ (where the first morphism exists due to $\bD(Q) \in \cD^{\ge 0}$) is non-zero. This is a contradiction to $\mu_{\sigma_{\alpha, \beta}}^+(E) < +\infty$.

\eqref{item:DoubleDualCohBeta2} By part \eqref{item:DoubleDualCohBeta1} we have two exact triangles
\begin{equation*}
\mathbb{D}(Q)\to E\to \mathbb{D}(E^{\sharp})\qquad E^{\sharp\sharp}\to \mathbb{D}(E^{\sharp})\to Q'
\end{equation*}
with $\bD(Q), Q' \in \cD^{\ge 1}$,  and all their cohomology sheaves are  supported in codimension at least 3, whereas $E, E^{\sharp\sharp} \in \Coh^\beta X$. In particular, $E \in \DD^{\le 0}, Q' \in \DD^{\ge 1}$ and the definition of t-structures imply
 $\Hom(E,Q')=0$, so we have an induced morphism $E\to E^{\sharp\sharp}$. The cone $C$ of this morphism fits into an exact triangle $Q'[-1] \to C \to \bD(Q)[1]$
and therefore has only non-negative cohomologies with respect to the t-structure $\Coh^\beta X$. The long exact cohomology sequence of the exact triangle $E \to E^{\sharp\sharp} \to C$ with respect to $\Coh^\beta X$ then shows that $E \to E^{\sharp\sharp}$ is injective in $\Coh^\beta X$, with cokernel a torsion sheaf supported in codimension at least 3.

To finish the proof, we consider a locally-free resolution $G^\bullet$ of $E^\sharp$.
By taking the functor $\mathbb{D}$, we obtain a morphism
\[
E^{\sharp\sharp} \to \left( G_0^\vee \xrightarrow{\phi} G_1^\vee \xrightarrow{\psi} \dots \to G_k^\vee \right)[1].
\]
By part \eqref{item:DoubleDualCohBeta1}, $E^{\sharp\sharp}$ is quasi-isomorphic to the complex $G_0^\vee \xrightarrow{\phi} \Ker(\psi)$, and $\Ker(\psi)$ is reflexive since it is the kernel of a morphism of locally-free sheaves.
\end{proof}

\begin{proof}[Proof of Proposition~\ref{prop:CohBetaIsGood}]
First of all note that by construction of $\sigma_{\alpha, \beta}$, the objects of $\Coh^\beta(X)_0$ are the objects in $\Coh(X)_0$---i.e., torsion sheaves supported in codimension at least two---that additionally satisfy $H^{n-2}\ch_2 = 0$; in other words, torsion sheaves supported in codimension at least three. 
This shows property \eqref{item:TiltGoodIsAlmostGood1}.

Regarding \eqref{item:TiltGoodIsAlmostGood2}, by Lemma~\ref{lem:TiltGoodIsAlmostGood} applied to $(\Coh(X), Z_H)$, we know that for any $A\in \Coh^\beta(X)$ there exists a maximal subobject $\widetilde{A}'$ which is a torsion sheaf supported in codimension at least $2$. 
Since $\Coh^\beta(X)_0\subset\Coh(X)_0$, for any subobject $K\subset A$ such that $K\in \Coh^\beta(X)_0$ we have $K\subset \widetilde{A}'$.
Since $\Coh(X)_0$ is noetherian, we find a maximal subobject $\widetilde{A}\subset \widetilde{A}'\subset A$ satisfying property \eqref{item:TiltGoodIsAlmostGood2}.

To prove property \eqref{item:TiltGoodIsAlmostGood3}, by Lemma~\ref{lem:DoubleDualCohBeta}\eqref{item:DoubleDualCohBeta2}, we have an exact sequence in $\Coh^\beta(X)$
\[
0\to A \to A^{\sharp\sharp} \to A^{\sharp\sharp}/A \to 0
\]
with $A^{\sharp\sharp}/A\in\Coh^\beta(X)_0$ and $A^{\sharp\sharp}$ is quasi-isomorphic to a two-term complex $C^{-1} \to C^0$ with $\Ext^1(\Coh^\beta(X)_0,C^0)=\Ext^2(\Coh^\beta(X)_0,C^{-1})=0$.
Then $\widehat{A}:=A^{\sharp\sharp}$ satisfies property \eqref{item:TiltGoodIsAlmostGood3}.
Indeed, this follows immediately by using the exact triangle
\[
C^{0} \to A^{\sharp\sharp} \to C^{-1}[1].\qedhere
\]
\end{proof}

\begin{proof}[Proof of Proposition~\ref{prop:tiltedtiltstability}]
By Lemma~\ref{lem:TiltGoodIsAlmostGood} and Proposition~\ref{prop:CohBetaIsGood}, every $E \in \Coh_{\alpha, \beta}^\mu(X)$ has a subobject $\widetilde{E} \subset E$ with $\widetilde{E} \in \Coh^\beta(X)_0$ and $\Hom(\Coh^\beta(X)_0, E/\widetilde{E}) = 0$.

Let $(\TT^\mu, \FF^\mu)$ denote the torsion pair in $\Coh^\beta(X)$
from which we construct $\Coh_{\alpha, \beta}^\mu(X)$; by definition,  
$(\FF^\mu[1], \TT^\mu)$ is a torsion pair in $\Coh_{\alpha, \beta}^\mu(X)$. 
If an object $E \in \TT^\mu$ is $\sigma_{\alpha, \beta}$-semistable, then $E/\widetilde{E}$ is $\sigma_{\alpha,
\beta}^\mu$-semistable; similarly for objects $E \in \FF^\mu$ up to the shift $[1]$.
Conversely, any stable object in $\Coh_{\alpha, \beta}^\mu(X)$ is, up to shift,
a $\sigma_{\alpha, \beta}$-stable object in $\Coh^\beta(X)$.

Any object $E$ fits into a short exact sequence
\[ 0 \to F[1] \to E \to T \to 0 \]
with $F \in \FF_{\sigma_{\alpha, \beta}}^\mu$ and
 $T \in \TT_{\sigma_{\alpha, \beta}}^\mu$. The objects $F$ and $T$ have Harder-Narasimhan filtrations
with respect to $\sigma_{\alpha, \beta}$, such that all quotients and subobjects in the filtration lie in
$\cF_{\sigma_{\alpha, \beta}}^\mu$ and $\cT_{\sigma_{\alpha, \beta}}^\mu$, respectively.
Combined, they give a finite filtration of $E$. Let $E \onto Q$ be the quotient of $E$ corresponding
to the last filtration step of $E$. Then the composition $E \onto Q \onto Q/\widetilde{Q}$ gives the maximal
destabilizing quotient of $E$ with respect to $\sigma_{\alpha, \beta}^\mu$; continuing this process
produces the Harder-Narasimhan filtration of $E$. 
\end{proof}

\section{Review on semiorthogonal decompositions}\label{sec:sod}

The second main ingredient in this paper consists in semiorthogonal decompositions. We begin with a very general and quick review, by following \cite{BondalOrlov:Main}.
To this extent, let $\cD$ be a triangulated category.

\begin{Def}\label{def:semiorth}
	A \emph{semiorthogonal} decomposition of $\cD$ is a sequence of full triangulated subcategories $\cD_1,\ldots,\cD_m\subseteq\cD$ such that $\Hom_{\cD}(\cD_i,\cD_j)=0$, for $i>j$ and, for all $G\in\cD$, there exists a chain of morphisms in $\cD$
	\[
	0=G_m\to G_{m-1}\to\ldots\to G_1\to G_0=G
	\]
	with $\mathrm{cone}(G_i\to G_{i-1})\in\cD_i$, for all $i=1,\ldots,m$.
\end{Def}

We will denote such a decomposition by $\cD=\langle\cD_1,\ldots,\cD_m\rangle$.
The semiorthogonality condition implies that $G\mapsto \mathrm{cone}(G_i\to G_{i-1})\in\cD_i$ defines a functor $\mathop{\mathrm{pr}}_i:\DD\to \DD_i$, called the \emph{$i$-th projection functor}.

\begin{Def}\label{def:exceptional}
\begin{enumerate}[{\rm (a)}]
\item An object $E\in\cD$ is \emph{exceptional} if $\Hom_{\cD}(E,E[p])=0$, for all $p\neq0$, and $\Hom_{\cD}(E,E)\cong\C$.

\item A collection $\{E_1,\ldots,E_m\}$ of objects in $\cD$ is called an \emph{exceptional collection} if $E_i$ is an exceptional object, for all $i$, and $\Hom_{\cD}(E_i,E_j[p])=0$, for all $p$ and all $i>j$.
\end{enumerate}
\end{Def}

An exceptional collection $\{E_1,\ldots,E_m\}$ in $\cD$ provides a semiorthogonal decomposition
	\[
	\cD=\langle\cD',E_1,\ldots,E_m\rangle,
	\]
where we have denoted by $E_i$ the full triangulated subcategory of $\cD$ generated by $E_i$ and
	\[
	\cD'=\langle E_1,\ldots,E_m\rangle^\perp=\left\{G\in\cD\colon \Hom(E_i,G[p])=0,\text{ for all }p\text{ and }i\right\}.
	\]
	Similarly, one can define ${}^\perp\langle F_1,\ldots,F_m\rangle=\left\{G\in\DD\,:\,\Hom(G,F_i[p])=0,\text{ for all }p\text{ and }i\right\}$.

Let $E\in\cD$ be an exceptional object.
We can define the \emph{left and right mutation functors}, $\cat{L}_E,\cat{R}_E:\cD\to\cD$ in the following way
\begin{equation*}\label{eqn:LRmutation}
	\begin{split}
	\cat{L}_E(G)&:=\mathrm{cone}\left(\mathrm{ev}\colon \bigoplus_p\Hom_\cD(E,G[p])\otimes E[-p]\to G\right) \in E^\perp \subset \cD \\
	\cat{R}_E(G)&:=\mathrm{cone}\left(\mathrm{ev}^\vee\colon G\to\bigoplus_p\Hom_\cD(G,E[p])^\vee\otimes E[p]\right)[-1] \in {}^\perp E \subset \cD.
	\end{split}
\end{equation*}


We will use the following properties of mutations and semiorthogonal decompositions, where $E, F$ are exceptional objects, and $S$ is a Serre functor of $\cD$.
\begin{enumerate}[(a)]
\item Given a semiorthogonal decomposition
	\[
	\cD = \langle\cD_1,\ldots,\cD_k,E,\cD_{k+1},\ldots,\cD_n\rangle,
	\]
with $E$ exceptional, we can apply left and right mutations and get
	\[
\cD=\langle\cD_1,\ldots,\cD_k,\cat{L}_E(\cD_{k+1}),E,\cD_{k+2},\ldots,\cD_n\rangle
=\langle\cD_1,\ldots,\cD_{k-1},E,\cat{R}_E(\cD_k),\cD_{k+1},\ldots,\cD_n\rangle.
	\]
    \item If $(E, F)$ is an exceptional pair, then
    $\cat{R}_E \cat{L}_E F = F$.
   \item $\cat{R}_{S(E)}$ is right adjoint to $\cat{L}_{E}$ while $\cat{R}_E$ is left adjoint to $\cat{L}_E$.
   \item If $\cD = \langle \cD_1, \cD_2 \rangle$ is a semiorthogonal decomposition, then so are
   \[ \cD = \langle S(\cD_2), \cD_1 \rangle = \langle \cD_2, S^{-1}(\cD_1) \rangle.
   \]
\end{enumerate}

\section{Inducing t-structures} \label{sec:inducing}

Let $\cD$ be a triangulated category admitting a Serre functor $S$, and
with a semiorthogonal decomposition $\cD = \langle \cD_1, \cD_2 \rangle$.
In this section, we give a general criterion for inducing a bounded t-structure on $\cD_1$ from a
bounded t-structure on $\cD$.
While in this paper, we are only interested in the case where $\cD_2$ is generated by an exceptional collection
in $\cD$, we state our criterion in a more general setting in terms of a spanning class of $\cD_2$.

\begin{Def} A spanning class of a triangulated category $\cD$ is a set of objects $\cG$ such that
if $F \in \cD$ satisfies $\Hom(G, F[p]) = 0$ for all $G \in \cG$ and all $p \in \Z$, then $F = 0$.
\end{Def}

The following observation is immediate:
\begin{Lem} \label{lem:obvious}
Let $\cG$ be a spanning class of $\cD_2$. Then for an object $F \in \cD$ we have
$F \in \cD_1$ if and only if $\Hom(G, F[p]) = 0$ for all $G \in \cG$ and all $p \in \Z$.
\end{Lem}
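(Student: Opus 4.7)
The plan is short: the lemma follows directly from combining the definitions of semiorthogonal decomposition and spanning class. The forward implication is immediate from semiorthogonality: if $F \in \cD_1$, then for every $G \in \cG \subset \cD_2$ and every $p \in \Z$, we have $\Hom(G, F[p]) = 0$, because by the convention of Definition~\ref{def:semiorth} the decomposition $\cD = \langle \cD_1, \cD_2 \rangle$ forces $\Hom(\cD_2, \cD_1) = 0$.

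For the converse, I would decompose $F$ using the semiorthogonal decomposition. Applied with $m = 2$, Definition~\ref{def:semiorth} produces an exact triangle
\[ G_1 \to F \to F_1 \to G_1[1] \]
with $G_1 \in \cD_2$ and $F_1 \in \cD_1$. The goal is to show $G_1 = 0$, so that $F \cong F_1 \in \cD_1$.

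To achieve this, I would apply $\Hom(G, \blank)$ for $G \in \cG$ to this triangle. By the forward direction already proved, $\Hom(G, F_1[p]) = 0$ for every $p$, so the long exact sequence yields an isomorphism $\Hom(G, G_1[p]) \cong \Hom(G, F[p])$. By hypothesis on $F$, the right-hand side vanishes for every $p \in \Z$ and every $G \in \cG$. Since $\cG$ is a spanning class of $\cD_2$ and $G_1 \in \cD_2$, the defining property of a spanning class forces $G_1 = 0$, completing the proof. There is no real obstacle here; the statement is purely a bookkeeping consequence of the two definitions.
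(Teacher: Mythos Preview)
Your argument is correct and is exactly the standard way to unpack this observation; the paper does not give a proof at all beyond calling it ``immediate'', so there is nothing further to compare.
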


The key ingredient of our entire construction is the following observation, slightly generalizing 
\cite[Theorem~4.1]{vdB:blowingdown} and \cite[Lemma~3.4]{BMMS:Cubics}.
\begin{Lem} \label{lem:induce}
Let $\cA \subset \cD$ be the heart of a bounded t-structure. Assume that the spanning
class $\cG$ of $\cD_2$ satisfies $\cG \subset \cA$, and 
$\Hom(G, F[p]) = 0$ for all $G \in \cG, F \in \cA$, and all $p > 1$. Then $\cA_1 := \cD_1 \cap \cA$
is the heart of a bounded t-structure on $\cD_1$. 
\end{Lem}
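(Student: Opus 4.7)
The plan is to verify the two axioms of Definition~\ref{def:heart} for $\cA_1 := \cA \cap \cD_1$ as a candidate heart of a bounded t-structure on $\cD_1$. Axiom (a), the $\Hom$-vanishing in negative degrees among objects of $\cA_1$, is inherited for free from the corresponding property of $\cA$. All of the substance lies in axiom (b). I would reduce that to the key claim that, for every $E \in \cD_1$, each cohomology object $A_i := H^i_\cA(E) \in \cA$ actually lies in $\cD_1$, hence in $\cA_1$. Granted this, the standard $\cA$-truncation filtration of $E$ is automatically a filtration of the shape required by Definition~\ref{def:heart}(b), with cones $A_i[k_i]$ inside $\cA_1$ and strictly decreasing $k_i$; it is finite since $\cA$ is bounded, so the induced t-structure on $\cD_1$ is bounded as well. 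By Lemma~\ref{lem:obvious}, the claim $A_i \in \cD_1$ is equivalent to $\Hom(G, A_i[q]) = 0$ for every $G \in \cG$ and every $q \in \Z$.

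I would prove this claim by induction on the number of nonzero $\cA$-cohomology objects of $E$. The base case $E = A[-n]$ is immediate: $\Hom(G, A[q]) = \Hom(G, E[q+n]) = 0$ for every $q$, simply because $E \in \cD_1$. For the inductive step, let $a$ be the lowest cohomological degree of $E$ and consider the truncation triangle
\[
A_a[-a] \to E \to \tau^{>a} E \to A_a[-a+1].
\]
The strategy is: first show $\tau^{>a} E \in \cD_1$; then deduce $A_a \in \cD_1$ from the associated long exact sequence; and finally apply the inductive hypothesis to $\tau^{>a} E$, which has one fewer nonzero cohomology.

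For the first step, I apply $\Hom(G,-)$. Since $\Hom(G, E[\cdot]) = 0$, the long exact sequence makes $\Hom(G, \tau^{>a} E[p])$ inject into $\Hom(G, A_a[p-a+1])$. The latter vanishes whenever $p - a + 1 \notin \{0, 1\}$: for $p - a + 1 < 0$ by $\Hom$-vanishing inside $\cA$, and for $p - a + 1 > 1$ by the hypothesis of the lemma. On the other hand, for $p \in \{a-1, a\}$ we have $\tau^{>a} E[p] \in \cD^{\ge a+1-p} \subseteq \cD^{\ge 1}$, so $\Hom(G, \tau^{>a} E[p]) = 0$ directly from the t-structure axiom. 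The two vanishings together cover all integers $p$, yielding $\tau^{>a} E \in \cD_1$. Then the same long exact sequence sandwiches $\Hom(G, A_a[p-a])$ between $\Hom(G, \tau^{>a} E[p-1]) = 0$ and $\Hom(G, E[p]) = 0$, giving $A_a \in \cD_1$. The principal subtlety, and the step that decisively requires the hypothesis $\Hom(G, F[p]) = 0$ for $F \in \cA$ and $p > 1$, is precisely this matching of ranges: that hypothesis is exactly what kills the contribution of $\Hom(G, A_a[p-a+1])$ for $p > a$, where the t-structure alone is insufficient.
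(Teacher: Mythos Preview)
Your proof is correct. The paper reaches the same key claim---that each $H^q_\cA(E)$ lies in $\cD_1$---but packages the argument differently: rather than inducting on the number of nonzero cohomology objects, it invokes the spectral sequence $E_2^{p,q} = \Hom(G, H^q_\cA(E)[p]) \Rightarrow \Hom(G, E[p+q])$, observes that the hypotheses force $E_2^{p,q} = 0$ for $p \notin \{0,1\}$ (hence degeneration at $E_2$), and then zero abutment forces all $E_2$ terms to vanish. Your inductive argument via truncation triangles is this spectral sequence unwound by hand: the two ranges you splice together ($p-a+1 \notin \{0,1\}$ via the hypotheses, and $p \in \{a-1,a\}$ via the t-structure axiom applied to $\tau^{>a}E$) are exactly the two-column degeneration and the vanishing abutment, respectively. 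You trade the one-line spectral sequence citation for a self-contained elementary argument; the paper's version is shorter but assumes the reader is comfortable with that machinery.
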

\begin{proof}
Clearly $\cA_1$ satisfies the first condition of Definition~\ref{def:heart}, and we only need to verify the second.

Consider $F \in \cD_1$. For every $G \in \cG$ there is a spectral sequence (see e.g., \cite[Proposition~2.4]{Okada:CY2})
\[
E_2^{p, q} = \Hom(G, H^q_\cA(F)[p]) \Rightarrow \Hom(G, F[p+q]).
\]
By the assumptions, these terms vanish except for $p = 0, 1$, and thus the spectral sequence
degenerates at $E_2$. On the other hand, since $G \in \cD_2$ and $F \in \cD_1$ we have $\Hom(G,
F[p+q]) = 0$. Therefore, $\Hom(G, H^q_\cA(F)[p]) = 0$ for all $G \in \cG$ and all $p \in \Z$;
by Lemma~\ref{lem:obvious} we conclude $H^q_\cA(F) \in \cA \cap \cD_1 = \cA_1$. This proves the
claim.
\end{proof}

We will always apply this lemma via the following consequence:

\begin{Cor} \label{cor:induce}
Let $\cA \subset \cD$ be the heart of a bounded t-structure such that $\cG \subset \cA$ and
$S(\cG) \subset \cA[1]$. Then $\cA_1 := \cA \cap \cD_1 \subset \cD_1$ is the heart of a bounded
t-structure.
\end{Cor}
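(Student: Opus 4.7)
The plan is to verify the hypotheses of Lemma~\ref{lem:induce} and then invoke it directly. The first two conditions of that lemma are built into our assumptions: $\cG$ is a spanning class of $\cD_2$, and $\cG \subset \cA$ is given. Only the vanishing condition
\[ \Hom_\cD(G, F[p]) = 0 \quad \text{for all } G \in \cG, \ F \in \cA, \ p > 1 \]
requires work, and essentially all of the content lies there.

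To establish this vanishing, I would use Serre duality together with the hypothesis $S(\cG) \subset \cA[1]$. Concretely, for any $G \in \cG$ and $F \in \cA$ we have
\[ \Hom_\cD(G, F[p]) \cong \Hom_\cD\bigl(F, S(G)[-p]\bigr)^{\vee}, \]
so, writing $S(G) = A'[1]$ with $A' \in \cA$, the right-hand side becomes $\Hom_\cD\bigl(F, A'[1-p]\bigr)^{\vee}$. For $p > 1$ the shift $1-p$ is strictly negative, and hence this Hom-group vanishes by property~(a) of Definition~\ref{def:heart} applied to the pair of objects $F, A' \in \cA$.

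With the vanishing in hand, Lemma~\ref{lem:induce} applies verbatim and produces that $\cA_1 = \cA \cap \cD_1$ is the heart of a bounded t-structure on $\cD_1$, completing the proof.

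I do not anticipate any real obstacle here: the Serre-duality computation is essentially forced once the hypothesis is phrased as $S(\cG) \subset \cA[1]$, and the rest is routine bookkeeping in Lemma~\ref{lem:induce}. The only mild subtlety is making sure one applies Serre duality in the correct direction (so that the shift of $S(G)$ lands back in $\cA$ shifted by a non-positive amount), but this is straightforward.
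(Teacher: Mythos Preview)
Your proposal is correct and follows exactly the same approach as the paper: verify the vanishing $\Hom(G,F[p])=0$ for $p>1$ via Serre duality and the assumption $S(\cG)\subset\cA[1]$, then invoke Lemma~\ref{lem:induce}. The paper's proof is essentially your argument compressed into a single line.
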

\begin{proof}
Given $G \in \cG$ and $F \in \cA$, as well as $p > 1$ we have
\[ \Hom(G, F[p]) = \Hom(F, S(G)[-p])^\vee = 0 \]
as $S(G)[-p] \in \cA[1-p]$. Therefore, the assumptions of Lemma~\ref{lem:induce} are satisfied.
\end{proof}

\begin{Ex}\label{ex:surfaces}
Let $X$ be a smooth projective surface with canonical divisor $K_X$, and let $H$ be an ample divisor.
Assume that there exists a semiorthogonal decomposition
\[
\Db(X) = \langle \DD_1,\DD_2 \rangle
\]
and a set of generators $\GG$ of $\DD_2$ such that $\GG$ consists of slope-semistable torsion-free sheaves with
\[
\mu_H(G\otimes K_X) \leq \beta < \mu_H(G),
\]
for $\beta\in\Q$ and for all $G\in\GG$; these assumptions often hold for exceptional collections of vector bundles. Then one can use Corollary~\ref{cor:induce} to prove the existence of a heart on $\DD_1$ as follows. 

Consider the tilted heart $\Coh_H^{\beta}(X)$ of Definition~\ref{def:Cohbeta}.
For $G \in \GG$, we have $G \in \Coh_H^{\beta}(X)$ and
$S(G) = G \otimes K_X[2] \in \Coh_H^{\beta}(X)[1]$. So the assumptions of Corollary~\ref{cor:induce} are satisfied, and  
we obtain an induced heart of a bounded t-structure $\AA_1=\Coh_H^{\beta}(X)\cap\DD_1$ on $\DD_1$.

Moreover, the results of the following section will give a Bridgeland stability condition on $\DD_1$ with heart $\AA_1$, constructed as follows. 
Let ${Z}_H(E) := \mathfrak{i}\, H^2 \rk(E) - H\,\ch_1(E)$ be the weak central charge on $\Coh(X)$ inducing
slope-stability for coherent sheaves, and let $u_{\beta}$ be the unit vector in the upper half plane with slope $-\frac{\Re u_{\beta}}{\Im u_\beta} = \beta$.
The stability condition is defined by $(\AA_1, Z_1)$ with
\[ 
Z_1(E) := \frac 1{u_\beta} {Z}_H(E).
\]

We note that $Z_1$ is a weak stability function on $\Coh_H^\beta(X)$, with $Z_1(E) = 0$ for $E \in \Coh_H^\beta(X)$ if and only if $E = H^0(E)$ is a 0-dimensional
torsion sheaf. 
However, if $E= H^0(E)$ is a 0-dimensional torsion sheaf, then $\Hom(G, E) \neq 0$ for any $G \in
\cG$, and hence $E \notin \cD_1$. This shows that $Z_1$ is a stability function for $\AA_1$.

The assumption $\beta \in \Q$ implies that the category $\Coh_H^{\beta}(X)$ is noetherian (see e.g.~\cite[Proof of Lemma~3.2.4]{BMT:3folds-BG}), and thus the same holds for its subcategory $\AA_1$; 
$\beta \in \Q$ also implies that the stability function $Z_1$ is discrete. By \cite[Proposition~B.2]{localP2} this shows that $(\AA_1, Z_1)$
satisfies the Harder-Narasimhan property. Proposition~\ref{prop:inducestability} will both show the existence of HN filtrations more generally, and establish the support property for $(\AA_1, Z_1)$. 
\end{Ex}

\section{Inducing stability conditions}
\label{sec:inducestability}

The goal of this section is to enhance the method of the previous section, and show that when $\DD_2$ is generated by an exceptional collection, then we can use the same procedure to induce a stability condition on $\DD_1$ from a stability condition on $\DD$, such that the underlying hearts are related by the construction of Lemma~\ref{lem:induce} and Corollary~\ref{cor:induce}.

\subsection*{Result}
Let $E_1, \dots, E_m$ be an exceptional collection in a triangulated category
$\DD$.
We let $\DD_2 = \langle E_1, \dots, E_m \rangle$ be the category generated by the exceptional objects, and we write
\[ \DD = \langle \DD_1, \DD_2 \rangle \]
for the resulting semiorthogonal decomposition of $\DD$. We continue to write $S$ for the Serre functor on $\DD$. The main result of this section is the following:

\begin{Prop} \label{prop:inducestability}
Let $\sigma = (\cA, Z)$ be a weak stability condition on $\DD$ with the following properties for all $i = 1, \dots, m$:
\begin{enumerate}
\item $E_i \in \cA$,
\item $S(E_i) \in \cA[1]$, and
\item $Z(E_i) \neq 0$.
\end{enumerate}
Assume moreover that there are no objects $0 \neq F \in \cA_1:=\cA \cap \cD_1$ with
$Z(F) = 0$ (i.e., $Z_1:=Z|_{K(\cA_1)}$ is a stability function on $\cA_1$).
Then the pair $\sigma_1 = (\cA_1, Z_1)$ is a stability condition on $\DD_1$.
\end{Prop}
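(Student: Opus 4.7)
The plan is to verify the three defining properties of a Bridgeland stability condition for $\sigma_1 = (\cA_1, Z_1)$ on $\DD_1$, leveraging the corresponding properties of $\sigma = (\cA, Z)$ on $\DD$. For the heart: applying Corollary~\ref{cor:induce} with the spanning class $\GG = \{E_1, \dots, E_m\}$ of $\DD_2$, the hypotheses $E_i \in \cA$ and $S(E_i) \in \cA[1]$ yield that $\cA_1 = \cA \cap \DD_1$ is the heart of a bounded t-structure on $\DD_1$. The stability function condition on $Z_1$ is precisely the extra hypothesis stated in the proposition.

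For the Harder--Narasimhan filtration property, I would transfer the $\sigma$-HN filtration in $\cA$ to a $\sigma_1$-HN filtration in $\cA_1$. Given $F \in \cA_1$, let
\[
0 = F_0 \subset F_1 \subset \cdots \subset F_n = F
\]
be its $\sigma$-HN filtration in $\cA$, with $\sigma$-semistable quotients $A_k = F_k/F_{k-1}$ of strictly decreasing slopes. The key claim is that each $F_k$ (and hence each $A_k$) lies in $\cA_1$. Since $F_k \in \cA$, the Serre duality identity $\Hom(E_j, F_k[p]) = \Hom(F_k, S(E_j)[-p])^\vee$ together with $S(E_j) \in \cA[1]$ forces the group to vanish for $p \geq 2$ (the target lies in $\cA[1-p]$ with $1-p \leq -1$) and trivially for $p \leq -1$. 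The case $p = 0$ follows from the inclusion $\Hom(E_j, F_k) \hookrightarrow \Hom(E_j, F) = 0$. Only the case $p = 1$ needs genuine input: via the long exact sequence coming from $0 \to F_k \to F \to F/F_k \to 0$ together with $\Hom(E_j, F) = \Hom(E_j, F[1]) = 0$, the vanishing $\Hom(E_j, F_k[1]) = 0$ is equivalent to $\Hom(E_j, F/F_k) = 0$; this must be established by comparing the semistability ordering of the $A_k$ with the finite-slope structure of $E_j$ (well-defined since $Z(E_j) \neq 0$). Granted the key claim, each $A_k$ is $\sigma$-semistable in $\cA$ and lies in $\cA_1$; since any $\cA_1$-subobject of $A_k$ is automatically an $\cA$-subobject with the same $Z$-value, $A_k$ is $\sigma_1$-semistable in $\cA_1$, so the $\sigma$-HN filtration serves directly as the $\sigma_1$-HN filtration.

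The support property is then immediate: one takes the same quadratic form $Q$ on $\Lambda \otimes \R$ witnessing the support property for $\sigma$. Since $\ker Z_1 \subset \ker Z$ as subspaces of $\Lambda \otimes \R$, negative definiteness on $\ker Z_1$ is inherited, and $Q \geq 0$ on $\sigma_1$-semistable objects because these are $\sigma$-semistable by the HN-transfer argument above.

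The main obstacle is the vanishing $\Hom(E_j, F/F_k) = 0$ appearing in the HN transfer. The long exact sequence alone only gives the tautological reformulation $\Hom(E_j, F_k[1]) \cong \Hom(E_j, F/F_k)$, so the vanishing must be extracted from a careful slope comparison: one has to rule out any nonzero morphism from (an HN factor of) $E_j$ into the slope-restricted quotient $F/F_k$, exploiting the assumption $Z(E_j) \neq 0$ to endow $E_j$ with a well-defined finite range of $\sigma$-slopes and playing this against the strictly decreasing slopes of the $A_i$ with $i > k$.
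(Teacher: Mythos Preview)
Your plan for the heart (via Corollary~\ref{cor:induce}) and the stability function is correct and matches the paper. However, both your HN-transfer argument and your support property argument rest on a claim that is \emph{false} in general: the $\sigma$-HN filtration of an object $F \in \cA_1$ need not stay in $\cA_1$, and consequently $\sigma_1$-semistable objects need not be $\sigma$-semistable.

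The vanishing $\Hom(E_j, F/F_k) = 0$ you isolate is not merely hard to prove; it genuinely fails. Nothing in the hypotheses relates the slope of $E_j$ to the slopes of the HN factors $A_i$ of $F$, so no slope comparison can force $\Hom(E_j, A_i) = 0$. Concretely, for $m=1$ the long exact sequence applied to $0 \to A \to F \to B \to 0$ with $F \in \cA_1$ gives $\Hom(E, B) \cong \Ext^1(E, A)$, which is typically nonzero; then $B \notin \cA_1$ (this is the content of Lemma~\ref{lem:RestrSes}). The paper's proof of the support property makes this explicit: for a $\sigma_1$-semistable $F$ with $\mu_Z(F) > \mu_Z(E)$ one only obtains $\mu_Z(B) \ge \min\{\mu_Z(F), \mu_Z(E)\}$ for $\cA$-quotients $B$, so $F$ may well be $\sigma$-unstable; see Remark~\ref{rem:SstableA1IffSstableA}. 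Hence your deduction of the support property, which presumes every $\sigma_1$-semistable object is $\sigma$-semistable, breaks down precisely in the interesting range of slopes.

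The paper's argument is genuinely different in both places. For HN filtrations (Lemma~\ref{lem:induceHN}) it never compares the two filtrations directly; instead it observes that any $\cA_1$-subobject $A \subset F$ satisfies $\HN^Z_{\cA}(A) \subset \HN^Z_{\cA}(F)$, so the mass $m^Z_{\cA}(A)$ is bounded, hence only finitely many classes $v(A) \in \Lambda$ occur, and the polygon $\HN^Z_{\cA_1}(F)$ is finite polyhedral on the left. For the support property, Lemma~\ref{lem:RestrSes} replaces an arbitrary $\cA$-quotient $B$ of a $\sigma_1$-semistable $F$ by the $\cA_1$-quotient $B_1$, differing by copies of $E$; since $\mu_Z(B_1) \ge \mu_Z(F)$ one gets $\mu_Z(B) \ge \min\{\mu_Z(F), \mu_Z(E)\}$, which confines $\HN^Z_{\cA}(F)$ to a triangle with one vertex angle equal to the argument of $Z(E)$, giving the bound $m^Z_{\cA}(F) \le D\,\lvert Z(F)\rvert$.
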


\subsection*{Inducing Harder-Narasimhan filtrations}
We start with some easy observations about the category $\cA_1$. If $F, G \in \cA_1$ are objects
with a morphism $f \colon F \to G$, then $f$ is injective (resp.~surjective) as a morphism in
$\cA_1$ if and only if it is injective (resp.~surjective) as a morphism in $\cA$. In other words,
the inclusion $\cA_1 \into \cA$ is an exact functor.

We will prove the existence of Harder-Narasimhan filtrations on $\cA_1$ in this setting:

\begin{Lem} \label{lem:induceHN}
Let $(\cA, Z)$ be a weak stability condition. Let $\cA_1 \subset \cA$ be an abelian
subcategory such that the inclusion functor is exact. Assume moreover that $Z$ restricted to $K(\cA_1)$ is a stability function.
Then Harder-Narasimhan filtrations exist in $\cA_1$ for the stability function $Z$.
\end{Lem}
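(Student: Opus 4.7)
The plan is to verify the two chain conditions of \cite[Lemma~2.4]{Bridgeland:Stab} for the pair $(\cA_1, Z|_{K(\cA_1)})$; once they hold, Harder--Narasimhan filtrations exist in $\cA_1$ by that lemma. The conditions are: (a) no infinite descending chain $\cdots\subset E_{j+1}\subset E_j\subset\cdots\subset E_0$ of subobjects in $\cA_1$ with $\phi(E_{j+1})>\phi(E_j)$; and (b) no infinite ascending chain of quotients in $\cA_1$ with strictly decreasing phases.

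First I would observe that since the inclusion $\cA_1\hookrightarrow\cA$ is exact, any such chain in $\cA_1$ is simultaneously a chain in $\cA$ with identical successive subquotients. Moreover, because $Z|_{K(\cA_1)}$ is a genuine stability function, every nonzero object of $\cA_1$ has a well-defined phase $\phi\in(0,1]$ coinciding with its phase in $\cA$. So a violation of (a) or (b) inside $\cA_1$ yields the analogous violation in $\cA$, with the additional property that every object involved satisfies $Z\ne 0$.

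Then I would derive the chain conditions from the existence of HN filtrations in $\cA$ (built into the weak stability axioms). Suppose (a) fails, witnessed by a chain with ambient object $E_0\in\cA_1$. Let $0=F_0\subset F_1\subset\cdots\subset F_n=E_0$ be the HN filtration of $E_0$ in $\cA$, with $\sigma$-semistable factors $A_i=F_i/F_{i-1}$ of slopes $\mu_1>\cdots>\mu_n$. For each $j$, intersecting the chain with this filtration yields a finite filtration of $E_j$ in $\cA$ whose successive quotients are $\cA$-subobjects of the $A_i$; the see-saw principle then bounds $\phi(E_j)\in[\mu_n,\mu_1]$. A pigeonhole argument applied to the numerical profiles $\bigl(v(E_j\cap F_i/E_j\cap F_{i-1})\bigr)_{i=1}^n\in\Lambda^n$ reduces an infinite violating chain to an infinite descending chain of subobjects inside a single $\sigma$-semistable factor $A_i$ with strictly increasing phases, which is precluded by the defining inequality of semistability. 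Condition (b) is proved dually using HN quotients of $E_0$ in $\cA$.

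The hard part will be making the pigeonhole/limiting-phase step rigorous: the numerical profiles take values in the finite-rank lattice $\Lambda^n$, and strict monotonicity of $\phi(E_j)$, together with non-increase of $v(E_j\cap F_i)$ along the chain, should force a descent in this lattice that can continue only finitely many times before the chain is trapped inside a single semistable factor, where the see-saw principle applied to semistability closes the argument.
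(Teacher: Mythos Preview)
Your approach has a real gap at precisely the step you flag as the hard part, and the concluding clause is incorrect. Semistability of $A_i$ in $\cA$ only says that every subobject $B\subset A_i$ satisfies $\mu_Z(B)\le\mu_Z(A_i)$; it does \emph{not} preclude an infinite descending chain $\cdots\subset B_2\subset B_1\subset A_i$ with strictly increasing slopes all below $\mu_Z(A_i)$. Chain conditions are genuinely extra information---that is exactly why \cite[Lemma~2.4]{Bridgeland:Stab} has hypotheses beyond the existence of a stability function. Nor can the ``pigeonhole'' be made rigorous as stated: $\Lambda^n$ is infinite with no natural well-ordering, $Z$ need not have discrete image, and you never invoke the one piece of structure that forces finiteness here, namely the support property (which in this paper is part of the definition of a weak stability condition). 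A further technical issue is that the intersections $E_j\cap F_i$ live in $\cA$ but need not lie in $\cA_1$, so your reduction does not stay in the category where you need the chain condition.

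The paper's argument fills exactly this gap via the support property. It introduces the Harder--Narasimhan polygon $\HN^Z_\cA(F)=\mathrm{conv}\{Z(A):A\subset F\text{ in }\cA\}$ and the associated mass $m^Z_\cA(F)$. For any $\cA_1$-subobject $A\subset F$ with $\mu_Z(A)>\mu_Z(F)$ one has $\HN^Z_\cA(A)\subset\HN^Z_\cA(F)$, so $m^Z_\cA(A)$ is bounded above by a constant depending only on $F$. The support property, in its norm formulation $m^Z_\cA(A)\ge C\,\|v(A)\|$, then bounds $\|v(A)\|$; hence only finitely many classes $v(A)\in\Lambda$ occur among destabilizing subobjects. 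This makes $\HN^Z_{\cA_1}(F)$ finite polyhedral on the left, which---because $Z$ is a genuine stability function on $\cA_1$---is equivalent to the existence of an HN filtration of $F$ in $\cA_1$.
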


We break up the proof of this result into several steps. We will deduce the existence of Harder-Narasimhan filtrations for objects $F \in \cA_1$ from the
existence of Harder-Narasimhan polygons and the concept of mass. We recall all the necessary
definitions and basic facts here; see \cite[Section~3]{Arend:short} for some context.

Let $\cB$ be an abelian category, and $Z \colon K(\cB) \to \C$ a weak stability function on
$\cB$ (see Definition~\ref{def:stabilityfunction}).

\def\HN{\mathop{\mathrm{HN}}}

\begin{Def}
The Harder-Narasimhan polygon $\HN^Z_\cB(F)$ of an object $F \in \cB$ is the convex hull in the complex plane of the set
\[
\stv{Z(A)}{A \subset F}.
\]
\end{Def}

We say that the Harder-Narasimhan polygon $\HN^Z_\cB(F)$ is \emph{finite polyhedral on the left} if
the intersection of $\HN^Z_\cB(F)$ with the closed half-plane to the left of the line through 0 and
$Z(F)$ is a polygon. The following Proposition is a variant of a well-known statement. 

\begin{Prop} \label{prop:HNpolygon}
If $F \in \cB$ admits a Harder-Narasimhan filtration, then $\HN^Z_\cB(F)$ is finite polyhedral
on the left. 
If moreover $Z$ is a stability function, then the converse holds.
\end{Prop}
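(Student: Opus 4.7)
The plan is to prove both directions by establishing that the vertex sequence of any HN filtration of $F$ coincides exactly with the vertex sequence of the left boundary of $\HN^Z_\cB(F)$.

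For the forward direction, assume $F$ admits a HN filtration $0 = F_0 \subset F_1 \subset \cdots \subset F_n = F$ with semistable quotients $A_i := F_i/F_{i-1}$ of strictly decreasing slopes $\mu_1 > \cdots > \mu_n$. First I would check that the polygonal path through $Z(F_0), Z(F_1), \ldots, Z(F_n)$ is concave: its $i$-th edge has direction $Z(A_i)$, and strictly decreasing $\mu_i$ translates into a strictly increasing argument of $Z(A_i)$ in the upper half-plane. The core step is then to show that for every subobject $B \subset F$, the point $Z(B)$ lies on or to the right of this path. I would intersect $B$ with the filtration: setting $B_i := B \cap F_i$ gives injections $B_i/B_{i-1} \hookrightarrow A_i$, whence $\mu_Z(B_i/B_{i-1}) \leq \mu_i$ by semistability of $A_i$; then $Z(B) = \sum_i Z(B_i/B_{i-1})$ together with strict decrease of the $\mu_i$ yields the desired planar estimate. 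Since the points $Z(F_i)$ are themselves realized by subobjects, this identifies them as the vertices of the left boundary and shows that $\HN^Z_\cB(F)$ is finite polyhedral on the left.

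For the converse, assume $Z$ is a stability function and the left boundary of $\HN^Z_\cB(F)$ is a finite polygonal chain with vertices $v_0 = 0, v_1, \ldots, v_n = Z(F)$ arranged with strictly decreasing edge slopes. First I would show each vertex is attained: by definition $v_i$ is a convex combination of finitely many $Z(A^{(j)})$ with $A^{(j)} \subset F$, and extremality of $v_i$ combined with the stability-function hypothesis forces each contributing $A^{(j)}$ to itself satisfy $Z(A^{(j)}) = v_i$, producing a subobject $B_i \subset F$ with $Z(B_i) = v_i$. Next I would chain the $B_i$ into a filtration $B_0 \subset B_1 \subset \cdots \subset B_n$ by successively replacing $B_i$ with $B_{i-1} + B_i$; the identity $Z(B_{i-1} + B_i) = Z(B_{i-1}) + Z(B_i) - Z(B_{i-1} \cap B_i)$ together with convexity of the polygon forces $Z(B_{i-1} \cap B_i) = Z(B_{i-1})$, so the sum still realizes $v_i$. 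Finally, semistability of $B_i/B_{i-1}$ of slope equal to the slope of the edge $[v_{i-1}, v_i]$ follows because a destabilizing subobject would lift to $B_{i-1} \subsetneq B' \subsetneq B_i$ with $Z(B')$ strictly to the left of that edge, contradicting the polyhedral description.

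The main obstacle I anticipate is the realization step in the converse: showing that the vertices $v_i$ of the convex hull are genuinely attained as $Z$-values of subobjects, rather than merely approached in the limit. This is exactly where strictness of the stability function is essential, as opposed to only a weak stability function: if nonzero objects with $\Im Z = \Re Z = 0$ were allowed, one could shift candidate subobjects horizontally without changing the $Z$-image, and extremal vertices could only be approached but never realized. A secondary subtlety, the stability of the successive quotients $B_i/B_{i-1}$, is a routine consequence of the strict concavity of the left boundary once the realization and chaining steps are in place.
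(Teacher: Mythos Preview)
Your argument is correct and is precisely the standard proof; the paper itself does not prove the statement but simply cites it as well-known (referring to \cite[Proposition~3.3]{Arend:short}), so there is nothing substantive to compare.

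One small correction to your commentary: the realization step for the vertices $v_i$ does not actually require $Z$ to be a stability function. Since $v_i$ is an extreme point of the convex hull $\HN^Z_\cB(F)$, Carath\'eodory plus extremality already force $v_i = Z(B_i)$ for some $B_i \subset F$, regardless of whether $Z$ is strict. The place where strictness is genuinely needed is later: first, to conclude $B_n = F$ from $Z(B_n) = Z(F)$ (otherwise $F/B_n$ could be a nonzero object with $Z = 0$, and your chain would not terminate at $F$); and second, in your Step~3, to rule out destabilising subobjects of $B_i/B_{i-1}$ with $Z = 0$. Your final paragraph attributes the obstacle to the wrong step, but the proof itself goes through.
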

\begin{proof}
When $Z$ is a stability function, then both directions are well-known, see e.g., \cite[Proposition
3.3]{Arend:short}. When $Z$ is only a weak stability function, the first statement is proved
easily with the same arguments as in the case of a stability function.
\end{proof}

The support property as defined here is equivalent to the one originally
appearing in \cite{Bridgeland:Stab}:

\begin{Prop}[{\cite[Lemma~A.4]{BMS}}] \label{prop:supportproperty2}
A pair $(\cB, Z)$ of an abelian category with a weak stability function satisfies the support
property if and only if there is a metric $\| \cdot \|$ on $\Lambda \otimes \R$, and a constant $C >
0$ such that for all
$Z$-semistable objects $F \in \cA$, we have 
\[ 
\abs{Z(F)} \ge C \| v(F) \|.
\]
\end{Prop}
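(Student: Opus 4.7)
The plan is to prove the two implications separately, both relying on the finite-dimensionality of $\Lambda \otimes \R$; in one direction I would use a compactness argument, and in the other I would write down an explicit quadratic form.

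First, suppose the support property holds in the quadratic form version: there is a quadratic form $Q$ on $\Lambda \otimes \R$ with $Q|_{\ker Z}$ negative definite and $Q(v(F)) \geq 0$ for every semistable $F$. Fix any norm $\|\cdot\|$ on $\Lambda \otimes \R$ and suppose for contradiction that no constant $C > 0$ works; then there exists a sequence of semistable objects $F_n$ with $v_n := v(F_n) \neq 0$ and $\abs{Z(F_n)} / \|v_n\| \to 0$. Set $w_n := v_n / \|v_n\|$; these lie on the unit sphere in $\Lambda \otimes \R$, which is compact. Passing to a subsequence, $w_n \to w$ with $\|w\|=1$, and by continuity of $Z$ we have $Z(w) = 0$, so $w \in \ker Z$. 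Then $Q(w) < 0$ since $Q$ is negative definite on $\ker Z$, so by continuity of $Q$ we get $Q(w_n) < 0$ for $n \gg 0$. But $Q(w_n) = Q(v_n)/\|v_n\|^2 \geq 0$ by hypothesis, a contradiction.

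Conversely, suppose there exists a metric $\|\cdot\|$ and $C > 0$ with $\abs{Z(F)} \geq C\|v(F)\|$ for every semistable $F$. Replace $\|\cdot\|$ by an equivalent Euclidean norm (any two norms on the finite-dimensional space $\Lambda \otimes \R$ are equivalent, possibly changing the constant $C$); then $\|v\|^2$ becomes a positive definite quadratic form. Define
\[
Q(v) := \abs{Z(v)}^2 - C^2 \|v\|^2,
\]
which is a quadratic form on $\Lambda \otimes \R$ since $\abs{Z(\cdot)}^2 = (\Re Z)^2 + (\Im Z)^2$ is quadratic. On $\ker Z$ we have $Q(v) = -C^2 \|v\|^2$, which is negative definite. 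On any semistable class, $\abs{Z(F)}^2 \geq C^2 \|v(F)\|^2$ gives $Q(v(F)) \geq 0$, as required.

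There isn't really a main obstacle here: the argument is a standard compactness-plus-continuity exercise on the finite-dimensional space $\Lambda \otimes \R$. The only subtle point I would be careful about is, in the second direction, replacing the abstract metric by a Euclidean norm so that $\|v\|^2$ actually defines a quadratic form and $Q$ above is well-defined as such.
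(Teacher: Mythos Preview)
Your argument is correct and is the standard proof. Note that the paper does not actually prove this proposition: it merely records the statement with a citation to \cite[Lemma~A.4]{BMS}, so there is no in-paper proof to compare against. Your sketch is essentially the argument one finds in the cited reference.
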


We now return to the setting of Lemma~\ref{lem:induceHN}, and assume 
that $(\cA, Z)$ is a weak stability condition.

\begin{Def}
The \emph{mass} $m^Z_\cA(F)$ of an object $F \in \cA$ is the length of the boundary of the Harder-Narasimhan
polygon $\HN^Z_\cA(F)$ on the left, between $0$ and $Z(F)$; equivalently, if $F_i$ are the HN filtration steps of $F$, then $m^Z_\cA(F) = \sum \abs{Z(F_i/F_{i-1})}$. 
\end{Def}

The triangle inequality gives:

\begin{Prop} \label{prop:massboundsmetric}
With a metric $\| \cdot \|$ and $C > 0$ as in Proposition~\ref{prop:supportproperty2}, any object $F$ satisfies
\[
m^Z_\cB(F) \ge C \| v(F) \|.
\]
\end{Prop}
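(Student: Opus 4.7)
The plan is to unwind the definition of the mass in terms of the Harder--Narasimhan filtration of $F$, and then apply the support property factor-by-factor together with the triangle inequality in the metric $\|\cdot\|$.

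First, using the existence of Harder--Narasimhan filtrations (part of the axioms of a weak stability condition), I would decompose $F$ into its $\sigma$-semistable factors $F_1,\dots,F_n$, ordered by decreasing phase. The key geometric observation is that the left boundary of the Harder--Narasimhan polygon $\HN^Z_\cB(F)$ between $0$ and $Z(F)$ is precisely the polygonal path obtained by concatenating the vectors $Z(F_1), Z(F_2), \dots, Z(F_n)$ in that order; this is the standard fact that the HN factors maximize the left boundary of the polygon. In particular,
\[
m^Z_\cB(F) \;=\; \sum_{i=1}^n \bigl|Z(F_i)\bigr|.
\]

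Next, I would apply the support property to each HN factor $F_i$. Since every $F_i$ is $\sigma$-semistable, Proposition~\ref{prop:supportproperty2} gives
\[
\bigl|Z(F_i)\bigr| \;\ge\; C\,\|v(F_i)\|
\]
for each $i$. Summing over $i$ and using the triangle inequality for the norm $\|\cdot\|$ on $\Lambda\otimes\R$, together with the additivity $v(F)=\sum_i v(F_i)$ in the Grothendieck group, yields
\[
m^Z_\cB(F) \;=\; \sum_{i=1}^n \bigl|Z(F_i)\bigr| \;\ge\; C\sum_{i=1}^n \|v(F_i)\| \;\ge\; C\,\Bigl\|\sum_{i=1}^n v(F_i)\Bigr\| \;=\; C\,\|v(F)\|,
\]
which is the claimed inequality.

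The only step that requires any care is identifying the mass as $\sum_i |Z(F_i)|$ in the weak (rather than strict) stability setting; but since the HN filtration still exists by hypothesis and distinct phases still impose the same convexity constraints on subobject charges, the same elementary argument as in the usual Bridgeland case (cf.\ \cite[Section~3]{Arend:short} and Proposition~\ref{prop:HNpolygon}) goes through. No obstacle is expected beyond this verification.
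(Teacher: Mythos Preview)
Your proposal is correct and is exactly the argument the paper has in mind: the paper's own proof consists of the single phrase ``The triangle inequality gives,'' and your unpacking via the Harder--Narasimhan factors $F_i$, the identity $m^Z_\cB(F)=\sum_i |Z(F_i)|$, and the factor-wise application of Proposition~\ref{prop:supportproperty2} is precisely what this phrase abbreviates. Your caveat about the weak setting is also harmless, since any semistable factor with $Z(F_i)=0$ must have $v(F_i)=0$ in $\Lambda$ by the negative-definiteness of $Q$ on $\ker Z$, so it contributes zero to both sides.
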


\begin{Lem} Let $F \in \cA_1$. Then the Harder-Narasimhan polygon $\HN^Z_{\cA_1}(F)$ (with respect to
subobjects in $\cA_1$) is finite polyhedral on the left.
\end{Lem}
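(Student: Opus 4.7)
The plan is to deduce finite polyhedrality of $\HN^Z_{\cA_1}(F)$ from that of the ambient polygon $\HN^Z_\cA(F)$, combining the support property of $(\cA,Z)$ with the discreteness of the finite-rank lattice $\Lambda$.

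First, since the inclusion $\cA_1 \hookrightarrow \cA$ is exact, every subobject of $F$ in $\cA_1$ is also a subobject of $F$ in $\cA$, so
\[
\HN^Z_{\cA_1}(F) \subseteq \HN^Z_\cA(F).
\]
By Proposition~\ref{prop:HNpolygon} applied to the weak stability condition $(\cA,Z)$, the right-hand polygon is finite polyhedral on the left; in particular, it is a compact convex region whose left boundary has finite length. Hence $\HN^Z_{\cA_1}(F)$ is bounded as well.

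Next I consider the vertices $p_0 = 0, p_1, p_2, \ldots$ on the left boundary of $\HN^Z_{\cA_1}(F)$. These arise from a strictly increasing chain $0 = A_0 \subsetneq A_1 \subsetneq A_2 \subsetneq \cdots$ of subobjects of $F$ in $\cA_1$ whose successive quotients $B_i := A_{i+1}/A_i$ have strictly decreasing slopes. Each $B_i \in \cA_1 \subseteq \cA$ satisfies $v(B_i) \ne 0$ in $\Lambda$, because $Z_1$ is a stability function on $\cA_1$ and so $Z(B_i) \ne 0$. Refining the chain by the Harder--Narasimhan filtration of each $B_i$ inside $\cA$ produces a filtration of $F$ in $\cA$ whose polygonal $Z$-path has total length $\sum_i m^Z_\cA(B_i)$ and whose vertices all lie inside the compact polygon $\HN^Z_\cA(F)$.

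The crucial estimate is that $\sum_i m^Z_\cA(B_i)$ is finite. Granting this, Proposition~\ref{prop:massboundsmetric} yields
\[
\sum_i \|v(B_i)\| \;\le\; C^{-1} \sum_i m^Z_\cA(B_i) \;<\; \infty,
\]
and the finite rank of the lattice $\Lambda$ (for which $\|v\| \ge \delta > 0$ for every nonzero lattice vector) forces the chain $(A_i)$ to terminate, yielding finite polyhedrality. The main obstacle is precisely this length bound: the concatenated Harder--Narasimhan path is generally not a convex curve at the junctions between successive $B_i$'s, so its total length is not immediately controlled by the perimeter of $\HN^Z_\cA(F)$. I would resolve this by exploiting that $\Im Z \ge 0$ on every segment (each intermediate quotient lies in $\cA$), making the path \emph{monotone} in the imaginary direction, and then comparing it piece by piece to the finitely many slope segments of the left boundary of $\HN^Z_\cA(F)$ to obtain a uniform length bound.
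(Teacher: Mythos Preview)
Your proposal has a genuine gap exactly where you flag it as ``the main obstacle.'' The proposed resolution via monotonicity in $\Im Z$ does not work: a path inside a bounded region that is monotone in the imaginary direction can still have arbitrarily large total length (think of repeated near-horizontal zigzags while creeping slowly upward---each segment has $\Im Z \ge 0$, all vertices lie in the bounded polygon, yet the real parts oscillate without control). The appeal to ``comparing piece by piece to the finitely many slope segments'' is too vague to close this, and I do not see how to make it precise without additional input.

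The paper's proof sidesteps this difficulty by a simple change of focus: bound the subobjects $A$ directly rather than the successive quotients $B_i$. For any subobject $A \subset F$ in $\cA_1$ with $\mu_Z(A) > \mu_Z(F)$, every subobject of $A$ in $\cA$ is also a subobject of $F$, so $\HN^Z_\cA(A) \subset \HN^Z_\cA(F)$. The left boundary of $\HN^Z_\cA(A)$ is then a \emph{single} convex curve from $0$ to $Z(A)$ lying inside the fixed bounded polygon (the left half of $\HN^Z_\cA(F)$); by elementary convexity its length $m^Z_\cA(A)$ is bounded by the perimeter of that polygon. Now Proposition~\ref{prop:massboundsmetric} bounds $\|v(A)\|$ uniformly, and lattice discreteness leaves only finitely many classes $v(A)$, hence only finitely many values $Z(A)$ to the left of the line through $0$ and $Z(F)$. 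This is already finite polyhedrality on the left.

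Your argument is easily repaired by exactly this switch: apply the mass bound to each $A_i$ rather than to each $B_i$. The convexity you need is then automatic (you are looking at a single HN polygon, not a concatenation), and no length estimate for a non-convex path is required.
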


\begin{proof}
For any subobject $A \subset F, A \in \cA_1$ we have $\HN^Z_\cA(A) \subset \HN^Z_\cA(F)$ by defintion. When we additionally assume $\mu_Z(A) > \mu_Z(A)$, a simple picture shows that the mass of $A$ is bounded (see also \cite[Lemma~5.5]{Arend:short}).

By Proposition~\ref{prop:massboundsmetric}, this implies a uniform bound for $\| v(A)\|$. Therefore, there are only finitely many possibilities for the class
$v(A)$ in $\Lambda$; therefore, the left-hand side of $\HN^Z_{\cA_1}(F)$ is the convex hull of finitely many points. 
\end{proof}

%

By Proposition~\ref{prop:HNpolygon}, this concludes the proof of Lemma~\ref{lem:induceHN}.

\subsection*{Inducing support property}
It remains to show that $(\cA_1, Z_1)$ has the support property.
By induction, we may assume for the rest of the section that $\DD_2$ is generated by a single exceptional object $E$.

\begin{Lem} \label{lem:4term}
Any $G \in \cA$ fits into a five term short exact sequence in $\cA$
\[ 0 \to \cH^{-1}_\cA(G_1) \to E \otimes \Hom(E, G) \to G \to \cH^0_{\cA} (G_1) \to E \otimes \Ext^1(E,G) \to 0 \]
with $G_1 \in \cD_1$.
\end{Lem}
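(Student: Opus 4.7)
The plan is to build $G_{1}$ as the output of the canonical two-step left mutation that kills first the contribution of $\Hom(E,G)$ and then that of $\Ext^{1}(E,G)$, and to read off the claimed four-term sequence by taking $\cA$-cohomology.

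First I would use the assumptions $E, G \in \cA$ and $S(E) \in \cA[1]$ together with Serre duality to observe that
\[
\Ext^{p}(E,G) \;=\; \Hom(G, S(E)[-p])^{\vee}
\]
vanishes for $p \geq 2$ (since $S(E)[-p] \in \cA[1-p]$ admits no maps from $G \in \cA$) and for $p < 0$ (by the $t$-structure bound). Only $\Hom(E,G)$ and $\Ext^{1}(E,G)$ can therefore be non-zero.

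Next I would carry out the mutation in two triangles. The evaluation triangle $E \otimes \Hom(E,G) \xrightarrow{\mathrm{ev}} G \to M$, combined with $\RHom(E, -)$ and the exceptional identification $\End(E) \otimes \Hom(E,G) = \Hom(E,G)$, gives $\Hom(E, M) = 0$, $\Ext^{1}(E, M) \cong \Ext^{1}(E, G)$, and $\Ext^{p}(E, M) = 0$ for all other $p$. Then the co-evaluation triangle
\[
E[-1] \otimes \Ext^{1}(E, G) \xrightarrow{\mathrm{coev}} M \to G_{1},
\]
with $\mathrm{coev}$ adjoint to the identity of $\Ext^{1}(E, G)$, produces by a parallel calculation $\RHom(E, G_{1}) = 0$, so $G_{1} \in \cD_{1}$ by Lemma~\ref{lem:obvious}.

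Finally I would extract the four-term sequence via $\cA$-cohomology. From the first triangle, since both outer terms lie in $\cA$, we get $H^{-1}_{\cA}(M) = \ker(\mathrm{ev})$ and $H^{0}_{\cA}(M) = \cok(\mathrm{ev})$. Since $E[-1] \otimes \Ext^{1}(E, G)$ has $\cA$-cohomology concentrated in degree $+1$, the long exact cohomology sequence for the second triangle yields $H^{-1}_{\cA}(G_{1}) = \ker(\mathrm{ev})$, $H^{i}_{\cA}(G_{1}) = 0$ for $i \neq -1, 0$, and a short exact sequence
\[
0 \to \cok(\mathrm{ev}) \to H^{0}_{\cA}(G_{1}) \to E \otimes \Ext^{1}(E, G) \to 0.
\]

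The main obstacle I expect is verifying $\ker(\mathrm{ev}) = 0$, which is what places $G_{1}$ in $\cA$ (hence in $\cA_{1}$) and makes the first arrow of the four-term sequence genuinely injective. A direct $\Ext$-sequence argument applied to $0 \to K \to E \otimes \Hom(E,G) \to \im(\mathrm{ev}) \to 0$ shows that $K := \ker(\mathrm{ev})$ satisfies $\RHom(E, K) = 0$ and hence lies in $\cA_{1}$; the ambient hypotheses of Proposition~\ref{prop:inducestability}, namely $Z(E) \neq 0$ and $Z|_{\cA_{1}}$ being a stability function, must then be used together with the subobject inclusion $K \hookrightarrow E^{\oplus \dim \Hom(E,G)}$ in $\cA$ to force $K = 0$. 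Once $K = 0$, splicing the sequence above with $0 \to E \otimes \Hom(E, G) \xrightarrow{\mathrm{ev}} G \to \cok(\mathrm{ev}) \to 0$ delivers the claimed four-term exact sequence.
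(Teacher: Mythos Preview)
Your two-step mutation is more elaborate than needed: the paper does it in one stroke, taking the long exact $\cA$-cohomology sequence of the single triangle $E \otimes \RHom(E,G) \xrightarrow{\mathrm{ev}} G \to G_1$ (with $G_1 = \Lmut_E G$), which directly produces
\[
0 \longrightarrow H^{-1}_\cA(G_1) \longrightarrow E \otimes \Hom(E,G) \longrightarrow G \longrightarrow H^0_\cA(G_1) \longrightarrow E \otimes \Ext^1(E,G) \longrightarrow 0.
\]
Your two triangles splice to exactly this same sequence, so the extra bookkeeping buys nothing.

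You are right that the entire content is the vanishing of $K := H^{-1}_\cA(G_1) = \ker(\mathrm{ev})$, and that the paper's one-line proof does not address it. However, your proposed resolution cannot work: the hypotheses $Z(E) \neq 0$ and ``$Z|_{\cA_1}$ is a stability function'' do \emph{not} force $K = 0$. Take $\cD = \Db(\P^1)$, $\cA = \Coh(\P^1)$, $E = \cO_{\P^1}$, and $Z = -\deg + \mathfrak{i}\,\rk$. Then $S(E) = \cO(-2)[1] \in \cA[1]$, $\cA_1 = \{\cO(-1)^{\oplus m}\}$, $Z(E) = \mathfrak{i} \neq 0$, and $Z|_{\cA_1}$ is a genuine stability function, so every hypothesis of Proposition~\ref{prop:inducestability} is satisfied. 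Yet for $G = \cO(1)$ the Euler sequence gives $\ker(\mathrm{ev}\colon \cO^2 \to \cO(1)) = \cO(-1) \neq 0$, and the mutation $G_1 = \cO(-1)[1]$ lies in $\cA_1[1]$, not in $\cA_1$.

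So neither the paper's proof nor your fix closes the gap; the four-term sequence as literally stated is false in this example. What the cohomology argument genuinely gives is the five-term sequence above with $H^{-1}_\cA(G_1), H^0_\cA(G_1) \in \cA_1$ (this follows from the proof of Lemma~\ref{lem:induce}), and the downstream use in Lemma~\ref{lem:RestrSes} and the support-property estimate has to be run carrying the extra kernel term.
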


\begin{proof}
This is the long exact cohomology sequence with respect to $\cA$ of the exact triangle
\[ E \otimes \RHom(E, G) \xrightarrow{\mathrm{ev}} G \to G_1 \]
where $G_1 = \cat{L}_E G \in \cD_1 = E^\perp$ is the mutation of $G$ at $E$.
\end{proof}

\begin{Lem}\label{lem:RestrSes} Consider a short exact sequence 
\begin{equation} \label{eq:AFBses} 0 \to A \to F \to B \to 0\end{equation}
in $\cA$ with $F \in \cA_1$. Then there is an exact sequence
\[ 0 \to \cH^{-1}_\cA(B_1) \to A_1 \to F \to \cH^0_\cA(B_1) \to 0 \] in $\cA_1$, together with  exact
sequences
\[
0 \to A \to A_1 \to E \otimes V \to 0, \quad
0 \to \cH^{-1}_\cA(B_1) \to E \otimes V \to B \to \cH^0_\cA(B_1) \to 0
\]
in $\cA$ where $V = \Hom(E, B)$.
\end{Lem}

\begin{proof}
As in Corollary \ref{cor:induce} we know that $\Ext^i(E, A) = \Ext^i(E, B) = 0 $ for $i \ge 2$.
The long exact Hom-sequence shows $\Hom(E, A) = 0 = \Ext^1(E, B)$ and
$V = \Hom(E, B) = \Ext^1(E, A)$. Therefore, the five term exact sequences 
of Lemma~\ref{lem:4term} for $A$ and $B$ behave as claimed. The long exact cohomology sequence of the projection of \eqref{eq:AFBses} to $\cD_1$ completes the proof.
\end{proof}

\begin{proof}[Proof of Proposition~\ref{prop:inducestability}]
We now fix a norm $\| \cdot \|$ on $\Lambda \otimes \R$ and a constant $C$ such that the weak stability conditions $(\cA, Z)$ satisfies the support property in the formulation given in Proposition~\ref{prop:supportproperty2}.
Assume that $F \in \cA_1$ is semistable within the category $\cA_1$, and consider a quotient $F \onto B$ as in Lemma~\ref{lem:RestrSes}. Since $\mu_Z(\cH^0_\cA(B_1)) \ge \mu_Z(F)$ (by semistability of $F$ in $\cA_1$), it follows from the last exact sequence of Lemma~\ref{lem:RestrSes} that $\mu_Z(B) \ge \min \{\mu_Z(F), \mu_Z^-(E)\}$.
In particular, if $\mu_Z(F) \le \mu_Z^-(E)$, then $F$ is also semistable as an object of $\cA$, and thus satisfies the support property with
respect to the same constant $C$. Otherwise, the left-hand side of the Harder-Narasimhan polygon $\HN^Z_{\cA}(F)$ is contained in the triangle with vertexes $0, z, Z(F)$ where $z$ is the point on the 
negative real line such that $Z(F) - z$ has slope corresponding to $\mu_Z^-(E)$ (see Figure~\ref{fig:phi} for the case where $E$ is semistable); therefore
$m^Z_\cA(F) \le \abs{z} + \abs{Z(F)-z}$. 

\begin{figure}[!tbp]
\begin{tikzpicture}[line cap=round,line join=round,>=triangle 45,x=1.0cm,y=1.0cm]
\clip(-6.26,-0.7) rectangle (5.42,3.3);
\fill[color=zzttqq,fill=zzttqq,fill opacity=0.1] (-4,1.5) -- (-3,0) -- (0,0) -- cycle;
\draw [shift={(-3,0)},color=qqwuqq,fill=qqwuqq,fill opacity=0.1] (0,0) -- (0:0.4) arc (0:123.69:0.4) -- cycle;
\draw [shift={(0,0)},color=qqwuqq,fill=qqwuqq,fill opacity=0.1] (0,0) -- (0:0.4) arc (0:123.69:0.4) -- cycle;
\draw [->] (0,0) -- (-1.5,2.25);
\draw [->] (0,0) -- (-4,1.5);
\draw (-4,1.5)-- (-3,0);
\draw (-4,1.5)-- (-1,1.5);
\draw [color=zzttqq] (-4,1.5)-- (-3,0);
\draw [color=zzttqq] (-3,0)-- (0,0);
\draw [color=zzttqq] (0,0)-- (-4,1.5);
\draw [->] (-5,0) -- (3,0);
\draw (2.9,0.14) node[anchor=north west] {$\Re$};
\draw [->] (0,0) -- (0,3);
\draw (0.02,3) node[anchor=north west] {$\Im$};
\begin{scriptsize}
\draw[color=tttttt] (-2,2.25) node {$Z(E)$};
\draw[color=tttttt] (-4.54,1.46) node {$Z(F)$};
\fill [color=tttttt] (-3,0) circle (1.5pt);
\draw[color=tttttt] (-3.0,-0.36) node {$z$};
\draw[color=tttttt] (-3.7,0.65) node {$a_1$};
\draw[color=tttttt] (-1.5,-0.26) node {$a_2$};
\draw[color=tttttt] (-1.79,0.89) node {$b$};
\draw[color=qqwuqq] (-2.54,0.32) node {$\phi$};
\draw[color=qqwuqq] (0.46,0.32) node {$\phi$};
\fill [color=uququq] (0,0) circle (1.5pt);
\draw[color=uququq] (0.00,-0.36) node {$0$};
\end{scriptsize}
\end{tikzpicture}
\caption{The left-hand side of $\HN^Z_{\cA}(F)$ is contained in the triangle $0, z, Z(F)$.}
\label{fig:phi}
\end{figure}

By Lemma~\ref{lem:8thgrade},
there is a constant $D > 0$, depending only on $\mu_Z^-(E)$, such that
\[
m^Z_\cA(F) \le \abs{z} + \abs{Z(F) - z} \le D \abs{Z(F)}.
\]
Combined with Proposition~\ref{prop:massboundsmetric}, it follows that $\| v(F) \| \le \frac DC
\abs{Z(F)}$ for all semistable objects $F \in \cA_1$, i.e., the pair $(\cA_1, Z)$ satisfies the
support property.
\end{proof}

\begin{Lem} \label{lem:8thgrade}
Let $0 < \phi < \pi$ be a fixed angle. Then there is a constant $D > 0$ such that for all triangles with
one angle given by $\phi$, and with adjacent side lengths $a_1, a_2$ and $b$ the side length opposite
of $\phi$, we have $a_1 + a_2 \le D b$.
\end{Lem}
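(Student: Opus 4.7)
The plan is to apply the law of sines directly. Label the three angles of the triangle by $\phi$ (which is given, opposite to the side of length $b$) and $\alpha_1, \alpha_2$ (opposite to the sides of length $a_1, a_2$ respectively). Then $\alpha_1 + \alpha_2 = \pi - \phi$, and the law of sines gives
\[
\frac{a_i}{\sin \alpha_i} \;=\; \frac{b}{\sin\phi}, \qquad i = 1, 2.
\]
Since $\sin \alpha_i \le 1$ and $\sin\phi > 0$ because $0 < \phi < \pi$, summing yields
\[
a_1 + a_2 \;=\; \frac{b}{\sin\phi}\bigl(\sin \alpha_1 + \sin\alpha_2\bigr) \;\le\; \frac{2}{\sin\phi}\, b,
\]
so one may take $D = 2/\sin\phi$ (a slightly better constant $1/\sin(\phi/2)$ can be extracted by maximizing $\sin\alpha_1 + \sin\alpha_2$ under the constraint $\alpha_1 + \alpha_2 = \pi - \phi$, but this is not needed).

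If one prefers to avoid the law of sines, an equally short route is via the law of cosines: $b^2 = a_1^2 + a_2^2 - 2 a_1 a_2 \cos\phi$. By homogeneity one may rescale so that $a_1 + a_2 = 1$, leaving the single parameter $t = a_1 \in [0,1]$; then $b^2 = 1 - 2 t(1-t)(1 + \cos\phi)$ is a continuous positive function on the compact interval $[0,1]$ (positive because $1 + \cos\phi < 2$ when $\phi > 0$), so it attains a positive minimum, giving the desired $D$.

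There is no real obstacle; the only point worth flagging is the hypothesis $0 < \phi < \pi$, which guarantees $\sin\phi > 0$ and hence that $D$ is finite. This is exactly the setting in which the lemma is applied in the proof of Proposition~\ref{prop:inducestability}: there $\phi$ is the angle between $Z(E)$ and the negative real axis, which is bounded away from both $0$ and $\pi$ by the assumption that $Z(E) \ne 0$ and $E \in \cA$.
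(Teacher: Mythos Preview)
Your proof is correct. The paper takes the law-of-cosines route that you sketch as an alternative, but instead of a compactness argument it uses the explicit identity
\[
b^2 = \frac{1-\cos\phi}{2}(a_1+a_2)^2 + \frac{1+\cos\phi}{2}(a_1-a_2)^2,
\]
drops the nonnegative second term, and reads off the sharp constant $D = 1/\sin(\phi/2)$ directly. Your law-of-sines argument is even quicker and yields the slightly weaker $D = 2/\sin\phi$; either is perfectly adequate here. One small correction to your closing remark on the application: in the proof of Proposition~\ref{prop:inducestability}, the angle $\phi$ is the argument of $Z(E)$, i.e., the angle between $Z(E)$ and the \emph{positive} real axis (see Figure~\ref{fig:phi}), not the negative one.
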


\begin{proof}
With $D = \sqrt{\frac{2}{1+\cos \phi}}$, this follows from
\[ b^2 = a_1^2 + a_2^2 - 2 a_1 a_2 \cos \phi = \frac{1 + \cos \phi}{2}(a_1 + a_2)^2 + \frac{1 - \cos \phi}{2}(a_1 - a_2)^2.\qedhere \]
\end{proof}

We note the following observation made in the proof:
\begin{Rem}\label{rem:SstableA1IffSstableA}
An object $F \in \cA_1$ with $\mu_Z(F) \le \mu_Z^-(E)$ is
semistable as an object in $\cA_1$ if and only if it is semistable as an object in $\cA$. More generally, any object $F \in \cA_1$ satisfies
$\mu_{Z}^-(F) \ge \min \{ \mu_{Z_1}^-(F), \mu_Z^-(E) \}$, where 
$\mu_Z^-(F)$ is computed by the HN filtration of $F$ as an object of $\cA$, and $\mu_{Z_1}^-(F)$ by HN filtration in $\cA_1$.
\end{Rem}

When combined with the results in \cite{CollinsPolishchuk:Gluing}, Proposition~\ref{prop:inducestability} has the following consequence:
\begin{Prop} \label{prop:glue}
Given the same assumptions as in Proposition~\ref{prop:inducestability}, there exists a stability conditions
$\sigma' = (\cA', Z')$ on $\cD$ with 
\[ \cA' = \langle \cA_1, E_1[1], E_2[2], \dots, E_m[m] \rangle \]
and $Z'$ determined by 
\[ Z'|_{\cD_1} = Z|_{\cD_1}, \quad Z'(E_i) = (-1)^{i+1} \]
for $i = 1, \dots, m$.
\end{Prop}

\begin{proof}
Let $\cA_2 := \langle E_1[1], \dots, E_m[m] \rangle$; by \cite[Section~2]{CollinsPolishchuk:Gluing}, $\cA_2$ is the heart of a bounded t-structure in $\cD_2$, that together with $\cA_1$ produce a heart $\cA' \subset \cD$ with description as in the claim; moreover, the pair $(\cA_2, \cA_1)$ is a torsion pair in $\cA$. 

By construction, the objects $\cA_2 \subset \cA'$ have maximal slope; thus, the existence of the torsion pair, combined with the existence of HN filtrations in $\cA_1$ with respect to $Z_1$, gives HN filtrations in $\cA'$ (similar to the proof of \cite[Proposition~3.3]{CollinsPolishchuk:Gluing}). Finally, $\sigma'$-stable objects are either $\sigma_1$-stable objects of $\cA_1$, or of the form $E_i[i]$. By Proposition~\ref{prop:supportproperty2}, this shows that the support property for $\sigma_1$ implies the support property for $\sigma'$.
\end{proof}

In other words, a \emph{weak} stability condition on $\cD$ with the assumptions of Proposition~\ref{prop:inducestability}  produces an actual stability condition on $\cD$---at the cost of making the associated heart more implicit.

\section{Fano threefolds} \label{sec:3folds}

In this section we apply the results of the previous sections in order to construct stability conditions on the Kuznetsov component of all but one deformation type of Fano threefolds of Picard rank one.
We assume the base field is algebraically closed and has characteristic either zero or sufficiently large.\footnote{Our own constructions, in Theorems~\ref{thm:index2}, \ref{thm:1object} and \ref{thm:index1}, work for arbitrary characteristic; this is due to Remark~\ref{rem:BGpositive}. Where we refer to prior explicit descriptions of Kuznetsov components, this typically require the characteristic to be sufficiently large.}

\subsection*{Review} \label{sec:3foldsoverview}
We begin by reviewing Kuznetsov's semiorthogonal decompositions for the Fano threefolds appearing in Theorem~\ref{thm:main1}. We follow the overview in \cite{Kuz:Fano3folds} of semiorthogonal decompositions of Fano threefolds of Picard rank one.

Deformation types of Fano threefolds of Picard rank one are classified by their index $i_X$ and their degree $d$; this classification holds similarly over algebraically closed fields of characteristic $p$ by \cite{Shepherd-Barron:Fano3foldscharp}. The index
$i_X \in \{1, 2, 3, 4\}$ is given by 
 $K_X = - i_X \, H$, where $K_X$ is the canonical divisor, and $H$ is a primitive ample divisor; the degree is given by $d = H^3$.
 
We ignore the cases of index four (projective space) and three (quadric), as they admit full exceptional collections.
In the case of index two, there are five deformation types classified by $1 \le d \le 5$. Our definition of the Kuznetsov component is straightforward in this case:
\begin{Def}[\cite{Kuz:Fano3folds}] \label{def:KuXindex2}
Let $X$ be a Fano threefold of index two over an algebraically closed field, and let $H=-\frac12 K_X$.
The Kuznetsov component $\Ku(X)$ is defined by the semiorthogonal decomposition
\[ \Db(X) = \langle \Ku(X), \OO_X, \OO_X(H) \rangle. \]
\end{Def}

For index one and Picard rank one, the degree is even and can thus by written as $H^3 = 2g-2$ in terms of its genus
$g$. There are 10 deformation types, corresponding to $2 \le g \le 12$, $g \neq 11$. The definition of Kuznetsov components depends on the genus, and relies on the existence of exceptional vector bundles due to Mukai. 

\begin{Thm}[\cite{Mukai:Fano3folds,Kuz:Fano3folds}]
\label{thm:Mukai}
Let $X$ be a Fano threefold of Picard rank one, index one, and even genus $g=2s>2$ over any algebraically closed field. Then there exists a stable vector bundle $\cE_2$ on $X$ of rank $2$, with
$c_1(\cE_2) = -H$ and $\ch_2(\cE_2) = (s-2) L$, where $L$ is the class of a line on $X$.
\end{Thm}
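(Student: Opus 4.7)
The plan is to produce $\cE_2$ via Serre's construction applied to a smooth connected curve of arithmetic genus one on $X$. Fix such a curve $C\subset X$ with $p_a(C)=1$ and $H\cdot C=s+1$; since $K_X=-H$, adjunction gives $\omega_C\cong\OO_C$. Curves of this type exist on every Fano threefold of Picard rank one, index one, and even genus $g=2s>2$, and can be exhibited case by case---as elliptic normal curves of degree $s+1$ in the natural embedding of $X$, or as degenerations of rational curves obtained by restriction from the ambient homogeneous variety in the cases $g\in\{6,8,10,12\}$. The main obstacle in a uniform treatment is precisely this step: producing, on each deformation type, a curve $C$ for which $(C,\OO_X(H))$ satisfies the Cayley--Bacharach condition, equivalently for which the restriction map $H^0(X,\OO_X(H))\to H^0(C,\OO_C(H))$ has one-dimensional cokernel matching $H^0(\omega_C)$.

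Given such a $C$, Serre duality yields $\Ext^1(\II_{C/X}(H),\OO_X)\cong H^0(\omega_C)^\vee\cong\C$. A non-zero extension class produces
\[
0 \to \OO_X \to \cF \to \II_{C/X}(H) \to 0,
\]
with $\cF$ locally free of rank two by the Cayley--Bacharach property. Setting $\cE_2:=\cF(-H)$, one reads off $c_1(\cE_2)=-H$ immediately, and a straightforward Chern character computation with $c_2(\cF)=[C]=(s+1)L$ gives $\ch_2(\cE_2)=(s-2)L$, as desired.

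It remains to verify slope-stability with respect to $H$. Since $\Pic(X)=\Z\cdot H$ and $c_1(\cE_2)=-H$ is primitive in $\NS(X)$, slope-semistability and slope-stability coincide, and it suffices to rule out destabilizing sub-line-bundles $\OO_X(kH)\hookrightarrow\cE_2$; comparing slopes forces $k\geq 0$. A non-zero map $\OO_X(kH)\to\cE_2$ would correspond to a non-zero section of $\cF(-(k+1)H)$. Twisting the defining sequence of $\cF$ by $-(k+1)H$ yields
\[
0 \to \OO_X(-(k+1)H) \to \cF(-(k+1)H) \to \II_{C/X}(-kH) \to 0,
\]
and for $k\geq 0$ both outer terms have no global sections by ampleness of $H$ and the fact that $\II_{C/X}(-kH)\subset\OO_X(-kH)$. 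Hence $H^0(\cF(-(k+1)H))=0$, ruling out any destabilizing subbundle, and $\cE_2$ is slope-stable. This, together with an explicit verification on one example per deformation type to fix the homotopy class of the construction, completes the proof.
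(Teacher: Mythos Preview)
The paper does not give a proof of this theorem: it is quoted from the literature (Mukai and Kuznetsov), so there is no ``paper's own proof'' to compare against. That said, your outline via Serre's construction from an elliptic curve of degree $s+1$ is exactly Mukai's original approach, so conceptually you are on the right track.

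There is, however, a genuine gap. The entire content of Mukai's theorem lies in the step you explicitly set aside: the existence, on every such $X$, of a smooth connected curve $C$ with $p_a(C)=1$ and $H\cdot C=s+1$. You write that such curves ``can be exhibited case by case'' and mention elliptic normal curves or degenerations from ambient homogeneous varieties, but you do not actually carry this out, and it is not routine. This is where Mukai's classification-theoretic input (the linear-section descriptions of $X$ for $g\in\{6,8,10,12\}$, and a separate argument for $g=4$) is essential; without it, the rest of the argument is a formal exercise. The closing sentence about ``an explicit verification on one example per deformation type to fix the homotopy class of the construction'' does not make sense in this context and should be removed.

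Two smaller remarks. First, your description of the Cayley--Bacharach condition is muddled: the condition for local freeness of $\cF$ is that the section of $\omega_C$ corresponding to the extension class be nowhere vanishing, which is automatic here since $\omega_C\cong\OO_C$ and $C$ is connected; it has nothing to do with the cokernel of $H^0(X,\OO_X(H))\to H^0(C,\OO_C(H))$. Second, once the curve $C$ is granted, your Chern-class and stability computations are correct.
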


\begin{PropDef}[{\cite{Kuz:Fano3folds}}] \label{propdef:excpair}
Let $X$ be a Fano threefold of Picard rank one, index one, and even genus $g>4$ over any algebraically closed field.
Then the pair $(\cE_2, \OO_X)$ is exceptional, and the Kuznetsov component of $X$ is defined by the semiorthogonal decomposition
\[
\Db(X) = \langle \Ku(X), \cE_2, \OO_X \rangle.
\]
\end{PropDef}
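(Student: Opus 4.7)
The plan is to reduce the statement to Mukai's Theorem~\ref{thm:Mukai} and the general theory of Section~\ref{sec:sod}. Once $(\cE_2, \OO_X)$ is shown to form an exceptional collection in $\Db(X)$, setting $\Ku(X) := \{\cE_2, \OO_X\}^\perp$ produces the claimed semiorthogonal decomposition automatically. The real work is therefore purely cohomological: verify that each of $\OO_X$ and $\cE_2$ is exceptional, and that $\Hom^\bullet(\OO_X, \cE_2) = H^\bullet(X, \cE_2) = 0$.

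For $\OO_X$, exceptionality is Kodaira vanishing on the Fano $X$. For $\cE_2$, I would use that $\rk(\cE_2) = 2$ together with $c_1(\cE_2) = -H$ gives $\cE_2^\vee \cong \cE_2(H)$, so $\Ext^\bullet(\cE_2, \cE_2) \cong H^\bullet(X, \cE_2 \otimes \cE_2(H))$. Slope-stability of $\cE_2$ from Theorem~\ref{thm:Mukai} forces simplicity, hence $\Hom(\cE_2, \cE_2) = \C$. The higher $\Ext$'s should follow from Serre duality together with slope comparisons (for example, $\Hom(\cE_2, \cE_2(-H)) = 0$ since the slopes $-H/2$ and $-3H/2$ differ and both sheaves are stable, which gives $\Ext^3(\cE_2, \cE_2) = 0$), combined with a Riemann--Roch check to handle the remaining $\Ext^1$ and $\Ext^2$.

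For the orthogonality, the vanishing $H^0(X, \cE_2) = 0$ is immediate from slope-stability and $\mu_H(\cE_2) = -H/2 < 0$. Serre duality together with the identity $\cE_2^\vee \otimes \omega_X \cong \cE_2(H) \otimes \OO_X(-H) = \cE_2$ then yields $H^3(X, \cE_2) \cong H^0(X, \cE_2)^\vee = 0$ and the self-pairing $H^2(X, \cE_2) \cong H^1(X, \cE_2)^\vee$, while Riemann--Roch applied to the Chern character of $\cE_2$ (using the data supplied by Theorem~\ref{thm:Mukai}) gives $\chi(\cE_2) = 0$. Everything collapses to the single vanishing $H^1(X, \cE_2) = 0$.

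The hard step will be this last vanishing, which cannot be deduced from slope considerations alone. I would pursue one of two routes: either invoke Mukai's original geometric construction of $\cE_2$ via a Brill--Noether locus on a curve section of $X$ \cite{Mukai:Fano3folds}, where the cohomology is tautologically controlled by the construction; or, using the tilt-stability framework of Section~\ref{sec:tilt}, verify that $\cE_2$ is $\sigma_{\alpha,\beta}$-stable in an appropriate chamber and rule out a nonzero class in $\Ext^1(\OO_X, \cE_2)$ by contradicting a Bogomolov--Gieseker-type inequality applied to a putative destabilizing extension. This is the one step that depends essentially on the specific geometric model of $X$, and where I expect the bulk of the serious work to lie.
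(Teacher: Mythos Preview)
The paper does not supply its own proof of this statement: it is recorded as a Proposition/Definition with attribution to \cite{Kuz:Fano3folds}, and the exceptionality of the pair $(\cE_2,\OO_X)$ is imported wholesale from the literature (see in particular \cite[Lemma~3.6]{Kuz:Fano3folds} and the references to Mukai there). There is therefore no in-paper argument to compare against.

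That said, your outline is structurally correct and pinpoints the genuine difficulty, namely $H^1(X,\cE_2)=0$. Your first proposed route is essentially what the literature does: Kuznetsov's proof uses the explicit realization of $X$ as a linear section of a homogeneous variety, with $\cE_2$ the restriction of a tautological bundle, so that the required vanishings follow from Borel--Weil--Bott on the ambient space. Your second route, via tilt-stability, is not available in the logical order of this paper: Section~\ref{sec:3folds} \emph{assumes} that $(\cE_2,\OO_X)$ is exceptional in order to construct stability conditions on $\Ku(X)$, so invoking that machinery here would be circular.

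One further gap: for the exceptionality of $\cE_2$ itself, Riemann--Roch only computes $\chi(\cE_2,\cE_2)$ and cannot by itself separate $\Ext^1$ from $\Ext^2$. You still need a geometric input (rigidity of $\cE_2$, i.e.\ $\Ext^1(\cE_2,\cE_2)=0$), which again comes from the explicit Mukai/Kuznetsov description rather than from slope or numerical considerations alone.
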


The existence of these vector bundles follow from the classification of Fano threefolds. However, as the argument sketched in \cite{Mukai:Fano3folds} seems incomplete\footnote{The assumptions for Fujita's extension theorem in \cite{Fujita:extendampledivisor} can never be satisfied on a surface.}, and as they are stated only for $\Char(k) = 0$ by both Mukai and Kuznetsov, we provide here a proof that is also independent of the characteristic.
\begin{proof}[Proof of Theorem~\ref{thm:Mukai} and Proposition and Definition~\ref{propdef:excpair}]
Consider a generic pencil of hyperplane sections in $\abs{H}$, and let $\widetilde{X} \to \P^1$ be the induced K3 fibration of the blowup of $X$ at its base locus $C_g$, a smooth curve of genus $g$.
Each K3 fiber $S$ admits a unique stable spherical vector bundle $\cE_S$ with Mukai vector $(2, -H, s)$; in other words, the corresponding relative coarse moduli space is isomorphic to $\P^1$. 
By Tsen's theorem, the Brauer group of $\P^1$ vanishes, see \cite[Theorem~6.2.8 and Proposition~6.2.3]{Gille-Szamuely:centralsimple}; therefore, the obstruction to the existence of a universal family vanishes, i.e., there is a vector bundle $\cE_{\widetilde{X}}$ whose restrictions are the given spherical vector bundles on the fibers. We claim that the restriction
of $\cE_{\widetilde{X}}$ to the exceptional divisor $C_g \times \P^1$ is of the form
$\cE_{C_g} \boxtimes \cO_{\P^1}(k)$: indeed, this can be checked infinitesimally from the fact that the unique deformation class $\Ext^1_{\wX}(i_{S*}\cE_S, i_{S*}\cE_S)$ is induced by the unique deformation class
of $i_{S*}\cO_S$, which induces the trivial deformation after restriction to $C_g$. 

After tensoring $\cE_{\wX}$ with the pull-back of $\cO_{\P^1}(-k)$ it is thus the pull-back of a vector bundle $\cE_X$ on $X$, whose restriction to the hyperplane section $S$ is still given by $\cE_S$. This proves Theorem~\ref{thm:Mukai}.

For the Proposition, since $s > 1$, Langer's version of Bogomolov's restriction theorem (which applies with no correction terms for K3 surfaces in characteristic $p$, \cite[Theorem~5.2]{Langer:positive}) shows that $\cE_{C_g}$ is \emph{stable}.
Therefore $\Hom(\cE_S, \cE_{C_g}) = k$, and the long exact Hom-sequence implies $\Ext^1(\cE_S, \cE_S(-1)) = \Ext^1(\cE_S, \cE_S)$, which vanishes by construction.
From this one deduces $\Ext^1(\cE_S, \cE_S(n)) = 0$ for all $n \in \Z$, using again the long exact Hom-sequence and Serre duality.
The analogous long exact Hom-sequence on $X$ then implies $\Ext^1(\cE_X, \cE_X) = \Ext^1(\cE_X, \cE_X(-1)) = \dots = \Ext^1(\cE_X, \cE_X(-n))$ for all $n > 0$, which of course vanishes for $n \gg 0$. Therefore, $0 = \Ext^1(\cE_X, \cE_X) = \Ext^1(\cE_X, \cE_X(-1)) = \Ext^2(\cE_X, \cE_X)$.
Since the required $\Ext^3$-vanishing is immediate from Serre duality, this finally shows that $\cE_X$ is exceptional.

That $(\cO_X, \cE_X)$ is an exceptional pair can similarly be deduced from the corresponding cohomology vanishing of $\cE_S$:
the key initial observation is  $H^1(\cE_S) = 0$, which holds as otherwise the corresponding extension $\cE_S \into F \onto \cO_S$ would be a stable vector bundle with $v(F)^2 < -2$, a contradiction. 
\end{proof}

A previous version of this paper incorrectly claimed Proposition~\ref{propdef:excpair} also for genus 4, the case of a complete intersection of a quadric and a cubic in $\P^5$. However, in this case there exist either exactly two (when the quadric is smooth) or no (when the quadric is singular) exceptional bundles of rank two.

The remaining cases of index one are covered by the following two definitions. 
\begin{PropDef}[{\cite{Mukai:Fano3folds,Kuznetsov:Hyperplane}}]
Let $X$ be a Fano threefold of Picard rank one, index one, and genus $7$ (resp. $9$).
Then there exists a rank $5$ (resp. $3$) vector bundle $\cE_5$ (resp. $\cE_3$) such that the pair $(\cE_5, \OO_X)$ (resp. $(\cE_3, \OO_X)$) is exceptional.
The Kuznetsov component of $X$ is defined to be its right orthogonal component.
\end{PropDef}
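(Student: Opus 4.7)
The plan is to appeal to Mukai's biregular classification, which realizes these Fano threefolds as smooth transverse linear sections of specific rational homogeneous varieties, and then construct $\cE_n$ as the restriction of a natural tautological bundle from the ambient space. For $g=7$, $X$ arises as a linear section $\mathbb{S}^{10}\cap\P^{8}$ of the $10$-dimensional spinor variety $\mathbb{S}^{10}\subset\P^{15}$, and $\cE_5$ is obtained by restricting the rank $5$ spinor bundle. For $g=9$, $X$ arises as a linear section $\mathrm{LGr}(3,6)\cap\P^{10}$ of the Lagrangian Grassmannian $\mathrm{LGr}(3,6)\subset\P^{13}$, and $\cE_3$ is the restriction of the tautological rank $3$ subbundle.

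To verify that $(\cE_n,\cO_X)$ is an exceptional pair, I would invoke Kuznetsov's theory of semiorthogonal decompositions for linear sections of homogeneous varieties. Each ambient $M$ above carries a natural rectangular Lefschetz decomposition of $\Db(M)$ whose starting block has the form $\langle\cE^{\vee},\cO_M\rangle$, where $\cE$ is the ambient tautological bundle. Kuznetsov's hyperplane sections theorem then transfers this to a semiorthogonal decomposition of $\Db(X)$ whose first two summands are the restrictions $\langle\cE_n,\cO_X\rangle$; exceptionality is thereby automatic, and $\Ku(X)$ is well-defined as the right orthogonal.

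The main obstacle is verifying the hypotheses of the hyperplane sections theorem, namely the existence and explicit form of the Lefschetz decomposition on $M$ together with the admissibility of $X$ as a linear section of the expected dimension. Both are precisely the content of the cited references: Mukai constructs the bundles and establishes the linear-section descriptions, while Kuznetsov's paper on hyperplane sections provides the general transfer mechanism, so the proof essentially reduces to assembling these ingredients. As a direct alternative, one could resolve $\cO_X$ by a Koszul complex of line bundles on $M$, compute $\RHom_M(\cE,\cE)$ and the related groups $\RHom_M(\cE,\cO_M)$ and $\RHom_M(\cO_M,\cO_M)$ using Borel-Weil-Bott for the relevant homogeneous bundles on $M$, and then handle any remaining higher Ext groups on $X$ via Kodaira vanishing (which applies since $-K_X$ is ample). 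This recovers exceptionality without appealing to the full Lefschetz package, at the price of carrying out the cohomology computations case by case.
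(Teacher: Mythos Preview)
The paper does not supply its own proof of this Proposition/Definition; it is stated as a citation of \cite{Mukai:Fano3folds} and \cite{Kuznetsov:Hyperplane}, and the existence of the bundles and the exceptionality of the pair are taken as input from those references. Your proposal is a correct summary of precisely the argument contained in those cited works: Mukai's linear-section description of $X_{12}\subset\mathbb{S}^{10}$ and $X_{16}\subset\mathrm{LGr}(3,6)$, the tautological origin of $\cE_5$ and $\cE_3$, and Kuznetsov's hyperplane-section machinery (see \cite[\S6.2, \S6.3]{Kuznetsov:Hyperplane}) transferring the rectangular Lefschetz decomposition of the ambient homogeneous variety to the semiorthogonal decomposition $\Db(X)=\langle\Ku(X),\cE_n,\cO_X\rangle$.

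One small point of care: in your sketch you describe the starting block on the ambient $M$ as $\langle\cE^\vee,\cO_M\rangle$ and then say its restriction yields $\langle\cE_n,\cO_X\rangle$; you should be consistent about whether $\cE_n$ denotes the restricted tautological subbundle or its dual, since the conventions in \cite{Kuznetsov:Hyperplane} and in this paper differ slightly. This does not affect the validity of the argument, only the bookkeeping. Your alternative route via Koszul resolutions and Borel--Weil--Bott is also sound and is essentially how one verifies the Lefschetz decomposition on $M$ in the first place.
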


\begin{Def}
Let $X$ be a Fano threefold of Picard rank one, index one, and genus $2$, $3$, $4$ or $5$ over any algebraically closed field.
Then the Kuznetsov component of $X$ is the right orthogonal to $\gen{\OO_X}$.
\end{Def}

Let $X$ be a Fano threefold.
We consider the lattice $\Lambda_H^2\cong\Z^3$ as in Example~\ref{ex:slopestability} and the natural map $v_H^2\colon K(X)\to\Lambda_H^2$.
For an admissible subcategory $\DD\subset \Db(X)$, we denote by 
\begin{equation}\label{eqn:LambdaD}
\Lambda_{H,\DD}^2:=\mathrm{Im}\left(K(\DD) \to K(X) \to \Lambda_H^2\right).
\end{equation}
the image of the composite map and, by abuse of notation, the induced morphism $v_H^2\colon K(\DD)\to \Lambda_{H,\DD}^2$.

\subsection*{Result and context}

The goal of this section is to prove Theorem~\ref{thm:main1}. 
We will split the statement into various cases and we summarize the results in the following two tables of Fano threefolds of Picard rank one (if the base field has characteristic either zero or sufficiently large):

\begin{center}
\begin{tabular}{r|l|l|c}
\hline
\multicolumn{4}{ c }{$\rho_X=1$ \& $i_X=2$}\\
\hline
$\deg$
&\multicolumn{2}{ |c| }{Semiorthogonal decomposition} &\begin{minipage}{3.5cm} $\exists$ stability conditions
\end{minipage}\\
\hline
5 &$\Db(Y_5)=\gen{\FF_2(-H),\OO(-H),\FF_2,\OO}$ &{\cite{orlov:Y5}}&expl.~descr.\protect\footnotemark~ or Thm.~\ref{thm:index2}\\
4 &$\Db(Y_4)=\gen{\Db(C_2),\OO(-H),\OO}$~\protect\footnotemark
\;\; &{\cite[Thm.~2.9]{BondalOrlov:Main}}
&expl.~descr.\footnoteref{expldescr} or Thm.~\ref{thm:index2}\\
3 &$\Db(Y_3)=\gen{\Ku(Y_3),\OO(-H),\OO}$&&{\cite{BMMS:Cubics}} or Thm.~\ref{thm:index2}\\
2 &$\Db(Y_2)=\gen{\Ku(Y_2),\OO(-H),\OO}$
&&Thm.~\ref{thm:index2}\\
1 &$\Db(Y_1)=\gen{\Ku(Y_1),\OO(-H),\OO}$&&Thm.~\ref{thm:index2}\\
\hline
\end{tabular}
\end{center}
\addtocounter{footnote}{-1}
{\footnotetext{\label{expldescr}By \emph{explicit description (expl.~descr.)}, we mean that the explicit description of the Kuznetsov component (according to the reference in the middle column), combined with the construction of stability conditions for curves and categories of quiver representations, implies the existence of stability conditions, for characteristic zero or sufficiently large. }}
\addtocounter{footnote}{1}
{\footnotetext{\label{Cg}We denote by $C_g$ a smooth genus $g$ curve.}}

\medskip

\begin{center}
\begin{tabular}{r|l|l|c}
\hline
\multicolumn{4}{ c }{$\rho_X=1$ \& $i_X=1$}\\
\hline
$g_X$ &\multicolumn{2}{ |c| }{Semiorthogonal decomposition} &\begin{minipage}{3.5cm} $\exists$ stability conditions
\end{minipage}\\
\hline
12&$\Db(X_{22})=\gen{\EE_4,\EE_3,\EE_2,\OO}$ &{\cite[Thm.~4.1]{Kuz:Fano3folds}}&expl.~descr.\footnoteref{expldescr}
or Thm.~\ref{thm:index1}\\
10&$\Db(X_{18})=\gen{\Db(C_2),\EE_2,\OO}$~\footnoteref{Cg} &
{\cite[\S6.4]{Kuznetsov:Hyperplane}}&expl.~descr.\footnoteref{expldescr} or Thm.~\ref{thm:index1}\\
9&$\Db(X_{16})=\gen{\Db(C_3),\EE_3,\OO}$~\footnoteref{Cg} &{\cite[\S6.3]{Kuznetsov:Hyperplane}}&expl.~descr.\footnoteref{expldescr}\\
8&$\Db(X_{14})=\gen{\Ku(X_{14}),\EE_2,\OO}$&
{\cite{Kuz:V14}}&\cite{BMMS:Cubics} or Thm.~\ref{thm:index1}\\
7&$\Db(X_{12})=\gen{\Db(C_7),\cE_5,\OO}$~\footnoteref{Cg} &{\cite[\S6.2]{Kuznetsov:Hyperplane}}&expl.~descr.\footnoteref{expldescr}\\
6&$\Db(X_{10})=\gen{\Ku(X_{10}),\EE_2,\OO}$ &{\cite[Lem.~3.6]{Kuz:Fano3folds}}&Thm.~\ref{thm:index1}\\
5&$\Db(X_{8})=\gen{ \Ku(X_8),\OO}$ &&Thm.~\ref{thm:1object}\\
4&$\Db(X_{6})=\gen{\Ku(X_{6}),\OO}$&&Thm.~\ref{thm:1object} \\
3&$\Db(X_{4})=\gen{ \Ku(X_4),\OO}$ &&Thm.~\ref{thm:1object}\\
2&$\Db(X_{2})=\gen{ \Ku(X_2),\OO}$ &&Thm.~\ref{thm:1object}\\
\hline
\end{tabular}
\end{center}

\bigskip

For Fano threefolds of Picard rank one, there is a conjectural relation between the index two case of degree $d\geq 2$ and the index one and genus $2d+2$, due to Kuznetsov (see \cite[Conjecture 3.7]{Kuz:Fano3folds}).
This is proved in the case $d=3,4,5$ and asserts there the equivalence between the respective Kuznetsov components.
In fact, our result may turn useful in understanding this conjecture, as explained in Example~\ref{ex:d4} below.
For $d=2$ the conjecture in \emph{loc.~cit.~} needs to be modified as remarked in \cite[Theorem~7.2]{marcellotabuada:NCmotives}.
A modified version does not ask the maps to be dominant, so the two Kuznetsov components would only be deformation equivalent \cite{Kuz:lectures}.


\begin{Ex}[$d=4$] \label{ex:d4}
The space of Bridgeland stability conditions on $\Ku(X_{18})\cong\Db(C_2)$ consists of a unique orbit, with respect to the $\widetilde{\GL_2^+}(\R)$-action, containing $(\Coh(C_2),\mathfrak{i}\rk - \deg)$ (see \cite{Macri:curves}).
In particular, the stability condition $\sigma$ constructed in Theorem~\ref{thm:main1} lies in the same orbit.

The curve $C_2$ can be reconstructed as moduli space of skyscraper sheaves, which are stable with respect to any stability condition on $\Db(C_2)$.
Hence, $C_2$ can be identified with the moduli space of $\sigma$-stable objects in $\Ku(X_{18})$ with Chern character $3-2H+9L-\frac12 \mathrm{pt}$, which is the image of the Chern character of a skyscraper sheaf via the inclusion $\Knum(C_2)\cong \Knum(\Ku(X_{18}))\subset\Knum(X_{18})$ (see \cite[Proposition~3.9]{Kuz:Fano3folds}).
The equivalence $\Ku(X_{18})\cong\Db(C_2)$ can then be reinterpreted as the one coming from the universal family on such a moduli space.

The Fano threefold $Y_4$ can then be reconstructed as the moduli space of rank $2$ vector bundles on $C_2$ with fixed determinant of odd degree (see \cite[Theorem~1]{newstead:stableRk2} or \cite[Theorem~4]{NarasimhanRamanan:moduli}). The equivalence $\Db(C_2)\cong\Ku(Y_4)$ can again be reinterpreted as the one coming from the universal family (see \cite[Theorem~2.7]{BondalOrlov:Main} and \cite[Remark~5]{narasimhan:derived}).
\end{Ex}

\subsection*{Proof of Theorem~\ref{thm:main1}, case index one and low genus.}
We divide the proof of the Theorem~\ref{thm:main1} in three cases, according to the index and the genus.
We begin with the easiest case. We will prove the following more general statement, which holds for all Fano threefolds, of any Picard number and index.

\begin{Thm}\label{thm:1object}
Let $X$ be a Fano threefold over any algebraically closed field.
Consider the semiorthogonal decomposition $\Db(X) = \langle \OO_X^\perp, \OO_X \rangle$.
Then $\OO_X^\perp$ has a Bridgeland stability condition with respect to the lattice $\Lambda_H^2\cong\Z^3$.\footnote{Note that in this case the lattice $\Lambda_{H,\cO_X^{\perp}}^2$ defined in \eqref{eqn:LambdaD} coincides with $\Lambda_H^2$.}
\end{Thm}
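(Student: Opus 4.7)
The plan is to apply Proposition~\ref{prop:inducestability} with the single exceptional object $\cO_X$, after constructing a suitable weak stability condition $(\cA, Z)$ on $\Db(X)$. Since the Serre functor is $S = (\blank) \otimes \cO_X(-i_X H)[3]$, the hypotheses of Proposition~\ref{prop:inducestability} require $\cO_X \in \cA$ and $\cO_X(-i_X H)[2] \in \cA$ simultaneously; these objects sit in different cohomological degrees with respect to the standard t-structure, so no single tilt of $\Coh(X)$ can contain both. We therefore tilt twice: first with respect to slope stability to obtain $\Coh^\beta(X)$, then with respect to the tilt-stability $\sigma_{\alpha,\beta}$ of Proposition~\ref{prop:tiltstability} using Proposition~\ref{prop:tiltedtiltstability}.

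Fix $\beta = -\tfrac12$ and take $\alpha > 0$ sufficiently small. Then $\cO_X \in \cT^\beta$ and, using $i_X \ge 1$, also $\cO_X(-i_X H) \in \cF^\beta$; hence $\cO_X$ and $\cO_X(-i_X H)[1]$ both lie in $\Coh^\beta(X)$. Since $\Delta_H = 0$ for line bundles, Proposition~\ref{prop:Delta0stable} guarantees that both are $\sigma_{\alpha,\beta}$-stable for every $(\alpha,\beta)\in\R_{>0}\times\R$. A direct calculation gives
\[
\mu_{\sigma_{\alpha,\beta}}(\cO_X) \;=\; \frac{\alpha^2-\beta^2}{2\beta}, \qquad
\mu_{\sigma_{\alpha,\beta}}\bigl(\cO_X(-i_X H)[1]\bigr) \;=\; \frac{\alpha^2-(i_X+\beta)^2}{2(i_X+\beta)},
\]
and for $\alpha$ sufficiently small the first slope is positive while the second is negative. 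Setting $\mu = 0$ and invoking Proposition~\ref{prop:tiltedtiltstability} produces a weak stability condition $(\cA, Z) := (\Coh^\mu_{\alpha,\beta}(X),\, Z^\mu_{\alpha,\beta})$ on $\Db(X)$ with respect to $\Lambda_H^2$ such that $\cO_X \in \cT^\mu_{\sigma_{\alpha,\beta}} \subset \cA$ and $\cO_X(-i_X H)[1] \in \cF^\mu_{\sigma_{\alpha,\beta}}$. Consequently $S(\cO_X) = \cO_X(-i_X H)[3] \in \cF^\mu_{\sigma_{\alpha,\beta}}[2] \subset \cA[1]$, and $Z(\cO_X)$ is a nonzero multiple of $Z_{\alpha,\beta}(\cO_X)$.

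It remains to verify the final hypothesis of Proposition~\ref{prop:inducestability}: no nonzero object of $\cA \cap \cO_X^\perp$ is annihilated by $Z$. By Lemma~\ref{lem:TiltGoodIsAlmostGood} applied to the tilt of Proposition~\ref{prop:tiltedtiltstability}, whose hypotheses are supplied by Proposition~\ref{prop:CohBetaIsGood}, the subcategory $\{F \in \cA : Z(F) = 0\}$ coincides with $\Coh^\beta(X)_0$, namely the $0$-dimensional torsion sheaves on $X$. For any nonzero such $F$ one has $\Hom(\cO_X, F) = H^0(X,F) \neq 0$, so $F \notin \cO_X^\perp$. Proposition~\ref{prop:inducestability} then yields the desired Bridgeland stability condition on $\cO_X^\perp$ with respect to $\Lambda_H^2$. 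The only genuine calculation in the argument is the slope comparison, and the proof uses nothing about $X$ beyond $i_X \ge 1$ (so it is independent of the Picard rank).
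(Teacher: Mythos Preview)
Your proof is correct and follows essentially the same approach as the paper's: tilt twice, using Proposition~\ref{prop:Delta0stable} for the stability of line bundles and Lemma~\ref{lem:TiltGoodIsAlmostGood} with Proposition~\ref{prop:CohBetaIsGood} to identify the zero locus of $Z$ in the doubly-tilted heart. The only cosmetic difference is your choice $\beta = -\tfrac12$ versus the paper's $\beta = -\tfrac{i_X}{2}$; the latter makes the computation symmetric (the real parts of $Z_{\alpha,\beta}(\cO_X)$ and $Z_{\alpha,\beta}(\cO_X(K_X)[1])$ are exact negatives), but your choice works equally well since $i_X \ge 1$.
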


In particular, if $X$ has index $1$ and genus $g\in\set{2,3,4,5}$, then $\Ku(X)$ has a stability condition. 

\begin{proof}
We want to apply Proposition~\ref{prop:inducestability} to the exceptional collection of length one given by $\cO_X$.

For $\Coh(X)$, we have $\cO_X \in \Coh(X)$, but $S(\cO_X) = \cO_X(K_X)[3] \in \Coh(X)[3]$. We need a weak stability condition whose heart still contains $\cO_X$, but also $\cO_X(K_X)[2] = \cO_X(-i_X H)[2]$ instead of $\cO_X(K_X)$, i.e., we will need to tilt $\Coh(X)$ twice.

Let $\beta = - \frac 12 i_X$. Then clearly $\cO_X, \cO_X(K_X)[1] \in \Coh_H^\beta(X)$. Now consider the weak stability condition $\sigma_{\alpha, \beta} = \left(\Coh_H^\beta(X), Z_{\alpha, \beta}\right)$ of Proposition~\ref{prop:tiltstability}, for
$\beta$ as above and for $\alpha$ sufficiently small. By Proposition~\ref{prop:Delta0stable}, both $\cO_X$ and $\cO_X(K_X)[1]$
are $\sigma_{\alpha, \beta}$-stable. Since $\alpha$ is sufficiently small, we have
\begin{eqnarray}
\Re Z_{\alpha, \beta}(\cO_X) = \frac {\alpha^2}2 H^3 - H\cdot \frac 12 \left(\frac{K_X}{2}\right)^2
& < 0 < &
\Re Z_{\alpha, \beta}(\cO_X(K_X)[1]) = -\frac{\alpha^2}2 H^3 + H \cdot \frac 12 \left(\frac{K_X}{2}\right)^2 \nonumber \\
\text{and so} \quad \mu_{\alpha, \beta}(\cO_X(K_X)[1])
& < 0 < &
\mu_{\alpha, \beta}(\cO_X). \label{ineq:mualphasmall}
\end{eqnarray}
Therefore, if we tilt a second time to obtain the weak stability condition $\sigma_{\alpha, \beta}^0$ of Proposition~\ref{prop:tiltedtiltstability}, then its heart $\Coh_{\alpha, \beta}^0(X)$ contains both $\cO_X$ and $\cO_X(K_X)[2]$. 

By Lemma~\ref{lem:TiltGoodIsAlmostGood}, we have $\Coh_{\alpha, \beta}^0(X)_0 = \Coh^\beta(X)_0$, which is the category of zero-dimensional torsion sheaves; as any such sheaf has global sections, the intersection $\OO_X^\perp \cap \Coh_{\alpha, \beta}^0(X)_0$ is empty.
Hence all assumptions of Proposition~\ref{prop:inducestability} are satisfied, and we obtain a stability condition on $\cO_X^\perp$.
\end{proof}

\subsection*{Proof of Theorem~\ref{thm:main1}, case index two.}
In this section we prove the following case of Theorem~\ref{thm:main1} without the Picard rank one assumption.

\begin{Thm}\label{thm:index2}
Let $X$ be a Fano threefold of index two over any algebraically closed field.
Then the Kuznetsov component $\Ku(X)$ has a Bridgeland stability condition with respect to $\Lambda_{H,\Ku(X)}^2\cong\Z^2$.
\end{Thm}

\begin{proof}
The proof goes along the exact same lines as the proof of Theorem~\ref{thm:1object}, by applying Proposition~\ref{prop:inducestability} to the exceptional collection $\langle \cO_X, \cO_X(H) \rangle$. We set $\beta = -\frac 12$;
then again $G, G(K_X)[1] \in \Coh_H^\beta(X)$ for $G \in \{\cO_X, \cO_X(H)\}$.
The exact same computation leading to \eqref{ineq:mualphasmall} shows that, for $\alpha$ small, we have
\[
\mu_{\alpha, -\frac 12}(G(K_X)[1]) < 0 < \mu_{\alpha, -\frac 12}(G)
\]
for the same $G$. With the same arguments as before, we apply Proposition~\ref{prop:inducestability} to show that the weak stability condition
$\sigma_{\alpha, -\frac 12}^0$ of Proposition~\ref{prop:tiltedtiltstability} induces a stability condition on $\Ku(X)$.
\end{proof}

\subsection*{Proof of Theorem~\ref{thm:main1}, case index one and high genus.}
The remaining non-trivial cases of Theorem~\ref{thm:main1} are covered by the following result:

\begin{Thm}\label{thm:index1}
Let $X$ be a Fano threefold of Picard rank one, index one, and even genus $g \ge 6$ over any algebraically closed field.
Then the Kuznetsov component $\Ku(X)$ has a Bridgeland stability condition with respect to the lattice $\Lambda_{H,\Ku(X)}^2\cong\Z^2$.
\end{Thm}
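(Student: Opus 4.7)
The plan is to follow exactly the strategy of Theorems~\ref{thm:1object} and~\ref{thm:index2}: we apply Proposition~\ref{prop:inducestability} to the exceptional pair $(\cE_2,\cO_X)$. Since the Serre functor on $\Db(X)$ is $S=(\blank)\otimes\cO_X(-H)[3]$, it suffices to exhibit a weak stability condition $\sigma=(\cA,Z)$ on $\Db(X)$ whose heart $\cA$ contains the four objects $\cE_2$, $\cO_X$, $\cE_2(-H)[2]$, $\cO_X(-H)[2]$, such that $Z(\cE_2)\neq 0\neq Z(\cO_X)$ and such that the only objects of $\cA$ with $Z=0$ lie outside $\Ku(X)$.

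I would construct $\sigma$ by the double-tilt procedure of Proposition~\ref{prop:tiltedtiltstability}. First choose $\beta\in(-1,-\tfrac12)$; since $\mu_H(\cO_X)=0$, $\mu_H(\cE_2)=-\tfrac12$, $\mu_H(\cO_X(-H))=-1$, and $\mu_H(\cE_2(-H))=-\tfrac32$, this ensures $\cO_X,\cE_2\in\Coh^\beta(X)$ and $\cO_X(-H)[1],\cE_2(-H)[1]\in\Coh^\beta(X)$. Next tilt at tilt-slope $\mu=0$ with small $\alpha>0$ to obtain $\cA=\Coh^0_{\alpha,\beta}(X)$ and $Z=Z^0_{\alpha,\beta}$. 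Using the numerical input $H\cdot\ch_2(\cE_2)=s-2$ from Theorem~\ref{thm:Mukai} together with $H^3=4s-2$, a direct computation should show that the four slope inequalities
\[
\mu_{\alpha,\beta}(\cO_X),\ \mu_{\alpha,\beta}(\cE_2)\;>\;0\;\ge\;\mu_{\alpha,\beta}(\cO_X(-H)[1]),\ \mu_{\alpha,\beta}(\cE_2(-H)[1])
\]
cut out a nonempty open region of $(\alpha,\beta)$-parameters in $\R_{>0}\times(-1,-\tfrac12)$, roughly characterized by $\beta$ not too close to $-\tfrac12$ and $\alpha$ small compared to $1+\beta$. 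The required vanishing condition on $Z|_{K(\cA_1)}$ will then follow exactly as in the earlier theorems: by Proposition~\ref{prop:CohBetaIsGood} and Lemma~\ref{lem:TiltGoodIsAlmostGood} the only objects of $\cA$ with $Z=0$ are zero-dimensional torsion sheaves, and any such sheaf admits a nonzero morphism from $\cO_X$, hence lies outside $\Ku(X)$.

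The hard part will be to upgrade these numerical slope inequalities to actual $\sigma_{\alpha,\beta}$-stability of $\cE_2$ and of $\cE_2(-H)[1]$; stability of $\cO_X$ and $\cO_X(-H)[1]$ is automatic by Proposition~\ref{prop:Delta0stable} since both have $\Delta_H=0$. Using the geometric description of walls as hyperplanes in $\Lambda_H^2\otimes\R$ passing through the fixed class $v_H^2(\cE_2)$, and using the computation $\Delta_H(\cE_2)=6(4s-2)$, the Bogomolov-Gieseker inequality $\Delta_H\ge 0$ applied to the tilt-semistable factors of any putative destabilizing sequence forces their $v_H^2$-classes into a discrete, bounded subset of $\Lambda_H^2$. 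Under the Picard rank one assumption, the remaining possibilities reduce essentially to rank-one torsion-free subobjects and quotients twisted by integer multiples of $H$, and I would check case by case that none of them yields a wall meeting the chosen $(\alpha,\beta)$-region. An entirely parallel argument, applied after the twist by $\cO_X(-H)$, handles $\cE_2(-H)$. With tilt-stability of all four distinguished objects in place, we have $\cE_2,\cO_X,\cE_2(-H)[2],\cO_X(-H)[2]\in\cA$ as required, and Proposition~\ref{prop:inducestability} then yields the desired Bridgeland stability condition on $\Ku(X)$ with respect to $\Lambda_{H,\Ku(X)}^2\cong\Z^2$.
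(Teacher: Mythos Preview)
Your overall architecture is exactly that of the paper: apply Proposition~\ref{prop:inducestability} to the pair $(\cE_2,\cO_X)$ after a double tilt, with the only nontrivial point being tilt-stability of $\cE_2$ and $\cE_2(-H)[1]$ (the line bundles being handled by Proposition~\ref{prop:Delta0stable}). Your identification of what is automatic and what is ``the hard part'' is correct.

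Where you diverge from the paper is in the proposed attack on that hard part. You suggest bounding the destabilizing classes via $\Delta_H\ge 0$ and then eliminating the finitely many candidates case by case; this is plausible in principle but you do not carry it out, and your claim that ``the remaining possibilities reduce essentially to rank-one torsion-free subobjects'' is not justified (tilt-destabilizers need not have rank one). The paper avoids any such enumeration by two neat observations. First, it works at $\beta=-1$: there $\Im Z_{\alpha,-1}(\cE_2)=H^2\ch_1^{-1}(\cE_2)=H^3$ is the minimal positive value in $\Z_{\ge 0}\cdot H^3$, so $\cE_2$ cannot be strictly semistable at \emph{any} $\alpha>0$, and together with the large-volume limit (Proposition~\ref{prop:largevolume}) this gives $\sigma_{\alpha,-1}$-stability for all $\alpha>0$; the same argument works for $\cE_2(-H)[1]$. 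Second, to pass from $\beta=-1$ to $\beta=-1+\epsilon$, the paper proves a purely numerical lemma (Lemma~\ref{lem:E2location}): for $g\ge 6$ the class $v_H^2(\cE_2)$ lies strictly inside the two tangent planes to $\{\Delta_H=0\}$ at $v_H^2(\cO_X)$ and $v_H^2(\cO_X(-H))$. This forces any wall for $\cE_2$ that meets the segment $\mu_H=-1+\epsilon$ either to meet $\mu_H=-1$ (already excluded) or to be the single boundary segment joining $v_H^2(\cE_2)$ to $v_H^2(\cO_X(-H))$; the latter is ruled out by an integrality argument at its endpoint together with $\Hom(\cO_X(-H)[1],\cE_2)=0$ and $\Hom(\cE_2,\cO_X(-H)[1])=\Hom(\cO_X,\cE_2[2])^\vee=0$. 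In particular the paper does \emph{not} need to list destabilizing classes at all, and it takes $\beta=-1+\epsilon$ (not a generic $\beta\in(-1,-\tfrac12)$) precisely so that the integrality trick at $\beta=-1$ is available. Finally, the second tilt in the paper is at a slope $\mu$ strictly between $\mu_{\alpha,\beta}(\cO_X(-H)[1])$ and $\mu_{\alpha,\beta}(\cE_2)$, not necessarily at $\mu=0$; your choice $\mu=0$ imposes an extra inequality you have not verified.
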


We want to apply the same proof as in the previous cases. Note that $\cE_2$ is slope-stable with $\mu_H(\cE_2) = -\frac 12$, whereas
$S(\cO_X) = \cO_X(-H)[3]$, $S(\cE_2) = \cE_2(-H)[3]$ are shifts of slope-stable sheaves of slope $-1$ and $-\frac 32$, respectively. Therefore, the first step works exactly as before: for any $\beta$ with $-1 < \beta < -\frac 12$, the 
abelian category $\Coh^\beta(X)$ contains $\cO_X, \cE_2$ as well as $\cO_X(-H)[1]$ and $\cE_2(-H)[1]$. 

To continue as before, we need to show tilt-stability of $\cE_2$; the corresponding statement was automatic in the previous cases by Proposition~\ref{prop:Delta0stable}. We start with an auxiliary observation.
Recall from Section~\ref{sec:tilt} that we have a quadratic form $\Delta_H$ on $\Lambda_H^2\otimes\R\cong\R^3$.

\begin{Lem} \label{lem:E2location}
Consider the tangent planes to the quadric $\Delta_H = 0$ in $\R^3$ at $v_H^2(\cO_X)$ and $v_H^2(\cO_X(-H))$, and the corresponding open half-spaces containing $(0, 0, 1)$.  
If $g \ge 6$ is an even genus, then $v_H^2(\cE_2)$ lies in the intersection of these two half-spaces.
\end{Lem}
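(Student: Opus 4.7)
The plan is to make the statement completely explicit in coordinates and reduce it to the positivity of two linear expressions. Using Example~\ref{ex:slopestability}, I would identify $\Lambda_H^2\otimes\R$ with $\R^3$ via the coordinates $(x,y,z) = (H^3\ch_0, H^2\ch_1, H\ch_2)$, so that the quadric becomes $\Delta_H = y^2-2xz$. Write $d = H^3 = 2g-2$ and $g = 2s$ throughout.

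First I would compute the three relevant vectors. From $\ch(\cO_X) = 1$ one reads off $v_H^2(\cO_X) = (d,0,0)$, and from $\ch(\cO_X(-H)) = e^{-H}$ one obtains $v_H^2(\cO_X(-H)) = (d,-d,d/2)$; both manifestly lie on the quadric $y^2 = 2xz$. For $\cE_2$, combining Theorem~\ref{thm:Mukai} with the equality $H\cdot L = 1$ (which holds because $L$ is the class of a line on a Picard-rank-one Fano threefold polarised by $H$) yields $v_H^2(\cE_2) = (2d,-d,s-2)$.

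Next I would record the two tangent planes. Since $\nabla(y^2-2xz) = (-2z,2y,-2x)$, the tangent plane at $v_H^2(\cO_X)$ reduces to $\{z = 0\}$, and the tangent plane at $v_H^2(\cO_X(-H))$ reduces, after clearing a common factor, to $\{x+2y+2z = 0\}$. The only genuine conceptual step is to fix which half-space of each plane qualifies as the \emph{interior}. The natural convention is that the interior is the half-space containing the sheet of the negative cone $\{\Delta_H < 0\}$ that is swept out by the lines $\ker Z_{\alpha,\beta}$ for $(\alpha,\beta)\in\R_{>0}\times\R$. A short direct computation (essentially already present in the discussion leading to Figure~\ref{fig:tiltwalls}) shows that this kernel is spanned by $\bigl(1,\beta,(\beta^2+\alpha^2)/2\bigr)$; substituting into the two linear forms yields $(\beta^2+\alpha^2)/2 > 0$ and $(1+\beta)^2+\alpha^2 > 0$ respectively, so the interiors are characterised precisely by $z > 0$ and by $x+2y+2z > 0$.

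Finally, evaluating these two linear forms at $v_H^2(\cE_2) = (2d,-d,s-2)$ gives $z = s-2$ and $x+2y+2z = 2d-2d+2(s-2) = 2(s-2)$, each of which is strictly positive as soon as $s\ge 3$, that is, $g = 2s\ge 6$. This proves the lemma. The calculation is entirely elementary; the only subtlety worth flagging is matching the sign convention for ``interior'' with the convention implicit in the rest of the paper, which is why I would pin it down explicitly via $\ker Z_{\alpha,\beta}$ as above. I do not anticipate any serious obstacle beyond this bookkeeping.
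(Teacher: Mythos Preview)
Your argument is correct. You compute both tangent planes and both linear forms directly, and your identification of the interior via the positive-rank branch of $\ker Z_{\alpha,\beta}$ matches the paper's stated convention.

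The paper's own proof is shorter because it exploits the symmetry you are implicitly reproducing by hand: the linear involution of $\Lambda_H^2\otimes\R$ induced by $\blank\otimes\cO_X(-H)$ preserves $\Delta_H$, swaps $v_H^2(\cO_X)$ with $v_H^2(\cO_X(-H))$ (hence swaps the two tangent planes), and fixes the hyperplane $\mu_H=-\tfrac12$ pointwise, which contains $v_H^2(\cE_2)$. Thus it suffices to check a single tangent plane, namely $H\ch_2(\blank)=0$, and the conclusion follows from $\ch_2(\cE_2)=(s-2)L$. Your explicit computation of the second tangent plane and the evaluation $x+2y+2z=2(s-2)$ is exactly what this symmetry predicts. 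The advantage of your approach is that it makes the interior convention completely explicit and avoids any appeal to symmetry; the advantage of the paper's is brevity.
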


In terms of visualisations of the negative cone via cross-sections of $\R^3$ as in Figure~\ref{fig:tiltwalls}, this means that $\EE_2$ lies above the tangent lines at $\OO_X(-H)$ and $\OO_X$ in Figure~\ref{fig:E2location}.

\begin{proof}
By the symmetry of $\R^3$ induced by $\lHom(\blank, \OO(-H))$, which leaves the quadratic form $\Delta_H$ invariant, the intersection of these two tangent planes is also contained in the plane
$\mu_H(\blank) = -\frac 12$ containing $v_H^2(\cE_2)$; therefore, it is enough to prove the claim for one of the two planes.

The tangent plane to $\Delta_H=0$ at $v_H^2(\cO_X)$ is given by $H\ch_2 (\blank) = 0$, and thus the claim follows from $H\ch_2(\cE_2) = H\left(\frac g2 - 2\right) L > 0$, see Theorem~\ref{thm:Mukai}.
\end{proof}

\begin{figure}
\centering
\hspace{0cm}
\begin{minipage}{.5\textwidth}
\definecolor{uququq}{rgb}{0.25,0.25,0.25}
\definecolor{ffqqqq}{rgb}{1,0,0}
\begin{tikzpicture}[line cap=round,line join=round,>=triangle 45,x=0.55cm,y=0.55cm]
\clip(-7.4,-7.) rectangle (6.7,6.);
\draw[fill=black,fill opacity=0.05](0,0) circle (2.75cm);
\begin{footnotesize}
\draw (-5.1,5.) node[anchor=north west] {$\Delta_H=0$};
\draw (-1.,1.61) node[anchor=north west] {$\mu_H=-\frac12$};
\draw (2.17,-4.5) node[anchor=north west] {$\mathcal{O}_X$};
\draw (-4.76,-1.53) node[anchor=north east] {$\mathcal{O}_X(-H)$};
\draw [color=ffqqqq](-2.6,-5.39) node[anchor=north west] {$\mathcal{E}_2$};
\draw (0,5) node[anchor=south west] {$(0,0,1)$};
\end{footnotesize}
\draw [dotted] (-3,-7)-- (0,5);
\draw [dash pattern=on 4pt off 4pt] (-6.87,5.03)-- (-3,-7);
\draw [dash pattern=on 4pt off 4pt] (-3,-7)-- (8.24,-1.57);
\begin{scriptsize}
\fill [color=black] (0,5) circle (1.5pt);
\fill [color=uququq] (-4.76,-1.53) circle (1.5pt);
\fill [color=uququq] (2.17,-4.5) circle (1.5pt);
\fill [color=ffqqqq] (-2.6,-5.39) circle (1.5pt);
\end{scriptsize}
\end{tikzpicture}
 \captionof{figure}{The location of $v_H(\cE_2)$}
 \label{fig:E2location}
\end{minipage}%
\begin{minipage}{.5\textwidth}
 \centering
\definecolor{xdxdff}{rgb}{0.49,0.49,1}
\definecolor{uququq}{rgb}{0.25,0.25,0.25}
\definecolor{ffqqqq}{rgb}{1,0,0}
\definecolor{zzttqq}{rgb}{0.6,0.2,0}
\begin{tikzpicture}[line cap=round,line join=round,>=triangle 45,x=0.6cm,y=0.6cm]
\clip(-7.2,-6.51) rectangle (1.5,6.);
\fill[color=zzttqq,fill=zzttqq,fill opacity=0.1] (-2.6,-5.39) -- (-4.76,-1.53) -- (0,5) -- cycle;
\draw[fill=black,fill opacity=0.05](0,0) circle (3cm);
\begin{footnotesize}
\draw (-4.81,4.78) node[anchor=north west] {$\Delta_H=0$};
\draw (-3.49,-0.6) node[anchor=north west] {$\mu_H=-1+\epsilon$};
\draw (2.15,-4.28) node[anchor=north west] {$\mathcal{O}_X$};
\draw [color=zzttqq](-4.76,-1.53) node[anchor=north east] {$\mathcal{O}_X(-H)$};
\draw [color=ffqqqq](-2.6,-5.39) node[anchor=north west] {$\mathcal{E}_2$};
\draw [color=zzttqq](0,5) node[anchor=south] {$(0,0,1)$};
\draw [color=zzttqq](-4.27,2.89) node[anchor=north west] {$\mu_H=-1$};
\end{footnotesize}
\draw [dotted] (-3,-7)-- (0,5);
\draw [dash pattern=on 3pt off 3pt] (-6.87,5.03)-- (-3,-7);
\draw [dash pattern=on 3pt off 3pt] (-3,-7)-- (8.24,-1.57);
\draw [color=zzttqq] (-2.6,-5.39)-- (-4.76,-1.53);
\draw [color=zzttqq] (-4.76,-1.53)-- (0,5);
\draw [color=zzttqq] (0,5)-- (-2.6,-5.39);
\draw (-4.29,-2.57)-- (0,5);
\begin{scriptsize}
\fill [color=black] (0,5) circle (1.5pt);
\fill [color=uququq] (-4.76,-1.53) circle (1.5pt);
\fill [color=uququq] (2.17,-4.5) circle (1.5pt);
\fill [color=ffqqqq] (-2.6,-5.39) circle (1.5pt);
\end{scriptsize}
\end{tikzpicture}
\captionof{figure}{(Lack of) Walls for $\cE_2$}
 \label{fig:E2WALLS}
\end{minipage}
\end{figure}

\begin{Lem}
For $\epsilon>0$ small, the objects $\cE_2$ and $\cE_2(-H)[1]$ are $\sigma_{\alpha, -1 + \epsilon}$-stable for all $\alpha > 0$.
\end{Lem}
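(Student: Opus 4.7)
The plan is to rule out walls in tilt-stability for $\mathcal{E}_2$ along the entire vertical ray $\{(\alpha,-1+\epsilon) : \alpha>0\}$ in the upper half-plane. Stability for $\alpha\gg 0$ is immediate from Proposition~\ref{prop:largevolume} since $\mathcal{E}_2$ is slope-stable by Theorem~\ref{thm:Mukai}. The claim for $\mathcal{E}_2(-H)[1]$ then follows either by running the identical argument with the roles of $\cO_X$ and $\cO_X(-H)$ symmetrically exchanged (the hypothesis of Lemma~\ref{lem:E2location} is symmetric in these two points), or by using the autoequivalence $\blank\otimes\cO_X(H)$, which intertwines $\sigma_{\alpha,\beta}$ with $\sigma_{\alpha,\beta+1}$ and carries $\mathcal{E}_2(-H)$ to $\mathcal{E}_2$.

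Assume for contradiction that a wall exists, realized by a short exact sequence
\[
0\to A\to \mathcal{E}_2\to B\to 0
\]
in $\Coh^{-1+\epsilon}(X)$, with $A,B$ both tilt-semistable of the same tilt-slope at some $(\alpha_0,-1+\epsilon)$, and with $v_H(A)$ not proportional to $v_H(\mathcal{E}_2)$. Applying the Bogomolov-Gieseker inequality of Proposition~\ref{prop:tiltstability} to the tilt-semistable factors of $A$ and $B$ gives $\Delta_H(A)\ge 0$ and $\Delta_H(B)\ge 0$. Furthermore, since $A,B\in\Coh^{-1+\epsilon}(X)$, both satisfy $H^2\ch_1^{-1+\epsilon}(\blank)\ge 0$, and these imaginary parts sum to $H^2\ch_1^{-1+\epsilon}(\mathcal{E}_2)=2\epsilon(2g-2)$, which tends to $0$ as $\epsilon\to 0^+$.

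The geometric heart of the argument is Lemma~\ref{lem:E2location}: the class $v_H(\mathcal{E}_2)$ lies strictly on the positive side of both tangent hyperplanes $\{H\ch_2=0\}$ (at $v_H(\cO_X)$) and $\{H^3\ch_0+2H^2\ch_1+2H\ch_2=0\}$ (at $v_H(\cO_X(-H))$) to the quadric $\{\Delta_H=0\}$. The integer classes $w=v_H(A)\in\Lambda_H^2$ satisfying simultaneously $\Delta_H(w)\ge 0$, $\Delta_H(v_H(\mathcal{E}_2)-w)\ge 0$, and $0\le H^2\ch_1^{-1+\epsilon}(w)\le 2\epsilon(2g-2)$ form a bounded region. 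By the strict interiority from Lemma~\ref{lem:E2location} and the discreteness of $\Lambda_H^2$, this region contains no integer classes other than multiples of $v_H(\mathcal{E}_2)$ for $\epsilon>0$ small enough, giving the desired contradiction.

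The main obstacle is to make this last step fully rigorous: one must verify that, beyond the obvious boundary candidates $w=v_H(\cO_X)$ and $w=v_H(\cO_X(-H)[1])$ (which are excluded by strict interiority in Lemma~\ref{lem:E2location}), no additional integer classes fall into the shrinking region as $\epsilon\to 0^+$. The hypothesis $g\ge 6$ enters precisely here, through $H\ch_2(\mathcal{E}_2)=g/2-2\ge 1$, which gives the quantitative slack ensuring that the distance from $v_H(\mathcal{E}_2)$ to each tangent plane is bounded below, so that for $\epsilon$ small enough no new integer lattice points can appear in the admissible region.
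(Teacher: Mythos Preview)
Your argument contains a computational error that undermines the entire strategy. You write that $H^2\ch_1^{-1+\epsilon}(\cE_2)=2\epsilon(2g-2)$, but in fact
\[
H^2\ch_1^{-1+\epsilon}(\cE_2)=H^2\bigl(\ch_1(\cE_2)-(-1+\epsilon)\cdot 2\cdot H\bigr)=(-1+2-2\epsilon)H^3=(1-2\epsilon)(2g-2),
\]
which tends to $2g-2$, not to $0$, as $\epsilon\to 0^+$. Consequently the slab $0\le H^2\ch_1^{-1+\epsilon}(w)\le H^2\ch_1^{-1+\epsilon}(\cE_2)$ does not shrink, the region you describe does not collapse, and the ``no new integer points for small $\epsilon$'' conclusion does not follow. (Even with the correct bound, you would still need to argue boundedness of that region carefully; the two conditions $\Delta_H(w)\ge 0$ and $\Delta_H(v_H(\cE_2)-w)\ge 0$ together with a slab of fixed nonzero width do not obviously cut out a bounded set in a lattice of signature $(2,1)$.) Your final paragraph already flags this step as the ``main obstacle'', and indeed it is where the argument breaks.

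The paper's proof proceeds quite differently. It first works at $\beta=-1$ exactly, where $\Im Z_{\alpha,-1}(\cE_2)=H^3$ is the minimal positive value attainable in $\Z_{\ge 0}\cdot H^3$, so neither $\cE_2$ nor $\cE_2(-H)[1]$ can be strictly semistable there; combined with Proposition~\ref{prop:largevolume} and the wall-and-chamber structure, this gives stability for all $\alpha>0$ along $\beta=-1$. For $\cE_2(-H)[1]$ one then observes, via Lemma~\ref{lem:E2location}, that any wall meeting the ray $\beta=-1+\epsilon$ would also meet $\beta=-1$, so none exists. For $\cE_2$ the same geometry and local finiteness of walls reduce the problem to ruling out a single potential wall: the line through $v_H(\cE_2)$ and $v_H(\cO_X(-H))$. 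That wall is then excluded not by a counting argument but by passing to the endpoint $(\alpha,\beta)=(0,-1)$, using the integrality of $\Im Z_{0,-1}$ again to force a destabilizer to have class proportional to $v_H(\cO_X(-H))$, invoking Proposition~\ref{prop:Delta0stable} to identify it as $\cO_X(-H)[1]$, and finally checking the explicit vanishings $\Hom(\cO_X(-H)[1],\cE_2)=0$ and $\Hom(\cE_2,\cO_X(-H)[1])=0$.
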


\begin{proof}
We first observe that both objects are $\sigma_{\alpha, -1}$-stable for all $\alpha > 0$. Indeed, for $\alpha \gg 0$, this follows from slope-stability of $\cE_2$ and Proposition~\ref{prop:largevolume}. Moreover, since
\[
\Im {Z_{\alpha, -1}}(\cE_2) = H^2 \ch_1^{-1}(\cE_2) = H^3 = -H^2 \ch_1^{-1}(\cE_2(-H)) = \Im {Z_{\alpha, -1}}(\cE_2(-H)[1]),
\]
and since $\Im Z_{\alpha, -1}(F) \in \Z_{\ge 0}\cdot H^3$ for all objects $F \in \Coh^{-1}(X)$, neither object can be strictly $\sigma_{\alpha, -1}$-semistable for any $\alpha > 0$; by the existence of the wall-and-chamber structure for tilt-stability, this means they must be $\sigma_{\alpha, -1}$-stable for all $\alpha > 0$. 

In the case of $\EE_2(-H)[1]$, we apply the previous Lemma~\ref{lem:E2location} analogously to $\EE_2(-H)$ instead of $\EE_2$: the projection of $v_H^2(\EE_2(-H)[1])$ in Figure~\ref{fig:E2WALLS} lies above the dotted tangent line to $\Delta_H = 0$ at $v_H^2(\OO(-H))$, but further to the left on the line $\mu = -\frac 32$. Therefore, any wall intersecting the line segment of slope $\mu_H = -1+\epsilon$ would also intersect the line segment $\mu_H = -1$.

Now consider the location of possible walls for $\sigma_{\alpha, \beta}$-semistability of $\cE_2$, as in Figure~\ref{fig:E2WALLS}; in this picture, they are given as the intersection of lines through $v_H^2(\cE_2)$ with the interior of the negative cone
$\Delta_H(\blank) < 0$. By the argument in the previous paragraph, no such wall can be in the interior of the triangle with vertices $v_H^2(\cE_2)$, $v_H^2(\cO_X(-H))$ and $(0, 0, 1)$. By the local finiteness of walls, it suffices to prove that the line segment connecting $v_H^2(\cE_2)$ and $v_H^2(\cO_X(-H))$ is not a wall: any wall strictly below that one would not intersect the line segment corresponding to $\beta = -1 + \epsilon$ for $\epsilon \ll 1$. 

Assume otherwise. Then there is a short exact sequence 
$A \into \cE_2 \onto B$ such that when $(\alpha, \beta)$ lies on the wall, then
$Z_{\alpha, \beta}(A)$ and $Z_{\alpha, \beta}(B)$ lie on the open line segment
connecting $0$ and $Z_{\alpha, \beta}(E)$ in the complex plane. By continuity, this still holds at the end point $(\alpha, \beta) = (0, -1)$; with the same integrality argument as before, we conclude either $Z_{0, -1}(A) = 0$ or
$Z_{0, -1}(B) = 0$; in particular, $v_H^2(A)$ or $v_H^2(B)$ are proportional to
$v_H^2(\cO_X(-H))$, respectively. By Proposition~\ref{prop:Delta0stable}, we must have
$A \cong \cO_X(-H)[1]$ or $B \cong \cO_X(-H)[1]$. But both of these are impossible: we clearly have $\Hom(\cO_X(-H)[1], \cE_2) = 0$, and, by Serre duality and the fact that $(\cO_X, \cE_2)$ is an exceptional pair, also
$\Hom(\cE_2, \cO_X(-H)[1])= \Hom(\cO_X, \cE_2[2])^\vee = 0$.
\end{proof}

\begin{proof}[Proof of Theorem~\ref{thm:index1}]
We have all the ingredients in place to apply Proposition~\ref{prop:inducestability}. Indeed, for $\beta = -1 + \epsilon$ and $\alpha > 0$ sufficiently small, the objects
$\cO_X, \cE_2, \cO_X(-H)[1], \cE_2(-H)[1]$ of $\Coh^\beta(X)$ are $\sigma_{\alpha, \beta}$-stable; one also easily checks with a computation, or a picture using Lemma~\ref{lem:E2location}, that
\[
\mu_{\sigma_{\alpha, \beta}} \left(\cE_2(-H)[1]\right) <
\mu_{\sigma_{\alpha, \beta}} \left(\cO_X(-H)[1]\right) <
\mu_{\sigma_{\alpha, \beta}} \left(\cE_2\right) <
\mu_{\sigma_{\alpha, \beta}} \left(\cO_X\right).
\]
Indeed, for the ``proof by picture'' note that for $\alpha \gg 0$, the objects ordered by slope are $\EE_2$, $\OO_X$, $\EE_2(-H)[1]$, $\OO(-H)[1]$; as $\alpha$ decreases, the order of two objects $A, B$ changes whenever the point corresponding to $\Ker Z_{\alpha, \beta}$ crosses the line between $v_H(A)$ and $v_H(B)$.
Regarding $\EE_2(-H)[1]$, recall from above that $v_H(\EE_2(-H))[1]$ lies above the tangent line at $\OO_X(-H)$, and on the line $\mu_H = -\frac 32$ further to the left.

Therefore, for $\mu$ in between the second and the third slope in these inequalities,
the tilted category $\Coh_{\alpha, \beta}^\mu(X)$ contains all of 
$\cO_X, \cE_2, \cO_X(-H)[2]$ and $\cE_2(-H)[2]$. Thus the weak stability condition
$(\Coh^\mu_{\alpha, \beta}(X), Z^\mu_{\alpha, \beta})$ of Proposition~\ref{prop:tiltedtiltstability} satisfies all the assumptions of Proposition~\ref{prop:inducestability}.
\end{proof}

\subsection*{Relation to Bridgeland stability on $\Db(X)$}
In \cite{Chunyi:Fano3folds}, stability conditions have been constructed on the whole category $\Db(X)$, when $X$ is a Fano threefold of Picard rank one (and in general in \cite{Dulip:Fano,BMSZ:Fano}).
In particular, the category $\Coh_{\alpha,\beta}^\mu(X)$ in Theorems~\ref{thm:1object},~\ref{thm:index2}, and~\ref{thm:index1} is the heart of a Bridgeland stability condition on $\Db(X)$.
While this much stronger result is not needed for our construction, it may be useful to compare stable objects in $\Db(X)$ versus stable objects in $\Ku(X)$, in a similar fashion as what has been done in \cite[Section~3]{LMS:ACM3folds}.
More precisely, in \cite{BMMS:Cubics}, the Kuznetsov component $\Ku(Y_3)$ is realized as an admissible subcategory in $\Db(\P^2,\BB_0)$ orthogonal to the right of an exceptional object (see also Section~\ref{sec:geometric} below).
In \cite{LMS:ACM3folds} the comparison is between stable objects in $\Ku(Y_3)$ and Bridgeland stable objects in $\Db(\P^2,\cB_0)$.

\section{Conic fibrations associated to cubic fourfolds}\label{sec:geometric}

In this section, we start the study of the Kuznetsov component $\Ku(X)$ of a cubic fourfold $X$.
In principle, we would like to apply a similar argument as in the Fano threefold case above. To this end, we would need to tilt three times starting from $\Coh(X)$.
The issue is the lack of a positivity result, generalizing Bogomolov inequality for stable sheaves to tilt-stable objects, which prevents us to tilt a third time.
The key idea then is to realize $\Ku(X)$ as an admissible subcategory of a derived category of modules over $\P^3$ with respect to an algebra vector bundle $\cB_0$. 
By choosing a line in $X$ not contained in a plane in $X$, the induced conic fibration provides $\cB_0$ as the even part of the associated Clifford algebra vector bundle. 

After a brief recall on Kuznetsov's result on semiorthogonal decompositions for quadric fibrations, the goal of this section is to describe such an embedding (see Proposition~\ref{prop:decomp}).

\subsection*{Modules over algebra vector bundles}\label{subsect:modalg}
Let $Y$ be a smooth projective variety, and let $\BB$ be a sheaf of $\cO_Y$-algebras  over $Y$; we will always assume that $\BB$ is a locally free sheaf of finite rank over $Y$, and call such $\BB$ an \emph{algebra vector bundle}.
We denote by $\Coh (Y, \BB)$ the category of coherent sheaves on $Y$ with a right $\BB$-module structure, and denote its derived category by $\Db(Y, \BB)$. The forgetful functor is denoted by
$\Forg \colon \Db(Y, \BB) \to \Db(Y)$. We now review some basic properties of $\Coh(Y, \BB)$ and $\Db(Y, \BB)$, see  \cite[arXiv:v1 Appendix~D/published version Section~10]{Kuznetsov:Hyperplane} and \cite[Section~2.1]{Kuz:Quadric}.

Consider a morphism $f \colon Y' \to Y$, and let $\BB' := f^* \BB$. Then the usual pull-back and push-forward for coherent sheaves directly induce functors
$f^* \colon \Db(Y, \BB) \to \Db(Y', \BB')$ and, when $f$ is projective, $f_* \colon \Db(Y', \BB') \to \Db(Y, \BB)$; in other words, these functors commute with the forgetful functors on $Y$ and $Y'$, and the ordinary pull-back and push-forward for coherent sheaves, respectively.

Let $E \in \Db(Y, \BB)$. 
By abuse of notation and language, we will write $\ch(E) = \ch(\Forg(E))$ for the Chern character of the underlying complex of coherent sheaves, and call it the Chern character of $E$. 
By the observation in the previous paragraph, the behavior of this Chern character behaves exactly as the Chern character of coherent sheaves under pull-backs and push-forwards whenever we are in the situation above (in particular, with $\cB' = f^* \cB$).

Since we assume $Y$ to be smooth and $\BB$ to be a vector bundle, we can also write the Serre functor on $\Db(Y, \BB)$ explicitly as
$S(\blank) = \omega_Y \otimes_{\OO_Y}(\blank)\otimes_{\BB} \BB^\vee[\dim(Y)]$, where
$\BB^\vee$ denotes the dual of $\BB$ as a coherent sheaf, together with its canonical structure as a $\BB$-bimodule. This follows from Serre duality on $Y$ together with the standard adjunctions for the forgetful functor (see also \cite[arXiv:v1 eq.~(70) /published version eq.~(10.1)]{Kuznetsov:Hyperplane}).

\subsection*{Conic fibrations}\label{subsec:conic}
We now recall Kuznetsov's description of the derived category of quadric fibrations, specialized to the case of relative dimension one, and extended to positive characteristic in \cite{ABB:intersectionsofquadrics}.
So let $\pi \colon X \to Y$ be a fibration in conics over a smooth projective variety $Y$ over $k$, where $k$ is an algebraically closed field with $\Char k \neq 2$.
There is a rank three vector bundle $\cF$ on $Y$ and a line bundle $\LL$ such that $X$ embeds into the $\P^2$-bundle $\P_Y(\cF)$ as the zero locus of a section 
\[s_X \in H^0(Y, \textstyle{\Sym^2} \cF^\vee \otimes \LL^\vee) = H^0(\P_Y(\cF), \OO_{\P_Y(\cF)}(2) \otimes \LL^\vee),\]
where, to simplify the notation, we write $\LL^\vee$ both for the line bundle on $Y$ and its pull-back to $\P_Y(\cF)$.

The even part of the Clifford algebra of $\pi$ as a sheaf is
\begin{equation} \label{eq:defB0}
\Forg(\BB_0) = \OO_Y \oplus \big(\textstyle{\bigwedge^2} \cF \otimes \LL\big). \end{equation}
The algebra structure on its fiber $\FF|_y$ at a point $y \in Y$ is determined by
\[
v_i \wedge v_k \cdot v_k \wedge v_j = s_{X}(v_k\otimes v_k)\, v_i \wedge v_j, \quad
v_i \wedge v_k \cdot v_i \wedge v_k = s_{X}(v_i \otimes v_i)\, s_{X}(v_k \otimes v_k),
\]
for an orthogonal basis $(v_1, v_2, v_3)$ of $\cF|_y$ with respect to the quadratic form $s_{X}$ and $i \neq j \neq k \neq i$.

The odd part of the Clifford algebra of $\pi$ is denoted by $\cB_1$.
Furthermore, we define the following $\cB_0$-bimodules, for $j\in\Z$:
\[
\cB_{2j}:=\cB_0\otimes \LL^{-j}\ \text{ and } \ \cB_{2j+1}:=\cB_1\otimes \LL^{-j}.
\]

In positive characteristic we rely on \cite[Proposition~A.1]{ABB:intersectionsofquadrics}, which identifies, for $\Char(k) \neq 2$, the Clifford algebra constructed in \cite[Section~3.3]{Kuz:Quadric} and described above for the case of conics, with the construction in \cite[Section~1.5]{ABB:intersectionsofquadrics} that works in arbitrary characteristic.

The fundamental result on the derived categories of quadric fibrations is the following.
\begin{Thm}[{\cite{Kuz:Quadric}, \cite[Theorem~2.2.1]{ABB:intersectionsofquadrics}}] \label{thm:Kuzquadric} 
There is a semiorthogonal decomposition 
\[ \Db(X) = \langle \Phi(\Db(Y, \BB_0)), \pi^*\Db(Y) \rangle. \]
\end{Thm}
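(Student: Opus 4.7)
The plan is to construct $\Phi$ as a Fourier--Mukai-type functor from a well-chosen kernel, and then verify the ingredients of a semiorthogonal decomposition in the order: full faithfulness of $\pi^*$, semiorthogonality $\Hom_X(\pi^*F, \Phi(G)[p]) = 0$ for $F \in \Db(Y)$ and $G \in \Db(Y,\cB_0)$, full faithfulness of $\Phi$, and joint generation of $\Db(X)$. The first is automatic, since the fibers of $\pi$ are conics in $\P^2$ (with at worst rational singularities), so $R\pi_*\OO_X = \OO_Y$ and the projection formula applies.

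To construct $\Phi$, I would exploit the inclusion $X \subset \P_Y(\cF)$ and the Koszul resolution $0 \to \pi^*\LL\otimes\OO_{\P_Y(\cF)}(-2) \to \OO_{\P_Y(\cF)} \to \OO_X \to 0$ to produce a natural rank-two coherent sheaf $\EE$ on $X$ carrying a right $\pi^*\cB_0$-module structure --- morally, the spinor bundle of the relative conic, whose existence on a smooth fiber $\P^1$ reflects the classical Morita equivalence between the even Clifford algebra of a nondegenerate rank-three quadratic form and $M_2$. One then sets $\Phi(G) = \pi^*(G) \otimes_{\pi^*\cB_0} \EE$, with right adjoint $\Phi^R(F) = \pi_*\,\RlHom_{\OO_X}(\EE, F)$, carrying its canonical $\cB_0$-module structure from the left $\pi^*\cB_0$-action on $\EE^\vee$.

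Semiorthogonality then reduces, via the projection formula and the adjunction $\pi^* \dashv \pi_*$, to computing $\pi_*\EE$ as a $\cB_0$-module and verifying that it vanishes; fiberwise over a smooth conic this is $H^*(\P^1,\OO(-1)) = 0$, and flatness of $\EE$ over $Y$ together with cohomology and base change handle the extension over the discriminant. Full faithfulness of $\Phi$ similarly reduces, via adjunction, to the identity
\[
\pi_*\RlHom_{\pi^*\cB_0}(\EE,\EE) \;=\; \cB_0,
\]
which is the key Morita-type computation. Generation will follow from a relative Beilinson-type resolution of the diagonal $\Delta_X \subset X \times_Y X$ into two terms of the form $\OO \boxtimes_Y \OO$ (producing the $\pi^*\Db(Y)$ component) and a $\cB_0$-twisted version of $\EE \boxtimes_Y \EE^\vee$ (producing the $\Phi$-component); convolving an arbitrary $E \in \Db(X)$ with this resolution exhibits $E$ as an extension of objects in the two pieces.

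The main obstacle is the careful bookkeeping of the $\cB_0$-module structures and, above all, the control over the discriminant locus $D \subset Y$ where the conic degenerates to a pair of lines or a double line: over $D$ the algebra $\cB_0$ fails to be Azumaya and $\EE$ is no longer a uniformly defined spinor bundle in the classical sense, so the Morita identification and the Beilinson-type resolution have to be engineered to respect these degenerations. This is precisely the reason for formulating the statement via modules over the globally defined algebra $\cB_0$ rather than via a Brauer class on a smooth locus, so that a uniform argument across $Y$ becomes possible.
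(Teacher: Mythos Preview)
The paper does not prove this theorem: it is quoted verbatim from \cite{Kuz:Quadric}, and the only contribution the paper makes here is to spell out, in the special case of the conic fibration arising from a cubic hypersurface, the explicit construction of the kernel $\cE'$ defining $\Phi$ and of the kernel $\cE$ defining its left adjoint $\Psi$ (via the short exact sequences $0 \to q^*\BB_0(-2H) \to q^*\BB_1(-H) \to \alpha_*\EE' \to 0$ and $0\to q^*\BB_{-1}(-2H)\to q^*\BB_0(-H)\to\alpha_*\EE\to0$). So there is no ``paper's own proof'' to compare your attempt against.

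That said, your outline is a faithful sketch of Kuznetsov's original argument in \cite{Kuz:Quadric}, and your kernel $\cE$ is exactly the sheaf the paper calls $\cE'$. A couple of small corrections: the adjoint the paper actually uses later is the \emph{left} adjoint $\Psi$ (not the right adjoint you write down), though for full faithfulness either would do; and your vanishing $\pi_*\cE' = 0$ is more cleanly obtained not fiberwise but by pushing forward the defining exact sequence on the ambient $\P^2$-bundle $\P_Y(\cF)$, where $q_*\cO(-H) = q_*\cO(-2H) = 0$ for elementary reasons, so no base-change subtlety over the discriminant arises at this step. Your concern about the degenerate fibers is well placed for the full-faithfulness and generation steps, and indeed this is exactly why Kuznetsov works with $\cB_0$-modules globally rather than with a Brauer class on the smooth locus.
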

We describe the fully faithful functor $\Phi\colon\Db(Y, \BB_0) \to \Db(X)$ and its left adjoint $\Psi$ explicitly below.

Consider a base change $f \colon Y' \to Y$. Then there are two algebras on $Y'$: the Clifford algebra $\BB_0'$ of the conic fibration
$\pi' \colon Y' \times_Y X \to Y'$, and the pull-back $f^* \BB_0$.

\begin{Lem}\label{lem:basechange}
In the above situation, we have a natural isomorphism $\BB_0'\cong f^*\BB_0$.
\end{Lem}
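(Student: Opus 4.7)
The plan is to exploit naturality of the conic fibration construction under base change, and then compare the explicit formula \eqref{eq:defB0} on both sides.

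First, I would observe that all the data defining a conic fibration is compatible with pullback. Indeed, the fiber product $X' := Y' \times_Y X$ embeds into $\P_{Y'}(f^*\cF) \cong Y' \times_Y \P_Y(\cF)$ as the zero locus of the pulled-back section $f^*s_X \in H^0(Y', \Sym^2(f^*\cF)^\vee \otimes (f^*\LL)^\vee)$. Hence $\pi'\colon X' \to Y'$ is exactly the conic fibration determined by the triple $(f^*\cF, f^*\LL, f^*s_X)$.

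Next, I would apply the description \eqref{eq:defB0} on both sides. On the one hand, the even Clifford algebra $\BB_0'$ of $\pi'$ satisfies
\[ \Forg(\BB_0') \;=\; \OO_{Y'} \oplus \bigl(\textstyle{\bigwedge^2} f^*\cF\bigr) \otimes f^*\LL. \]
On the other hand, since $f^*$ commutes with direct sums, wedge powers, and tensor products of locally free sheaves, we have
\[ f^*\Forg(\BB_0) \;=\; \OO_{Y'} \oplus f^*\bigl(\textstyle{\bigwedge^2}\cF \otimes \LL\bigr) \;=\; \OO_{Y'} \oplus \bigl(\textstyle{\bigwedge^2}f^*\cF\bigr) \otimes f^*\LL. \]
This yields a canonical isomorphism of coherent sheaves $\Forg(\BB_0') \cong f^*\Forg(\BB_0)$.

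Finally, I would verify that this isomorphism is compatible with the algebra structures. The multiplication on $\BB_0$ is determined locally by $s_X$ through the explicit formulas
\[ v_i \wedge v_k \cdot v_k \wedge v_j \;=\; s_X(v_k \otimes v_k)\, v_i \wedge v_j, \qquad v_i \wedge v_k \cdot v_i \wedge v_k \;=\; s_X(v_i \otimes v_i)\, s_X(v_k \otimes v_k), \]
recorded in the statement preceding the lemma. Pulling back a local orthogonal frame $(v_1, v_2, v_3)$ of $\cF$ along $f$ gives a local frame of $f^*\cF$, and the structure section becomes $f^*s_X$; since the algebra structure on $\BB_0'$ is defined by the very same formulas with $s_X$ replaced by $f^*s_X$, the two multiplications agree under the identification above. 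The argument is essentially a bookkeeping check of the functoriality of the Clifford-algebra construction under pullback; the main (minor) care needed is to make sure that the local descriptions glue to a global isomorphism of sheaves of algebras, which follows since the identification $f^*(\textstyle{\bigwedge^2}\cF \otimes \LL) = \textstyle{\bigwedge^2}f^*\cF \otimes f^*\LL$ is canonical.
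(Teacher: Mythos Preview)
Your argument is correct. The paper itself does not give a proof but simply records that the statement is observed in the proof of \cite[Lemma~3.2]{Kuz:Quadric}; your write-up is precisely the kind of direct verification one finds by unwinding that reference, using the explicit description \eqref{eq:defB0} and the fact that the Clifford construction is functorial in the data $(\cF,\LL,s_X)$. One small remark: an orthogonal frame for $s_X$ need only exist \'etale-locally, but this is harmless here since both algebra structures are already globally defined and it suffices to compare them after an \'etale cover (or, more cleanly, one can bypass the local formulas entirely and invoke directly that the even Clifford algebra of a quadratic module is compatible with base change, which is the content of the cited lemma).
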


\begin{proof}
This is observed in the proof of \cite[Lemma~3.2]{Kuz:Quadric}.
\end{proof}

\subsection*{Cubic hypersurfaces and conic fibrations}\label{subsec:cubic4folds}
Let $N\geq1$ and let $X\subset\P^{N+2}$ be a smooth cubic hypersurface of dimension $N+1$ over an algebraically closed field of $\Char(k) \neq 2$. 
We can associate to $X$ a conic fibration as follows.
Let $L_0\subset X$ be a line not contained in a plane in $X$.
Consider the blow-up $\sigma\colon\wX\to X$ along $L_0$, and denote by $i\colon D\hookrightarrow \wX$ its exceptional divisor.
Then the projection from $L_0$ onto the projective space $\P^N$ induces a conic fibration $\pi \colon \wX \to \P^N$ whose discriminant locus is a hypersurface of degree $5$.
We denote by $\alpha\colon\wX\hookrightarrow\wPP$ the embedding into the $\P^2$-bundle $q \colon \wPP \to \P^N$, where $\wPP$ is the blow-up of $\P^{N+2}$ along $L_0$.
Summarizing, we have the following diagram:
\begin{equation}\label{eqn:notacio}
\xymatrix{
& D \ar[r]^i \ar[dl]_p & \wX \ar[dl]_\sigma \ar[drr]_\pi\ar[r]^\alpha &\wPP\ar[dl]|!{[l];[rd]}\hole \ar[rd]^q \\ 
L_0 \ar[r] & X \ar[r]&\P^{N+2} && \P^N. 
}
\end{equation}

\begin{Rem}\label{rem:cubicconic}
Take a generic hyperplane $\P^{N-1}\hookrightarrow\P^{N}$.
The restriction of the conic fibration $\pi$ to $\P^{N-1}$ is the conic fibration obtained by blowing up along $L_0$ the smooth cubic hypersurface of dimension $N$ obtained by intersecting $X$ with the $\P^{N+1}$ spanned by $L_0$ and $\P^{N-1}$.
\end{Rem}

We will abuse notation and denote by $H$ (resp.~by $h$) both the class of the hyperplane in $\P^{N+2}$ (resp.~$\P^{N}$) and the pull-back to $\wX$ and to $\wPP$.

In the notation of the previous section, we then have $\wPP=\P(\cF)$, where 
$\cF = \OO_{\P^N}^{\oplus 2}\oplus \OO_{\P^N}^{}(-h)$; the line bundle $\LL$ is $\OO_{\P^N}(-h)$. The forgetful sheaf $\Forg(\cB_0)$ in \eqref{eq:defB0} of the even part $\cB_0$ of the Clifford algebra of $\pi$ is
\[
\Forg(\BB_0) \cong \OO_{\P^N} \oplus\OO_{\P^N} (-h) \oplus \OO_{\P^N} (-2h)^{\oplus 2},
\]
while the odd part $\cB_1$ is
\[
\Forg(\BB_1) \cong \cF \oplus \textstyle{\bigwedge^3} \cF \otimes \LL \cong \OO_{\P^N}^{\oplus 2} \oplus\OO_{\P^N}^{} (-h) \oplus \OO_{\P^N}^{} (-2h).
\]

We can now define the functors $\Phi$ and $\Psi$ of Theorem~\ref{thm:Kuzquadric}.
There is a canonical map of left $q^*\BB_0$-modules $q^*\BB_0 \to q^*\BB_1(H)$,
which is injective and its cokernel is supported on~$\wX$. Twisting by $\OO_{\wPP}(-2H)$, 
we obtain an exact sequence
\[
0 \to q^*\BB_0(-2H) \to q^*\BB_1(-H) \to \alpha_*\EE' \to 0,
\]
where $\cE'$ is a sheaf of left $\pi^*\BB_0$-modules on $\wX$ and $\Forg(\cE')$ is a vector bundle of rank $2$.
The functor $\Phi\colon\Db(\P^N,\BB_0) \to \Db(\wX)$ is defined as:
\[
\Phi(\blank) = \pi^*(\blank)\otimes_{\pi^*\BB_0} \EE'.
\]
The left adjoint functor of $\Phi$ is
\begin{align*}
\Psi(\blank)&:=\pi_*(\blank\otimes\OO_{\wX}(h)\otimes\EE[1]),
\end{align*}
where $\cE$ is a sheaf of right $\pi^*\BB_0$-modules on $\wX$ and $\Forg(\cE)$ is a vector bundle of rank $2$, defined by the following short exact sequence of $q^*\BB_0$-modules:
\begin{equation}\label{eqn:defdeE}
0\to q^*\BB_{-1}(-2H)\to q^*\BB_0(-H)\to\alpha_*\EE\to0.
\end{equation}

\subsection*{The Kuznetsov component of a cubic fourfold}
We can now describe the Kuznetsov component of a cubic fourfold as an admissible subcategory in $\Db(\P^3,\cB_0)$.

\begin{Def} Let $X$ be a cubic fourfold.
The Kuznetsov component $\Ku(X)$ of $X$ is defined by the semiorthogonal decomposition
\begin{equation*}
 \Db(X)=\gen{ \Ku(X),\OO_{X},\OO_{X}(H),\OO_{X}(2H)}.
\end{equation*}
\end{Def}

Assume now that $X$ is defined over an algebraically closed field with $\Char(k) \neq 2$. We fix a line $L_0\subset X$ not contained in a plane in $X$ and keep the notation as in the previous section.
We start by describing a fully faithful functor $\Xi\colon\Ku(X)\to\Db(\P^3,\cB_0)$. The semiorthogonal complement is then described in Proposition~\ref{prop:decomp}. 

In the proof of Lemma~\ref{lem:so} we will use several times the following elementary lemma, whose statement and proof are analogous to \cite[Lemma~4.1]{Kuz:fourfold}. 

\begin{Lem}\label{lem:conicFibr}
We have linear equivalences in $\wX$:
\[
D = H - h,\qquad
K_{\wX} = -3H + 2D = -H -2h.
\]
\end{Lem}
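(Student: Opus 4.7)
The plan is to obtain both relations by doing the calculation first in the ambient space $\wPP$ and then restricting to $\wX$, combined with the standard formula for the canonical class of a blow-up.

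For the relation $D = H - h$, I would work on $\wPP$, where by construction the blow-up morphism $\sigma \colon \wPP \to \P^{N+2}$ is the resolution of the linear projection $\P^{N+2} \dashrightarrow \P^N$ from $L_0$. Call $E \subset \wPP$ the exceptional divisor, so that $D = E|_{\wX}$. A hyperplane $H_0 \subset \P^N$ corresponds to a hyperplane $H_1 \subset \P^{N+2}$ containing $L_0$, and its pullback $q^*H_0$ on $\wPP$ is exactly the proper transform of $H_1$. The standard blow-up formula for a divisor containing the center of the blow-up gives $\sigma^*H_1 = q^*H_0 + E$, i.e.\ $H = h + E$ in $\mathrm{Pic}(\wPP)$, so that $E = H - h$. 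Restricting to $\wX$ yields $D = H - h$.

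For the canonical class, I would apply the blow-up formula $K_{\wX} = \sigma^* K_X + (c-1)D$, where $c = \dim X - \dim L_0 = N$ is the codimension of $L_0$ in $X$. In the case of cubic fourfolds ($N=3$), this gives $K_{\wX} = \sigma^*K_X + 2D$. Since $X \subset \P^5$ is a smooth cubic hypersurface, adjunction gives $K_X = -3H|_X$, so $K_{\wX} = -3H + 2D$. Substituting $D = H - h$ from the first part yields $K_{\wX} = -3H + 2(H-h) = -H - 2h$, as claimed.

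There is really no serious obstacle here: both identities are standard blow-up/projection computations, and the only subtlety is to notice that a hyperplane in $\P^N$ lifts to the proper transform of a hyperplane through $L_0$ in $\P^{N+2}$, which is what links the two ``natural'' line bundles $H$ and $h$ on $\wPP$ via the exceptional divisor. The identity $D = H - h$ is written for the cubic fourfold case ($N=3$) but holds verbatim for every $N \ge 1$ by the same argument, and the expression $K_{\wX} = -H - 2h$ is specific to cubic fourfolds through the formula $K_X = -3H$ and the codimension being $3$.
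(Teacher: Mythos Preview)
Your argument is correct. The paper does not actually give its own proof of this lemma; it simply states that ``the statement and proof are analogous to \cite[Lemma~4.1]{Kuz:fourfold}'', and your computation is precisely the standard blow-up calculation underlying that reference.
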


Theorem~\ref{thm:Kuzquadric} gives the following semiorthogonal decomposition:
\begin{equation}\label{eqn:quadrica}
\Db(\wX) =\langle \Phi(\Db(\P^3,\BB_0)),\underbrace{\OO_{\wX}(-h),\OO_{\wX},\OO_{\wX}(h),\OO_{\wX}(2h)}_{\pi^*\Db(\P^3)}\rangle.
\end{equation}
Set $\Phi':=\Rmut_{\OO_{\wX}(-h)}\circ \Phi$.

\begin{Lem}\label{lem:so}
The admissible subcategory $\Phi'(\Db(\P^3,\BB_0))\subset\Db(\wX)$ has a semiorthogonal decomposition
\begin{gather*}
\Phi'(\Db(\P^3,\BB_0))= \left\langle \sigma^* \Ku(X), \OO_{\wX}(h-H),\Lmut_{\OO_{}}\Lmut_{\OO_{}(h)}\OO_{\wX}(H),\Lmut_{\OO_{}}\Lmut_{\OO_{}(h)}i_*\OO_{D}(h)\right\rangle.
\end{gather*}
\end{Lem}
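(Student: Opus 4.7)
The plan is to identify the subcategory $\Phi'(\Db(\P^3,\cB_0))\subset\Db(\wX)$ by comparing two natural semiorthogonal decompositions of $\Db(\wX)$: the one coming from Theorem~\ref{thm:Kuzquadric} applied to the conic fibration $\pi\colon\wX\to\P^3$, and the one coming from Orlov's blow-up formula for $\sigma\colon\wX\to X$ combined with the defining decomposition of $\Ku(X)$. Right-mutating $\Phi(\Db(\P^3,\cB_0))$ past $\OO_{\wX}(-h)$ in the decomposition of Theorem~\ref{thm:Kuzquadric} yields
\[
\Db(\wX)=\langle\OO_{\wX}(-h),\Phi'(\Db(\P^3,\cB_0)),\OO_{\wX},\OO_{\wX}(h),\OO_{\wX}(2h)\rangle,
\]
so that $\Phi'(\Db(\P^3,\cB_0))$ is characterized intrinsically as ${}^{\perp}\langle\OO_{\wX},\OO_{\wX}(h),\OO_{\wX}(2h)\rangle\cap\OO_{\wX}(-h)^{\perp}$.

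The strategy is then to produce a semiorthogonal collection inside this subcategory that spans it. The natural candidate arises from Orlov's blow-up formula applied to the codimension-three center $L_0\cong\P^1$, combined with $\Db(X)=\langle\Ku(X),\OO_X,\OO_X(H),\OO_X(2H)\rangle$: this gives a semiorthogonal decomposition of $\Db(\wX)$ with eight pieces, namely $\sigma^*\Ku(X)$, the three twists $\OO_{\wX}(jH)$ for $j=0,1,2$, and four blow-up exceptional objects supported on $D$. Using the linear equivalences $D=H-h$ and $K_{\wX}=-H-2h$ of Lemma~\ref{lem:conicFibr}, the short exact sequence $0\to\OO_{\wX}(-D)\to\OO_{\wX}\to i_*\OO_D\to 0$ and its twists (which tie the pullbacks from $\P^{N+2}$ to those from $\P^N$), Serre duality on $\wX$, and the mutation identities of Section~\ref{sec:sod}, a finite sequence of mutations should transform this decomposition into the form
\[
\Db(\wX)=\langle\OO_{\wX}(-h),\sigma^*\Ku(X),\OO_{\wX}(h-H),\OO_{\wX},\OO_{\wX}(h),\OO_{\wX}(H),i_*\OO_D(h),\OO_{\wX}(2h)\rangle.
\]
Applying two final left mutations past the adjacent pair $\langle\OO_{\wX},\OO_{\wX}(h)\rangle$ to the pieces $\OO_{\wX}(H)$ and $i_*\OO_D(h)$ then moves them into $\Phi'(\Db(\P^3,\cB_0))$, producing the three objects listed in the statement.

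The main obstacle is the combinatorial bookkeeping in this mutation chain: carrying out the mutations in the correct order, verifying semiorthogonality at each intermediate step, and identifying each mutated object explicitly. The most delicate verification is $\sigma^*\Ku(X)\subset\Phi'(\Db(\P^3,\cB_0))$, i.e.\ that for $F\in\Ku(X)$ all Hom-spaces between $\OO_{\wX}(-h)$ and $\sigma^*F$, and between $\sigma^*F$ and $\OO_{\wX}(jh)$ for $j=0,1,2$, vanish in every degree. Using $-h=D-H$, the adjunction $\sigma^*\dashv R\sigma_*$ together with Serre duality and the projection formula, and the standard vanishings $R\sigma_*\OO_{\wX}(kD)=\OO_X$ for $k=0,1,2$, these reduce to the defining orthogonality of $\Ku(X)$ inside $\Db(X)$.

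Given the semiorthogonal decomposition of $\Phi'(\Db(\P^3,\cB_0))$ with $\sigma^*\Ku(X)$ as its leftmost component, the functor $\Xi=\Phi^{-1}\circ\Lmut_{\OO_{\wX}(-h)}\circ\sigma^*$ is manifestly fully faithful: $\sigma^*$ identifies $\Ku(X)$ with its image as the leftmost piece of $\Phi'(\Db(\P^3,\cB_0))$; the left mutation $\Lmut_{\OO_{\wX}(-h)}$ is the quasi-inverse of $\Rmut_{\OO_{\wX}(-h)}$ on this subcategory and carries it back to $\Phi(\Db(\P^3,\cB_0))$; and $\Phi^{-1}$ is the quasi-inverse of Kuznetsov's equivalence $\Phi$ onto its image.
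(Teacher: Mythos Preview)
Your proposal follows essentially the same route as the paper: compare the conic-fibration decomposition (after the right mutation producing $\Phi'$) with Orlov's blow-up decomposition, and use a chain of mutations, Serre duality on $\wX$, and the exact sequence $0\to\OO_{\wX}(-D)\to\OO_{\wX}\to i_*\OO_D\to 0$ (which yields the key identity $\Lmut_{\OO_{\wX}}(i_*\OO_D)\cong\OO_{\wX}(h-H)[1]$, the paper's \eqref{eqn:iso1}) to transform the latter. Your target intermediate decomposition is, after one application of Serre duality moving $\OO_{\wX}(-h)$ to the far right as $\OO_{\wX}(H+h)$, exactly the paper's \eqref{eqn:semi6} with $\cD$ written in its unmutated form; the two final left mutations you propose are the paper's \eqref{eqn:semi4}.

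One genuine slip to correct: your intrinsic characterization of $\Phi'(\Db(\P^3,\cB_0))$ has both orthogonals reversed. With the convention $\Hom(\DD_i,\DD_j)=0$ for $i>j$, the decomposition
\[
\Db(\wX)=\langle\OO_{\wX}(-h),\Phi'(\Db(\P^3,\cB_0)),\OO_{\wX},\OO_{\wX}(h),\OO_{\wX}(2h)\rangle
\]
yields $\Phi'(\Db(\P^3,\cB_0))={}^{\perp}\OO_{\wX}(-h)\cap\langle\OO_{\wX},\OO_{\wX}(h),\OO_{\wX}(2h)\rangle^{\perp}$. Accordingly, the vanishings needed for $F\in\Ku(X)$ are $\Hom(\sigma^*F,\OO_{\wX}(-h)[*])=0$ and $\Hom(\OO_{\wX}(jh),\sigma^*F[*])=0$ for $j=0,1,2$. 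These \emph{do} reduce, via the projection formula and your identity $R\sigma_*\OO_{\wX}(kD)=\OO_X$ for $k=0,1,2$, to $\Hom(F,\OO_X(-H)[*])=0$ (which holds by Serre duality on $X$, since $S_X^{-1}(\OO_X(-H))\cong\OO_X(2H)[-4]$) and $\Hom(\OO_X(jH),F[*])=0$. The directions you actually wrote, e.g.\ $\Hom(\sigma^*F,\OO_{\wX}[*])=\Hom(F,\OO_X[*])$, do \emph{not} vanish for arbitrary $F\in\Ku(X)$, so your reduction would fail as stated. This is a notational slip rather than a conceptual one---your ingredient $R\sigma_*\OO_{\wX}(kD)=\OO_X$ (for positive $k$) is precisely what the correct direction needs---but it is worth fixing before carrying out the bookkeeping. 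In the paper's presentation this direct check is in fact bypassed: the containment $\sigma^*\Ku(X)\subset\Phi'(\Db(\P^3,\cB_0))$ falls out of the final comparison of \eqref{eqn:semi} with \eqref{eqn:semi6}.
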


\begin{proof}
In view of \eqref{eqn:quadrica}, the derived category $\Db(\wX)$ has the following semiorthogonal decompositions
\begin{equation}\label{eqn:semi}
\begin{split}
\Db(\wX)&=	\langle \OO_{\wX}(-h),\Phi'(\Db(\P^3,\BB_0)), \OO_{\wX},\OO_{\wX}(h),\OO_{\wX}(2h)\rangle\\
	&=\langle \Phi'(\Db(\P^3,\BB_0)), \OO_{\wX},\OO_{\wX}(h),\OO_{\wX}(2h),\OO_{\wX}(H+h)\rangle,
\end{split}
\end{equation}
where the second one is obtained via Serre duality.

Since $\wX$ is the blow-up of $X$ along $L_0$, we can apply \cite{Orlovblowup} and get the following semiorthogonal decomposition of $\Db(\wX)$ (here we use the notation in \eqref{eqn:notacio})
\begin{equation}\label{eqn:semi1}
\langle \overbrace{\OO_{\wX}(-H),\sigma^*\Ku(X),\OO_{\wX},\OO_{\wX}(H)}^{\sigma^*\Db(X)},\overbrace{i_*\OO_{D},i_*\OO_{D}(H)}^{\Db(\P^1)=\langle\OO,\OO(1)\rangle},\overbrace{i_*\OO_{D}(H-D),i_*\OO_{D}(2H-D)}^{\Db(\P^1)=\langle\OO(1),\OO(2)\rangle}\rangle
\end{equation}

We claim that the pair $(\OO_{\wX}(H),i_*\OO_{D})$ is completely orthogonal.
To show this, by using the short exact sequence
\[
0\to \OO_{\wX}(-H+h)\to \OO_{\wX}\to i_*\OO_D\to 0
\]
and applying $\Ext^\bullet(\OO_{\wX}(H),\blank)$, we only have to show the vanishings
\begin{align*}
\Ext^\bullet(\OO_{\wX}, \OO_{\wX}(-2H+h))=0\qquad \text{and}\qquad \Ext^\bullet(\OO_{\wX}, \OO_{\wX}(-H)) &=0.
\end{align*}
The second is clear, while for the first one we observe that, since $\sigma_*\OO_{\wX}(-D)=I_{L_0}$ and $D=H-h$, we have
\[
\Ext^\bullet(\OO_{\wX}, \OO_{\wX}(-2H+h))=
\Ext^\bullet(\OO_{\wX}, \OO_{\wX}(-H-D))=
\Ext^\bullet(\OO_{X}, I_{L_0}(-H))=0.
\]

From the orthogonality of the pair $(\OO_{\wX}(H), i_*\OO_{D})$ and \eqref{eqn:semi1}, we get
\begin{equation}\label{eqn:semi2}
\Db(\wX)=\langle \OO_{\wX}(-H),\sigma^*\Ku(X),\OO_{\wX},i_*\OO_{D},\OO_{\wX}(H),i_*\OO_{D}(H),i_*\OO_{D}(h),i_*\OO_{D}(H+h)\rangle.
\end{equation}

Observe now that we have the following equalities 
\begin{align}\label{eqn:iso1}
\Lmut_{\OO_{\wX}}(i_*\OO_D) \cong \OO_{\wX}(h-H)[1]\qquad
\text{and}\qquad \Rmut_{\OO_{\wX}(h-H)}(i_*\OO_{D}) &\cong \OO_{\wX}.
\end{align}
Indeed, note that $\Ext^k(\cO_{\wX},\OO_{\wX}(H-h)) = 0$ if $k\neq 0$ and $\Hom(\cO_{\wX},\OO_{\wX}(H-h)) \cong \C$.
By definition of right mutation, we have the following distinguished triangle
\[
\Rmut_{\OO_{\wX}}(\OO_{\wX}(h-H)) \to \OO_{\wX}(h-H) \to \OO_{\wX}.
\]
Since $H - h = D$, the last map in this triangle is given by the equation of $D$. Thus $\Rmut_{\OO_{\wX}}(\OO_{\wX}(h-H))\cong i_*\OO_D[-1]$.
Equivalently, $\Lmut_{\OO_{\wX}}(i_*\OO_D)\cong \OO_{\wX}(h-H)[1]$.
For the second isomorphism in \eqref{eqn:iso1}, note that $\Ext^k(i_*\OO_D,\OO_{\wX}(h-H)) = 0$ if $k\neq 1$ and $\Ext^1(i_*\OO_D,\OO_{\wX}(h-H)) \cong \C$.
Again, we consider the distinguished triangle
\[
\Rmut_{\OO_{\wX}(h-H)}(i_*\OO_{D}) \to i_*\OO_{D}\to \OO_{\wX}(h-H)[1].
\]
Since $H - h = D$, we can argue as above and conclude that $\Rmut_{\OO_{\wX}(h-H)}(i_*\OO_{D})\cong \OO_{\wX}$.

Thus, by using $\Lmut_{\OO_{\wX}}(i_*\OO_D) \cong \OO_{\wX}(h-H)[1]$ in~\eqref{eqn:iso1} and its tensorization by $\OO_{\wX}(H)$, we obtain from \eqref{eqn:semi2}
\begin{equation}\label{eqn:semi3}
\Db(\wX)=\langle \OO_{\wX}(-H),\sigma^*\Ku(X),\OO_{\wX}(h-H),\underbrace{\OO_{\wX},\OO_{\wX}(h),\OO_{\wX}(H),i_*\OO_{D}(h)}_{\cD},i_*\OO_{D}(H+h)\rangle.
\end{equation}

By applying mutations in $\cD$, we get the semiorthogonal decomposition
\begin{equation}\label{eqn:semi4}
\cD=\langle \Lmut_{\OO_{\wX}}\Lmut_{\OO_{\wX}(h)}\OO_{\wX}(H),\Lmut_{\OO_{\wX}}\Lmut_{\OO_{\wX}(h)}i_*\OO_{D}(h),\OO_{\wX},\OO_{\wX}(h)\rangle.
\end{equation}
By plugging \eqref{eqn:semi4} into \eqref{eqn:semi3}, we get
\begin{equation*}
\Db(\wX)=\langle \OO_{\wX}(-H),\sigma^*\Ku(X),\OO_{\wX}(h-H),\cD,i_*\OO_{D}(H+h)\rangle.
\end{equation*}
We apply Serre duality, and we can rewrite it as
\begin{equation}\label{eqn:semi5}
\Db(\wX)=\langle \sigma^*\Ku(X),\OO_{\wX}(h-H),\cD,i_*\OO_{D}(H+h),\OO_{\wX}(2h)\rangle.
\end{equation}
Finally, we apply to \eqref{eqn:semi5} the isomorphism $\Rmut_{\OO_{\wX}(2h)}(i_*\OO_{D}(H+h))\cong \OO_{\wX}(H+h)$ which is obtained from~\eqref{eqn:iso1} by tensoring by $\OO_{\wX}(H+h)$. Thus we get the semiorthogonal decomposition
\begin{equation}\label{eqn:semi6}
\Db(\wX)=\langle \sigma^*\Ku(X),\OO_{\wX}(h-H),\cD,\OO_{\wX}(2h),\OO_{\wX}(H+h)\rangle.
\end{equation}

Comparing the two semiorthogonal decompositions \eqref{eqn:semi} and \eqref{eqn:semi6}, i.e., comparing
\[
\langle \OO_{\wX},\OO_{\wX}(h),\OO_{\wX}(2h),\OO_{\wX}(H+h)\rangle^\perp
\]
inside them, we get the desired equivalence.
\end{proof}

Consider the functor
\begin{equation}\label{eqn:Xi}
 \Xi=\Psi\circ\, \sigma^*\colon \Ku(X)\longrightarrow \Db(\P^3,\BB_0).
\end{equation}
We are now ready to prove the main result of this section. 

\begin{Prop}\label{prop:decomp}
The functor $\Xi$ is fully faithful. Moreover, 
\[
 \Db(\P^3,\BB_0)=\left\langle \Xi(\Ku(X)), \BB_{1},\BB_{2}, \BB_{3}\right\rangle.
\]
\end{Prop}

\begin{proof}
The projection formula, the fact that $\pi=q\circ\alpha$, and \eqref{eqn:defdeE} show that
\begin{equation}\label{eqn:PsiO}
\Psi(\OO_{\wX}(mh))=0,
\end{equation}
for all $m$.
This implies immediately that 
\begin{equation}\label{eqn:cancellation}
\Psi \circ \Lmut_{\OO_{\wX}(mh)} = \Psi \circ \Rmut_{\OO_{\wX}(mh)} = \Psi.
\end{equation}
In particular, we get
\begin{equation}\label{eqn:cancellation2}
\Psi\circ\Lmut_{\OO_{\wX}(-h)}\circ\,\sigma^*|_{\Ku(X)}=\Psi\circ \sigma^*|_{\Ku(X)} = \Xi.
\end{equation}

Since $\Phi$ is fully faithful, $\Psi\circ\Phi\cong\id$.
By using \eqref{eqn:cancellation2} and by applying $\Psi$ to the decomposition of Lemma~\ref{lem:so}, we get that $\Xi$ is fully faithful and we obtain the semiorthogonal decomposition
\[
\Db(\P^3,\BB_0)=\left\langle \Xi (\Ku(X)), \Psi(\OO_{\wX}(h-H)), \Psi(\OO_{\wX}(H)), \Psi(\OO_{\wX}(2h - H))\right\rangle,
\]
where we have used the short exact sequence $\OO_{\wX}(2h - H) \into \OO_{\wX}(h) \onto i_* \OO_D(h)$ and \eqref{eqn:cancellation}. 

We deduce the claim with a direct computation based on relative Grothendieck-Serre duality for the $\P^2$-fibration $q$ with relative dualizing complex $\omega_q = \OO_{\widetilde\P}(h-3H)[2]$ and \eqref{eqn:PsiO}:
\begin{align*}
\Psi(\OO_{\wX}(mh-H))& = q_* \alpha_*\EE((m+1)h-H)[1]=q_*(q^*\BB_{-1}((m+1)h-3H)[2]) \\
&= q_*(q^* \BB_{-1+2m} \otimes \omega_q)
 =\BB_{-1+2m} 
 \\
 \Psi(\OO_{\wX}(mh+H))& =q_*(q^*\BB_0((m+1)h))[1]
 =\BB_0((m+1)h)[1]=\BB_{2+2m}[1].\qedhere
\end{align*}
\end{proof}

\section{A Bogomolov inequality} \label{sec:Bogomolov}

To construct stability conditions on the Kuznetsov component of a cubic fourfold, we need to define tilt-stability on the category $\Db(\P^3,\cB_0)$ introduced in the previous section.
This requires a Bogomolov inequality for slope-stable torsion-free sheaves in $\Coh(\P^3,\cB_0)$.
This is the content of this section.

\subsection*{Slope-stability}
Let $Y$ be a smooth projective variety of dimension $n$, and let $\cB$ be an algebra vector bundle on $Y$.
Let $\underline{D}=\{D_1,\dots,D_{n-1}\}$ be nef divisor classes on $Y$ such that $D_1^2 D_2\cdots D_{n-1}>0$, and consider the lattice 
\[\Lambda_{\underline{D}}^1 = \langle D_1^2 D_2\cdots D_{n-1}\ch_0,\, D_1 D_2\cdots D_{n-1}\ch_1 \rangle\cong\Z^2.\] 
By Remark~\ref{rem:HNexists}, since $\Coh(Y,\cB)$ is noetherian, the following slight generalization of Example~\ref{ex:slopestability} holds:

\begin{PropDef} The pair $\sigma_{\underline{D}}=(\Coh(Y,\BB), Z_{\underline{D}})$, where
\[
Z_{\underline{D}}=\mathfrak{i}\,D_1^2 D_2\cdots D_{n-1}\ch_0 - D_1 D_2\cdots D_{n-1}\ch_1
\]
defines a weak stability condition on $\Db(Y,\cB)$, which we still call \emph{slope stability}.
We write $\mu_{\underline{D}}$ for the associated slope function.
\end{PropDef}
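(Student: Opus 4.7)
The plan is to verify the three conditions in Definition~\ref{def:weakstability} one by one. Since the Chern character on $\Db(Y, \cB)$ was defined (in Section~\ref{sec:geometric}) via the forgetful functor to $\Db(Y)$, and $Z_{\underline{D}}$ depends on $E$ only through $\ch_0$ and $\ch_1$ of the underlying coherent sheaf $\Forg(E)$, all three checks will reduce to statements about $\Coh(Y)$ and will run in parallel with the case of ordinary slope-stability (Example~\ref{ex:slopestability}).

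First I would check that $Z_{\underline{D}}$ is a weak stability function. The imaginary part $\Im Z_{\underline{D}}(E) = D_1^2 D_2 \cdots D_{n-1} \ch_0(E)$ is nonnegative because the coefficient $D_1^2 D_2 \cdots D_{n-1}$ is positive by hypothesis and $\ch_0(\Forg(E)) \geq 0$. When $\Im Z_{\underline{D}}(E) = 0$ the sheaf $\Forg(E)$ is torsion, and hence $\ch_1(\Forg(E))$ is represented by an effective divisor (the divisorial part of its support, counted with multiplicity), which pairs nonnegatively with the nef class $D_1 D_2 \cdots D_{n-1}$; this gives $\Re Z_{\underline{D}}(E) \leq 0$, as required.

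For the existence of Harder-Narasimhan filtrations I would invoke Remark~\ref{rem:HNexists}: the category $\Coh(Y, \cB)$ is noetherian because $Y$ is noetherian and $\cB$ is a coherent $\cO_Y$-algebra, and the image of $Z_{\underline{D}}$ on $\Lambda_{\underline{D}}^1$ is discrete by construction. For the support property I would appeal to Remark~\ref{rem:supportproptrivial}: the lattice $\Lambda_{\underline{D}}^1$ has rank two, and the induced map $Z_{\underline{D}} \colon \Lambda_{\underline{D}}^1 \to \C$ sends the two generators to $\mathfrak{i}$ and $-1$ respectively, so is injective, and any nonnegative quadratic form $Q$ (for instance $Q = 0$) works. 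I do not foresee a real obstacle; the only point worth flagging is that effectivity of $\ch_1$ for a torsion sheaf uses only its divisorial part, since higher-codimension torsion is invisible to $\ch_1$, but this is harmless because such contributions make $\ch_1$ vanish rather than become negative.
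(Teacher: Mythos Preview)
Your proposal is correct and follows exactly the approach the paper indicates: the paper simply states (in the sentence preceding the PropDef) that this is a slight generalization of Example~\ref{ex:slopestability}, with Harder--Narasimhan filtrations supplied by Remark~\ref{rem:HNexists} via noetherianity of $\Coh(Y,\cB)$. You have spelled out the details of the weak stability function check and the support property (via Remark~\ref{rem:supportproptrivial}) that the paper leaves implicit, but the argument is the same.
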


When $D_1=\dots=D_{n-2}=D$, we use the notation $Z_{D,D_{n-1}}$ and $\mu_{D,D_{n-1}}$.
When moreover $D_{n-1}=D$, we also use the notation $Z_D$ and $\mu_D$, compatibly with Example~\ref{ex:slopestability}.

\subsection*{The main theorem}

Let $N\geq 2$.
Let $X$ be a smooth cubic hypersurface in $\P^{N+2}$ and let $L\subseteq X$ be a line not contained in a plane in $X$; as above we assume that $X$ is defined over an algebraically closed field with $\Char(k) \neq 2$. Consider the blow-up $\wX$ of $X$ along $L$ and the natural conic bundle $\pi\colon\wX\to\P^N$. According to the discussion in Section~\ref{subsec:conic}, this yields an algebra vector bundle $\BB_0$ on $\P^N$.

\begin{Def}
Let $\VV=(V_1,\dots,V_m)$ be an ordered configuration of linear subspaces of codimension $2$ in $\P^N$.
We say that $\psi\colon Y\to\P^N$ is a blow-up along $\VV$ if $\psi$ is the iterated blow-up along the strict transforms of the $V_j$'s.
\end{Def}

\begin{Thm}\label{thm:Bogomolov}
Let $\psi\colon Y\to\P^N$ be a blow-up along an ordered configuration of codimension $2$ linear subspaces.
Let $h=\psi^*\OO_{\P^N}(1)$.
Assume that $E\in\Coh(Y,\psi^*\BB_0)$ is a $\mu_h$-slope semistable torsion-free sheaf.
Then
\[
\Delta_{\psi^*\cB_0}(E) := h^{N-2}\,\left( \ch_1(E)^2 - 2 \rk(E) \left( \ch_2(E) - \frac{11}{32}\rk(E)\right) \right) \geq0.
\]
\end{Thm}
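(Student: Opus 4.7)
The plan is to reduce to a surface, then prove the surface case via Hirzebruch--Riemann--Roch for $\psi^*\cB_0$-modules combined with Serre duality.

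First I would reduce to $\dim Y = 2$: the quantity $h^{N-2}\Delta_{\psi^*\cB_0}(E)$ on $Y$ equals the corresponding discriminant of $E|_S$ on a surface $S$ obtained by cutting $Y$ with $N-2$ generic hyperplane pullbacks from $\P^N$. By Remark~\ref{rem:cubicconic} and Lemma~\ref{lem:basechange}, such an $S$ is itself a blow-up of $\P^2$ along the intersection points with the strict transforms of the $V_j$'s, and $\psi^*\cB_0|_S$ is the analogous Clifford algebra on that surface. A Mehta--Ramanathan-type restriction theorem (applied after perturbing $h$ by a small ample class and then passing to the limit) ensures that $E|_S$ remains $\mu_h$-semistable as a $\psi^*\cB_0|_S$-module.

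It is illuminating to rewrite the claimed inequality: using $\rk(\cB_0) = 4$, $\ch_1(\cB_0) = -5h$ and $\ch_2(\cB_0) = \tfrac{9}{2}h^2$, one checks that $\Delta_{\cB_0}(\cB_0) = 0$, and more generally that the inequality is equivalent to $\Delta(E) \geq \tfrac{\rk(E)^2}{\rk(\cB_0)^2}\Delta(\cB_0) = -\tfrac{11}{16}\rk(E)^2 h^2$. In other words, $\cB_0$-semistable $\cB_0$-modules are required to satisfy a relaxed Bogomolov inequality with the same discriminant-to-rank-squared ratio as $\cB_0$ itself. The classical Bogomolov inequality does not apply directly to $\Forg(E)$, since $E$ need not be $\mu_h$-semistable as an $\cO_Y$-sheaf: already $\cB_0$ itself is not, with HN slopes $0, -1, -2, -2$.

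The surface case then proceeds via Hirzebruch--Riemann--Roch for $\cB_0$-modules. For a $\mu_h$-stable torsion-free $\cB_0$-module $E$ on $S$, a computation of $\chi_{\cB_0}(E, E)$ in terms of $\ch(E)$ yields, up to terms linear in $\ch(E)$, a quadratic expression proportional to $-\Delta_{\psi^*\cB_0}(E)$. Combining $\hom_{\cB_0}(E, E) = 1$ (from stability) with $\ext^1_{\cB_0}(E, E) \geq 0$, we obtain
\[
\chi_{\cB_0}(E, E) \leq 1 + \ext^2_{\cB_0}(E, E),
\]
so the inequality reduces to showing $\ext^2_{\cB_0}(E, E) = 0$. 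By Serre duality for $\Coh(S, \psi^*\cB_0)$, this Ext group equals $\hom_{\cB_0}(E, E \otimes_{\cB_0} \cB_0^\vee \otimes \omega_S)$.

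The main obstacle is precisely this Serre-duality step. The negative slope contribution from $\omega_S$ (via $\omega_{\P^2} = \cO(-3)$, noting $h \cdot E_i = 0$ for the exceptional divisors of $\psi$) is concrete, but the contribution of the bimodule $\cB_0^\vee$ must be analyzed carefully: as an $\cO_Y$-sheaf $\cB_0^\vee \cong \cO \oplus \cO(h) \oplus \cO(2h)^{\oplus 2}$, which is not a simple line-bundle twist of $\cB_0$, so the slope shift arising from $\otimes_{\cB_0} \cB_0^\vee$ needs care, exploiting the algebra structure and the trace pairing on $\cB_0$. Once the slope of $E \otimes_{\cB_0} \cB_0^\vee \otimes \omega_S$ is shown to be strictly less than that of $E$, $\mu_h$-stability forces the Hom to vanish. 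The strictly semistable case then follows from the stable one by Jordan--Hölder filtrations in $\Coh(S, \psi^*\cB_0)$ together with the convex behavior of $\Delta_{\psi^*\cB_0}$ under such decompositions.
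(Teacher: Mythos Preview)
Your surface argument is essentially the one in the paper (Proposition~\ref{prop:BG-surface}): compute $\chi_{\cB_0}(E,E)$ via Riemann--Roch, use $\ext^2=0$ from Serre duality and semistability, and bound $\chi(E,E)\le 1\le r^2/16$. The Serre-duality step you worry about is not really an obstacle: on $\P^2$ the Serre functor is $(-)\otimes_{\cB_0}\cB_0^\vee\otimes\omega_{\P^2}[2]$, and one checks (or cites \cite{BMMS:Cubics,LMS:ACM3folds}) that this lowers the $\mu_h$-slope strictly, so $\Ext^2(E,E)=\Hom(E,S(E)[-2])^\vee=0$ for $E$ semistable. Note, however, that the paper runs this argument on $\P^2$, not on a blow-up $S$: on $S$ the canonical $\omega_S$ picks up exceptional contributions and the slope comparison is murkier. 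The paper handles blow-ups separately by pushing forward to $\P^2$ (Lemma~\ref{lem:Bblow}).

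The genuine gap is your reduction to the surface. Cutting by $N-2$ generic members of $|h|$ does give a blow-up of $\P^2$ with the right restricted Clifford algebra, but there is no reason for $E|_S$ to remain $\mu_h$-semistable after a single degree-one section, let alone $N-2$ of them. Mehta--Ramanathan does not help: it requires sections of \emph{high} degree in an \emph{ample} class, and the resulting surface would then be neither a blow-up of $\P^2$ nor carry the Clifford algebra of a cubic's conic fibration, so your base case would not apply. Perturbing $h$ to an ample class and passing to a limit does not repair this, because the degree of the hypersurface section in Mehta--Ramanathan depends on the sheaf and blows up as the polarization degenerates.

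This is exactly why the paper uses Langer's induction on the rank instead of a restriction theorem. The key point is to control what happens when the restriction $E|_D$ to a general $D\in|h|$ \emph{fails} to be semistable: Theorem~\ref{thm:langerT1} bounds the spread of the HN slopes of $E|_D$ by $\Delta_{\psi^*\cB_0}(E)$, using the inductive hypothesis on lower ranks (via the incidence variety and a deformation of the polarization from $h$ to the fibre class $f$, Proposition~\ref{prop:hf->hh}). Then Theorem~\ref{thm:langerT1}($r$) forces $E|_D$ to be semistable whenever $\Delta_{\psi^*\cB_0}(E)<0$, allowing the induction on $N$ to proceed down to the surface case.
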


The result will be proved in the rest of this section. It is mainly based on the induction on the rank of $E$, which is a variant of an argument by Langer (see \cite[Section~3]{Langer:positive}). This basically allows a reduction to the case $N=2$ where we provide the estimate using Grothendieck-Riemann-Roch.

While a general Bogomolov inequality can be proved along the lines of \cite[Section~3.2.3]{Lieblich:Twisted} (after reinterpreting $\cB_0$-modules as modules over an Azumaya algebra on a root stack over $Y$, see \cite[Section~3.6]{Kuz:Quadric}), this will not be strong enough for our argument.
In particular, we will need that our inequality is sharp for $\cB_j$, for all $j \in \Z$:

\begin{Rem}\label{rem:B0stableAndDelta0}
The rank of an object in $\Coh(Y,\psi^*\cB_0)$ is always a multiple of $4$: this is part of \cite[Proposition~2.12]{BMMS:Cubics} for $Y = \P^2$; 
using Remark~\ref{rem:cubicconic}, the general case follows by pushing forward along $\psi$, followed by restricting to a generic plane $\P^2 \subset \P^N$.  In particular $\psi^*\cB_j$ is $\mu_h$-stable, for all $j\in\Z$. Moreover, observe that $\Delta_{\psi^*\cB_0}(\psi^*\cB_j)=0$.
\end{Rem}

\subsection*{Blow-ups and the surface case}\label{subsect:Bgeneral}

Let $Y$ be a smooth projective variety of dimension $n$, and let $\BB$ be an algebra vector bundle on $Y$. Let $h$ be a big and nef divisor class on $Y$.

\begin{Lem}\label{lem:BlowUpSurface}
Let $q\colon\wY\to Y$ be the blow up of $Y$ at a smooth codimension $2$ subvariety $S$.
Let $E\in\Coh(\wY,q^*\BB)$ be a torsion-free object.
\begin{enumerate}[{\rm (a)}]
\item \label{item:pushforwardcoh}
The complex $q_*E\in\Db(Y,\BB)$ has two cohomology objects; the sheaf $R^0q_*E$ is torsion-free while $R^1q_*E$ is topologically supported on $S$.
\item \label{item:ch0} $\ch_0(q_*E)=\ch_0(R^0q_*E)=\ch_0(E)$.
\item \label{item:ch1} $h^{n-1}\ch_1(q_*E)=h^{n-1}\ch_1(R^0q_*E)=(q^*h)^{n-1}\ch_1(E)$.
\item \label{item:pushforwardstable}
If $E$ is $\mu_{q^*h}$-semistable, then $R^0q_*E$ is $\mu_h$-semistable.
\end{enumerate}
\end{Lem}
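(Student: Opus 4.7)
The plan is to handle the four parts in order, reducing everything via the forgetful functor to standard statements about blow-ups together with the fact (recalled in Section~\ref{subsect:modalg}) that $q^*$ and $q_*$ commute with $\Forg$. The geometric inputs are that $q$ is birational with exceptional divisor $E_D$ a $\P^1$-bundle over $S$, that $q_*[\wY]=[Y]$ and $q_*[E_D]=0$, and that $R^0 q_* \cO_{\wY}=\cO_Y$ with $R^{\ge 1} q_* \cO_{\wY}=0$. Since the forgetful functor is exact and faithful, torsion-freeness and slope-semistability of a $q^*\cB$-module can be checked on the underlying $\cO_{\wY}$-module.

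For \eqref{item:pushforwardcoh}, the dimension bound on the fibres of $q$ forces $R^i q_* E=0$ for $i\ge 2$, and $R^1 q_* E$ vanishes away from $S$ since $q$ is an isomorphism there, so it is topologically supported on $S$. For torsion-freeness of $R^0 q_* E$, I will argue by contradiction: a nonzero subsheaf $T\subset R^0 q_* E$ supported in positive codimension corresponds by adjunction to a morphism $q^* T\to E$; since $q^* T$ is supported in positive codimension in $\wY$, and $E$ is torsion-free, the image must vanish, so the adjoint map $T\to R^0 q_* E$ is zero as well, forcing $T=0$.

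For \eqref{item:ch0} and \eqref{item:ch1}, the support condition on $R^1 q_* E$ from \eqref{item:pushforwardcoh} shows that it contributes neither to $\ch_0$ nor to $\ch_1$, so both claims reduce to the identities for $q_* E$ regarded as a complex. Applying Grothendieck--Riemann--Roch to $q$ gives $\ch(q_* E)\td(Y)=q_*(\ch(E)\td(\wY))$; using $K_{\wY}=q^* K_Y + E_D$ together with $q_*E_D=0$ yields $q_* c_1(\wY)=c_1(Y)$, and comparing the degree-$0$ and degree-$1$ components gives $\ch_0(q_* E)=\ch_0(E)$ and $\ch_1(q_* E)=q_*\ch_1(E)$. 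Intersecting the latter with $h^{n-1}$ and using the projection formula proves \eqref{item:ch1}.

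The main step is \eqref{item:pushforwardstable}. Given a nonzero $q^*\cB$-submodule $F\subset R^0 q_* E$, the pullback of the inclusion composed with the counit gives a morphism $q^* F\to q^* R^0 q_* E \to E$ whose image $F'\subset E$ is a $q^*\cB$-submodule; the kernel $K$ of $q^* F \twoheadrightarrow F'$ is supported on the exceptional divisor $E_D$. Since $(q^*h)^{n-1}\cdot[E_D]=h^{n-1}\cdot q_*[E_D]=0$ by the projection formula, $K$ contributes nothing to $\ch_0$ or to $(q^*h)^{n-1}\ch_1$, so $\mu_{q^*h}(F')=\mu_{q^*h}(q^* F)=\mu_h(F)$ (the last equality again by projection formula, using $(q^*h)^n = h^n$). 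Parts \eqref{item:ch0} and \eqref{item:ch1} applied to $E$ yield $\mu_{q^*h}(E)=\mu_h(R^0 q_* E)$, and $\mu_{q^*h}$-semistability of $E$ then gives
\[
\mu_h(F)=\mu_{q^*h}(F')\le \mu_{q^*h}(E)=\mu_h(R^0 q_* E),
\]
as required. The only real obstacle is the torsion-freeness argument in \eqref{item:pushforwardcoh}; all remaining parts amount to bookkeeping with Chern characters via GRR and the projection formula, combined with the naturality of adjunction, image, and kernel with respect to the $\cB$-module structure.
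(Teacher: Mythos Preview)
Your proof is correct and takes the same approach as the paper, which simply notes that the forgetful functor commutes with $q^*$ and $q_*$, reducing everything to the standard case $\cB = \cO_Y$; you have spelled out those classical details explicitly. One small imprecision: your opening claim that slope-semistability ``can be checked on the underlying $\cO_{\wY}$-module'' is not literally true (a $\cB$-module can be semistable without the underlying sheaf being so), but your actual argument for part~(d) correctly tests only against $\cB$-submodules, so this does not affect the validity of the proof.
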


\begin{proof}
This follows immediately from the case $\BB\cong\OO_Y$, since the forgetful functor commutes with push-forward and pull-back.
For example, observe that a subsheaf $A \into R^0 q_*E$ induces a non-zero morphism
$L^0 q^* A \to E$;
the image of this map will give a contradiction to $E$ being torsion-free in \eqref{item:pushforwardcoh} when $A$ is torsion, and to $E$ being $\mu_{q^*h}$-semistable when $A$ destabilizes $R^0q_*E$ with respect to $\mu_h$.
\end{proof}

Assume now that $Y$ has dimension $2$.

\begin{Lem}\label{lem:Bblow}
Assume that, for any $F\in\Coh(Y,\cB)$ which is $\mu_h$-semistable, we have $\Delta_{\BB}(F)\geq 0$ and let $q \colon \wY \to Y$ be the blow-up at a point.
Then, for any $E\in\Coh(\wY,q^*\cB)$ which is $\mu_{q^*h}$-semistable, we have $\Delta_{q^*\cB}(E)\geq 0$.
\end{Lem}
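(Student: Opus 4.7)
My plan is to reduce the inequality on $\wY$ to the corresponding one on $Y$ by pushing forward along $q$. Let $E \in \Coh(\wY, q^*\cB)$ be $\mu_{q^*h}$-semistable, set $r = \rk(E)$, and write $\ch_1(E) = q^*D + ae$, where $e$ denotes the exceptional divisor of $q$ and $D$ is a divisor class on $Y$. A direct computation using $e^2 = -1$ and $q^*\alpha \cdot e = 0$ for every $\alpha \in \Pic(Y)$ shows that tensoring $E$ with $\cO_{\wY}(ke)$ preserves $\mu_{q^*h}$-semistability and does not change $\Delta_{q^*\cB}(E)$, while shifting $a$ by $rk$. Replacing $E$ by a suitable such twist, I may therefore assume $0 \le a \le r$.

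The next step is to compare $\Delta_{q^*\cB}(E)$ with $\Delta_{\cB}(R^0q_*E)$. By Lemma~\ref{lem:BlowUpSurface}, $R^0q_*E$ is a torsion-free $\cB$-module with $\rk(R^0q_*E) = r$ and $\ch_1(R^0q_*E) = D$, while $R^1q_*E$ is a zero-dimensional torsion sheaf of some length $\ell \ge 0$. To compute $\ch_2$, I apply Grothendieck-Riemann-Roch: for a blow-up of a smooth surface at a point one has $\td(T_q) = 1 - \frac{e}{2}$, and a straightforward calculation gives
\begin{equation*}
\ch_2(R^0q_*E) = \ch_2(E) + \frac{a}{2} + \ell.
\end{equation*}

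The discriminant $\Delta_{\cB}$ is a polynomial in the numerical invariants $(\rk, \ch_1, \ch_2)$ which is affine in $\ch_2$ with leading piece $\ch_1^2 - 2\rk \cdot \ch_2$; since $\rk(E) = \rk(R^0q_*E) = r$, any contribution depending only on $\rk$ cancels in the difference. Using $(q^*D + ae)^2 = D^2 - a^2$, I obtain
\begin{equation*}
\Delta_{q^*\cB}(E) - \Delta_{\cB}(R^0q_*E) = a(r-a) + 2r\ell.
\end{equation*}
Both summands on the right are non-negative for $0 \le a \le r$ and $\ell \ge 0$. By Lemma~\ref{lem:BlowUpSurface}(d), $R^0q_*E$ is $\mu_h$-semistable in $\Coh(Y, \cB)$, so the hypothesis yields $\Delta_{\cB}(R^0q_*E) \ge 0$, and the desired inequality follows.

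The main technical work is the Grothendieck-Riemann-Roch bookkeeping, which is routine once one knows $\td(T_q) = 1 - \frac{e}{2}$. The key conceptual point is that the negative exceptional-divisor correction $-a^2$ hidden in $\ch_1(E)^2$ is converted into the non-negative term $a(r-a)$ once $a$ has been normalized modulo $r$; this normalization is made possible precisely by the invariance of $\Delta_{q^*\cB}$ under twists by $\cO_{\wY}(ke)$.
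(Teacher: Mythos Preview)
Your proof is correct and follows essentially the same approach as the paper: normalize $a$ modulo $r$ by twisting with a power of $\cO_{\wY}(e)$, push forward along $q$, apply Grothendieck--Riemann--Roch with $\td(T_q) = 1 - \frac{e}{2}$, and compare $\Delta_{q^*\cB}(E)$ with $\Delta_{\cB}(R^0q_*E)$ using Lemma~\ref{lem:BlowUpSurface}. The only cosmetic difference is that the paper routes the inequality through $\Delta_{\cB}(q_*E)$, absorbing the $R^1q_*E$ contribution into the step $\Delta_{\cB}(R^0q_*E) \le \Delta_{\cB}(q_*E)$, whereas you track the length $\ell$ of $R^1q_*E$ explicitly and obtain the sharper identity $\Delta_{q^*\cB}(E) - \Delta_{\cB}(R^0q_*E) = a(r-a) + 2r\ell$.
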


\begin{proof}
Consider a $\mu_{q^*h}$-semistable $q^*\BB$-module $E$ on $\wY$. We want to apply Lemma~\ref{lem:BlowUpSurface}\eqref{item:pushforwardstable}.
We write
$\ch_1(E) = q^*l + a e $, where $e$ is the class of the exceptional divisor,
and $l$ a divisor class on $Y$. After tensoring with an appropriate power of
$\OO_{\wY}(e)$, we may assume that $0 \le a < \rk (E)$. The relative Todd class of $q$ is given by $1 - \frac e2$. We obtain
\begin{align*}
0 & \le \Delta_{\BB}(R^0 q_* E) \le \Delta_{\BB}(q_* E) \\
& = l^2 - 2 \rk(E) \left(\ch_2(E) + \frac a2 - \frac{11}{32}\rk(E)\right)
\\
& \le l^2 - a^2 - 2\rk(E) \left(\ch_2(E) - \frac{11}{32}\rk(E) \right)
 = \Delta_{q^*\BB}(E),
\end{align*}
where the first inequality used the assumption; the second inequality follows from Lemma~\ref{lem:BlowUpSurface}\eqref{item:pushforwardcoh}, the next equality follows from Grothendieck-Riemann-Roch, and the last inequality is due to $0 \le a < \rk(E)$.
\end{proof}

We can now prove Theorem~\ref{thm:Bogomolov} for $N=2$.
Let $\cB_0$ be the Clifford algebra bundle associated to a cubic threefold, as in the statement.

\begin{Prop} \label{prop:BG-surface}
Let $Y$ be a smooth projective surface with a birational morphism $\psi \colon Y \to \P^2$.
Let $E$ be a $\mu_{\psi^*h}$-semistable $\psi^*\BB_0$-module on $Y$.
Then $\Delta_{\psi^*\BB_0}(E) \ge 0$.
\end{Prop}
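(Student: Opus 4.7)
The plan is a two-step reduction to an explicit calculation on $\P^2$.

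First, I would reduce to $Y=\P^2$ via Lemma~\ref{lem:Bblow}. By the structure theorem for birational morphisms between smooth projective surfaces, $\psi\colon Y\to\P^2$ decomposes as a finite sequence of blow-ups at smooth points
\[
Y = Y_n \xrightarrow{q_n} Y_{n-1} \to \cdots \to Y_1 \xrightarrow{q_1} Y_0 = \P^2.
\]
Iterating Lemma~\ref{lem:Bblow} at each stage, applied to the pulled-back algebra bundle $(q_1\circ\cdots\circ q_k)^*\cB_0$ and to the $\mu_{q_k^*h}$-semistability inherited by the successive direct-image sheaves via Lemma~\ref{lem:BlowUpSurface}\eqref{item:pushforwardstable}, reduces the claim to the base case $Y=\P^2$.

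Second, on $\P^2$ I would reduce to the case that $E$ is $\mu_h$-stable by passing to the associated graded $\mathrm{gr}(E)=\bigoplus_i E_i$ of a Jordan--H\"older filtration in $\Coh(\P^2,\cB_0)$, with $E_i$ stable of common slope $\mu$. Since $\Delta_{\cB_0}$ depends only on the Chern character and $\ch$ is additive in short exact sequences, $\Delta_{\cB_0}(E)=\Delta_{\cB_0}(\mathrm{gr}\,E)$. Writing $\Delta_{\cB_0}(F)=2r(F)^2\bigl(\nu_0(\mu)-\nu(F)\bigr)$ with $\nu(F):=\ch_2(F)/r(F)$ and $\nu_0(\mu):=\mu^2/2+11/32$, an elementary expansion on $\P^2$ (using $h^2=1$) yields
\[
\Delta_{\cB_0}(\mathrm{gr}\,E) \;=\; r(E)\sum_i \frac{\Delta_{\cB_0}(E_i)}{r(E_i)},
\]
which is non-negative as soon as each $\Delta_{\cB_0}(E_i)\ge 0$. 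So it suffices to treat the stable case.

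For stable $E$, I would apply the classical Bogomolov strategy via Euler characteristics. Stability gives $\Hom_{\cB_0}(E,E)=\C$. The Serre functor on $\Db(\P^2,\cB_0)$ has the form $S(F)=F\otimes_{\cO_{\P^2}}\omega_{\P^2}\otimes_{\cB_0}\cB_0^\vee[2]$, and since $\omega_{\P^2}=\cO(-3h)$ a direct slope computation shows that $S[-2]$ strictly decreases the $\mu_h$-slope of any $\cB_0$-module; by stability this forces $\Ext^2_{\cB_0}(E,E)\cong \Hom_{\cB_0}(E,S(E)[-2])^\vee=0$, and hence $\chi_{\cB_0}(E,E)\le 1$. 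Grothendieck--Riemann--Roch for $\cB_0$-modules then expresses $\chi_{\cB_0}(E,E)$ as an explicit polynomial in $\ch(E)$, $\ch(\cB_0)$ and $\td(\P^2)$; combined with the rank divisibility $r(E)\in 4\Z$ from Remark~\ref{rem:B0stableAndDelta0} and the extremality $\Delta_{\cB_0}(\cB_j)=0$ for all $j$, the polynomial simplifies to $\frac{r(E)^2}{16}-\frac{1}{2}\Delta_{\cB_0}(E)$, so the bound $\chi_{\cB_0}(E,E)\le 1$ forces $\Delta_{\cB_0}(E)\ge 0$.

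The main technical obstacle is this last step: I need both to verify that the Serre twist $-\otimes_{\cB_0}\cB_0^\vee\otimes\omega_{\P^2}$ strictly lowers the $\mu_h$-slope of every $\cB_0$-module, and to carry out the Grothendieck--Riemann--Roch computation precisely enough to produce the sharp correction $\frac{11}{32}$. Both rely on the explicit description of $\cB_0$ on $\P^2$ recalled in Section~\ref{subsec:cubic4folds}; the sanity check that $\Delta_{\cB_0}(\cB_0)=\Delta_{\cB_0}(\cB_1)=0$ fixes the correct constant and shows that the inequality is sharp.
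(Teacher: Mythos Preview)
Your approach is essentially the paper's: reduce to $\P^2$ via iterated application of Lemma~\ref{lem:Bblow}, then use the Riemann--Roch formula for $\chi_{\cB_0}(E,E)$ together with the vanishing of $\Ext^2$ and the divisibility $4\mid \rk(E)$. The paper is terser---it quotes the formula $\chi(E,E)=-\tfrac{7}{64}r^2-\tfrac14\ch_1^2+\tfrac12 r\,\ch_2$ from \cite{LMS:ACM3folds} and asserts $\chi(E,E)\le 1$ directly for semistable $E$, whereas your Jordan--H\"older reduction to the stable case is the cleaner justification of that step; note also that the correct simplification is $\chi_{\cB_0}(E,E)=\tfrac{r^2}{16}-\tfrac14\Delta_{\cB_0}(E)$, not $-\tfrac12\Delta_{\cB_0}(E)$.
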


\begin{proof}
In view of Lemma~\ref{lem:Bblow}, it is enough to prove the result for $\psi=\id$. In this situation, we have (see \cite[Equation (2.2.2)]{LMS:ACM3folds})
\[
	\chi(E,E)=-\frac{7}{64}\rk(E)^2-\frac{1}{4}\ch_1(E)^2+\frac{1}{2}\rk(E)\ch_2(E).
\]
Since $E$ is $\mu_h$-semistable, $\Ext^2(E,E)=\Hom(E,E\otimes_{\BB_0}\BB_{-1})^\vee=0$, where for the first equality we use Serre duality (see \cite[Proposition~2.9(ii)]{BMMS:Cubics}) while the second one follows from \cite[Lemma~2.16]{BMMS:Cubics}.
By considering the Jordan-H\"older factors, we can assume that $E$ is $\mu_h$-stable, by~\cite[Lemma~A.6]{BMS} (see also Lemma~\ref{lem:InequalityJH} below for a more general statement).
Thus we have $\chi(E,E)\leq 1 \leq \frac{r^2}{16}$, where we have used that the rank of $E$ is always divisible by $4$, as already observed in Remark~\ref{rem:B0stableAndDelta0} (see \cite[Proposition~2.12]{BMMS:Cubics}).

Thus, we have
\[
	-\frac{7}{64}\rk(E)^2-\frac{1}{4}\ch_1(E)^2+\frac{1}{2}\rk(E)\ch_2(E)\leq \frac{r^2}{16},
\]
which is what we claimed.
\end{proof}

\subsection*{Deformation of stability}
Let $\psi\colon Y\to\P^N$ be a blow-up along an ordered configuration of codimension $2$ linear subspaces, and let $h=\psi^*\OO_{\P^N}(1)$.
Set $\Pi\subset |h|$ a general pencil and let
\begin{equation}\label{eqn:incidence}
\wY = \set{(D, y) \in \Pi \times Y \mid y \in D}
\end{equation}
be the incidence variety with projections $p\colon \wY \to \Pi$ and $q\colon \wY \to Y$.
Note that $q$ is the blow-up of $Y$ along the base locus of $\Pi$ which is a smooth codimension $2$ subvariety. Moreover, since $\Pi$ is a general pencil, the composition $\wpsi:=\psi\circ q\colon \wY\to \P^N$ is again a blow-up along an ordered configuration of codimension-$2$ linear subspaces.

Let $f$ be the class of a fiber of $p$ and, by abuse of notation, we also denote by $h$ the class of $\wpsi^*\OO_{\P^N}(1)$ in $\wY$.
We consider slope-stability on $\Coh(\wY,\wpsi^*\cB_0)$ with respect to the divisor classes $h,\stackrel{N-2}\dots,h,h_t$, where $\cB_0$ is the Clifford algebra in Theorem~\ref{thm:Bogomolov}, and $h_t:=t h+f$, for $t\in\R_{\ge 0}$.

To apply Langer's argument \cite[Section~3]{Langer:positive}, we want deduce the positivity of the discriminant of $\mu_{h,f}$-stable sheaves from the analogous positivity for $\mu_h$-stable objects with smaller rank.
We prove this by deforming the slope function $\mu_{h,h_t}$.

\begin{Lem}\label{lem:InequalityJH}
Let $t_0\in\R_{\geq0}$.
Let $E\in\Coh(\wY,\wpsi^*\BB_0)$ be a $\mu_{h,h_{t_0}}$-semistable torsion-free sheaf and let $E_1,\dots,E_m$ be its Jordan-H\"older $\mu_{h,h_{t_0}}$-stable factors.
If $\Delta_{\wpsi^*\BB_0}(E_j)\geq 0$, for all $j=1,\dots,m$, then $\Delta_{\wpsi^*\BB_0}(E)\geq0$.
\end{Lem}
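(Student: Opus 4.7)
I would proceed by induction on the length $\ell$ of a Jordan--H\"older filtration $0=G_0\subset G_1\subset\cdots\subset G_\ell=E$ whose stable subquotients are the $E_j$; the case $\ell=1$ is exactly the hypothesis. For $\ell\geq 2$, set $F:=G_{\ell-1}$ and $G:=E/G_{\ell-1}$, so that
\[
0\to F\to E\to G\to 0
\]
is exact in $\Coh(\wY,\wpsi^*\BB_0)$. Both $F$ and $G$ are $\mu_{h,h_{t_0}}$-semistable of the same slope as $E$, with Jordan--H\"older factors drawn from $\{E_1,\dots,E_m\}$ (the quotient $G$ being itself one of the $E_j$), so $\Delta_{\wpsi^*\BB_0}(F),\Delta_{\wpsi^*\BB_0}(G)\geq 0$ by the inductive hypothesis.

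The computational input is the classical extension identity for the Bogomolov form. Writing $r_1=\rk(F)$, $r_2=\rk(G)$, $r=r_1+r_2$, and $D:=r_2\,\ch_1(F)-r_1\,\ch_1(G)$, additivity $\ch(E)=\ch(F)+\ch(G)$ (which holds because $\Forg$ is exact) gives, after a direct expansion,
\[
r_1 r_2\,\Delta_{\wpsi^*\BB_0}(E) \;=\; rr_2\,\Delta_{\wpsi^*\BB_0}(F)+rr_1\,\Delta_{\wpsi^*\BB_0}(G)-h^{N-2}\cdot D^2,
\]
the $\tfrac{11}{32}\rk^2$-correction cancelling identically via $r_1 r_2 r^2 = r r_2 r_1^2 + r r_1 r_2^2$. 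Equality of slopes for $F$ and $G$ rewrites as $h^{N-2}h_{t_0}\cdot D=0$, so it only remains to prove $h^{N-2}\cdot D^2\leq 0$.

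\textbf{Anticipated main obstacle.} This last inequality is a generalized Hodge-index statement for the pair of nef classes $(h,h_{t_0})$ on the $N$-dimensional variety $\wY$, and I expect this to be the technically most delicate point, since neither class is ample: $h=\wpsi^*\OO_{\P^N}(1)$ is only big and nef (as $\wpsi$ is birational), while $h_{t_0}=t_0 h+f$ contains the fibre class $f$ of $p$ with $f^2=0$. My plan is to perturb by a fixed ample divisor $A$ on $\wY$, apply the classical Hodge index theorem on $\wY$ to the ample pair $(h+\varepsilon A,\,h_{t_0}+\varepsilon A)$, and let $\varepsilon\to 0^+$; the limiting inequality produces $h^{N-2}\cdot D^2\leq 0$, provided one knows $h^{N-1}h_{t_0}>0$, which follows from the straightforward computation $h^N=1$ and $h^{N-1}\!\cdot\! f=1$ on $\wY$. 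Combined with the two non-negative terms on the right-hand side and $r_1 r_2>0$, this closes the induction and yields $\Delta_{\wpsi^*\BB_0}(E)\geq 0$.
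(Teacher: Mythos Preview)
Your proposal is correct and follows essentially the same strategy as the paper. The paper cites \cite[Lemma~A.6]{BMS} (negative semi-definiteness of the kernel of $Z$ implies that the quadratic form is preserved under Jordan--H\"older extension) and then reduces to the identical Hodge-index statement $h^{N-2}h_{t_0}D=0\Rightarrow h^{N-2}D^2\le 0$; your explicit induction via the extension identity $r_1r_2\,\Delta(E)=rr_2\,\Delta(F)+rr_1\,\Delta(G)-h^{N-2}D^2$ is precisely the special case of that lemma written out by hand, and your perturbation-by-ample argument is a valid way to justify the Hodge-index step that the paper asserts ``follows immediately''.
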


\begin{proof}
This follows immediately from \cite[Lemma~A.6]{BMS}.
Indeed, first of all we observe that the lemma in \emph{loc.~cit.}\ still holds for weak stability conditions, with the same proof.
Then, the only thing to check is that $\Ker Z_{h,h_{t_0}}$ is negative semi-definite with respect to the quadratic form $\Delta_{\wpsi^*\BB_0}$.
Explicitly, this means the following.
Let $D$ be a divisors class on $\wY$ such that $h^{N-2}h_tD=0$. We need to show that $h^{N-2}D\leq 0$, which follows immediately from the Hodge Index Theorem.
\end{proof}

\begin{Prop}\label{prop:hf->hh}
Let $E\in\Coh(\wY,\wpsi^*\BB_0)$ be a $\mu_{h,f}$-semistable torsion-free sheaf.
If $\Delta_{\wpsi^*\BB_0}(A)\geq 0$ for all $\mu_h$-semistable torsion-free sheaves $A\in\Coh(\wY,\wpsi^*\BB_0)$ with $\ch_0(A)\leq\ch_0(E)$, then $\Delta_{\wpsi^*\BB_0}(E)\geq0$.
\end{Prop}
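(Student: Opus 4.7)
The natural approach is strong induction on $\rk(E)$, proving a slightly more flexible statement: for every $t\geq 0$ and every $\mu_{h,h_t}$-semistable torsion-free $E\in\Coh(\wY,\wpsi^*\BB_0)$, the inequality $\Delta_{\wpsi^*\BB_0}(E)\geq 0$ holds as soon as $\Delta_{\wpsi^*\BB_0}(A)\geq 0$ for every $\mu_h$-semistable torsion-free $A$ with $\rk(A)\leq\rk(E)$. Proposition~\ref{prop:hf->hh} is the case $t=0$. For the base case $\rk(E)=4$, which is the smallest possible rank by Remark~\ref{rem:B0stableAndDelta0}, any proper $\wpsi^*\BB_0$-submodule of $E$ either vanishes (rank $0$, hence torsion, hence zero) or has rank $4$ with torsion cokernel (hence cannot strictly destabilize). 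Thus $E$ is $\mu_h$-semistable and the hypothesis applies.

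For the inductive step, I would consider
\[
S(E):=\{t\geq 0\,:\, E\text{ is }\mu_{h,h_t}\text{-semistable}\},
\]
which is non-empty (it contains $t=0$) and closed by the standard wall-and-chamber structure for the one-parameter family $\sigma_t:=(\Coh(\wY,\wpsi^*\BB_0),Z_{h,h_t})$. If $S(E)$ is unbounded, then for every subsheaf $F\subset E$ the inequality $\mu_{h,h_t}(F)\leq\mu_{h,h_t}(E)$ holds along a sequence $t_n\to\infty$ in $S(E)$; since $\mu_{h,h_t}\to\mu_h$ as $t\to\infty$, passing to the limit gives $\mu_h(F)\leq\mu_h(E)$, so $E$ is $\mu_h$-semistable and the hypothesis of the proposition directly yields $\Delta_{\wpsi^*\BB_0}(E)\geq 0$.

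If $S(E)$ is bounded, set $t^*:=\sup S(E)$, so $t^*\in S(E)$ while $E$ fails to be $\mu_{h,h_t}$-semistable for all $t\in(t^*,t^*+\epsilon)$. By openness of stability for the family $\sigma_t$, $E$ is then forced to be strictly $\mu_{h,h_{t^*}}$-semistable; its Jordan-H\"older filtration at $t^*$ has factors $E_1,\dots,E_m$ (with $m\geq 2$) that are $\mu_{h,h_{t^*}}$-stable and torsion-free (any torsion subobject would have slope $+\infty$, violating semistability), of rank strictly less than $\rk(E)$. The numerical hypothesis of the proposition transfers to each $E_i$ since their ranks are smaller, so the inductive hypothesis applied at $t=t^*$ yields $\Delta_{\wpsi^*\BB_0}(E_i)\geq 0$; Lemma~\ref{lem:InequalityJH}, applied at $t_0=t^*$, then assembles these into $\Delta_{\wpsi^*\BB_0}(E)\geq 0$.

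The main technical obstacle is verifying rigorously that $S(E)$ is closed and that $t^*$ really forces a nontrivial Jordan-H\"older decomposition with strictly smaller-rank factors: both claims rest on local finiteness of walls for $\sigma_t$ and on openness of stability along the one-parameter family, which would be established along the same formal lines as in Proposition~\ref{prop:tiltstability}, using that only finitely many numerical classes of potential destabilizers need to be tracked near $t^*$.
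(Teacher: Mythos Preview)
Your proof is correct and follows essentially the same approach as the paper's: both deform along the one-parameter family $h_t=th+f$, argue that either $E$ remains (semi)stable all the way to $\mu_h$, or hits a wall where it breaks into strictly smaller-rank Jordan--H\"older factors to which Lemma~\ref{lem:InequalityJH} applies. The paper organizes this as an informal iterative argument (citing \cite[Section~3.6]{Langer:positive} for the openness of stability you flag as a technical obstacle), whereas you package it as a clean strong induction on rank over all $t\geq 0$; the content is the same.
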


\begin{proof}
To start with, by Lemma~\ref{lem:InequalityJH}, we can assume that $E$ is $\mu_{h,f}$-stable.
By arguing as in \cite[Section~3.6]{Langer:positive}, if a sheaf $F\in\Coh(\wY,\wpsi^*\BB_0)$ is $\mu_{h,h_{t_0}}$-stable, then it is $\mu_{h,h_{t}}$-stable, for $t\in\R_{\geq0}$ sufficiently close to $t_0$.

We now have two possible situations. If $E$ is $\mu_{h,h_t}$-stable for all $t\geq0$, then $E$ is $\mu_h$-stable. Hence we can take $A=E$ and apply the assumption, getting the desired inequality.

Otherwise, $E$ is strictly $\mu_{h, h_t}$-semistable for some $t > 0$. 
Let $E_1,\ldots,E_m$ be its Jordan-H\"older factors. For each of them we can apply the same argument. Since whenever we replace $E$ with its Jordan-H\"older factors the rank drops, in a finite number of steps we get to a situation where all Jordan-H\"older factors are $\mu_{h,h_t}$-stable, for all $t$ large. Hence they are $\mu_h$-stable and, by assumption, they all satisfy the inequality $\Delta_{\wpsi^*\BB_0}\ge 0$.
By applying Lemma~\ref{lem:InequalityJH} again, we conclude the proof.
\end{proof}

\subsection*{Induction on the rank}\label{thm:induction}
Let $\psi\colon Y\to\P^N$ be a blow-up along an ordered configuration of codimension $2$ linear subspaces, let $h=\psi^*\OO_{\P^N}(1)$.
We want to prove Theorem~\ref{thm:Bogomolov} by induction on the rank.
To this end, as in \cite{Langer:positive}, we consider the following version of Theorem~\ref{thm:Bogomolov} with fixed rank:
\begin{repThm}{thm:Bogomolov}[$\rk(E)=r$]
Let $E\in\Coh(Y,\psi^*\BB_0)$ be a $\mu_h$-slope semistable torsion-free sheaf of rank $r$.
Then $\Delta_{\psi^*\cB_0}(E) \geq0$.
\end{repThm}

We also consider the following statement, again with fixed rank:

\begin{Thm}[$\rk(E)=r$]\label{thm:langerT1}
Let $E\in\Coh(Y,\psi^*\BB_0)$ be a rank $r$ $\mu_h$-slope semistable torsion-free sheaf.
	Assume that the restriction $\res{E}{D}\in\Coh(D,\res{\psi^*\BB_0}{D})$ of $E$ to a general divisor $D \in |h|$ is not slope semistable with respect to $\res{h}{D}$.
	Then
	\begin{equation*}\label{eqn:langerT1}
		\sum_{i<j} r_i r_j (\mu_i - \mu_j )^2 \leq \Delta_{\psi^*\BB_0}(E), 
	\end{equation*}
where $\mu_i$ (resp., $r_i$) denote the slopes (resp., the ranks) of the Harder-Narasimhan factors of $\res{E}{D}$.
\end{Thm}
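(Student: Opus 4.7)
I will prove Theorem~\ref{thm:langerT1}[$\rk(E)=r$] by induction on $r$, assuming Theorem~\ref{thm:Bogomolov} holds for all ranks $<r$; this is the standard simultaneous-induction strategy of Langer \cite{Langer:positive}, adapted to $\psi^{*}\cB_{0}$-modules. The key construction is the relative Harder-Narasimhan filtration with respect to the general pencil $\Pi$.

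Concretely, I work on the incidence variety $\wY$ in \eqref{eqn:incidence}. Since $q\colon\wY\to Y$ is the blow-up along a smooth codimension-$2$ subvariety (the base locus of $\Pi$) and $\wpsi=\psi\circ q$ is itself an iterated blow-up along codimension-$2$ linear subspaces of $\P^{N}$, the setup applies to $\wpsi^{*}\cB_{0}=q^{*}\psi^{*}\cB_{0}$. By Lemma~\ref{lem:BlowUpSurface} (and the projection formula with $q^{*}h=h$), $q^{*}E$ is $\mu_{q^{*}h}$-semistable and $h^{N-2}\Delta_{\wpsi^{*}\cB_{0}}(q^{*}E)=h^{N-2}\Delta_{\psi^{*}\cB_{0}}(E)$, while the restriction of $q^{*}E$ to a generic fiber of $p\colon\wY\to\Pi$ is $E|_{D}$ for general $D\in|h|$.

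I now form the relative Harder-Narasimhan filtration of $q^{*}E$ over $\Pi$. Since $E|_{D}$ is not semistable by hypothesis, the filtration
\[ 0=F_{0}\subset F_{1}\subset\dots\subset F_{\ell}=q^{*}E \]
in $\Coh(\wY,\wpsi^{*}\cB_{0})$ is nontrivial; after possibly a further blow-up of $\wY$ ensuring $p$-flatness and fiberwise torsion-freeness of the quotients (which preserves all relevant invariants, again via Lemma~\ref{lem:BlowUpSurface}), the successive quotients $\mathrm{gr}_{i}=F_{i}/F_{i-1}$ restrict on a general fiber to the HN factors of $E|_{D}$, of generic rank $r_{i}<r$ and slope $\mu_{i}$. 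Each $\mathrm{gr}_{i}$ is $\mu_{h,f}$-semistable on $\wY$, because any $\mu_{h,f}$-destabilizing subobject would generically restrict to a destabilizing subsheaf of the corresponding HN factor on $D$. Applying Proposition~\ref{prop:hf->hh} together with the inductive hypothesis Theorem~\ref{thm:Bogomolov} for all ranks $<r$, I obtain $\Delta_{\wpsi^{*}\cB_{0}}(\mathrm{gr}_{i})\geq 0$ for every $i$.

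To close the argument, a direct Chern-character computation (in the spirit of \cite[Theorem~3.2]{Langer:positive}) yields the filtration identity of divisor-class-squared expressions
\[ \Delta_{\wpsi^{*}\cB_{0}}(q^{*}E)=\sum_{i=1}^{\ell}\frac{r}{r_{i}}\,\Delta_{\wpsi^{*}\cB_{0}}(\mathrm{gr}_{i})\;-\;\sum_{i<j}r_{i}r_{j}\bigl(c_{i}/r_{i}-c_{j}/r_{j}\bigr)^{2}, \]
where $c_{i}=\ch_{1}(\mathrm{gr}_{i})$; the rank correction term $\tfrac{11}{16}r^{2}$ cancels cleanly because $\sum r_{i}=r$. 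Pairing with $h^{N-2}$ and applying the Hodge index theorem on the generic fiber $D$, together with the projection-formula identity $h^{N-2}f\cdot c_{i}/r_{i}=\mu_{i}$, bounds the cross-term from below by $\sum_{i<j}r_{i}r_{j}(\mu_{i}-\mu_{j})^{2}$. Combined with the nonnegativity of each $\Delta_{\wpsi^{*}\cB_{0}}(\mathrm{gr}_{i})$ and the equality of discriminants for $E$ and $q^{*}E$, this delivers the inequality. The main technical obstacle is the rigorous construction of the relative HN filtration in the $\wpsi^{*}\cB_{0}$-module category---checking that each $F_{i}$ is preserved by the $\cB_{0}$-action and that the quotients remain $p$-flat and fiberwise torsion-free after the possible further blow-up---and tracking the normalization of the Hodge-index step so that the slopes appearing on the two sides match literally; both are routine transpositions of Langer's sheaf-theoretic formalism to modules over the algebra vector bundle.
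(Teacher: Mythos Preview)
Your overall strategy---pass to the incidence variety $\wY$, take the $\mu_{h,f}$-Harder--Narasimhan filtration of $q^{*}E$, apply Proposition~\ref{prop:hf->hh} together with the inductive hypothesis to get $\Delta_{\wpsi^{*}\cB_{0}}(\mathrm{gr}_{i})\ge 0$, and then use the standard filtration identity for $\Delta$---is exactly the paper's. The gap is in your final paragraph.

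You claim that after pairing with $h^{N-2}$ and using Hodge index ``on the generic fiber $D$'', the cross-term
\[
-\sum_{i<j} r_i r_j\, h^{N-2}\Bigl(\tfrac{c_i}{r_i}-\tfrac{c_j}{r_j}\Bigr)^{2}
\]
is bounded below by $+\sum_{i<j} r_i r_j(\mu_i-\mu_j)^{2}$. This is false as stated: the Hodge index theorem gives an \emph{upper} bound on a self-intersection, so it can only turn $-h^{N-2}(\ldots)^{2}$ into something $\ge -(\mu_i-\mu_j)^{2}$, with the wrong sign. Concretely, after the filtration identity and Hodge index the paper arrives at
\[
\frac{\Delta_{\psi^*\cB_0}(E)}{r}\;\ge\; 2\sum_i a_i\mu_i \;-\; \frac{1}{r}\sum_{i<j} r_i r_j(\mu_i-\mu_j)^{2},
\]
which still has the minus sign. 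Here $a_i$ is the coefficient of the exceptional divisor $e$ in the decomposition $\ch_1(\mathrm{gr}_i)=q^{*}l_i+a_ie$; without further input the right-hand side need not be $\ge 0$.

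What you are missing is the one place where the hypothesis that $E$ itself is $\mu_h$-\emph{semistable} enters the estimate. Because each $R^{0}q_{*}E_i\subset E$, Lemma~\ref{lem:BlowUpSurface}\eqref{item:ch0},\eqref{item:ch1} and the $\mu_h$-semistability of $E$ give
\[
\sum_{j\le i} r_j\bigl(\mu_j-\mu_h(E)\bigr)\;\le\;\sum_{j\le i} a_j
\]
for every $i$ (this is \eqref{eqn:ineq22}). Abel-summing against the decreasing sequence $\mu_i$ yields $\sum_i a_i\mu_i \ge \frac{1}{r}\sum_{i<j} r_i r_j(\mu_i-\mu_j)^{2}$, which flips the sign and delivers the inequality. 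Your sketch never decomposes $c_i$ along $e$, never derives \eqref{eqn:ineq22}, and never uses the semistability of $E$ beyond the first paragraph---so the argument cannot close. The ``routine'' Hodge-index normalization you defer to the end is precisely where the real content lies.
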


Note that, since $h$ is a linear hyperplane section, $\res{\psi}{D}\colon D\to\P^{N-1}$ is again a blow-up along an ordered configuration of codimension $2$ linear subspaces in $\P^{N-1}$, and $\res{\psi^*\BB_0}{D}=\res{\psi}{D}^*(\res{\cB_0}{h})$.
By Remark~\ref{rem:cubicconic}, $\res{\cB_0}{h}$ on $\P^{N-1}$ is still the sheaf of even parts of the Clifford algebra associated to smooth cubic hypersurface of dimension $N$.

We will use induction on $N$, and, in each induction step, induction on the rank (which is divisible by 4 as observed in Remark~\ref{rem:B0stableAndDelta0}).
The idea is then to show that Theorem~\ref{thm:langerT1}($r$) implies Theorem~\ref{thm:Bogomolov}($r$), and Theorem~\ref{thm:Bogomolov}($\le r-4$) implies Theorem~\ref{thm:langerT1}($\le r$), for all $Y$ at once.
The case in which $Y$ is a surface corresponds to Proposition~\ref{prop:BG-surface}, while Theorem~\ref{thm:langerT1}($r=4$) is clear.

\begin{proof}[Theorem~\ref{thm:langerT1}($r$) implies Theorem~\ref{thm:Bogomolov}($r$)]
Let us assume that $E$ is $\mu_h$-semistable but $\Delta_{\psi^*\cB_0}(E)< 0$. 
Theorem~\ref{thm:langerT1} implies that the
restriction of $\res{E}{D}$ is semistable, for any general divisor $D \in |h|$. 

By induction on dimension, the restriction of $E$ to a  general complete intersection
$Y' = |h| \cap\stackrel{N-2}\ldots\cap |h|$ of dimension $2$ is semistable.
Then, Proposition~\ref{prop:BG-surface} implies the result.
\end{proof}

The second implication follows line-by-line the argument in \cite[Section~3.9]{Langer:positive}. The only difference is that we cannot do a complete induction as in \emph{loc.~cit.}, since such a strong inequality is not necessarily true for arbitrary surfaces. Therefore, we have to use the deformation argument in Proposition~\ref{prop:hf->hh} to reduce to blow-ups of $\P^2$.

\begin{proof}[Theorem~\ref{thm:Bogomolov}($\le r-4$) implies Theorem~\ref{thm:langerT1}($\le r$)]
As in the previous section, let $\Pi$ denote a general pencil in $|h|$ and consider the incidence variety $\wY$ in \eqref{eqn:incidence} with projections $p\colon \wY \to \Pi$ and $q\colon \wY \to Y$.
We denote by $e$ the class of the exceptional divisor of $q$ and, as before, $f$ the class of the fiber of $p$.
Note that the center of the blow-up $q$ is smooth and connected (for $N=2$ we use $h^N=1$). 

Note that the Harder-Narasimhan filtration of $q^* E$ with respect to $\mu_{h,f}$ corresponds to the relative Harder-Narasimhan filtration (\cite[Theorem~2.3.2]{HL:Moduli}, generalized to $\cB$-modules, with a similar proof) of $E$ with respect to $p$.
Hence, since $\res{E}{D}$ is not $\mu_{\res{h}{D}}$-semistable, $q^*E$ is not $\mu_{h,f}$-semistable.
Therefore, we consider $0 \subset E_0 \subset E_1 \subset \cdots \subset E_m = q^* E$ the (non-trivial) Harder-Narasimhan filtration with respect to $\mu_{h,f}$ and let $F_i = E_i /E_{i-1}$ be the corresponding $\mu_{h,f}$-semistable factors. 

There exist integers $a_i$ and divisor classes $l_i$ in $Y$ such that $\ch_1 (F_i) = q^* l_i+a_i e $.
Then, since $f=h-e$ and $h^{N-2}e^2=-1$, we have
\begin{equation}\label{eqn:mui}
	\mu_i=\mu_{h,f}(F_i)=\frac{h^{N-1}l_i+a_i}{r_i}.
\end{equation}
On the other hand, since $R^0q_*E_i\subset E$ and $E$ is $\mu_h$-semistable,
by Lemma~\ref{lem:BlowUpSurface}\eqref{item:ch0},\eqref{item:ch1}, we have
\begin{equation}\label{eqn:ineqHN}
	\frac{\sum_{j\leq i} h^{N-1}l_j}{\sum_{j\leq i}r_j}\leq \mu_{h}(E).
\end{equation}
Hence, by \eqref{eqn:mui} and \eqref{eqn:ineqHN}, we deduce 
\begin{equation}\label{eqn:ineq22}
	\sum_{j\leq i}r_j\left(\mu_j-\mu_h(E)\right)\leq \sum_{j\leq i}a_j .
\end{equation}

Since $\rk(F_j)\leq r-4$, by Theorem~\ref{thm:Bogomolov}($\leq r-4$) and Proposition~\ref{prop:hf->hh}, we have
$\Delta_{\widetilde{\psi}^*\cB_0}(F_j)\geq 0$ for all $j$.
Therefore,
\begin{align*}
	\frac{\Delta_{\psi^*\cB_0}(E)}{r}&=\sum_i \frac{\Delta_{\wpsi^*\cB_0}(F_i)}{r_i}- \frac{1}{r}\sum_{i<j}r_ir_j\,h^{N-2}{\left(\frac{\ch_1(F_i)}{r_i}-\frac{\ch_1(F_j)}{r_j}\right)}^2\\
	&\geq \frac{1}{r}\sum_{i<j}r_ir_j\left({\left(\frac{a_i}{r_i}-\frac{a_j}{r_j}\right)}^2-h^{N-2}{\left(\frac{l_i}{r_i}-\frac{l_j}{r_j}\right)}^2\right)\\
	&\geq \frac{1}{r}\sum_{i<j}r_ir_j\left({\left(\frac{a_i}{r_i}-\frac{a_j}{r_j}\right)}^2-{\left(\frac{h^{N-1}l_i}{r_i}-\frac{h^{N-1}l_j}{r_j}\right)}^2\right),
\end{align*}
where the last inequality follows from the Hodge Index Theorem.
By using \eqref{eqn:mui} and simplifying, we see that the last expression in the above inequality is equal to
\[
	2\sum_i a_i\mu_i-\frac{1}{r}\sum_{i<j}r_ir_j(\mu_i-\mu_j)^2.
\]
By \eqref{eqn:ineq22}, we have
\begin{align*}
	\sum_i a_i\mu_i&=\sum_i\left(\sum_{j\leq i}a_j\right)(\mu_i-\mu_{i+1})\geq \sum_i\left(\sum_{j\leq i} r_j(\mu_j-\mu_{h}(E))\right)(\mu_i-\mu_{i+1})\\
	&=\sum_i r_i\mu_i^2-r\mu_h(E)^2 =\sum_{i<j} \frac{r_ir_j}{r}(\mu_i-\mu_j)^2.
\end{align*}
Therefore, we obtain
\[
	\frac{\Delta_{\psi^*\cB_0}(E)}{r}\geq \sum_{i<j} \frac{r_ir_j}{r}(\mu_i-\mu_j)^2,
\]
as we wanted. 
\end{proof}

\section{Cubic fourfolds}\label{sec:4folds}

The goal of this section is to prove the existence of Bridgeland stability conditions on the Kuznetsov component $\Ku(X)$ of a cubic fourfold $X$.

We use the Bogomolov inequality of Section~\ref{sec:Bogomolov} to extend the notion of tilt-stability to $\Db(\P^3,\cB_0)$.
Theorem~\ref{thm:main2} will follow by the general method described in Sections~\ref{sec:inducing} and~\ref{sec:inducestability} as in the case of Fano threefolds.
Finally, we identify the central charge of the associated stability condition with the natural $A_2$-lattice associated to any cubic fourfold.
This allows us to prove a stronger version of the support property, and to obtain an open subset of the space of full numerical stability conditions on $\Ku(X)$.

\subsection*{Weak stability conditions on the twisted projective space}\label{subsec:Bridgeland}

Let $N\geq 2$. Let $X$ be a smooth cubic hypersurface in $\P^{N+2}$ and let $L_0\subset X$ be a line not contained in a $\P^{2}$ contained in $X$.
Let $\cB_0$ be the sheaf of even parts of the Clifford algebra on $\P^N$ associated to the conic fibration induced by projection from $L_0$, as in Section~\ref{sec:geometric}. Let $H$ be an hyperplane section.

We modify the Chern character as follows.

\begin{Def}
For $E\in\Db(\P^N,\cB_0)$, we set
\[
\ch_{\cB_0} (E) := \ch(\Forg(E)) \left( 1 - \frac{11}{32}H^2\right),
\]
where $H^2$ denotes the class of a codimension $2$ linear subspace in $\P^N$.
\end{Def}

In particular, note that $\ch_{\cB_0}$ differs from the usual Chern character only in degree $\geq 2$. We will only care about $\ch_{\cB_0, 2}$.
The Bogomolov inequality in Theorem~\ref{thm:Bogomolov} assumes the following more familiar form.
For any $\mu$-semistable object $E\in\Coh(\P^N,\cB_0)$, we have
\[
\Delta_{\cB_0}(E) = \ch_{\cB_0,1}(E)^2 - 2 \rk(E) \ch_{\cB_0,2}(E) \geq0,
\]
where, as usual, we used $h$ to identify the Chern characters on $\P^N$ with rational numbers.

\begin{Def}
We write $\Coh^\beta(\P^N,\cB_0)$ for the heart of a bounded t-structure obtained by tilting $\Coh(\P^N,\cB_0)$ with respect to slope-stability at the slope $\mu=\beta$.
\end{Def}

The following result, generalizing Proposition~\ref{prop:tiltstability}, can be proved analogously by using Theorem~\ref{thm:Bogomolov}.
We define first a twisted Chern character $\ch^{\beta}_{\cB_0}:= e^{-\beta} \ch_{\cB_0}$ and a lattice $\Lambda_{\cB_0}^2\cong\Z^3$, as in Example~\ref{ex:slopestability}.

\begin{Prop}\label{prop:tiltstabilityBmodules}
Given $\alpha > 0, \beta \in \R$, the pair $\sigma_{\alpha, \beta} = (\Coh^\beta(\P^N,\cB_0), Z_{\alpha, \beta})$
with
\[
Z_{\alpha, \beta}(E) := \mathfrak{i}\, \ch_{\cB_0,1}^\beta(E) + \frac 12 \alpha^2 \ch_{\cB_0,0}^\beta(E) - \ch_{\cB_0,2}^\beta(E)
\]
defines a weak stability condition on $\Db(\P^3,\cB_0)$ with respect to $\Lambda_{\cB_0}^2$. The quadratic form $Q$ can be given by the discriminant $\Delta_{\cB_0}$; these stability conditions vary continuously as $(\alpha, \beta) \in \R_{>0} \times \R$ varies.
\end{Prop}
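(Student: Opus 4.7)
The plan is to mirror the proof of Proposition~\ref{prop:tiltstability} in the twisted setting, with Theorem~\ref{thm:Bogomolov} replacing the classical Bogomolov-Gieseker inequality throughout, and to verify the three axioms of Definition~\ref{def:weakstability} in order. First I will check that $Z_{\alpha,\beta}$ is a weak stability function on $\Coh^\beta(\P^N,\cB_0)$. By the tilting construction, it suffices to verify this on slope-semistable sheaves $F$ with $\mu_h(F)>\beta$, on shifts $F[1]$ of slope-semistable sheaves with $\mu_h(F)\le\beta$, and on codimension-two torsion sheaves; the only delicate case is $\mu_h(F)=\beta$, where $\Im Z_{\alpha,\beta}(F[1])=0$ and one needs $\Re Z_{\alpha,\beta}(F[1])<0$. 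A short computation reduces this to Theorem~\ref{thm:Bogomolov}, which yields $\ch_{\cB_0,2}(F)\le \tfrac{\beta^2}{2}\rk(F)$ and hence $\Re Z_{\alpha,\beta}(F[1])\le -\tfrac{\alpha^2}{2}\rk(F)<0$.

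Next I will establish the support property (axiom (c)) with quadratic form $\Delta_{\cB_0}$. Negative definiteness of $\Delta_{\cB_0}$ on $\Ker Z_{\alpha,\beta}$ is a signature calculation in $\Lambda_{\cB_0}^2\otimes\R$. The main inequality $\Delta_{\cB_0}(E)\ge 0$ for $\sigma_{\alpha,\beta}$-semistable $E$ will follow by the large-volume-plus-wall-crossing argument of \cite{BMT:3folds-BG,BMS}: for $\alpha\gg 0$ tilt-semistable objects reduce to slope-semistable sheaves where Theorem~\ref{thm:Bogomolov} applies directly, and under wall-crossing the Hodge-index identity for classes whose $Z_{\alpha,\beta}$-images lie on a common ray propagates $\Delta_{\cB_0}\ge 0$ through the Jordan-H\"older factors by induction on rank. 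The existence of HN filtrations (axiom (b)) will then follow by the standard argument combining noetherianity of $\Coh^\beta(\P^N,\cB_0)$ with the support property, and continuous variation in $(\alpha,\beta)$ is automatic from continuity of $Z_{\alpha,\beta}$.

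The principal obstacle is the wall-crossing step in axiom (c); this is the one place where the sharpness of Theorem~\ref{thm:Bogomolov} is essential. The constant $\tfrac{11}{32}$ in the definition of $\ch_{\cB_0}$ was chosen precisely so that $\Delta_{\cB_0}(\cB_j)=0$ for all $j$ (Remark~\ref{rem:B0stableAndDelta0}), which is what allows the induction to close on objects built from the algebra-bundle generators; any weaker Bogomolov inequality would cause the argument to fail on the $\cB_j$ themselves.
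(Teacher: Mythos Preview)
Your approach is correct and is exactly what the paper intends: it says this proposition ``can be proved analogously'' to Proposition~\ref{prop:tiltstability} ``by using Theorem~\ref{thm:Bogomolov}'', and your outline does precisely that.

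One correction to your final paragraph: the sharpness of Theorem~\ref{thm:Bogomolov} (i.e., $\Delta_{\cB_0}(\cB_j)=0$) is \emph{not} needed for this proposition. The wall-crossing propagation of $\Delta_{\cB_0}\ge 0$ to tilt-semistable objects (as in \cite[Lemma~A.6]{BMS}) only requires that $\Delta_{\cB_0}$ is negative semi-definite on $\Ker Z_{\alpha,\beta}$ and nonnegative on the slope-semistable Jordan--H\"older factors; sharpness plays no role in that induction. Where sharpness actually matters is later, in Remark~\ref{rem:B0tiltstable} and the proof of Theorem~\ref{thm:main2}: $\Delta_{\cB_0}(\cB_j)=0$ is what allows Proposition~\ref{prop:Delta0stable} to guarantee that each $\cB_j$ is $\sigma_{\alpha,\beta}$-stable for \emph{all} $(\alpha,\beta)$, so that the exceptional objects and their Serre duals can be placed in the heart $\Coh^0_{\alpha,-1}(\P^3,\cB_0)$ as required by Proposition~\ref{prop:inducestability}. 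A weaker Bogomolov inequality would still give you Proposition~\ref{prop:tiltstabilityBmodules}; it would fail you only at the next step.
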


\begin{Rem}\label{rem:B0tiltstable}
By Remark~\ref{rem:B0stableAndDelta0}, for all $j\in\Z$, $\cB_j$ is slope-stable with slope $\mu(\cB_j)=\frac{-5+2j}{4}$ and $\Delta_{\cB_0}(\cB_j)=0$. 
Hence, $\cB_j\in\Coh^\beta(\P^N,\cB_0)$ (resp.~$\cB_j[1]\in\Coh^\beta(\P^N,\cB_0)$), for $\beta<\frac{-5+2j}{4}$ (resp.~$\beta\geq\frac{-5+2j}{4}$), and by Proposition~\ref{prop:Delta0stable}, it is $\sigma_{\alpha,\beta}$-stable, for all $\alpha>0$.
\end{Rem}

\subsection*{Proof of Theorem~\ref{thm:main2}}

Let $X$ be a cubic fourfold and let us fix a line $L_0\subset X$ not contained in a plane in $X$.
By Proposition~\ref{prop:decomp}, we can realize its Kuznetsov component in the  semiorthogonal decomposition
\[
\Db(\P^3,\cB_0)=\gen{\Ku(X),\cB_1,\cB_2,\cB_3}.
\]
Since we have defined tilt-stability for $\Db(\P^3,\cB_0)$ in Proposition~\ref{prop:tiltstabilityBmodules}, the proof of Theorem~\ref{thm:main2} now goes along the exact same lines as the proof of Theorem~\ref{thm:1object}, by applying Proposition~\ref{prop:inducestability} to the exceptional collection $\langle \cB_1,\cB_2,\cB_3 \rangle$. 

We set $\beta = -1$.
The Serre functor $S$ on $\Db(\P^3,\cB_0)$ is given by $S(\blank)=\blank\otimes_{\cB_0}\cB_{-3}[3]$.
By \cite[Corollary~3.9]{Kuz:Quadric}, $\cB_i\otimes_{\cB_0}\cB_j\cong\cB_{i+j}$, and so $S(\cB_j)=\cB_{j-3}[3]$.
Hence, by Remark~\ref{rem:B0tiltstable}, $\cB_1,\cB_2,\cB_3$, $\cB_{-2}[1],\cB_{-1}[1],\cB_{0}[1] \in \Coh^{-1}(\P^3,\cB_0)$, and they are $\sigma_{\alpha,-1}$-stable for all $\alpha>0$. 

An easy computation shows that, for $\alpha$ sufficiently small,
\[
\mu_{\alpha,-1}(\BB_{-2}[1])<\mu_{\alpha,-1}(\BB_{-1}[1])<\mu_{\alpha,-1}(\BB_0[1])
<0<\mu_{\alpha,-1}(\BB_1)<\mu_{\alpha,-1}(\BB_2)<\mu_{\alpha,-1}(\BB_3).
\]
Therefore, if we tilt a second time to obtain the weak stability condition $\sigma_{\alpha,-1}^{0}$ (exactly in the same way as in Proposition~\ref{prop:tiltedtiltstability}), then 
its heart $\Coh^{0}_{\alpha,-1}(\P^3,\cB_0)$ contains 
$\cB_1,\cB_2,\cB_3,\cB_{-2}[2],\cB_{-1}[2],\cB_{0}[2]$, for $\alpha$ sufficiently small.

By Lemma~\ref{lem:TiltGoodIsAlmostGood}, $F\in (\Coh^{0}_{\alpha,-1}(\P^3,\cB_0))_0$ implies that $\Forg(F)$ is a torsion sheaf supported in dimension $0$.
But then $F\notin\Ku(X)$: indeed, by construction $\cB_2 = \cB_0(H)$, and so  $\Hom_{\cB_0}(\cB_2,F)=\Hom_{\P^3}(\cO_{\P^3}(H),\Forg(F))\neq 0$, as $\blank \otimes \cB_0$ is the left adjoint to the forgetful functor $\Forg$.
Therefore, all assumptions of Proposition~\ref{prop:inducestability} are 
satisfied, and we obtain a stability condition $\sigma_{\Ku(X)}$ on $\Ku(X)$.

By Proposition~\ref{prop:inducestability}, we know that the stability condition $\sigma_{\Ku(X)}$ in $\Ku(X)$ just constructed only satisfies the support property with respect to the lattice $\Lambda_{\cB_0,\Ku(X)}^2\cong\Z^2$ (defined analogously as in \eqref{eqn:LambdaD}).
This concludes the proof Theorem~\ref{thm:main2}. 

To describe (a subset of) the space of stability conditions in more detail, we need the full the support property, for which we will assume $k = \C$.
The numerical Grothendieck group of the Kuznetsov component is larger for any special cubic fourfold, and we will prove that $\sigma_{\Ku(X)}$ satisfies the support property for the full numerical Grothendieck group.

\subsection*{The Mukai lattice of the Kuznetsov component of a cubic fourfold}

Let $X$ be a cubic fourfold over $\C$.
The Kuznetsov component $\Ku(X)$ can be considered as a non-commutative K3 surface.
Its Serre functor is equal to the double shift functor $[2]$ (see \cite[Lemma~4.2]{Kuz:V14}) and there is an analogue of ``singular cohomology'' (and Hochschild (co)homology) for $\Ku(X)$ which is isomorphic to the one of a K3 surface (e.g., \cite{Kuz:SemiOrthogonal,Kuz:fourfold,AddingtonThomas:CubicFourfolds}).

We summarize the basic properties of the Mukai structure on $\Ku(X)$ in the statement below. The proofs and some context can be found in \cite[Section~2]{AddingtonThomas:CubicFourfolds}.

\begin{PropDef}\label{prop:MukaiLattice}
Let $X$ be a smooth cubic fourfold over $\C$.
\begin{enumerate}[{\rm (a)}]
\item The \emph{singular cohomology} of $\Ku(X)$ is defined as
\[
H^*(\Ku(X),\Z) := \left\{ \kappa \in K_{\mathrm{top}}(X)\,:\, \chi_{\mathrm{top}}\left([\cO_X(i)],\kappa\right)=0,\text{ for }i=0,1,2  \right\},
\]
where $K_{\mathrm{top}}(X)$ denotes the topological K-theory of $X$ and $\chi_\mathrm{top}$ the topological Euler characteristic\footnote{See, e.g., \cite[Section~2.1]{AddingtonThomas:CubicFourfolds} for the basic definitions; but note that our notation $H^*(\Ku(X),\Z)$ is different.}.
It is endowed with a bilinear symmetric non-degenerate unimodular even form $(\blank,\blank)=-\chi_{\mathrm{top}}(\blank,\blank)$\footnote{Our pairing is the opposite of the one in \cite{AddingtonThomas:CubicFourfolds}, to be coherent with the usual Mukai structure on K3 surfaces.} called the \emph{Mukai pairing}, and with a weight-$2$ Hodge structure
\[
H^*(\Ku(X),\C):=H^*(\Ku(X),\Z)\otimes \C=\bigoplus_{p+q=2} \widetilde{H}^{p,q}(\Ku(X)).\] As a lattice $H^*(\Ku(X),\Z)$ is abstractly isomorphic to the extended K3 lattice $U^4\oplus E_8(-1)^2$.

\item The \emph{Mukai vector} $\vv\colon K(\Ku(X))\to H^*(\Ku(X),\Z)$ is the morphism induced by the natural morphism $K(X)\to K_\mathrm{top}(X)$.

\item The \emph{algebraic Mukai lattice} of $\Ku(X)$ is defined as
\[
H^*_\mathrm{alg}(\Ku(X),\Z) := H^*(\Ku(X),\Z) \cap \widetilde{H}^{1,1}(\Ku(X)).
\]
It coincides with the image of $\vv$, and it is isomorphic with $\Knum(\Ku(X))$.
Moreover, given $E,F\in\Ku(X)$, we have
\begin{equation}\label{eqn:MukaiPairingEulerChar}
\chi(E,F) = - (\vv(E), \vv(F)).
\end{equation}
\end{enumerate}
\end{PropDef}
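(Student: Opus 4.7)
My plan is to follow the outline of \cite[Section~2]{AddingtonThomas:CubicFourfolds}, but let me describe the key steps. The three parts are essentially bookkeeping once one understands the topological K-theory of $X$, the Euler form on it, and how the semiorthogonal decomposition interacts with both.

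For part (a), I would first recall that $K_{\mathrm{top}}(X)$ is a unimodular lattice with respect to the topological Euler pairing $\chi_{\mathrm{top}}$; this follows from Poincar\'e duality together with the Atiyah--Hirzebruch spectral sequence degeneration (cohomology of a cubic fourfold being torsion-free and concentrated in even degrees). Next, I would compute the Gram matrix of the sublattice $\langle [\cO_X], [\cO_X(H)], [\cO_X(2H)] \rangle$ with respect to $\chi_{\mathrm{top}}$ using Hirzebruch--Riemann--Roch: the resulting $3 \times 3$ matrix has determinant $\pm 1$ (one checks it equals the Gram matrix of the $A_2$-lattice twisted appropriately, since $(\cO_X, \cO_X(H), \cO_X(2H))$ is an exceptional collection). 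Primitivity plus unimodular Gram matrix implies that the orthogonal complement $H^*(\Ku(X),\Z)$ is itself a primitive, unimodular sublattice of $K_{\mathrm{top}}(X)$ with an even symmetric form (evenness of $-\chi_{\mathrm{top}}$ restricted to this orthogonal uses that $\Ku(X)$ has $[2]$ as Serre functor, so the Euler pairing is symmetric on classes in $H^*(\Ku(X),\Z)$; the parity follows from an explicit check on a basis). The weight-$2$ Hodge structure is transported from the Hodge decomposition of $H^*(X,\C)$ via the modified Chern character $v := \ch \cdot \sqrt{\td_X}$, which is compatible with $\chi_{\mathrm{top}}$ and sends $K_{\mathrm{top}}(X) \otimes \C$ isomorphically onto $H^{\mathrm{even}}(X,\C)$. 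Finally, the abstract isomorphism $H^*(\Ku(X),\Z) \cong U^4 \oplus E_8(-1)^2$ follows from the signature computation (using that $\Ku(X)$ is a K3 category, so the real Hodge structure has signature $(4,20)$ when one includes $(2,0)+(0,2)+$ algebraic $(1,1)$ contributions) combined with Milnor's classification of even indefinite unimodular lattices.

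For part (b), the point is that the Mukai vector $\vv$ defined via the composition $K(\Ku(X)) \hookrightarrow K(X) \to K_{\mathrm{top}}(X)$ indeed lands in $H^*(\Ku(X),\Z)$. This is immediate: for any $E \in \Ku(X)$ and $i \in \{0,1,2\}$, we have $\RHom(\cO_X(iH), E) = 0$ by definition of $\Ku(X)$, so $\chi(\cO_X(iH), E) = 0$, and by the comparison between algebraic and topological Euler characteristics (i.e., Hirzebruch--Riemann--Roch), this vanishing lifts to $\chi_{\mathrm{top}}([\cO_X(iH)], \vv(E)) = 0$.

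For part (c), the inclusion $\Imm(\vv) \subset H^*_{\mathrm{alg}}(\Ku(X),\Z)$ follows because Chern characters of algebraic classes are of Hodge type $(p,p)$, so $v(E) \in \widetilde{H}^{1,1}(\Ku(X))$ once one accounts for the shift in the weight convention induced by $\sqrt{\td_X}$. The reverse inclusion---that every algebraic class arises from an actual object---would normally need the integral Hodge conjecture, but here one only needs the image in the numerical Grothendieck group, which reduces to an algebraic statement. The pairing identity $\chi(E,F) = -(\vv(E), \vv(F))$ is again Hirzebruch--Riemann--Roch applied to $X$ combined with the sign convention $(\blank,\blank) = -\chi_{\mathrm{top}}(\blank,\blank)$. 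Finally, the identification with $\Knum(\Ku(X))$ follows from this pairing identity: since $\chi_{\mathrm{top}}$ restricted to $H^*_{\mathrm{alg}}(\Ku(X),\Z)$ is non-degenerate (because it is unimodular on the full $H^*(\Ku(X),\Z)$ and $H^*_{\mathrm{alg}}(\Ku(X),\Z)$ contains the image of $\vv$), the kernel of $\vv$ is exactly the kernel of $\chi$ on $K(\Ku(X))$, which by definition is $\Knum(\Ku(X))$.

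The main technical obstacle is the unimodularity and evenness of the pairing on $H^*(\Ku(X),\Z)$; everything else is formal. In principle this requires either a direct matrix computation or an appeal to the general machinery of Hochschild (co)homology for admissible subcategories (cf.~\cite{Kuz:SemiOrthogonal}), where it is explained that $H^*(\Ku(X),\Z)$ inherits the structure of a Mukai lattice from the fact that $\Ku(X)$ is a ``noncommutative K3 surface''. We would invoke this machinery rather than redo the computation, referring to \cite[Section~2]{AddingtonThomas:CubicFourfolds} for the careful verification.
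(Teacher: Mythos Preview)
The paper does not give its own proof of this Proposition/Definition; it simply states ``The proofs and some context can be found in \cite[Section~2]{AddingtonThomas:CubicFourfolds}'' before the statement. Your proposal does the same---it defers to Addington--Thomas while sketching the argument---so the approaches coincide.

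That said, there is one genuine confusion in your sketch of part (c). You write that the reverse inclusion $H^*_{\mathrm{alg}}(\Ku(X),\Z) \subset \Imm(\vv)$ ``would normally need the integral Hodge conjecture, but here one only needs the image in the numerical Grothendieck group, which reduces to an algebraic statement.'' This is not right: the claim that every class in $H^*(\Ku(X),\Z) \cap \widetilde{H}^{1,1}(\Ku(X))$ arises as $\vv(E)$ for some $E \in \Ku(X)$ genuinely requires the integral Hodge conjecture for cubic fourfolds, which is a theorem of Voisin. Addington--Thomas invoke it explicitly in their proof of \cite[Proposition~2.4]{AddingtonThomas:CubicFourfolds}. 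There is no purely algebraic shortcut here, because $H^*_{\mathrm{alg}}$ is \emph{defined} Hodge-theoretically, and matching it with the a priori smaller image of $\vv$ is exactly the content of Voisin's result. Your final identification with $\Knum(\Ku(X))$ then follows once surjectivity is known, since the pairing on the image is non-degenerate.
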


The signature of the Mukai pairing on $H^*_\mathrm{alg}(\Ku(X),\Z)$ is $(2,\rho)$, where $0\leq \rho \leq 20$. 

When $\Ku(X)\cong\Db(S)$, where $S$ is a K3 surface, then this coincides with the usual Mukai structure on $S$.
For a very general cubic fourfold ($\rho=0$) its Kuznetsov component is never equivalent to the derived category of a K3 surface.

The algebraic Mukai lattice always contains two special classes:
\[
\llambda_1 := \vv(\mathop{\mathrm{pr}}\cO_L(H))\quad \text{ and } \quad
\llambda_2 := \vv(\mathop{\mathrm{pr}}\cO_L(2H)),
\]
where $L\subset X$ denotes a line and 
$\mathop{\mathrm{pr}}:\Db(X)\to \Ku(X)$ is the natural projection functor.
They generate the primitive positive definite sublattice
\[
A_2 =\begin{pmatrix} 2 & -1 \\ -1 & 2 \end{pmatrix} \subset H^*_\mathrm{alg}(\Ku(X),\Z)
\]
which agrees with the image of the restriction $K_{\mathrm{top}}(\P^5) \to K_{\mathrm{top}}(X)$ followed by projection.
We think of this primitive sublattice as the choice of a polarization on $\Ku(X)$.
We  make this precise in Proposition~\ref{prop:A2}:  the central charge of the stability condition constructed in Theorem~\ref{thm:main2}  exactly corresponds to this sublattice 
(see also \cite[Section~1.2]{Kontsevich-Soibelman:stability}).
A first instance of this is given by the following result.

Let $F(X)$ denote the Fano variety of lines contained in $X$ endowed with the natural polarization $g$ coming from the Pl\"ucker embedding $F(X)\hookrightarrow \mathrm{Gr}(2,6)$.

\begin{Prop}[{\cite[Proposition~2.3]{AddingtonThomas:CubicFourfolds}}]\label{prop:OrthogonalA2Lattice}
There exist Hodge isometries
\[
\langle \llambda_1,\llambda_2 \rangle^{\perp} \cong H^4_\mathrm{prim}(X,\Z)(-1) \cong H^2_\mathrm{prim}(F(X),\Z)
\]
where $\langle \llambda_1,\llambda_2 \rangle^{\perp} \subset H^*(\Ku(X),\Z)$.
\end{Prop}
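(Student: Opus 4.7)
The strategy is to transport everything to singular cohomology via the Mukai vector map and then reduce to two well-understood classical statements: a direct computation of orthogonal complements in the Mukai lattice of $X$, and the Beauville--Donagi Hodge isometry relating $X$ and $F(X)$. Since the Mukai vector is a Hodge isometry onto its image (up to the standard sign), both statements can be verified at the level of rational cohomology and then promoted to the integral level.

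First I would set up the Mukai vector $\vv \colon K_{\mathrm{top}}(X) \to H^*(X,\Q)$ given by $\vv(E) = \ch(E)\sqrt{\td(X)}$. By Grothendieck--Riemann--Roch it intertwines the topological Euler pairing with the Mukai form $(\alpha,\beta) = -\int_X \alpha^\vee \cdot \beta$, and it is compatible with the Hodge structures. From the definition, $H^*(\Ku(X),\Z)$ is identified with the orthogonal complement, inside $K_{\mathrm{top}}(X)$, of the classes $[\cO_X], [\cO_X(H)], [\cO_X(2H)]$.

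Next I would compute the Mukai vectors $\vv(\llambda_1)$, $\vv(\llambda_2)$ explicitly: write $\cO_L(iH)$ for $i=1,2$, project orthogonally to $\langle \cO_X, \cO_X(H), \cO_X(2H)\rangle$ using the standard formulas for the projection functor $\mathop{\mathrm{pr}}$, and read off the resulting Chern characters. These classes live in the span of powers of the hyperplane class $H$ together with classes of lines, i.e., inside the algebraic part of $H^*(X,\Q)$ coming from $K_{\mathrm{top}}(\P^5)|_X$; a short computation verifies that they pair to give the Gram matrix of $A_2$. The orthogonal complement inside the Mukai lattice of $X$ of the five classes $\vv(\cO_X(iH))$ ($i=0,1,2$) and $\vv(\llambda_1), \vv(\llambda_2)$ then coincides with the primitive part $H^4_{\mathrm{prim}}(X,\Q)$, which, because all of the contributions in other degrees are purely algebraic and have already been cut out, can be replaced by the integral lattice $H^*_{\mathrm{prim}}(X,\Z)$. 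The factor $(-1)$ arises from the Mukai-pairing sign convention (Poincaré duality on $H^4$ contributes a minus sign relative to $-\int \alpha^\vee\beta$).

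For the second isometry I would invoke the Beauville--Donagi correspondence: the incidence variety $\mathcal{I} = \{(\ell,x)\in F(X)\times X : x\in \ell\}$ induces, via pullback--pushforward, a morphism $H^4(X,\Z)\to H^2(F(X),\Z)$ which restricts to a Hodge isometry $H^4_{\mathrm{prim}}(X,\Z)(1)\cong H^2_{\mathrm{prim}}(F(X),\Z)$. Combined with the first step and the fact that the primitive cohomology of the hyperkähler $F(X)$ is concentrated in degree two (on the primitive part), this gives the second Hodge isometry. The main obstacle is the integral bookkeeping in the first step: one must check that the explicit orthogonal projection $\mathop{\mathrm{pr}}\cO_L(iH)$ has Mukai vector lying in the correct \emph{integral} refinement of $H^*_{\mathrm{prim}}(X,\Z)^\perp$, and that primitivity of the sublattice $\langle \llambda_1,\llambda_2\rangle\subset H^*_{\mathrm{alg}}(\Ku(X),\Z)$ is preserved; this is done by comparing discriminants, since $H^*(\Ku(X),\Z) \cong U^4\oplus E_8(-1)^2$ is unimodular and the discriminant of $A_2$ matches that of $H^*_{\mathrm{prim}}(X,\Z)$.
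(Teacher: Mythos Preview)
The paper does not give its own proof of this proposition; it is stated with a citation to \cite[Proposition~2.3]{AddingtonThomas:CubicFourfolds}, and the surrounding text explicitly says ``The proofs and some context can be found in \cite[Section~2]{AddingtonThomas:CubicFourfolds}.'' So there is nothing in the paper to compare your argument against beyond the reference.

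Your sketch is essentially the argument of Addington--Thomas: identify $H^*(\Ku(X),\Z)$ inside $K_{\mathrm{top}}(X)$ via the Mukai vector, observe that $\langle \vv(\cO_X(i)) : i=0,1,2\rangle$ together with $\llambda_1,\llambda_2$ span the image of $K_{\mathrm{top}}(\P^5)\to K_{\mathrm{top}}(X)$, so the orthogonal complement is exactly $H^4_{\mathrm{prim}}(X,\Z)$ with the sign-flipped pairing, and then invoke Beauville--Donagi for the isometry to $H^2_{\mathrm{prim}}(F(X),\Z)$. The discriminant/unimodularity check you mention is the right way to promote the rational statement to integral lattices. One small caution: your remark that ``the primitive cohomology of the hyperk\"ahler $F(X)$ is concentrated in degree two'' is not literally true (there is primitive $H^4$ as well); the statement in the paper should be read as $H^2_{\mathrm{prim}}(F(X),\Z)$, which is what Beauville--Donagi actually gives and what matches the rank of $H^4_{\mathrm{prim}}(X,\Z)$.
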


\subsection*{Bridgeland stability conditions on the Kuznetsov component}

Let $X$ be a cubic fourfold.
We now review the basic theory of Bridgeland stability conditions for K3 categories \cite{Bridgeland:K3} applied to $\Ku(X)$.

Let $\Lambda$ be a lattice together with a surjective map $v\colon K(\Ku(X))\onto \Lambda$.
Assume that $v$ factors via the surjections $K(\Ku(X)) \stackrel{\vv}{\longtwoheadrightarrow} H^*_\mathrm{alg}(\Ku(X),\Z) \stackrel{u}\longtwoheadrightarrow \Lambda$.
Let $\sigma=(\AA,Z)$ be a stability condition with respect to such $\Lambda$ and let $\eta(\sigma)\in H_{\mathrm{alg}}^*(\Ku(X),\C)$ be such that
\[
(Z\circ u) (\blank)=(\eta(\sigma),\blank).
\]
As in \cite{Bridgeland:K3}, we define $\PP\subset H_{\mathrm{alg}}^*(\Ku(X),\C)$ 
as the open subset consisting of those vectors whose real and imaginary parts span positive-definite two-planes in $H_{\mathrm{alg}}^*(\Ku(X),\R)$,
and $\PP_0$ as
\[
\PP_0 := \PP \setminus \bigcup_{\delta\in\Delta} \delta^\perp,
\]
where $\Delta:=\set{\delta\in  H_{\mathrm{alg}}^*(\Ku(X),\Z)\,:\, \chi(\delta,\delta)=2}$.

\begin{Lem}\label{lem:FullSupportStabilityKuznetsovCubicFourfold}
Let $\sigma=(\AA,Z)$ be a stability condition with respect to such $\Lambda$.
If $\eta(\sigma)\in\PP_0$, then $\sigma':=(\AA,Z\circ u)$ is a stability condition with respect to $H^*_\mathrm{alg}(\Ku(X),\Z)$.
\end{Lem}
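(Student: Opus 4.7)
The plan is to exploit that $\sigma'$ and $\sigma$ have the same heart $\AA$ and that their central charges agree on $K(\Ku(X))$: by construction both factor as $K(\Ku(X)) \xrightarrow{\vv} H^*_{\mathrm{alg}}(\Ku(X),\Z) \xrightarrow{u} \Lambda \xrightarrow{Z} \C$, so semistable objects, slopes, and Harder--Narasimhan filtrations for $\sigma'$ coincide with those of $\sigma$. The only nontrivial point is therefore the support property with respect to the larger lattice $H^*_{\mathrm{alg}}(\Ku(X),\Z)$, for which we must exhibit a new quadratic form $Q$ on $H^*_{\mathrm{alg}}(\Ku(X),\R)$.

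The first key observation is that the Mukai pairing $(-,-)$ itself is a natural candidate. Writing $\eta(\sigma) = \eta_1 + \mathfrak{i}\eta_2$, the hypothesis $\eta(\sigma) \in \PP$ says that $\eta_1, \eta_2$ span a positive-definite two-plane $P \subset H^*_{\mathrm{alg}}(\Ku(X),\R)$; using $(Z\circ u)(v) = (\eta(\sigma),v)$, one checks $\ker(Z\circ u)\otimes \R = P^\perp$, and since the Mukai pairing has signature $(2,\rho)$, the restriction $(-,-)|_{P^\perp}$ is negative definite. Second, using that $\Ku(X)$ is a K3 category, I will argue that every $\sigma$-semistable object $E$ satisfies $(\vv(E),\vv(E))\ge -2$: for $\sigma$-stable $E$ this is Serre duality (since $S=[2]$ gives $\Ext^2(E,E)\cong\Hom(E,E)^\vee$, so $\chi(E,E) = 2 - \dim\Ext^1(E,E) \le 2$); for semistable $E$, the inequality passes through the Jordan--H\"older filtration by a standard $\ext$-counting/moduli-dimension argument.

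To handle the remaining case $(\vv(E),\vv(E))=-2$, I use the extra assumption $\eta(\sigma)\in\PP_0$. The point is to obtain a uniform positive lower bound $|(Z\circ u)(\delta)|^2\ge 2\lambda$ over all $\delta\in\Delta$. Decomposing $\delta = \pi_P(\delta) + \pi_{P^\perp}(\delta)$, the identity $2 = (\delta,\delta) = \|\pi_P(\delta)\|^2 + (\pi_{P^\perp}(\delta),\pi_{P^\perp}(\delta))$ and negative-definiteness of $P^\perp$ force $\|\pi_P(\delta)\|^2\ge 2$. Since $\eta(\sigma)\in P\otimes\C$ and its real/imaginary parts span $P$, the real quadratic form $v\mapsto |(\eta(\sigma),v)|^2$ on $P$ is positive definite; letting $\lambda>0$ be its smallest eigenvalue (in the positive-definite inner product on $P$) gives $|(\eta(\sigma),\delta)|^2 = |(\eta(\sigma),\pi_P(\delta))|^2 \ge \lambda\|\pi_P(\delta)\|^2 \ge 2\lambda$, as desired.

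With these ingredients in hand, I will define
\[
Q(v) := (v,v) + \lambda^{-1}\, |(Z\circ u)(v)|^2
\]
on $H^*_{\mathrm{alg}}(\Ku(X),\R)$. On $\ker(Z\circ u)\otimes\R = P^\perp$ this reduces to the Mukai pairing, which is negative definite. For any $\sigma'$-semistable object $E$, the inequality $(\vv(E),\vv(E))\ge -2$ gives two cases: if $(\vv(E),\vv(E))\ge 0$ then $Q(\vv(E))\ge 0$ trivially; if $(\vv(E),\vv(E)) = -2$ then $\vv(E)\in\Delta$ and the uniform bound yields $Q(\vv(E)) \ge -2 + \lambda^{-1}\cdot 2\lambda = 0$. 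This verifies the support property and completes the proof. The main obstacle I anticipate is the second step above, namely ensuring the Bogomolov-type inequality $(\vv(E),\vv(E))\ge -2$ for all $\sigma$-semistables (and not only stables) in $\Ku(X)$; the rest is essentially linear algebra once $\eta(\sigma)\in\PP_0$ is unpacked.
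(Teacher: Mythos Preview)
Your overall strategy is the right one and is an explicit unpacking of what the paper does by citing \cite[Lemma~8.1]{Bridgeland:K3}: identify $\ker(Z\circ u)\otimes\R$ with $P^\perp$, use the signature $(2,\rho)$ of the Mukai pairing, and exploit $\vv(E)^2\ge -2$ for stable objects. However, two steps are broken as written.

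First, there is a sign error that invalidates your lower-bound argument. In the paper's conventions $(\blank,\blank)=-\chi(\blank,\blank)$ and $\Delta=\{\delta:\chi(\delta,\delta)=2\}$, so $\delta\in\Delta$ satisfies $(\delta,\delta)=-2$, not $+2$. With the correct sign your decomposition gives only $\|\pi_P(\delta)\|^2=-2-(\pi_{P^\perp}(\delta),\pi_{P^\perp}(\delta))\ge 0$, which yields no uniform lower bound on $\|\pi_P(\delta)\|$ and hence none on $|(\eta,\delta)|$. The uniform bound does hold, but for a different reason: if $|(\eta,\delta_n)|\to 0$ along $\delta_n\in\Delta$, then $(\pi_P(\delta_n),\pi_P(\delta_n))\to 0$, forcing $(\pi_{P^\perp}(\delta_n),\pi_{P^\perp}(\delta_n))\to -2$; thus the $\delta_n$ lie in a bounded region of the lattice and are finite in number, contradicting $(\eta,\delta_n)\neq 0$. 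This discreteness argument (or the equivalent norm estimate in \cite{Bridgeland:K3}, which is how the paper proceeds via Proposition~\ref{prop:supportproperty2}) is what replaces your incorrect inequality.

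Second, your claimed extension of $\vv(E)^2\ge -2$ from stable to semistable objects is false: for $E=E_0^{\oplus n}$ with $E_0$ spherical one gets $\vv(E)^2=-2n^2$. Fortunately you do not need it. Once $Q(\vv(E))\ge 0$ holds for all \emph{stable} $E$, the extension to semistable objects is automatic by \cite[Lemma~A.6]{BMS}: the Jordan--H\"older factors have central charges on a common ray, so their classes lie in a real hyperplane containing $\ker(Z\circ u)$; there $Q$ has at most one positive direction, and one half of its nonnegative cone is convex. Equivalently, in the norm formulation of Proposition~\ref{prop:supportproperty2} one simply uses $|Z(E)|=\sum_i|Z(E_i)|\ge C\sum_i\|\vv(E_i)\|\ge C\|\vv(E)\|$.
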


\begin{proof}
By Proposition~\ref{prop:supportproperty2}, this follows immediately from \cite[Lemma~8.1]{Bridgeland:K3}: 
the generalization to the case of the Kuznetsov component is straight-forward, since the Mukai pairing has the correct signature $(2,\rho)$ and, by Serre duality and \eqref{eqn:MukaiPairingEulerChar}, for any $\sigma$-stable object $E$ in $\Ku(X)$, we have $\vv(E)^2\geq-2$.
\end{proof}

Motivated by the above lemma, we can then make the following definition.

\begin{Def}
A \emph{full numerical stability condition} on $\Ku(X)$ is a Bridgeland stability condition on $\Ku(X)$ whose lattice $\Lambda$ is given by the Mukai lattice $H^*_\mathrm{alg}(\Ku(X),\Z)$ and the map $v$ is given by the Mukai vector $\vv$.
\end{Def}

We denote by $\Stab(\Ku(X))$ the space of full numerical stability conditions on $\Ku(X)$.
The map $\eta\colon\Stab(\Ku(X))\to H_{\mathrm{alg}}^*(\Ku(X),\C)$ defined above is then a local homeomorphism, by Bridgeland's Deformation Theorem \cite[Theorem~2.1]{Bridgeland:Stab}.
If $\eta(\sigma)\in\PP_0$, then we have a more precise result.

\begin{Prop}\label{prop:Bridgeland}
The map $\eta\colon \eta^{-1}(\PP_0)\subset\Stab(\Ku(X)) \to \PP_0$ is a covering map.
\end{Prop}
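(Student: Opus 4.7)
The plan is to follow closely Bridgeland's proof of the analogous statement for K3 surfaces, namely \cite[Theorem~1.1]{Bridgeland:K3}. All three key inputs used there transfer to the Kuznetsov component setting: the Mukai pairing on $H^*_\mathrm{alg}(\Ku(X),\Z)$ has signature $(2,\rho)$, so $\PP$ decomposes into two connected components distinguished by orientation; every $\sigma$-stable object $E\in\Ku(X)$ satisfies $\vv(E)^2\geq -2$ by Serre duality together with \eqref{eqn:MukaiPairingEulerChar}, as already used in the proof of Lemma~\ref{lem:FullSupportStabilityKuznetsovCubicFourfold}; and the map $\eta$ is a local homeomorphism on all of $\Stab(\Ku(X))$ by Bridgeland's deformation theorem \cite[Theorem~1.2]{Bridgeland:Stab}, which applies since the support property is part of our definition.

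The strategy is to verify the continuous path-lifting property on $\PP_0$: given a continuous path $\gamma\colon [0,1]\to\PP_0$ and any $\sigma_0\in\eta^{-1}(\gamma(0))$, I would construct a continuous lift $\widetilde{\gamma}\colon [0,1]\to\eta^{-1}(\PP_0)$ with $\widetilde{\gamma}(0)=\sigma_0$. Combined with the local homeomorphism property, this implies by standard topology that $\eta$ is a covering. Local existence of the lift on some $[0,t_\ast)$ is immediate from $\eta$ being a local homeomorphism; one then sets $T$ to be the supremum of $t$ for which the lift exists on $[0,t]$ and aims to show $T=1$ with the lift extending continuously up to $T$.

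For a sequence $t_n\uparrow T$, I would show that $\widetilde{\gamma}(t_n)$ is Cauchy in the generalized metric on $\Stab(\Ku(X))$ of \cite[Section~6]{Bridgeland:Stab}. The central charges $Z_{t_n}$ converge to $Z_{\gamma(T)}$ by continuity of $\gamma$; combined with the support property in the formulation of Proposition~\ref{prop:supportproperty2}, one obtains a uniform bound $\|\vv(E)\|\leq C\cdot|Z_{t_n}(E)|$ on all $\widetilde{\gamma}(t_n)$-semistable objects $E$. This gives uniform boundedness of masses and, via local finiteness of walls as in \cite[Proposition~9.3]{Bridgeland:Stab}, reduces the problem to a finite list of numerical classes supporting semistable objects along the path. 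The decisive point, and exactly where the assumption $\gamma(T)\in\PP_0$ enters, is that $Z_{\gamma(T)}$ is not orthogonal to any $\delta\in\Delta$ (i.e., with $\delta^2=-2$); this prevents spherical-like classes from causing wall accumulation in the limit, and guarantees that the Cauchy sequence converges to a genuine stability condition with central charge $Z_{\gamma(T)}$.

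The main obstacle will be carrying out Bridgeland's delicate limit argument in full detail in the Kuznetsov component setting; since every input used in \cite{Bridgeland:K3} holds verbatim here (signature of the Mukai lattice, the $\vv^2\geq -2$ bound on stable objects, local finiteness of walls, and the explicit form of the deformation theorem), the argument should transfer without any essential modification specific to $\Ku(X)$.
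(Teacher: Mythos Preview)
Your proposal is correct and takes essentially the same approach as the paper: the paper's entire proof is the one-line assertion that the argument of \cite[Proposition~8.3]{Bridgeland:K3} goes through verbatim (with the alternative reference \cite[Corollary~1.3]{Arend:short}), and you have spelled out why the ingredients of that argument --- signature $(2,\rho)$, the bound $\vv(E)^2\ge -2$, and the local homeomorphism from the deformation theorem --- carry over to $\Ku(X)$. The only minor remark is that the reference \cite{Arend:short} gives a more streamlined route than the original path-lifting/Cauchy-sequence argument you sketch: once the support property is formulated via a quadratic form $Q$ negative definite on $\Ker Z$, the covering property follows directly from an effective version of the deformation theorem, without needing to rerun the limit analysis by hand.
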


\begin{proof}
This can be proved exactly as in \cite[Proposition~8.3]{Bridgeland:K3} 
(see also \cite[Corollary~1.3]{Arend:short}).
\end{proof}

\subsection*{The support property}
Let $X$ be a cubic fourfold over $\C$ and let us fix a line $L_0\subset X$ not contained in a plane in $X$.
We can now show that the stability condition in Theorem~\ref{thm:main2} is a full numerical stability condition on $\Ku(X)$.

Consider the fully faithful functor $\Xi\colon\Ku(X) \to \Db(\P^3,\cB_0)$ in \eqref{eqn:Xi}, and the composition $\Forg\circ\Xi$.
The composition of the induced morphism $(\Forg\circ \Xi)_*$ at level of numerical Grothendieck groups and the truncated Chern character $\ch_{\cB_0,\leq 2}$ gives a surjective morphism
$H^*_\mathrm{alg}(\Ku(X),\Z) \longtwoheadrightarrow \Lambda_{\cB_0,\Ku(X)}^2$,
where $\Lambda_{\cB_0}^2$ is the lattice generated by $\ch_{\cB_0,0}, \ch_{\cB_0,1}, \ch_{\cB_0,2}$ (see Proposition~\ref{prop:tiltstabilityBmodules}) and $\Lambda_{\cB_0,\Ku(X)}^2\subset \Lambda_{\cB_0}^2$ is nothing but the image of $K(\Ku(X))$ (see \eqref{eqn:LambdaD}).

Consider the stability condition $\sigma_{\Ku(X)}=(\AA,Z)$ defined in the proof of Theorem~\ref{thm:main2}.
By the above discussion, we can then define $\eta(\sigma_{\Ku(X)})\in H^*_\mathrm{alg}(\Ku(X),\Z)$.
To prove that $\sigma_{\Ku(X)}$ is a full numerical stability condition on $\Ku(X)$ we only need to check the condition of Lemma~\ref{lem:FullSupportStabilityKuznetsovCubicFourfold}, namely the following
proposition:

\begin{Prop} \label{prop:A2}
We have $\eta(\sigma_{\Ku(X)})\in (A_2)_\C \cap\PP\subset\PP_0$. 
\end{Prop}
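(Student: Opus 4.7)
The proposition splits into three assertions: (i) $\eta(\sigma_{\Ku(X)}) \in (A_2)_\C$, (ii) $\eta(\sigma_{\Ku(X)}) \in \PP$, and (iii) $(A_2)_\C \cap \PP \subset \PP_0$. For (i), my plan is to show that the functional $v \mapsto (\eta, v)$ on $H^*_{\mathrm{alg}}(\Ku(X),\Z)$ vanishes on $A_2^\perp$. Since $Z$ factors as $Z_{\alpha,-1}^0 \circ \ch_{\cB_0,\leq 2} \circ \Xi$, it suffices to check that each $\ch_{\cB_0,i}(\Xi(E))$ for $i = 0, 1, 2$ depends on $\vv(E)$ only through the two pairings $(\vv(E), \llambda_j) = -\chi(E, \mathop{\mathrm{pr}}\cO_L(jH))$ for $j = 1, 2$. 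I would exploit the adjunction $\chi_{\cB_0}(\Xi(E), \cB_i) = \chi_{\wX}(\sigma^*E, \Phi(\cB_i))$ for $i = 1, 2, 3$, combined with a Grothendieck-Riemann-Roch identification of $\sigma_*\Phi(\cB_i)$, modulo the semiorthogonal complement $\langle \cO_X, \cO_X(H), \cO_X(2H) \rangle$, as a $\Z$-linear combination of the sheaves $\cO_L(jH)$; this would re-express the three Chern characters $\ch_{\cB_0,\leq 2}(\Xi(E))$ as $\Z$-linear combinations of $(\vv(E), \llambda_1)$ and $(\vv(E), \llambda_2)$, yielding the desired factorization through $A_2$.

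For (ii), I would compute $Z(\llambda_1)$ and $Z(\llambda_2)$ directly using the formula for $Z_{\alpha,-1}^0$ and the Chern characters of $\mathop{\mathrm{pr}}\cO_L(jH)$, and check that these two complex numbers are $\R$-linearly independent. Since $(A_2)_\R$ has real dimension two, this independence forces $\Real\eta$ and $\Imm\eta$ to span $(A_2)_\R$, and the positive-definiteness of the Mukai pairing on $A_2$ then ensures that the plane they span is positive-definite, so $\eta \in \PP$. For (iii), fix any $\eta' \in (A_2)_\C \cap \PP$ and $\delta \in \Delta$, and decompose $\delta = \delta_1 + \delta_2$ with $\delta_1 \in (A_2)_\Q$ and $\delta_2 \in (A_2^\perp)_\Q$; since $\eta' \in (A_2)_\C$, one has $(\eta', \delta) = (\eta', \delta_1)$. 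If $\delta_1 = 0$ then $\delta \in A_2^\perp \cap H^*_{\mathrm{alg}}(\Ku(X),\Z)$ with $(\delta, \delta) = -2$, and by Proposition~\ref{prop:OrthogonalA2Lattice} this produces an integral Hodge-type $(2,2)$ class $T \in H^4_{\mathrm{prim}}(X, \Z)$ with $T^2 = 2$; then $\langle h^2, T \rangle$ is a primitively embedded rank-two algebraic sublattice of $H^4(X, \Z)$ of discriminant $6$, forcing $X$ to lie in a Hassett divisor of discriminant $6$, which is empty since Hassett's admissibility condition requires $d > 6$. Hence $\delta_1 \neq 0$, and since $\Real\eta'$ and $\Imm\eta'$ span the positive-definite plane $(A_2)_\R$, we conclude $(\eta', \delta_1) \neq 0$, whence $\eta' \notin \delta^\perp$.

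The main obstacle is step (i): identifying the projection of $\Phi(\cB_i)$ into $\Ku(X) \subset \Db(X)$ in terms of sheaves on the line $L$ requires careful Grothendieck-Riemann-Roch bookkeeping using the explicit form of $\Phi$ and $\Psi$ together with the geometry of the blow-up $\sigma\colon \wX \to X$ and the conic fibration $\pi\colon \wX \to \P^3$. Steps (ii) and (iii) are comparatively routine given (i): (ii) reduces to a finite numerical check, and (iii) to the lattice-theoretic observation that the emptiness of the would-be Hassett divisor of discriminant $6$ precludes any $(-2)$-class in $A_2^\perp \cap H^*_{\mathrm{alg}}(\Ku(X), \Z)$.
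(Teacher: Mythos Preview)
Your outline for (ii) and (iii) is essentially the paper's argument: the explicit values $Z_\alpha(\llambda_j)$ are computed and seen to be $\R$-linearly independent, and for (iii) the paper cites Voisin's result that $H^4_{\mathrm{prim}}(X,\Z)$ has no integral Hodge class of square $2$, which is exactly your discriminant-$6$ Hassett-divisor argument.

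For (i) the paper takes a quite different and much shorter route. Instead of analysing $\sigma_*\Phi(\cB_i)$, it shows directly that $Z_\alpha(F)=0$ whenever $\vv(F)\in A_2^\perp=H^4_{\mathrm{prim}}(X,\Z)(-1)$, via restriction to a hyperplane. Since $Z_\alpha$ depends only on $\ch_i(\Forg\Xi(F))$ for $i\le2$, it is determined by $j_*j^*\Forg\Xi(F)$ for any hyperplane $j\colon\P^2\hookrightarrow\P^3$; by base change and the projection formula this equals $\Forg\circ\Xi(j_{X*}j_X^*F)$ where $j_X\colon X_H\hookrightarrow X$ is the corresponding hyperplane section through $L_0$; and $\vv(F)\in H^4_{\mathrm{prim}}$ forces $[F]-[F(-H)]=0$ in $K_{\mathrm{top}}(X)$, hence $\ch(j_{X*}j_X^*F)=0$. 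No explicit knowledge of $\Phi(\cB_i)$ is needed.

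Your approach can be made to work, but has two issues to repair. First, a bookkeeping slip: for $E\in\Ku(X)$ one has $\chi(\cO_X(j),E)=0$ for $j=0,1,2$, hence by Serre duality $\chi(E,\cO_X(k))=0$ for $k=-3,-2,-1$, not $k=0,1,2$; so in analysing $\chi_X(E,\sigma_*\Phi(\cB_i))$ you must mod out by $\langle\cO_X(-3),\cO_X(-2),\cO_X(-1)\rangle$ rather than $\langle\cO_X,\cO_X(H),\cO_X(2H)\rangle$. Second, and more seriously, the three pairings $\chi_{\cB_0}(\Xi(E),\cB_i)$ need not determine $\ch_{\cB_0,\le2}(\Xi(E))$: the $\cB_0$-linear Euler form does \emph{not} factor through the Chern character of the underlying $\cO_{\P^3}$-complex (indeed $K_{\mathrm{num}}(\Db(\P^3,\cB_0))\cong K_{\mathrm{num}}(\Ku(X))\oplus\Z^3$ has rank $5+\rho>4$). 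What does recover $\ch_{\le2}$ is the collection $\chi_{\cB_0}(\cB_{-2j},\Xi(E))=\chi(\P^3,\Forg\Xi(E)\otimes\cO(j))$; via Serre duality in $\Db(\P^3,\cB_0)$ and the same adjunction these again become $\chi_X(E,\sigma_*\Phi(\cB_k))$ for suitable $k$, and then your GRR computation (showing $\ch(\sigma_*\Phi(\cB_k))$ lies in the subring of $H^*(X,\Q)$ generated by $H$, hence has no $H^4_{\mathrm{prim}}$-component) finishes the job. The paper's hyperplane trick packages all of this in one stroke.
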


\begin{proof}
We consider the subspace $V$ in $H^*_\mathrm{alg}(\Ku(X),\R)$ generated by the real and imaginary part of $\eta(\sigma_{\Ku(X)})$.
We claim that $V=A_2$.

To prove the claim, we freely use the notation from Section~\ref{sec:geometric}.
First of all, by definition of $Z_\alpha=Z_{\alpha,-1}^0$, it is straightforward to check that $V$ has real dimension $2$.
Indeed, $\ch_{\cB_0,\leq 2}(\Xi(\mathop{\mathrm{pr}}(\cO_L(H))))=(4,-1,-\frac{15}{8})$ and $\ch_{\cB_0,\leq 2}(\Xi(\mathop{\mathrm{pr}}(\cO_L(2H))))=(-8,8,-\frac{18}{8})$.
Since 
\[
Z_\alpha=\ch_{\cB_0,1}^{-1}+ \mathfrak{i}\left(-\tfrac12\alpha^2\ch_{\cB_0,0}^{-1}+\ch_{\cB_0,2}^{-1}\right),
\]
we have $Z_\alpha(\mathop{\mathrm{pr}}(\cO_L(H)))=3+\mathfrak{i}(-2\alpha^2-\frac{7}{8})$ and $Z_\alpha(\mathop{\mathrm{pr}}(\cO_L(2H)))=\mathfrak{i}(4\alpha^2+\frac{7}{4})$, and they are linearly independent.

Hence, to prove the claim $V = A_2$ it remains to show that $\eta(\sigma_{\Ku(X)})\in (A_2)_\C$; equivalently, we have to show that for $F$ with
$\vv(F) \in A_2^\perp = H^4_{\mathrm{prim}}(X, \Z)(-1)$ (see Proposition~\ref{prop:OrthogonalA2Lattice}), we have 
$Z_{\alpha}(F) = 0$. 

Let $j \colon \P^2 \into \P^3$ be the inclusion of a hyperplane, and let 
$j_X \colon X_H \into X$ be the inclusion of the corresponding hyperplane section of $X$ containing $L_0$. Let $\Xi_H \colon \Db(X_H) \to \Db(\P^2, \BB_0|_{\P^2})$ be the restriction of the functor $\Xi$.

The assumption $\vv(F) \in H^4_{\mathrm{prim}}(X, \Z)$
implies that $\ch\left(j_{X*}^{} j_X^* F\right) = 0$, since the
class $[F] - [F\otimes \cO_X(-H)]$ in the topological $K$-group is zero.
On the other hand, since our formula for the central charge only depends on $\ch_i(\Forg(\Xi(F)))$ for $0 \le i \le 2$, it is determined by the Chern character of
\begin{align*}
j_*^{} j^* \Forg\bigl(\Xi(F)\bigr) = j_*^{} \Forg\ \bigl(\Xi_H(j_X^* F)\bigr)
= \Forg \circ \Xi \bigl(j_{X*}^{} j_X^* F\bigr),
\end{align*}
where we used base change in the first equality, and projection formula in the second. By the observation above, the class of this object in the $K$-group of $\P^3$ vanishes, and thus its central charge
$Z_{\alpha}(\blank)$ is zero. Therefore $\eta(\sigma_{\Ku(X)}) \in (A_2)_\C$ as claimed.

By \cite[Proposition~1, page~596]{Voisin:cubics} the primitive cohomology $H^4_{\mathrm{prim}}(X, Z)(-1)$ cannot contain algebraic classes $\delta$ with square $\delta^2 = -2$; in other words, 
$(A_2)_\C \cap\PP\subset\PP_0$ completing the proof.
\end{proof}

\begin{Rem}
Combining Propositions~\ref{prop:Bridgeland} and \ref{prop:A2}, we obtain an open subset $\Stab^{\dag}(\Ku(X)) \subset \Stab(\Ku(X))$ with a covering to $\PP_0$. However, at this point we cannot prove that it forms a connected component. A proof would follow from the existence of semistable objects 
for every primitive class $\vv \in H^*_{\mathrm{alg}}(\Ku(X), \Z)$ and for
every stability condition $\sigma$ in this open subset.
\end{Rem}

Finally, let us point out that Proposition~\ref{prop:glue} and its proof give:
\begin{Cor}
There are stability conditions $\sigma'$ on $\Db(X)$. They satisfy the support property with respect to the image $\Lambda \subset H^*(X, \Q)$ of the Chern character.
\end{Cor}

\appendix

\section{The Torelli Theorem for cubic fourfolds}\label{app:Torelli}
\begin{center}
\small{by {\sc A.~Bayer, M.~Lahoz, E.~Macr\`i, P.~Stellari, X.~Zhao}}
\end{center}

\vspace{0.2cm}

In the recent inspiring paper \cite{HR16:Torelli_cubic_fourfolds}, Huybrechts and Rennemo gave a new proof of the Torelli Theorem for cubic fourfolds by first proving a categorical version of it.
The aim of this appendix is to present a different proof, based on Theorem~\ref{thm:main2}, of their categorical Torelli Theorem; our proof works for \emph{very general} cubic fourfolds.

\begin{Thm}\label{thm:CategoricalTorelli}
Let $X$ and $Y$ be smooth cubic fourfolds over $\C$.
Assume that $H_{\mathrm{alg}}^*(\Ku(X),\Z)$ has no $(-2)$-classes.
Then $X\cong Y$ if and only if there is an equivalence
$\Phi \colon \Ku(X) \to \Ku(Y)$ whose induced map $H^*_{\mathrm{alg}}(\Ku(X), \Z) \to H^*_{\mathrm{alg}}(\Ku(Y), \Z)$ commutes with the action of $(1)$.
\end{Thm}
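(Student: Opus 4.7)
The plan is to recover the Fano variety of lines $F(X)$ together with its Pl\"ucker polarization $g$ intrinsically from the pair $(\Ku(X), (1))$, and then invoke the classical reconstruction of a cubic fourfold from $(F(X), g)$.

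For a line $L \subset X$, let $P_L := \mathop{\mathrm{pr}}(I_L) \in \Ku(X)$ be the projection of the ideal sheaf of $L$, and set $\vv_L := \vv(P_L)$. A direct Chern-character computation shows that $\vv_L$ lies in the sublattice $A_2 = \langle \llambda_1, \llambda_2\rangle \subset H^*_{\mathrm{alg}}(\Ku(X),\Z)$ and is a primitive class with $\vv_L^2 = 2$. Since the action of $(1)$ on $A_2$ is by an automorphism of finite order, the $\langle (1)\rangle$-orbit of $\vv_L$ is a distinguished finite set of primitive square-$2$ classes in $A_2$, which is therefore preserved by any equivalence $\Phi$ satisfying the hypothesis of the theorem.

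The central technical point is to prove that $P_L$ is $\sigma$-stable for every full numerical stability condition $\sigma \in \Stab^{\dag}(\Ku(X))$. Arguing by contradiction, a destabilizing short exact sequence $0 \to A \to P_L \to B \to 0$ in the heart of $\sigma$ would produce classes $\alpha := \vv(A)$ and $\vv_L - \alpha$, each of which must satisfy the Mukai-type inequality $\ge -2$ inherited from stability in a K3 category. Together with $\vv_L^2 = 2$ and an elementary analysis of the $A_2$-geometry, the assumption that $H^*_{\mathrm{alg}}(\Ku(X),\Z)$ contains no $(-2)$-class forces either $\alpha$ or $\vv_L - \alpha$ to be a $(-2)$-class, a contradiction. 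Consequently, the map $L \mapsto P_L$ defines an injection $F(X) \hookrightarrow M_{\sigma}(\vv_L)$ into the moduli space of $\sigma$-stable objects, which by dimension considerations ($\dim M_{\sigma}(\vv_L) = \vv_L^2 + 2 = 4$) and smoothness is an isomorphism onto a connected component.

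Given the equivalence $\Phi$ from the theorem, transport a stability condition from $\Ku(X)$ to $\Ku(Y)$; after possibly composing $\Phi$ with a power of $(1)$, we may assume $\Phi_*\vv_L = \vv_{L'}$. This yields an isomorphism $f \colon F(X) \xrightarrow{\sim} F(Y)$ via the moduli-space interpretation. It remains to check that $f$ intertwines the Pl\"ucker polarizations. By Proposition~\ref{prop:OrthogonalA2Lattice}, the primitive part of $H^2(F(X),\Z)$ is identified with $A_2^{\perp} \subset H^*(\Ku(X),\Z)$, while the class of $g$ itself can be read off explicitly from the $A_2$-sublattice together with the stability condition (via a Bayer--Macr\`i-style formula). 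Since $\Phi_*$ preserves $A_2$ in a $(1)$-equivariant manner, it carries $g_X$ to $g_Y$. A classical result (see for instance the argument recalled in \cite[Section~2]{HR16:Torelli_cubic_fourfolds}) then reconstructs $X$ from $(F(X), g)$, yielding $X \cong Y$. The hard part will be the stability statement for $P_L$ across the whole stability manifold: the wall-crossing analysis requires carefully exploiting the no-$(-2)$-class hypothesis, and, once that is in place, tracking the Bayer--Macr\`i map together with the $(1)$-action to verify preservation of the polarization.
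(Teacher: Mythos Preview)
Your overall strategy---recover the polarized Fano variety of lines from $(\Ku(X),(1))$ and then invoke the classical reconstruction---is exactly the paper's approach. The difficulty is in the central stability claim, where your sketch has a genuine gap.

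You argue that a destabilizing sequence $0 \to A \to P_L \to B \to 0$ produces classes $\alpha$ and $\vv_L - \alpha$ subject to the Mukai inequality $\ge -2$, and then appeal to ``an elementary analysis of the $A_2$-geometry.'' But there is no reason for $\alpha$ to lie in $A_2$: for a special cubic fourfold the algebraic Mukai lattice is strictly larger than $A_2$, and destabilizing sub- or quotient objects may have Mukai vectors anywhere in $H^*_{\mathrm{alg}}(\Ku(X),\Z)$. Moreover, the bound $\vv^2 \ge -2$ holds only for $\sigma$-\emph{stable} objects, whereas $A$ and $B$ are a priori merely objects of the heart. Finally, even when the argument is set up correctly, the $(-2)$-class one produces is $\vv(A)-\vv(B)$, not $\alpha$ or $\vv_L-\alpha$.

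The paper bypasses these issues via Mukai's lemma on $\Ext^1$-dimensions: for a triangle $A \to E \to B$ with $\Hom(A,B)=0$ one has $\ext^1(A,A)+\ext^1(B,B) \le \ext^1(E,E)$. Under the no-$(-2)$-class hypothesis one first deduces that there are no nonzero objects with $\Ext^1=0$, hence any object with $\ext^1=2$ is automatically $\sigma$-stable for every $\sigma$, and has $\vv^2=0$. Applied to $P_L$ (which has $\Hom=\C$, $\ext^1=4$), any destabilization must have exactly two factors $A,B$ with $\vv(A)^2=\vv(B)^2=0$; then $(\vv(A)+\vv(B))^2=2$ forces $(\vv(A)-\vv(B))^2=-2$, the desired contradiction. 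This argument never mentions $A_2$ and works over the full Mukai lattice.

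Two smaller points. First, identifying $F(X)$ with only a connected component of $M_\sigma(\vv_L)$ is insufficient: you must know the moduli space has no other components (the paper uses a Mukai-style argument for this), or else $\Phi$ could send $F(X)$ elsewhere. Second, to identify the Bayer--Macr\`\i\ class $\ell_\sigma$ with the Pl\"ucker polarization you must choose $\sigma$ with $\eta(\sigma)\in (A_2)_\C\cap\PP$; this is precisely where Theorem~\ref{thm:main2} and Proposition~\ref{prop:A2} enter, and you should make that explicit. Also, to normalize $\Phi_*\llambda_1=\llambda_1$ you may need $[1]$ in addition to powers of $(1)$, since $(1)$ alone permutes only three of the six roots of $A_2$.
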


In the above statement, we denote by $(1)$ the natural autoequivalence of the Kuznetsov component induced by $\blank \otimes\OO_X(1)$ followed by projection; it is called the \emph{degree shift functor}. Note that the algebraic Mukai lattice is isomorphic to the numerical Grothendieck group, hence the action induced by $\Phi$ can be defined without assuming that it is of Fourier-Mukai type.

Theorem~\ref{thm:CategoricalTorelli} is a very general version of \cite[Corollary~2.10]{HR16:Torelli_cubic_fourfolds} in the cubic fourfold case (with the minor improvement that we do not need to assume that the equivalence is given by a Fourier-Mukai functor, and that we only need compatibility with $(1)$ on the level of algebraic Mukai lattice).  
It is still enough to deduce the classical Torelli Theorem, as we briefly sketch in Section~\ref{subsec:appendix:ClassicalTorelli}.
Particular cases of it also appeared as \cite[Proposition~6.3]{BMMS:Cubics} (for generic cubics containing a plane) and \cite[Theorem~1.5, (iii)]{Huy:cubics} (for cubics such that $A_2$ is the entire algebraic Mukai lattice); however, both proofs rely on the classical Torelli Theorem for cubic fourfolds.

The main idea of the proof of Theorem~\ref{thm:CategoricalTorelli} is to use the existence of Bridgeland stability conditions on $\Ku(X)$.
As observed in \cite[Section~5]{KuznetsovMarkushevic}, the Fano variety of lines $F(X)$ of a cubic fourfold $X$ is isomorphic to a moduli space of torsion-free stable sheaves on $X$ which belong to $\Ku(X)$.
Given a line $L_X\subset X$, we denote by $F_{X,L_X}\in\Ku(X)$ the corresponding sheaf.
If $H_{\mathrm{alg}}^*(\Ku(X),\Z)$ has no $(-2)$-classes, we show in Proposition~\ref{prop:FanoStable} that the objects $F_{X,L_X}$ are also Bridgeland stable in $\Ku(X)$ for any stability condition.
Moreover, an argument by Mukai implies that any object with the same numerical class and the same Ext-groups must be one of them (up to shift), see Proposition~\ref{prop:MukaiConnectedness}.
Given an equivalence of triangulated categories $\Phi\colon\Ku(X)\mapto{\iso}\Ku(Y)$, we consider the images $\Phi(F_{X,L_X})$.
If the induced between the algebraic Mukai lattice commutes with the degree shift functor $(1)$, then we can assume that all objects $\Phi(F_{X,L_X})$ and $F_{Y,L_Y}$ have the same numerical class up to composing $\Phi$ with the shift functor $[1]$.
We  use this to obtain an isomorphism between $F(X)$ and $F(Y)$.
Finally, by \cite{BM:projectivity}, moduli spaces of Bridgeland stable objects come equipped with a natural line bundle.
If we choose a stability condition with central charge in the $A_2$-lattice, as the one we construct in Theorem~\ref{thm:main2}, the induced line bundle is exactly the Pl\"ucker polarization on the Fano variety of lines (up to constant).
Hence the isomorphism between $F(X)$ and $F(Y)$ preserves the Pl\"ucker polarization.
This is enough to recover an isomorphism $X\cong Y$, by an elementary argument by Chow \cite{Chow:homsp}, see \cite[Proposition~4]{Charles:Torelli}.
	
This argument should hold without the assumption on $(-2)$-classes, and so prove Theorem~\ref{thm:CategoricalTorelli} without assumptions, as originally stated in \cite{HR16:Torelli_cubic_fourfolds}. 
This would also directly imply the strong version of the classical Torelli theorem, as originally stated in \cite{Voisin:cubics}. 
The two issues are to prove that the objects $F_{X,L_X}$ are Bridgeland stable with respect to the stability conditions we constructed in Theorem~\ref{thm:main2}, and that we can change the equivalence $\Phi$ by autoequivalences of $\Ku(Y)$ until it  preserves such stability conditions.
A proof of this generalized version has been recently given in \cite{LPZ:Cubics}.


\subsection{Classification of stable objects}\label{subsec:appendix:classification}

Let $X$ be a cubic fourfold. We freely use the notation in Section~\ref{sec:4folds}.
We start by recalling the following elementary but very useful result due to Mukai, which  allows us to control (in)stability of objects with small $\Ext^1$. It first appeared in \cite{Mukai:BundlesK3}; see \cite[Lemma~2.5]{BB:K3Pic1} for the version stated here:

\begin{Lem}[Mukai]\label{lem:Mukai}
Let $A \to E \to B$ be an exact triangle in $\Ku(X)$ with $\Hom(A,B)=0$.
Then
\[
\dim_\C \Ext^1(A,A)+\dim_\C \Ext^1(B,B)\leq\dim_\C \Ext^1(E,E).
\]
\end{Lem}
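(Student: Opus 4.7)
The plan is to deduce the inequality by a dimension count on a space of morphisms of distinguished triangles, using the K3 structure of $\Ku(X)$ only to kill a single obstruction in $\Ext^2$.

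First I would introduce the linear subspace
\[ V := \stv{(\eta_A, \tilde\eta, \eta_B) \in \Ext^1(A,A) \oplus \Ext^1(E,E) \oplus \Ext^1(B,B)}{(\eta_A,\tilde\eta,\eta_B)\ \text{is a morphism of triangles}}, \]
where we mean that $(\eta_A, \tilde\eta, \eta_B)$ fits into a morphism from $A \xrightarrow{\iota} E \xrightarrow{\pi} B \xrightarrow{\delta} A[1]$ to its shift $A[1] \to E[1] \to B[1] \to A[2]$; equivalently, $\tilde\eta \circ \iota = \iota[1]\circ \eta_A$ and $\pi[1]\circ \tilde\eta = \eta_B \circ \pi$. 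Since these conditions are linear, $V$ is a linear subspace.

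Next I would show that the projection $V \to \Ext^1(A,A) \oplus \Ext^1(B,B)$ is surjective. By axiom (TR3), given $(\eta_A, \eta_B)$ one can fill in the middle $\tilde\eta$ precisely when the outer square involving the connecting maps commutes, i.e., when $\delta[1] \circ \eta_B = \eta_A[1] \circ \delta$ in $\Ext^2(B,A)$. Since $\Ku(X)$ is a K3 category (Serre functor $[2]$), Serre duality gives $\Ext^2(B,A) \cong \Hom(A,B)^\vee$, which vanishes by assumption; hence the obstruction is automatically zero and the lift exists.

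Finally I would verify that the projection $V \to \Ext^1(E,E)$, $(\eta_A, \tilde\eta, \eta_B) \mapsto \tilde\eta$, is injective. Applying $\Hom(A,\blank)$ to the triangle and using $\Hom(A,B)=0$ yields an injection $\Ext^1(A,A) \hookrightarrow \Ext^1(A,E)$ given by post-composition with $\iota$; symmetrically, applying $\Hom(\blank,B)$ gives $\Ext^1(B,B) \hookrightarrow \Ext^1(E,B)$ via pre-composition with $\pi$. The defining relations of $V$ then force $\tilde\eta$ to determine both $\eta_A$ (through $\iota[1]\circ\eta_A = \tilde\eta\circ\iota$) and $\eta_B$ (through $\pi[1]\circ\tilde\eta = \eta_B\circ\pi$) uniquely. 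Combining the two steps gives
\[ \dim \Ext^1(A,A) + \dim \Ext^1(B,B) \leq \dim V \leq \dim \Ext^1(E,E), \]
which is the claim. The only non-formal ingredient is the K3 Serre-duality vanishing of $\Ext^2(B,A)$, so I do not expect any further obstacles beyond bookkeeping of the commutative squares.
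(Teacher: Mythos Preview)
Your proof is correct. The paper does not actually supply its own argument for this lemma; it merely cites \cite{Mukai:BundlesK3} and \cite[Lemma~2.5]{BB:K3Pic1}. Your approach---setting up the space $V$ of morphisms of triangles, establishing surjectivity of $V \to \Ext^1(A,A)\oplus\Ext^1(B,B)$ via the vanishing $\Ext^2(B,A)\cong\Hom(A,B)^\vee=0$ from the K3 Serre duality, and injectivity of $V \to \Ext^1(E,E)$ from the long exact sequences---is precisely the standard proof found in those references.

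One cosmetic point: in the surjectivity step you write that by (TR3) one can fill in $\tilde\eta$ ``precisely when'' the connecting square commutes. Strictly, (TR3) applied to the rotation $B \to A[1] \to E[1] \to B[1]$ gives only the ``if'' direction (commutativity of the $\delta$-square suffices to produce $\tilde\eta$), not the converse. But since $\Ext^2(B,A)=0$ makes the condition vacuous, only sufficiency is needed and the argument goes through unchanged.
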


As first corollary, we show that under our assumption there are no objects with $\Ext^1=0$.

\begin{Lem}\label{lem:NoRigid}
Assume that $H_{\mathrm{alg}}^*(\Ku(X),\Z)$ has no $(-2)$-classes.
Then there exists no non-zero object $E\in\Ku(X)$ with $\Ext^1(E,E)=0$.
\end{Lem}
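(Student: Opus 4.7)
The plan combines the stability condition on $\Ku(X)$ from Theorem~\ref{thm:main2}, Mukai's Lemma~\ref{lem:Mukai}, and a positivity argument for the Mukai form on Jordan--H\"older factors.

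First, I fix any full numerical Bridgeland stability condition $\sigma$ on $\Ku(X)$ provided by Theorem~\ref{thm:main2}, and let $E$ be a non-zero object with $\Ext^1(E,E)=0$. Applying Lemma~\ref{lem:Mukai} inductively to the triangles $A_i\to E^{(i-1)}\to E^{(i)}$ of the $\sigma$-HN filtration of $E$, in which $\Hom(A_i,E^{(i)})=0$ by the strict inequality of phases, I conclude that every non-zero $\sigma$-semistable HN factor $A$ of $E$ still satisfies $\Ext^1(A,A)=0$. It therefore suffices to derive a contradiction under the extra hypothesis that $A$ is $\sigma$-semistable and non-zero.

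Since $\Ku(X)$ is $2$-Calabi--Yau and $A$ lies in the heart of a bounded t-structure, $\Ext^i(A,A)=0$ for $i\notin\{0,1,2\}$ and $\Ext^2(A,A)\cong\Hom(A,A)^\vee$ by Serre duality. Together with $\Ext^1(A,A)=0$, this gives $\chi(A,A)=2\dim\Hom(A,A)\ge 2$, and hence by \eqref{eqn:MukaiPairingEulerChar}
\[
\vv(A)^2=-\chi(A,A)\le-2.
\]
On the other hand, let $S_1,\dots,S_n$ be the Jordan--H\"older factors of $A$ in the abelian category of $\sigma$-semistable objects of phase $\phi(A)$; then $\vv(A)=\sum_i\vv(S_i)$ and
\[
\vv(A)^2=\sum_{i,j}(\vv(S_i),\vv(S_j)).
\]

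I claim that each summand on the right is non-negative, which will contradict the inequality above. If $S_i\cong S_j$, then stability yields $\Hom(S_i,S_i)\cong\C$, and Serre duality in the $2$-Calabi--Yau setting gives $\chi(S_i,S_i)\le 2$, whence $\vv(S_i)^2\ge-2$; the no-$(-2)$-class hypothesis then forces $\vv(S_i)^2\ge 0$. If $S_i\not\cong S_j$ are stable of the same phase, then $\Hom(S_i,S_j)=\Hom(S_j,S_i)=0$ by stability, and Serre duality yields $\Ext^2(S_i,S_j)=0$, so $(\vv(S_i),\vv(S_j))=\dim\Ext^1(S_i,S_j)\ge 0$. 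The main conceptual point is that the no-$(-2)$-class hypothesis is exactly what promotes the K3-type inequality $\vv(S_i)^2\ge-2$ from weak to strict, making the diagonal terms contribute non-negatively; the main technical check is the systematic use of Serre duality and vanishing of higher Ext groups in the $2$-Calabi--Yau category $\Ku(X)$, after which the argument is essentially formal.
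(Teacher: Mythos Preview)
Your proof is correct and follows the same overall strategy as the paper's: pass to a $\sigma$-semistable factor via Mukai's Lemma~\ref{lem:Mukai} applied to the HN filtration, then contradict the absence of $(-2)$-classes. The only structural difference is in the Jordan--H\"older step. The paper applies Mukai's Lemma a second time to reduce to a semistable object with a \emph{unique} stable factor $E_0$; from $\vv(E)=n\vv(E_0)$ and $\vv(E)^2<0$ it gets $\vv(E_0)^2<0$, and since $\Hom(E_0,E_0)\cong\C$ this forces $\vv(E_0)^2=-2$. You instead expand $\vv(A)^2=\sum_{i,j}(\vv(S_i),\vv(S_j))$ over all JH factors and check each summand is non-negative. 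Both routes are short; yours trades the second use of Mukai's Lemma for a direct positivity computation.

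One small point: your passage from $\vv(S_i)^2\ge-2$ to $\vv(S_i)^2\ge0$ silently uses that the Mukai pairing is \emph{even} (ruling out $\vv(S_i)^2=-1$). This is recorded in Proposition/Definition~\ref{prop:MukaiLattice}, and the paper's proof uses it just as implicitly, so it is not a gap relative to the paper---but it would be cleaner to say it.
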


\begin{proof}
Let $E\in\Ku(X)$ be a non-zero object such that $\Ext^1(E,E)=0$.
Let $\sigma\in\Stab(\Ku(X))$.
By Lemma~\ref{lem:Mukai}, we can assume that $E$ is $\sigma$-semistable and that it has a unique $\sigma$-stable factor $E_0$.
Therefore, $\vv(E)^2<0$.
But then $\vv(E_0)^2<0$ as well. 
Since $\Hom(E_0,E_0)\cong\C$, we have $ 0 > \vv(E_0)^2 = \ext^1(E_0, E_0) - 2 \neq -2$; this is a contradiction as $\Ext^1(E_0,E_0)$ is even-dimensional.
\end{proof}

Using Lemma~\ref{lem:NoRigid}, we can show that objects with $\Ext^1\cong\C^2$ are always stable.

\begin{Lem}\label{lem:SemirigidAlwaysStable}
Assume that $H_{\mathrm{alg}}^*(\Ku(X),\Z)$ has no $(-2)$-classes.
Let $E\in\Ku(X)$ be an object with $\Ext^1(E,E)\cong\C^2$.
Then, for all $\sigma\in\Stab(\Ku(X))$, $E$ is $\sigma$-stable.
In particular, $\vv(E)^2=0$.
\end{Lem}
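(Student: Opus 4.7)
The plan is to prove the lemma in two main stages: first establish that $E$ is $\sigma$-semistable for every $\sigma \in \Stab(\Ku(X))$, and then upgrade this to $\sigma$-stability, with the equality $\vv(E)^2 = 0$ falling out along the way.

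For semistability I argue by contradiction. If $E$ were not $\sigma$-semistable, its Harder--Narasimhan filtration would produce an exact triangle $A \to E \to B$ in which the strict phase inequality $\phi_\sigma^+(A) > \phi_\sigma^-(B)$ forces $\Hom(A, B) = 0$. Mukai's Lemma~\ref{lem:Mukai} then yields $\ext^1(A, A) + \ext^1(B, B) \leq \ext^1(E,E) = 2$. The key observation is that since $\Ku(X)$ is a K3 category (Serre functor $[2]$), the identity $\chi(F, F) = 2 \hom(F, F) - \ext^1(F, F)$ combined with the evenness of the Mukai pairing forces $\ext^1(F, F)$ to be even for every $F$. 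Together with Lemma~\ref{lem:NoRigid} (which rules out $\ext^1=0$ for nonzero $F$), both $\ext^1(A, A)$ and $\ext^1(B, B)$ must be $\geq 2$, contradicting the Mukai bound.

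Assume now $E$ is $\sigma$-semistable and let $S_1, \dots, S_m$ be its distinct stable Jordan--H\"older factors with multiplicities $n_i$. For $i \neq j$, non-isomorphic stables of the same phase satisfy $\Hom(S_i, S_j) = 0$ (and dually, by Serre duality, $\Ext^2(S_i, S_j) = 0$), so $\chi(S_i, S_j) = -\ext^1(S_i, S_j) \leq 0$, i.e.\ $(\vv(S_i), \vv(S_j)) \geq 0$. Our hypothesis that $H^*_{\mathrm{alg}}(\Ku(X),\Z)$ has no $(-2)$-classes gives $\vv(S_i)^2 \geq 0$ for every $i$. Hence $\vv(E)^2 = \sum_{i,j} n_i n_j (\vv(S_i), \vv(S_j)) \geq 0$, while the identity $\vv(E)^2 = \ext^1(E,E) - 2 \hom(E,E) \leq 0$ forces equality throughout. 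One concludes $\vv(E)^2 = 0$, $\hom(E, E) = 1$, $\vv(S_i)^2 = 0$ for all $i$, and $(\vv(S_i), \vv(S_j)) = 0$ for all $i \neq j$.

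Finally, for stability these orthogonalities give $\chi(S_i, S_j) = 0$ and hence $\Ext^1(S_i, S_j) = 0$ for all $i \neq j$. An inductive argument via maximal isotypic subobjects (using that if $F \subseteq E$ is the largest $S_k$-isotypic subobject then $\Ext^1(E/F, F)$ vanishes) then decomposes $E \cong \bigoplus_k E_k$ as a direct sum of isotypic pieces. Indecomposability (forced by $\hom(E, E) = 1$) leaves only one summand, so $E$ is isotypic in a single stable $S_0$ with multiplicity $n_0$. If $n_0 \geq 2$, any Jordan--H\"older filtration $0 \subset E_1 \subset \cdots \subset E_{n_0} = E$ yields an inclusion $\iota\colon S_0 = E_1 \hookrightarrow E$ and a surjection $\pi\colon E \twoheadrightarrow E/E_{n_0-1} = S_0$; the composition $\pi\iota \in \End(S_0) = \C$ is either non-zero, giving a splitting $E = S_0 \oplus \ker\pi$, or zero, making $\iota\pi$ a non-scalar endomorphism of $E$ independent of $\id_E$. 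Both cases produce $\hom(E, E) \geq 2$, contradicting $\hom(E, E) = 1$. Hence $n_0 = 1$ and $E = S_0$ is stable.

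The main obstacle is the isotypic block decomposition in the last step: while morally clear from the vanishing of $\Ext^1$ between distinct stable factors, rigorously propagating the splittings through the Jordan--H\"older filtration requires a careful induction on the number of distinct stable factors appearing in $E$.
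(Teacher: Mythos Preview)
Your proof is correct. The semistability step via Mukai's Lemma~\ref{lem:Mukai}, the parity of $\ext^1$ (from evenness of the Mukai pairing and Serre duality), and Lemma~\ref{lem:NoRigid} is exactly what the paper does, though the paper leaves the parity argument implicit.

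Where you diverge from the paper is in upgrading semistability to stability. You compute $\vv(E)^2$ from both sides---nonnegative via the Jordan--H\"older factors and their pairwise Mukai pairings, nonpositive via $\ext^1(E,E)=2$---to force $\vv(E)^2=0$ and mutual orthogonality of the distinct stable factors, and then invoke an isotypic block decomposition (which, as you note, requires a separate induction). The paper instead simply reapplies Mukai's lemma at the Jordan--H\"older level: if $E$ had two non-isomorphic stable factors, taking $A$ to be the maximal subobject with a single stable factor $S_1$ and $B=E/A$ gives $\Hom(A,B)=0$ by maximality, hence $\ext^1(A,A)+\ext^1(B,B)\le 2$, again contradicting parity and Lemma~\ref{lem:NoRigid}. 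This yields a unique stable factor $E_0$ in one stroke and bypasses the isotypic splitting entirely. From there both arguments finish identically: $\vv(E)^2=n^2\vv(E_0)^2$ with $\vv(E_0)^2\ge 0$ (no $(-2)$-classes) and $\vv(E)^2\le 0$ forces $\vv(E)^2=0$, whence $\hom(E,E)=1$, and your multiplicity-one argument (which the paper compresses to ``this implies $E=E_0$'') gives stability. Your route is sound but longer; the paper's second application of Mukai's lemma is the shortcut.
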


\begin{proof}
Let $E\in\Ku(X)$ be an object with $\Ext^1(E,E)\cong\C^2$.
Let $\sigma\in\Stab(\Ku(X))$.
By Lemma~\ref{lem:Mukai} and Lemma~\ref{lem:NoRigid}, we deduce that $E$ is $\sigma$-semistable with a unique $\sigma$-stable object $E_0$.
Therefore, $\vv(E)^2\leq0$.
But then $-2 \le \vv(E_0)^2\leq0$, and so $\vv(E_0)^2=0$, by assumption.
We deduce that $\vv(E)^2=0$ and so that $\Hom(E,E)\cong\C$.
This implies that $E=E_0$, as we wanted.
\end{proof}

Finally, we can study stability of objects with $\Ext^1\cong\C^4$, the case of interest for us.

\begin{Lem}\label{lem:StabilityFano}
Assume that $H_{\mathrm{alg}}^*(\Ku(X),\Z)$ has no $(-2)$-classes.
Let $E\in\Ku(X)$ be an object with $\Ext^{<0}(E,E)=0$, $\Hom(E,E)\cong\C$, and $\Ext^1(E,E)\cong\C^4$.
Then, for all $\sigma\in\Stab(\Ku(X))$, $E$ is $\sigma$-stable.
\end{Lem}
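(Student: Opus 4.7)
The plan is to argue by contradiction: assume that there is a stability condition $\sigma \in \Stab(\Ku(X))$ for which $E$ fails to be $\sigma$-stable, and deduce the existence of a $(-2)$-class in $H^*_\mathrm{alg}(\Ku(X),\Z)$. First, since $\Ku(X)$ is a K3 category, Serre duality gives $\ext^2(E,E) = \hom(E,E) = 1$, so $\chi(E,E) = -2$ and $\vv(E)^2 = 2$. Shifting $E$ if necessary, I may assume $E$ lies in the heart of $\sigma$.

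The key step is to produce an exact triangle $A \to E \to B$ in $\Ku(X)$ with $\Hom(A,B) = 0$. If $E$ is not $\sigma$-semistable, taking $A$ to be the first Harder--Narasimhan factor and $B$ its cone works automatically, since the slope inequality $\mu^-_\sigma(A) > \mu^+_\sigma(B)$ forces $\Hom(A,B) = 0$. If instead $E$ is $\sigma$-semistable but not $\sigma$-stable, I pick a $\sigma$-stable subobject $A_0 \subset E$ and iteratively enlarge it by pulling back along any nonzero map $A_0 \into E/A^{(k)}$ (which is automatically injective by stability of $A_0$ and same-phase semistability of the quotient); for length reasons the process terminates in some $A := A^{(n)}$ with $\Hom(A_0, E/A) = 0$, and a short induction on the Jordan--H\"older length of $A$ (whose factors are all isomorphic to $A_0$) propagates this to $\Hom(A, E/A) = 0$. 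This construction is the main delicate point of the argument.

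Applying Mukai's Lemma~\ref{lem:Mukai} then gives $\ext^1(A,A) + \ext^1(B,B) \le \ext^1(E,E) = 4$. Both summands are nonzero by Lemma~\ref{lem:NoRigid}, and both are even, because for every $F \in \Ku(X)$ the identity $\chi(F,F) = 2\hom(F,F) - \ext^1(F,F)$ takes values in the even lattice $H^*_\mathrm{alg}(\Ku(X),\Z)$. Hence $\ext^1(A,A) = \ext^1(B,B) = 2$, and Lemma~\ref{lem:SemirigidAlwaysStable} implies that $A$ and $B$ are both $\sigma$-stable with $\vv(A)^2 = \vv(B)^2 = 0$; in the semistable case this in addition forces $n = 1$, so $A = A_0$, since an iterated nontrivial self-extension of $A_0$ would not be stable.

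To conclude, additivity $\vv(E) = \vv(A) + \vv(B)$ combined with $\vv(E)^2 = 2$ yields $(\vv(A), \vv(B)) = 1$. In particular $\vv(A) \ne \vv(B)$, because $\vv(A)^2 = 0 \ne 1 = (\vv(A), \vv(B))$. Therefore $\vv(A) - \vv(B)$ is a nonzero algebraic class of Mukai square $0 + 0 - 2\cdot 1 = -2$, contradicting the standing hypothesis that $H^*_\mathrm{alg}(\Ku(X),\Z)$ carries no $(-2)$-classes.
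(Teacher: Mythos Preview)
Your proof is correct and follows essentially the same route as the paper's: both arguments reduce, via Mukai's Lemma together with Lemmas~\ref{lem:NoRigid} and~\ref{lem:SemirigidAlwaysStable}, to a triangle $A \to E \to B$ with $\vv(A)^2 = \vv(B)^2 = 0$, and then derive the $(-2)$-class $\vv(A)-\vv(B)$. Your write-up simply spells out in more detail the construction of the triangle with $\Hom(A,B)=0$ in the strictly semistable case and the parity reason forcing $\ext^1(A,A)=\ext^1(B,B)=2$, points the paper leaves implicit.
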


\begin{proof}
By assumption, $\vv(E)^2=2$.
Therefore, $\vv(E)$ is a primitive vector in $H_{\mathrm{alg}}^*(\Ku(X),\Z)$.
By Lemma~\ref{lem:Mukai}, Lemma~\ref{lem:NoRigid}, and Lemma~\ref{lem:SemirigidAlwaysStable}, if $E$ is not $\sigma$-stable, then there exists a triangle $A\to E \to B$, where $A$ and $B$ are both $\sigma$-stable with $\ext^1=2$.
Hence, $\vv(A)^2=\vv(B)^2=0$ and $(\vv(A)+\vv(B))^2=2$.
But then $(\vv(A)-\vv(B))^2=-2$, a contradiction.
\end{proof}

Before stating the main result, we recall a construction by Kuznetsov and Markushevich.
Given a line $L\subset X$, we define a torsion-free sheaf $F_L := \Ker \left( \OO_X^{\oplus 4} \onto I_L(1) \right)$ as the kernel of the evaluation map.
Then by \cite[Section~5]{KuznetsovMarkushevic}, $F_L$ is a torsion-free Gieseker-stable sheaf on $X$ which has the same $\Ext$-groups as $I_L$, and which belongs to $\Ku(X)$.
By definition of $\llambda_1$, one easily verifies $\vv(F_L)=\llambda_1$.
By letting $L$ vary, the sheaves $F_L$ span a connected component of the moduli space of Gieseker-stable sheaves which is isomorphic to $F(X)$ \cite[Proposition~5.5]{KuznetsovMarkushevic}.
We denote by $\mathcal{F}_\mathcal{L}$ the universal family.

\begin{Prop}\label{prop:FanoStable} Assume that $H_{\mathrm{alg}}^*(\Ku(X),\Z)$ has no $(-2)$-classes.
Let $L\subset X$ be a line.
Then, for all $\sigma\in\Stab(\Ku(X))$, the sheaf $F_L$ is $\sigma$-stable.
\end{Prop}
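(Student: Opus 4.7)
The strategy is to apply Lemma~\ref{lem:StabilityFano} directly to $E = F_L$. Granted that, the proposition reduces to verifying three homological conditions: $\Ext^{<0}(F_L, F_L) = 0$, $\Hom(F_L, F_L) \cong \C$, and $\Ext^1(F_L, F_L) \cong \C^4$. Of course, this approach relies on the standing assumption that $H^*_{\mathrm{alg}}(\Ku(X), \Z)$ has no $(-2)$-classes inherited from the context of the appendix, which is precisely the hypothesis needed to apply the stability lemmas of Section~\ref{subsec:appendix:classification}.

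The first two conditions are essentially immediate. Since $F_L$ is a (torsion-free) coherent sheaf, we have $\Ext^{i}(F_L, F_L) = 0$ for all $i < 0$. Moreover, by \cite[Section~5]{KuznetsovMarkushevic}, $F_L$ is Gieseker-stable, hence simple, giving $\Hom(F_L, F_L) \cong \C$.

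The substantive step is the computation $\Ext^1(F_L, F_L) \cong \C^4$. Recall that $F_L$ is defined by the short exact sequence $0 \to F_L \to \cO_X^{\oplus 4} \to I_L(1) \to 0$, and Kuznetsov-Markushevich show that this induces an isomorphism $\Ext^\bullet(F_L, F_L) \cong \Ext^\bullet(I_L, I_L)$. Therefore it suffices to compute $\Ext^1(I_L, I_L)$, which by standard deformation theory of the Hilbert scheme of lines equals $H^0(L, N_{L/X})$. Since $F(X)$ is a smooth hyperk\"ahler fourfold by Beauville-Donagi, this tangent space has dimension $4$, as required.

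With the three hypotheses verified, Lemma~\ref{lem:StabilityFano} immediately yields that $F_L$ is $\sigma$-stable for every $\sigma \in \Stab(\Ku(X))$. The potentially delicate point one might worry about is whether the Ext-computation via $I_L$ is really valid in all cases and whether the Beauville-Donagi description transfers cleanly; however, both are classical and already explicitly carried out in \cite{KuznetsovMarkushevic}, so no genuine obstacle arises here---the proof is essentially a verification of the hypotheses of an already-established lemma.
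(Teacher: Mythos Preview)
Your proof is correct and takes essentially the same approach as the paper: both apply Lemma~\ref{lem:StabilityFano} directly to $F_L$, with the required Ext-computations already recorded (via \cite{KuznetsovMarkushevic}) in the paragraph preceding the proposition. Your version spells out the verification of the hypotheses in more detail than the paper, which simply writes ``This is now immediate from Lemma~\ref{lem:StabilityFano}.''
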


\begin{proof}
This is now immediate from Lemma~\ref{lem:StabilityFano}.
\end{proof}

The last result we need is about moduli spaces, by generalizing an argument by Mukai \cite{Mukai:BundlesK3} (see \cite[Theorem~4.1]{KLS:SingSymplecticModuliSpaces}).
Let $\sigma=(Z,\AA)$ be a Bridgeland stability condition on $\Ku(X)$.
Let us also fix a numerical class $\vv\in H_{\mathrm{alg}}^*(\Ku(X),\Z)$.
We denote by $M^{\mathrm{spl}}(\Ku(X))$ the space parameterizing simple objects in $\Ku(X)$, which is an algebraic space locally of finite-type over $\C$ by \cite{Inaba}.
We also denote by $M_{\sigma}^{\mathrm{st}}(\vv)\subset M^{\mathrm{spl}}(\Ku(X))$ the subset parameterizing $\sigma$-stable objects in $\AA$ with Mukai vector $\pm \vv$.

\begin{Prop}\label{prop:MukaiConnectedness}
Assume there exists a smooth integral projective variety $M\subset M_{\sigma}^{\mathrm{st}}(\vv)$ of dimension $\vv^2+2$.
Then $M = M_{\sigma}^{\mathrm{st}}(\vv)$.
\end{Prop}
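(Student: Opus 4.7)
The proof combines smoothness of the ambient moduli space of simple objects with a Mukai-type argument for connectedness.

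I would first establish that $M^{\mathrm{spl}}(\Ku(X))$ is smooth of dimension $\vv^2+2$ at each point of Mukai vector $\vv$. For a simple $E\in\Ku(X)$, Serre duality together with the $2$-Calabi--Yau property $S=[2]$ gives $\ext^2(E,E)=\hom(E,E)=1$, and combined with $\chi(E,E)=-\vv^2$ from \eqref{eqn:MukaiPairingEulerChar}, we obtain $\ext^1(E,E)=\vv^2+2$. The classical unobstructedness argument of Mukai--Artamkin carries over to the $2$-Calabi--Yau category $\Ku(X)$ (the obstruction lies in $\Ext^2(E,E)$, and the trace pairing with Serre duality forces it to vanish on simple objects), and Inaba's representability theorem then yields the claimed smoothness. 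Since $\sigma$-stability is an open condition, $M_{\sigma}^{\mathrm{st}}(\vv)$ is an open subspace of $M^{\mathrm{spl}}(\Ku(X))$, hence also smooth of dimension $\vv^2+2$.

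Next, the inclusion $M\hookrightarrow M_{\sigma}^{\mathrm{st}}(\vv)$ is a morphism between smooth (algebraic) spaces of the same dimension, hence locally an isomorphism, and in particular $M$ is open in $M_{\sigma}^{\mathrm{st}}(\vv)$. Since $M$ is projective and $M_{\sigma}^{\mathrm{st}}(\vv)$ is separated, $M$ is also closed; combined with irreducibility of $M$, this shows that $M$ is a single connected component of $M_{\sigma}^{\mathrm{st}}(\vv)$.

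The main obstacle is then to exclude the existence of any further connected components. Suppose there exists $E_0\in M_{\sigma}^{\mathrm{st}}(\vv)\setminus M$. Since $E_0\not\cong E$ for any $E\in M$, stability forces $\Hom(E,E_0)=\Hom(E_0,E)=0$, and Serre duality further gives $\Ext^2(E,E_0)=0$, so $\ext^1(E,E_0)=\vv^2$. The plan is to exploit the universal family $\mathcal{E}$ on $M$ via the associated integral transform $\Phi_{\mathcal{E}}\colon\Db(M)\to\Ku(X)$: using the vanishings above together with Mukai's Lemma~\ref{lem:Mukai} applied to universal extensions of $\mathcal{E}$ by $E_0$, one aims to show that $\Phi_{\mathcal{E}}$ is fully faithful onto an admissible subcategory of $\Ku(X)$ that must contain $E_0$, forcing the contradiction $E_0\in M$. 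The hard part, and the technical heart of the proof, is precisely establishing this contradiction, which requires carefully transplanting Mukai's original K3-surface argument to the Kuznetsov-component setting.
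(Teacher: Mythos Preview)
Your overall architecture is right: establish that $M$ is an open and closed smooth component of the expected dimension, then rule out any other component by a Mukai-type argument using the universal family. The paper proceeds the same way at this level. The preliminary steps (smoothness via the $2$-Calabi--Yau property, openness of stability, closedness by properness of $M$) are fine.

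Where your plan becomes vague, and in fact points in a misleading direction, is the mechanism for the contradiction. You propose to show that the integral transform $\Phi_{\mathcal E}$ is \emph{fully faithful onto an admissible subcategory containing $E_0$}, and then use Mukai's Lemma~\ref{lem:Mukai} on universal extensions. This is not how the Mukai/KLS argument runs, and it is unclear how one would make it work: there is no a priori reason for the image of $\Phi_{\mathcal E}$ to be admissible, nor for $E_0$ to lie in it. The actual argument (as in \cite[Theorem~4.1]{KLS:SingSymplecticModuliSpaces}, which the paper follows) is more direct and numerical. One takes $F\in M$ and $G\in M_\sigma^{\mathrm{st}}(\vv)\setminus M$, and forms the relative $\Ext$ complexes
\[
\mathbb F := p_*\,\RlHom(q^*F,\mathcal F), \qquad \mathbb G := p_*\,\RlHom(q^*G,\mathcal F)
\]
on $M$. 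The key technical point---and the only place where one has to adapt the K3 argument to $\Ku(X)$---is to verify the correct cohomological amplitude: $\mathbb F\otimes k(y)$ is concentrated in degrees $0,1,2$, and $\mathbb G\otimes k(y)$ in degree $1$ only, for every closed point $y\in M$. This follows from base change together with the fact that all the objects involved lie in the heart $\mathcal A$ of the stability condition $\sigma$ (so $\Ext^j$ vanishes outside $[0,2]$, and for the non-isomorphic stable objects $G$ and $\mathcal F_y$ also $\Hom$ and $\Ext^2$ vanish by stability and Serre duality). Once this is in place, $\mathbb G[-1]$ is a locally free sheaf of rank $\vv^2$ on $M$, $\mathbb F$ is a three-term perfect complex, and the remainder of the KLS argument applies verbatim to produce the contradiction.

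In short: keep your first two steps, but for the third replace the full-faithfulness heuristic with the explicit construction of $\mathbb F$ and $\mathbb G$ and the amplitude check; the essential new input beyond \cite{KLS:SingSymplecticModuliSpaces} is that membership in the heart $\mathcal A$ gives the required $\Ext$-vanishing.
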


\begin{proof}
The proof works exactly as in \cite[Theorem~4.1]{KLS:SingSymplecticModuliSpaces}.
For simplicity, we assume the existence of a universal family $\FF$ on $M$. Note that this is the case in our situation where $M=F(X)$.
The general case, in which only a quasi-universal family exists, can be treated similarly as in \cite[Lemma~4.2]{KLS:SingSymplecticModuliSpaces}.

Suppose that $M_{\sigma}^{\mathrm{st}}(\vv) \neq M$, and consider objects $F\in M$ and $G \in M_{\sigma}^{\mathrm{st}}(\vv) \setminus M$.
We consider the product $M\times X$, and we denote by $p\colon M\times X\to M$ and $q\colon M\times X \to X$ the two projections.
We will implicitly treat all objects in $\Ku(X)$ as objects in $\Db(X)$ without mentioning.
The idea is to look at the following objects in $\Db(M)$: $\mathbb{F} := p_* \lHom(q^* F, \FF)$ and $\mathbb{G} := p_* \lHom(q^* G, \FF)$.

To apply the argument in \cite[Theorem~4.1]{KLS:SingSymplecticModuliSpaces}, we need to show that $\mathbb{F}$ is quasi-isomorphic to a complex of locally free sheaves on $M$ of the form $A^0 \to A^1 \to A^2$ and $\mathbb{G}[-1]$ is a locally-free sheaf.
By \cite[Proposition~5.4]{Bridgeland-Maciocia:K3Fibrations}, it suffices to show that, for all closed points $y\in M$, the complex $\mathbb{F}\otimes k(y)$ is supported in degrees $0,1,2$.
Since the projection $p$ is flat, by the Projection Formula and Cohomology and Base Change (see \cite[Lemma~1.3]{BondalOrlov:Main}), we have $\mathrm{Tor}_{-j}(\mathbb{F}, k(y)) \cong \Ext^{j}(F,(i_{y}\times\id)^{*}\FF)$,
where $i_{y}\times\id \colon \{ y \}\times X \to M\times X$ denotes the inclusion.
But $F$ and $(i_{y}\times\id)^{*}\FF$ both belong to a heart of a bounded t-structure in $\Ku(X)$. Hence, $\Ext^j(F,(i_{y}\times\id)^{*}\FF)=0$, for all $j\neq 0,1,2$, as we wanted.

A similar computation gives that $\mathbb{G} \otimes k(y)$ is supported only in degree $1$, and so $\mathbb{G}[-1]$ is a locally-free sheaf on $M$.
The rest of the argument can be carried out line-by-line following \cite{KLS:SingSymplecticModuliSpaces}.
\end{proof}

Recall that, when $M_{\sigma}^{\mathrm{st}}(\vv)$ is a proper algebraic space over $\C$, by \cite[Section~4]{BM:projectivity}, we can define a nef divisor class $\ell_\sigma$ on $M_{\sigma}^{\mathrm{st}}(\vv)$.
Proposition~\ref{prop:FanoStable} and Proposition~\ref{prop:MukaiConnectedness} give a complete description of the moduli space $M_{\sigma}^{\mathrm{st}}(\llambda_1)$.
When $\eta(\sigma)\in (A_2)_\C\cap \PP\subset H_{\mathrm{alg}}^*(\Ku(X),\Z)$, we can also describe $\ell_\sigma$.

\begin{Thm}\label{thm:FanoStable}
Let $X$ be a cubic fourfold such that $H_{\mathrm{alg}}^*(\Ku(X),\Z)$ has no $(-2)$-classes.
Then $M_{\sigma}^{\mathrm{st}}(\llambda_1)\cong F(X)$ is a fine moduli space, for any $\sigma\in\Stab(\Ku(X))$.
Moreover, if $\eta(\sigma)\in (A_2)_\C\cap \PP$, then $\ell_\sigma$ is a positive multiple of the divisor class $g=\llambda_1+2\llambda_2$ of the Pl\"ucker embedding\footnote{Here we use \cite[Proposition~7]{addington:two_conjectures} to identify $\NS(F(X))$ with $\llambda_1^\perp$; see also Proposition~\ref{prop:OrthogonalA2Lattice}.}.
\end{Thm}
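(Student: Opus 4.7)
The proof splits into two parts. For the isomorphism $M_\sigma^{\mathrm{st}}(\llambda_1) \cong F(X)$, the key inputs will be Propositions~\ref{prop:FanoStable} and~\ref{prop:MukaiConnectedness}. The Kuznetsov--Markushevich universal family $\mathcal{F}_\mathcal{L}$ on $F(X) \times X$ induces a morphism $\iota \colon F(X) \to M_\sigma^{\mathrm{st}}(\llambda_1)$ sending $L$ to $F_L$: by Proposition~\ref{prop:FanoStable} each $F_L$ is $\sigma$-stable with Mukai vector $\llambda_1$, and $\iota$ is injective because the underlying torsion-free sheaves $F_L$ already determine $L$. Since $F(X)$ is smooth, irreducible, and projective of dimension $4 = \llambda_1^2 + 2$, Proposition~\ref{prop:MukaiConnectedness} will then force $\iota(F(X)) = M_\sigma^{\mathrm{st}}(\llambda_1)$. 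Fineness is immediate from the existence of $\mathcal{F}_\mathcal{L}$ together with the simpleness of the $F_L$.

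For the identification of $\ell_\sigma$, I will apply the construction of the nef divisor from \cite[Theorem~1.1]{BM:projectivity}. It associates to $\sigma$ a linear functional on curves in the moduli space which factors through a Donaldson--Mukai-type morphism from $\llambda_1^\perp \otimes \R$ to $\NS(F(X))_\R$; combined with the identification $\NS(F(X))_\R \cong \llambda_1^\perp \otimes \R$ of \cite[Proposition~7]{addington:two_conjectures}, this realises $\ell_\sigma$ as the image of
\[
\theta_\sigma := \Im\!\left(-\frac{\overline{\eta(\sigma)}}{\overline{Z(\llambda_1)}}\right) \in \llambda_1^\perp \otimes \R.
\]
The hypothesis $\eta(\sigma) \in (A_2)_\C$ then places $\theta_\sigma$ in the one-dimensional real subspace $A_2 \cap \llambda_1^\perp = \R\cdot(\llambda_1 + 2\llambda_2) = \R\cdot g$, as follows from the pairing computation $(\llambda_1+2\llambda_2,\llambda_1) = 2 - 2 = 0$. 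Hence $\ell_\sigma$ is a real multiple of $g$; since $\ell_\sigma$ is nef and $g$ is ample, positivity reduces to non-vanishing, which follows by direct evaluation from $Z(\llambda_1) \neq 0$ and the explicit form of $\eta(\sigma)$ computed in Proposition~\ref{prop:A2}.

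The main obstacle will be verifying that the machinery of \cite{BM:projectivity}—originally developed for moduli of Bridgeland-stable objects on K3 surfaces—applies verbatim to stable objects in the Kuznetsov component $\Ku(X)$. Since $\Ku(X)$ is a K3 category with the correct Serre functor, Mukai pairing, and (by the present paper) stability conditions, this should be a direct adaptation of the formal deformation-theoretic arguments of \emph{loc.~cit.}, but it is the one step that relies on input beyond what is explicitly developed in this paper.
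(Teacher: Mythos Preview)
Your proof is correct and follows essentially the same approach as the paper: both invoke Proposition~\ref{prop:FanoStable} to embed $F(X)$ into $M_\sigma^{\mathrm{st}}(\llambda_1)$, apply Proposition~\ref{prop:MukaiConnectedness} to conclude equality, use the Kuznetsov--Markushevich universal family for fineness, and identify $\ell_\sigma$ via the construction of \cite{BM:projectivity} together with the constraint $\eta(\sigma)\in (A_2)_\C$ forcing $\theta_\sigma\in A_2\cap\llambda_1^\perp=\R\cdot g$. Your write-up is in fact more explicit than the paper's, which simply refers to \cite[Lemma~9.2]{BM:projectivity} for the computation and to \cite[Equation~(6)]{addington:two_conjectures} for the identification with the Pl\"ucker class.

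Two minor comments. First, your worry about porting \cite{BM:projectivity} to $\Ku(X)$ is overstated: the definition of $\ell_\sigma$ in \cite[Section~4]{BM:projectivity} only needs a Bridgeland stability condition, a proper moduli space, and a (quasi-)universal family---nothing specific to K3 surfaces---so it applies here without modification. Second, your appeal to Proposition~\ref{prop:A2} for non-vanishing is slightly misdirected, since that proposition concerns one particular $\sigma$; the non-vanishing of $\theta_\sigma$ for arbitrary $\sigma$ with $\eta(\sigma)\in(A_2)_\C\cap\PP$ follows directly from the fact that the real and imaginary parts of $\eta(\sigma)$ span all of $A_2\otimes\R$, so $Z(\llambda_1)\neq 0$ and $\theta_\sigma=0$ would force a linear dependence between them.
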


\begin{proof}
By Proposition~\ref{prop:FanoStable}, the Fano variety of lines is a smooth integral projective variety of dimension $4=\llambda_1^2+2$ which is contained in $M_{\sigma}^{\mathrm{st}}(\llambda_1)$.
Hence, by Proposition~\ref{prop:MukaiConnectedness}, we have $F(X)=M_{\sigma}^{\mathrm{st}}(\llambda_1)$.
The universal family $\FF_\LL$ is also a universal family for objects in $\Ku(X)$.
Hence $F(X)$ is a fine moduli space of Bridgeland stable objects in $\Ku(X)$.
Finally, it is a straightforward computation, as in \cite[Lemma~9.2]{BM:projectivity}, to see that $\ell_\sigma$ is proportional to $\llambda_1+2\llambda_2$.
The fact that this is the Pl\"ucker polarization on $F(X)$ can be found, for example, in \cite[Equation (6)]{addington:two_conjectures}.
\end{proof}

\subsection{Proof of Theorem~\ref{thm:CategoricalTorelli}}

Clearly only one implication is non-trivial. Let us pick any stability condition $\sigma$ on $\Ku(X)$ such that $\eta(\sigma)\in (A_2)_\C\cap \PP$ and consider the fine moduli space $M_{\sigma}^{\mathrm{st}}(\llambda_1)$.
By Theorem~\ref{thm:FanoStable}, $M_{\sigma}^{\mathrm{st}}(\llambda_1)$ is isomorphic to $F(X)$ and carries a universal family $\mathcal{F}_{X,\mathcal{L}_X}$.

We recall from \cite[Proposition~3.12]{Huy:cubics} that the action of $(1)$ on the cohomology $H^*(\Ku(X), \Z)$ leaves $A_2^\perp$ invariant, whereas it cyclically permutes the roots
$\llambda_1, \llambda_2$ and $-\llambda_1-\llambda_2$ in $A_2$.

Now consider an equivalence
$\Phi\colon\Ku(X)\to\Ku(Y)$ as in Theorem~\ref{thm:CategoricalTorelli}, and let $\sigma':=\Phi(\sigma)$.
By the previous paragraph, it sends the distinguished sublattice
$A_2 \subset H^*_{\mathrm{alg}}(\Ku(X), \Z)$ to the corresponding sublattice for $Y$. Since the group generated by $(1)$ and $[1]$ acts transitively on the roots of $A_2$, we can replace $\Phi$ by a functor with $\Phi_*(\llambda_1) = \llambda_1$ and $\Phi_*(\llambda_2) = \llambda_2$.
Then it automatically induces a bijection between
$M_{\sigma}^{\mathrm{st}}(\llambda_1)$ and $M_{\sigma'}^{\mathrm{st}}(\llambda_1)$.
We need to show that this bijection is actually an isomorphism.

Consider the composition $\Psi\colon\Db(X)\to\Ku(X)\stackrel{\Phi}{\to}\Ku(Y)\hookrightarrow\Db(Y)$,
of $\Phi$ with the natural projection and inclusion.
If this is of Fourier-Mukai type, then it makes sense to consider the functor $\Psi\times\id\colon\Db(X\times M_{\sigma}^{\mathrm{st}}(\llambda_1))\to\Db(Y\times M_{\sigma}^{\mathrm{st}}(\llambda_1))$.
Then the object $(\Psi\times \id)(\FF_{X,\LL_X})$ provides a universal family on $Y\times M_{\sigma}^{\mathrm{st}}(\llambda_1)$.
Arguing as in \cite[Section~5.3]{BMMS:Cubics}, this gives a morphism $f\colon M_{\sigma'}^{\mathrm{st}}(\llambda_1)\to M_{\sigma}^{\mathrm{st}}(\llambda_1)$. Since it is induced by $\Phi$, this is an isomorphism.

If $\Psi$ is not of Fourier-Mukai type, we can  proceed as in \cite[Section~5.2]{BMMS:Cubics}.
While the functor $\Psi\times \id$ may not be well-defined, it still makes sense to define $(\Psi\times \id)(\FF_{X,\LL_X})$, and then argue as before.

By construction, we have  $f^*(\ell_{\sigma'})=\ell_\sigma$. By Theorem~\ref{thm:FanoStable}, $\ell_\sigma$ is the Pl\"ucker polarization; by the compatibility with $\Phi$ and the distinguished sublattice $A_2$, the same holds for $\ell_{\sigma'}$.
Hence we get an isomorphism $F(X)\to F(Y)$ which preserves the Pl\"ucker polarization.
By \cite[Proposition~4]{Charles:Torelli}, we get an isomorphism $X\cong Y$.

\subsection{The classical Torelli theorem}\label{subsec:appendix:ClassicalTorelli}

We are now ready to deduce the classical Torelli theorem for cubic fourfolds, by using the same argument as in \cite{HR16:Torelli_cubic_fourfolds}.

\begin{Thm}[Voisin]\label{thm:Torelli}
Two smooth complex cubic fourfolds $X$ and $Y$ over $\C$ are isomorphic if and only if there exists a Hodge isometry $H^4_{\mathrm{prim}}(X,\Z)\cong H^4_{\mathrm{prim}}(Y,\Z)$ between the primitive cohomologies.
\end{Thm}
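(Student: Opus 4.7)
The plan is to follow the strategy of Huybrechts--Rennemo \cite{HR16:Torelli_cubic_fourfolds}: upgrade the given Hodge isometry to an equivalence of Kuznetsov components satisfying the hypotheses of Theorem~\ref{thm:CategoricalTorelli}, and reduce the case of special cubic fourfolds to the very general case via a deformation argument.

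First, I would extend the Hodge isometry $\phi\colon H^4_{\mathrm{prim}}(X,\Z)\to H^4_{\mathrm{prim}}(Y,\Z)$ to a Hodge isometry $\widetilde{\phi}\colon H^*(\Ku(X),\Z)\to H^*(\Ku(Y),\Z)$ which commutes with the action of $(1)$. By Proposition~\ref{prop:OrthogonalA2Lattice}, $H^4_{\mathrm{prim}}(X,\Z)(-1)$ is Hodge-isometric to the orthogonal complement of the distinguished $A_2$-sublattice spanned by $\llambda_1,\llambda_2$. Since $(1)$ preserves both $A_2$ and its orthogonal (see \cite[Proposition~3.12]{Huy:cubics}), one can extend $\phi$ by a suitable isometry of the $A_2$-lattices commuting with the cyclic permutation induced by $(1)$ on the roots $\{\llambda_1,\llambda_2,-\llambda_1-\llambda_2\}$. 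Checking the compatibility with the weight-$2$ Hodge structures on both sides is straightforward.

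Second, I would realize $\widetilde{\phi}$ as the action on Mukai lattices of an exact equivalence $\Phi\colon\Ku(X)\to\Ku(Y)$ of triangulated categories (for very general $X$, one may use the derived global Torelli for general K3 categories as developed in \cite{HR16:Torelli_cubic_fourfolds}; for cubics whose Kuznetsov component is equivalent to a (twisted) K3 surface, this is the classical derived Torelli of Mukai--Orlov--Huybrechts). Assuming in addition that $H^*_{\mathrm{alg}}(\Ku(X),\Z)$ contains no $(-2)$-classes, Theorem~\ref{thm:CategoricalTorelli} produces an isomorphism $X\cong Y$. This handles all very general cubic fourfolds.

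The main obstacle, and the final step, is the descent to \emph{arbitrary} cubic fourfolds, where the $(-2)$-class hypothesis may fail. Here I would deform $X$ and $Y$ simultaneously: the presence of $(-2)$-classes in $H^*_{\mathrm{alg}}(\Ku(X),\Z)$ cuts out a countable union of Noether--Lefschetz divisors inside the moduli of cubic fourfolds, so one can choose a smooth one-parameter family $\mathcal{X}\to B$ through $X$ whose very general member has no $(-2)$-classes. The Hodge isometry $\phi$ parallel-transports along $B$ (after possibly shrinking $B$ to a simply-connected analytic neighbourhood) and, using the period map, determines a corresponding family $\mathcal{Y}\to B$ through $Y$ with fibrewise Hodge isometries. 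By the previous step, the generic fibres are isomorphic; since the locus of pairs $(X',Y')$ with a polarized isomorphism is closed in the relevant Hilbert scheme, the special fibres satisfy $X\cong Y$ as well. The technical heart of this last step is verifying that the deformation of $X$ together with the Hodge isometry indeed gives rise to a well-defined family $\mathcal{Y}\to B$ of cubic fourfolds, which is exactly the content of the classical global Torelli statement for the analytic family and follows from the surjectivity and local injectivity of the period map for cubic fourfolds.
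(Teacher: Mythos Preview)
Your overall strategy mirrors the paper's, but there is a genuine gap in your second step. You claim that for a \emph{very general} cubic fourfold $X$ (one with no $(-2)$-classes in $H^*_{\mathrm{alg}}(\Ku(X),\Z)$), a Hodge isometry $\widetilde{\phi}\colon H^*(\Ku(X),\Z)\to H^*(\Ku(Y),\Z)$ can be lifted to an equivalence $\Phi\colon\Ku(X)\to\Ku(Y)$, citing ``derived global Torelli for general K3 categories as developed in \cite{HR16:Torelli_cubic_fourfolds}''. No such result is in \cite{HR16:Torelli_cubic_fourfolds}: their categorical Torelli theorem, like Theorem~\ref{thm:CategoricalTorelli} here, goes in the opposite direction (from an equivalence to an isomorphism of cubics). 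Lifting a Hodge isometry of Mukai lattices to an equivalence of categories is only known when $\Ku(X)$ is equivalent to the derived category of a (twisted) K3 surface, via the classical derived Torelli theorem; for a truly very general cubic fourfold, $\Ku(X)$ is not geometric, and no such lifting mechanism is available.

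The paper's proof, following \cite{HR16:Torelli_cubic_fourfolds}, circumvents this by deforming in the opposite direction. One first extends $\phi$ to an oriented Hodge isometry $\phi'$ of Mukai lattices (\cite[Proposition~3.2]{HR16:Torelli_cubic_fourfolds}), then uses local Torelli to identify $\mathrm{Def}(X)\cong\mathrm{Def}(Y)$ and parallel-transport $\phi'$ over this base. The crucial point is that the locus of $t$ where $\Ku(X_t)\cong\Db(S,\alpha)$ for some twisted K3 surface \emph{and} $H^*_{\mathrm{alg}}(\Ku(X_t),\Z)$ has no $(-2)$-classes is dense (see \cite[Lemma~3.22]{HMS:generic_K3s}). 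At such $t$, derived Torelli for twisted K3 surfaces lifts $\phi'_t$ to an equivalence $\Phi_t$ compatible with $(1)$, Theorem~\ref{thm:CategoricalTorelli} applies, and one concludes $X_t\cong Y_t$. Separatedness of the moduli space of cubic fourfolds then gives $X\cong Y$. In short: you deform \emph{away} from special loci to reach cubics with no $(-2)$-classes, whereas the argument requires deforming \emph{towards} the (dense) twisted-K3 locus, since that is the only place where the Hodge isometry can be lifted to an equivalence.
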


This result was originally proved in \cite{Voisin:cubics}. Later Loojienga provided another proof in \cite{Looijenga:cubics} while describing the image of the period map. 
Charles \cite{Charles:Torelli} gave an elementary proof relying on the Torelli theorem for hyperk\"ahler manifolds \cite{Verbitsky:torelli}.

\begin{proof}
We briefly sketch the argument in \cite[Section~4.2]{HR16:Torelli_cubic_fourfolds}. 
Let $\phi\colon H^4_{\mathrm{prim}}(X,\Z)\xrightarrow{\simeq} H^4_{\mathrm{prim}}(Y,\Z)$
be a Hodge isometry.
By \cite[Proposition~3.2]{HR16:Torelli_cubic_fourfolds}, it induces a Hodge isometry $\phi'\colon H^*(\Ku(X),\Z)\xrightarrow{\simeq} H^*(\Ku(Y),\Z)$ that preserves the natural orientation.

A general deformation argument based on \cite{Huy:cubics} shows that $\phi'$ extends over a local deformation $\mathrm{Def}(X)\cong\mathrm{Def}(Y)$. 
The set $D\subset\mathrm{Def}(X)$ of points corresponding to cubic fourfolds $X'$ such that $\Ku(X')\cong \Db(S,\alpha)$, where $S$ is a smooth projective K3 surface and $\alpha$ is an element in the Brauer group $\mathrm{Br}(S)$, and $H_{\mathrm{alg}}^*(\Ku(X'),\Z)$ has no $(-2)$-classes is dense (see \cite[Lemma~3.22]{HMS:generic_K3s}). 
Moreover, as argued in \cite[Section~4.2]{HR16:Torelli_cubic_fourfolds}, for any $t\in D$ there is an orientation preserving Hodge isometry $\phi_t\colon  H^*(\Ku(X_t),\Z)\xrightarrow{\simeq} H^*(\Ku(Y_t),\Z)$ which commutes with the action on cohomology of the degree shift functor $(1)$ and which lifts to an equivalence $\Phi_t\colon\Ku(X_t)\to\Ku(Y_t)$. 
Now we can apply Theorem~\ref{thm:CategoricalTorelli} and get an isomorphism $X_t\cong Y_t$, for any $t\in D$. Since the moduli space of cubic fourfolds is separated, this yields $X\cong Y$.
\end{proof}



\end{document}